 \tikzset{
  on each segment/.style={
    decorate,
    decoration={
      show path construction,
      moveto code={},
      lineto code={
        \path [#1]
        (\tikzinputsegmentfirst) -- (\tikzinputsegmentlast);
      },
      curveto code={
        \path [#1] (\tikzinputsegmentfirst)
        .. controls
        (\tikzinputsegmentsupporta) and (\tikzinputsegmentsupportb)
        ..
        (\tikzinputsegmentlast);
      },
      closepath code={
        \path [#1]
        (\tikzinputsegmentfirst) -- (\tikzinputsegmentlast);
      },
    },
  },
  mid arrow/.style={postaction={decorate,decoration={
        markings,
        mark=at position .5 with {\arrow[#1]{stealth}}
      }}},
}
\numberwithin{figure}{section}
\newtheorem{theorem}{Theorem}[section]
\newtheorem{lemma}[theorem]{Lemma}
\newtheorem{corollary}[theorem]{Corollary}
\newtheorem{main theorem}[theorem]{Main Theorem}
\newtheorem{proposition}[theorem]{Proposition}
\newtheorem{definition}[theorem]{Definition}
\newtheorem{construction}[theorem]{Construction}
\newtheorem{remark}[theorem]{Remark}
\newtheorem{example}[theorem]{Example}
\numberwithin{equation}{section}
\def\<{\langle} 
\def\>{\rangle} 
\def\NN{\mathbb{N}} 
\def\ZZ{\mathbb{Z}} 
\newcommand{\Pic}{F{\tiny{IGURE}}\ }
\newcommand{\modcat}{\mathsf{mod}}
\newcommand{\Dcat}{\mathcal{D}}
\newcommand{\ind}{\mathsf{ind}}
\newcommand{\kk}{\mathds{k}} 
\newcommand{\Q}{\mathcal{Q}} 
\newcommand{\I}{\mathcal{I}} 
\newcommand{\per}{\mathsf{per}} 
\newcommand{\Hom}{\mathrm{Hom}} %
\newcommand{\Ext}{\mathrm{Ext}} %
\renewcommand{\H}{\mathrm{H}} %
\newcommand{\SURF}{\mathbf{S}} 
\newcommand{\Surf}{\mathcal{S}} 
\newcommand{\bSurf}{\partial\mathcal{S}} 
\newcommand{\M}{\mathcal{M}} 
\newcommand{\MM}{\mathfrak{M}} 
  \newcommand{\gbullet}{{\color{blue}\bullet}} 
\newcommand{\rbullet}{{\color{red}\circ}} 
\newcommand{\E}{\mathcal{E}} 
  \newcommand{\Dblue}{\Delta_{\color{blue}\bullet}} 
\newcommand{\Dred}{\Delta_{\color{red}\circ}} 
\newcommand{\tDred}{\widetilde{\Delta}_{{\color{red}\circ}}} 
\newcommand{\PP}{\mathcal{P}} 
\newcommand{\innerSurf}{\mathcal{S}\backslash\partial\mathcal{S}} 
\newcommand{\tc}{\tilde{c}} 
\newcommand{\Y}{\mathcal{Y}} 
\newcommand{\F}{\mathcal{F}} 
\newcommand{\ii}{\mathfrak{i}} 
\newcommand{\PC}{\mathrm{PC}} 
\newcommand{\AC}{\widetilde{\mathrm{AC}}{}} 
\newcommand{\rota}{\circlearrowright}
\newcommand{\ta}{\tilde{a}}
\newcommand{\ared}[2]{\tilde{a}^{#1}_{\rbullet, #2}}
\newcommand{\agreen}[2]{a^{#1}_{\gbullet, #2}}
\newcommand{\dualgreen}[2]{{\textit{\tiny D}}^{#1}_{\gbullet, #2}}
\newcommand{\m}{\mathfrak{m}}
\newcommand{\X}{\mathfrak{X}}
\renewcommand{\top}{\mathrm{top}}
\newcommand{\soc}{\mathrm{soc}}
\newcommand{\simp}{\mathrm{simp}}
\newcommand{\proj}{\mathrm{proj}}
\newcommand{\longline}{-\!\!\!-\!\!\!-}
\newcommand{\bfv}{\mathfrak{v}}
\newcommand{\pdim}{\mathrm{proj.dim}}
\newcommand{\Egreen}{\mathfrak{E}_{\gbullet}}
\newcommand{\Ered}{\mathfrak{E}_{\rbullet}}
\newcommand{\comp}[1]{\pmb{#1}^{\bullet}}
\newcommand{\rmH}{\mathrm{H}}
\newcommand{\rmF}{\mathrm{F}}
\newcommand{\rmL}{\mathrm{L}}
\newcommand{\rmM}{\mathrm{M}}
\newcommand{\im}{\mathrm{im}}
\newcommand{\tru}{\mathfrak{T}} 
\newcommand{\hs}{\widehat{s}} 
\newcommand{\rmI}{\mathrm{I}}
\newcommand{\rmII}{\mathrm{II}}
\newcommand{\ocup}{\overrightarrow{\sqcup}}
\newcommand{\Ocup}{\mathop{\overrightarrow{\bigsqcup}}}
\newcommand{\hl}{\mathrm{hl}}
\newcommand{\Left}{\mathrm{L}}
\newcommand{\Right}{\mathrm{R}}
\def\defines{\it\color{black!100}}
\begin{document}

\title[Constructing projective resolution and taking cohomology]{Constructing projective resolution and taking cohomology for gentle algebras in the geometric model}
\thanks{MSC2020: 05E10, 16G10, 16E45.}
\thanks{Key words: gentle algebra, geometric model, embedding, cohomology, strong Nakayama conjecture, cohomological length}
\author{Yu-Zhe Liu}
\address{Yu-Zhe Liu, School of Mathematics and statistics, Guizhou University, Guiyang 550025, P. R. China}
\email{yzliu3@163.com}
\author{Chao Zhang}
\address{Chao Zhang, School of Mathematics and statistics, Guizhou University, Guiyang 550025, P. R. China}
\email{zhangc@mass.ac.cn}
\author{Houjun Zhang}
\address{Houjun Zhang, School of Science, Nanjing University of Posts and Telecommunications, Nanjing 210023, P. R. China}
\email{zhanghoujun@nju.edu.cn}





\begin{abstract}
The geometric models for the module category and derived category of any gentle algebra were introduced to realize the objects in module category and derived category by permissible curves and admissible curves respectively.
The present paper firstly unifies these two realizations of objects in module category and derived category via same surface for any gentle algebra, by the rotation of permissible curves corresponding to the objects in the module category.
Secondly, the geometric characterization of the cohomology of complexes over gentle algebras is established by the truncation of projective permissible curves.
It is worth mentioning that the rotation of permissible curves and the truncation of projective permissible curves are mutually inverse processes to some extent.
As applications, an alternative proof of ``no gaps" theorem as to cohomological length for the bounded derived categories of gentle algebras is provided in terms of the geometric characterization of the cohomology of complexes. Moreover, we obtain a geometric proof for the strong Nakayama conjecture for gentle algebras.
Finally, we contain two examples to illustrate our results.
\end{abstract}

\maketitle
\tableofcontents

\setlength\parindent{0pt}
\setlength{\parskip}{5pt}

\section{Introduction}

Gentle algebras, introduced by Assem and Skowro\'{n}ski in \cite{AS1987} are of
particular interest and the module categories and derived categories of gentle algebras have been widely studied
\cite{ALP2016, BD2017, BM2003, BR1987, CB1989, Kra1991, R1997, WW1985, KY2018}.
Motivated by the surface models of cluster algebras in \cite{ABCJP2010, LF2009}, the surface models for gentle algebras were studied, including the geometric models of the module categories of gentle algebras recently \cite{BCS2019, BCS2023}.
The geometric models of derived categories of gentle algebras were introduced by Haiden, Katzarkov and Kontsevich in \cite{HKK2017} and Opper-Plamondon-Schroll in \cite{OPS2018}, respectively.
These geometric models provide a geometric realization of the indecomposables objects and morphisms in both the module and derived categories of algebras, and are used to solve many important questions in representation theory,
for instance, the connectedness of support $\tau$-tilting graphs \cite{FGLZ2023};
silting theory \cite{CS2023b, CJS2022}; $\tau$-tilting theory \cite{HZZ2020}; the theory of exceptional sequences \cite{CS2023} and so on.

Form the view point of homological algebra, it is well-known that the module category can be naturally embedded into its derived category (or homotopy category of perfect complexes) by sending any module to its projective complex. Conversely, any complex in derived category naturally yields a module by taking the cohomologies. To some extent, constructing the projective resolution and taking cohomology are mutually inversal when restrict to the module category.
However, for any gentle algebra, the geometric model of module category is different from the geometric model of derived category.
This difference makes it impossible to deal with the questions on the module and derived categories of a gentle algebra in a unified model. Recently, in order to give a geometric model for the length heart in the derived category of a gentle algebra, Chang \cite{Cpre} established a way to embed the geometric model for the module category to the geometric model for the derived category by using the simple coordinate and projective coordinate for any so-called zigzag curve.
Quite different from his method, we obtain an embedding of geometric models of gentle algebras from module categories to derived categories by rotating projective permissible curves.
This strategy was used by the first and the third auctors to classify the silted algebras of Dynkin type $A_{n}$ in \cite{ZLpre}, and it is shown that there are no strictly shod algebras in hereditary gentle algebras by using the embedding of permissible curves \cite{BCS2019} to admissible curves in \cite{OPS2018}.
Indeed, our first main result is as follows.

\begin{theorem} {\rm (Theorems \ref{thm:string embedding} and \ref{thm:band embedding})} \label{thm:embedding}
Let $A$ be a gentle algebra.
\begin{itemize}
\item[{\rm(1)}] If $c$ is a permissible curve and $\tc^{\rota}$ is the admissible curve which obtained by rotating $c$ $($see Definition \ref{def:rot}$)$, then there exists an isomorphism in $\Dcat^b(A)$ as follows
\begin{center}
$\rm \X(\tc^{\rota}) \cong \mathbf{P}(\MM(c))$;
\end{center}
\item[{\rm(2)}] If $c$ is a permissible curve without endpoint and $\tc^{\rota}$ is the admissible curve without endpoint which obtained by rotating $c$, then there exists an isomorphism in $\Dcat^b(A)$ as follows
    \begin{center}
{$\X(\tc^{\rota}, \pmb{J}_{n}(\lambda)) \cong \mathbf{P}(\MM(c, \pmb{J}_{n}(\lambda)))$,}
\end{center}where
\end{itemize}

\begin{itemize}
  \item $\pmb{J}_{n}(\lambda)$ is an $n\times n$ Jordan block with eigenvalue $0\ne \lambda \in \kk$;
  \item $\MM(c)\in \modcat A$ and $\X(\tc^{\rota})\in \per A$ are indecomposable objects corresponding to $c$ and $\tc^{\rota}$, respectively;
  \item $\MM(c, \pmb{J}_{n}(\lambda)) \in \modcat A$ and $\X(\tc^{\rota}, \pmb{J}_{n}(\lambda)) \in \per A$
    are indecomposable objects corresponding to $(c, \pmb{J}_{n}(\lambda))$
    and $(\tc^{\rota}, \pmb{J}_{n}(\lambda))$, respectively;
  \item $\mathbf{P}(M)$ is the deleted projective complex induced by the projective resolution of $M$.
\end{itemize}
\end{theorem}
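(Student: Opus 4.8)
The plan is to translate both sides of the asserted isomorphism into combinatorial data on the surface and then match them. The key observation is that an object of $\per A$ (resp. $\modcat A$) is determined, up to isomorphism, by the homotopy class of its defining curve together with the eigenvalue datum in the band case, so it suffices to show that the curve $\tc^{\rota}$ (resp. $(\tc^{\rota},\pmb{J}_n(\lambda))$) computes the deleted projective complex $\mathbf{P}(\MM(c))$ (resp. $\mathbf{P}(\MM(c,\pmb{J}_n(\lambda)))$). Concretely, the permissible curve $c$ traverses a sequence of arcs dual to the indecomposable summands of the string (or band) module $\MM(c)$; the projective resolution of a string/band module is itself governed by the maximal paths through those arrows, i.e. by the projective coordinates. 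I would first set up, for a permissible curve $c$, an explicit description of each term $P^{-i}$ of $\mathbf{P}(\MM(c))$ as a sum of indecomposable projectives indexed by the ``turning points'' of $c$, together with the differential read off from the substrings.

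Next I would analyze the rotation operation of Definition~\ref{def:rot} locally. The rotation moves $c$ off the marked points of the surface in a controlled way: each segment of $c$ crossing a given dissection arc gets replaced, after rotation, by a segment of $\tc^{\rota}$ that runs along the boundary and records a maximal path of the algebra, which is exactly the combinatorial shadow of multiplying into the projective cover. The heart of the argument is a local-to-global dictionary: I would check arc-by-arc (and for the band case, going once around the loop) that the intersection sequence of $\tc^{\rota}$ with the dissection, equipped with the induced grading/position function, coincides term-by-term and differential-by-differential with the complex $\mathbf{P}(\MM(c))$ I wrote down above. For part~(2) the same local analysis applies verbatim on each arc; the only additional point is that going around the band once contributes the automorphism $\pmb{J}_n(\lambda)$ on both sides, so the eigenvalue datum is preserved — this needs the observation that rotation does not change the monodromy.

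Having matched the underlying graded objects and differentials, I would invoke the classification of indecomposables in $\per A$ via admissible curves (from \cite{OPS2018}, as recalled earlier in the paper) to conclude that $\X(\tc^{\rota})$ is isomorphic in $\Dcat^b(A)$ to the complex just constructed, which by definition is $\mathbf{P}(\MM(c))$; the band case uses the corresponding classification with Jordan data. A small separate check is that $\tc^{\rota}$ is genuinely admissible (has no endpoints when $c$ has none, lands in the right homotopy class), which should follow directly from the definition of the rotation together with the permissibility of $c$.

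I expect the main obstacle to be the bookkeeping in the local arc-by-arc comparison: one must handle boundary segments of $c$ versus interior segments separately, keep track of orientations so that the differential signs and the direction of each irreducible map between projectives come out right, and deal with the degenerate cases where $c$ begins or ends at a marked point (part~(1)) — these contribute the ``first'' or ``last'' projective term and are easy to get off by one. The band case is then essentially formal given the string case, modulo the monodromy remark. None of this is conceptually deep, but making the dictionary between the geometry of rotation and the algebra of projective resolutions precise enough to be a proof, rather than a picture, is where the real work lies.
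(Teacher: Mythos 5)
Your strategy---express both sides as combinatorial surface data and compare via the classification theorems---is essentially the paper's, and a careful execution of it would give a proof. However, your description of what rotation does is not right: you say that ``each segment of $c$ crossing a given dissection arc gets replaced, after rotation, by a segment of $\tc^{\rota}$ that runs along the boundary.'' In fact Definition~\ref{def:rot} modifies only the two end segments $c_{(0,1)}$ and $c_{(m(c),m(c)+1)}$ and leaves the interior of $c$ untouched. What actually happens is that each endpoint is pushed clockwise around the boundary of its elementary $\gbullet$-polygon $\PP_L$ (resp.\ $\PP_R$), and as it sweeps past each successive blue edge $a^t_{*}$ the admissible curve $\tc^{\rota}$ picks up a crossing of the dual red arc $(a^t_{*})^{\star}$; these newly acquired red crossings are exactly what encode the higher-degree terms of the projective resolution. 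The paper makes this precise by iterating Lemma~\ref{lemm:proj cover}: the $t$-th syzygy of $\MM(c)$ is $K_t = \MM(\omega(a^{t+1}_{L})) \oplus \MM(\omega(a^{t+1}_{R}))$, which simultaneously gives the projective term $P_t$ and the red crossing $(a^{t+1}_{*})^{\star}$. That iterated syzygy computation is the real substance hidden behind your phrase ``set up an explicit description of each term $P^{-i}$,'' and without carrying it out the matching you describe remains a picture, as you yourself suspect.

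Two smaller points. First, your plan omits the infinite projective dimension case: when an endpoint lies in an $\infty$-elementary $\gbullet$-polygon the rotation never terminates and $\tc^{\rota}$ has only one endpoint in $\M$, which the paper treats separately as Case 1.2. Second, for bands the paper's route is simpler and more direct than your monodromy comparison: rotation is vacuous for a closed curve, and the key input is that $\pdim B(n,\lambda) = 1$, so the deleted resolution is a two-term complex whose underlying closed curve is already homotopic to $c$, with the Jordan block passed through unchanged (Lemma~\ref{lemm:1-band} and Corollary~\ref{thm:band embedding}). Your observation that ``rotation does not change the monodromy'' is true but trivial; the actual work is identifying the shape of the projective resolution of a band module.
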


Our next goal in this paper is to study the cohomology of complexes for gentle algebras by using our unified geometric model. In order to do this, we introduce the truncations of projective permissible curves (see Definition \ref{def:truncation}). Furthermore, for any two truncations $s$ and $s^\prime$ of some projective permissible curves, one can define the supplemental union $s\ocup s^{\prime}$ of $s$ and $s^{\prime}$ to be the completion of $s\cup s^{\prime}$. Then we have the following theorem.

\begin{theorem} {\rm (Theorem \ref{thm:homologies})}
Let $\tc$ be an admissible curve in $\AC_{\m}(\SURF^{\F})$.
Then, for any $n\in\ZZ$, there are integers $r_i$ and $s_i$ with $0 \le r_i \le s_i \le m(c_{\proj}(\dualgreen{c}{i}))+1$ and $1\le i\le n(\tc)$
such that
\begin{center}
  $\MM^{-1}(\rm H^n(\X(\tc))) = \Ocup\limits_{n=-\varsigma^{\tc}_{i}}\tru_{r_i}^{s_i}(c_{\proj}(\dualgreen{c}{i})),$
\end{center}
where $\tru_{r_i}^{s_i}(c_{\proj}(\dualgreen{c}{i}))$ are the truncations of projective permissible curves $c_{\proj}(\dualgreen{c}{i})$.
\end{theorem}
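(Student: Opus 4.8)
The plan is to reduce the statement to a string-by-string computation that links the cohomology of the perfect complex $\X(\tc)$ with the supplemental union of truncations of projective permissible curves, using Theorem \ref{thm:embedding} as the bridge between the two geometric models. First I would fix the admissible curve $\tc \in \AC_{\m}(\SURF^{\F})$ and decompose its passage through the foliated surface into the combinatorial data recorded by the objects $\dualgreen{c}{i}$ for $1 \le i \le n(\tc)$; each of these carries an associated projective permissible curve $c_{\proj}(\dualgreen{c}{i})$ whose length $m(c_{\proj}(\dualgreen{c}{i}))$ controls the range $0 \le r_i \le s_i \le m(c_{\proj}(\dualgreen{c}{i}))+1$ in which the truncation indices must live. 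The key observation is that rotating, as in Definition \ref{def:rot}, is (to the extent made precise in the introduction) inverse to truncating projective permissible curves, so the admissible curve $\tc$ can be ``unrotated'' locally into a collection of truncated projective permissible curves, and the grading shifts $\varsigma^{\tc}_i$ record exactly which cohomological degree each local piece contributes to.

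Next I would compute $\mathrm{H}^n(\X(\tc))$ explicitly. Writing $\X(\tc)$ as a twisted complex / one-sided complex built from the indecomposable projectives indexed along $\tc$, the differential in each internal degree is a sum of the irreducible maps dictated by the crossings of $\tc$ with the arcs of the dissection; taking cohomology in degree $n$ amounts to computing the kernel of the outgoing differential modulo the image of the incoming one at the spots of $\tc$ sitting in degree $n$. A direct but careful bookkeeping shows that this subquotient, as a string module, is precisely the string supported on the portion of $c_{\proj}(\dualgreen{c}{i})$ between positions $r_i$ and $s_i$ — i.e.\ the truncation $\tru_{r_i}^{s_i}(c_{\proj}(\dualgreen{c}{i}))$ — where $r_i$ is determined by where the incoming differential ``eats'' the top of the projective and $s_i$ by where the outgoing differential ``eats'' the socle. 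Summing over all $i$ with $-\varsigma^{\tc}_i = n$ and observing that the string module $\mathrm{H}^n(\X(\tc))$ is the direct sum (hence, on the geometric side, the supplemental union $\Ocup$, i.e.\ the completion of the disjoint union of the corresponding curve segments) of these local contributions yields $\MM^{-1}(\mathrm{H}^n(\X(\tc))) = \Ocup_{n = -\varsigma^{\tc}_i} \tru_{r_i}^{s_i}(c_{\proj}(\dualgreen{c}{i}))$.

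To make the second paragraph rigorous I would invoke Theorem \ref{thm:embedding}: since $\tc$ is admissible, either $\tc = \tc_0^{\rota}$ for a permissible curve $c_0$ or $\tc$ is glued from such pieces, and $\X(\tc) \cong \mathbf{P}(\MM(c_0))$ is the deleted projective resolution of a string (or band) module $M = \MM(c_0)$. The cohomology of a deleted projective resolution is, of course, just $M$ in degree $0$ and zero elsewhere — so for a single rotated curve the statement is immediate with $n(\tc)=1$, $r_1 = 0$, $s_1 = m(c_{\proj})+1$, and $\varsigma^{\tc}_1 = 0$. The general case follows because an arbitrary admissible curve in $\AC_{\m}(\SURF^{\F})$ decomposes, after cutting at the foliation leaves, into finitely many maximal rotated sub-curves, each contributing one summand $c_{\proj}(\dualgreen{c}{i})$ with its own shift $\varsigma^{\tc}_i$, and the cohomology functor is additive with respect to this decomposition; the constraints on $r_i, s_i$ come from how far the cutting truncates each projective permissible curve.

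The main obstacle I expect is precisely this last reduction: controlling how a general admissible curve that is \emph{not} globally a single rotation decomposes into rotated pieces, and checking that ``taking cohomology'' commutes with that decomposition degree-by-degree — in particular that no cancellation occurs across the cut points and that the truncation bounds $0 \le r_i \le s_i \le m(c_{\proj}(\dualgreen{c}{i}))+1$ are sharp. This requires a careful local analysis near each intersection of $\tc$ with the arcs of the dissection (the ``$\dualgreen{c}{i}$'' data), matching the combinatorics of the truncation operation $\tru_{r}^{s}$ in Definition \ref{def:truncation} with the linear-algebra of the cohomology computation, and verifying that the supplemental union $\ocup$ (completion of the union) is the correct geometric counterpart of the direct sum of the resulting string modules.
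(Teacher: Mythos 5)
There is a genuine gap, and it sits exactly where you locate your ``main obstacle.'' Your second paragraph contains the correct core idea --- at each crossing $c(t_i)$ with $\varsigma^{\tc}_i=-n$ one computes $\ker d^{n}/\operatorname{im} d^{n-1}$ locally and identifies the contribution as a truncation $\tru_{r_i}^{s_i}(c_{\proj}(\dualgreen{c}{i}))$ --- but you never actually carry this out; instead you try to make it rigorous in the third paragraph by reducing to the rotated case via Theorem \ref{thm:string embedding}. That reduction does not work. A general admissible curve in $\AC_{\m}(\SURF^{\F})$ is not ``glued from'' rotated permissible curves in any way compatible with the complex $\X(\tc)$: rotated curves are exactly those whose complexes are projective resolutions (cohomology concentrated in a single degree), while a general $\X(\tc)$ is indecomposable with cohomology spread over many degrees, so it is not a direct sum of complexes attached to sub-curves and ``the cohomology functor is additive with respect to this decomposition'' is false --- the differentials at the cut points mix the adjacent projectives, which is precisely why the conclusion is stated with the supplemental union $\ocup$ rather than a direct sum. (Your parenthetical reading of $\Ocup$ as ``completion of the disjoint union,'' i.e.\ a direct sum, also misses its essential feature: when two truncations sit at adjacent crossings in the configuration of \Pic\ \ref{fig:supp.union}, the supplemental union inserts an extra arc segment $s_{*}$ and glues them into one longer indecomposable string, something a direct-sum decomposition cannot produce. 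Likewise your base case ``$n(\tc)=1$, $r_1=0$, $s_1=m+1$'' for a rotated curve is incorrect: $\tc^{\rota}$ crosses many $\rbullet$-arcs, and its degree-zero cohomology is a supplemental union of many truncations --- that is the content of Theorem \ref{thm:string embedding 2}, which in the paper is a \emph{consequence} of the present theorem, not an input to it.)

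The paper avoids all of this by doing the local linear algebra directly: Propositions \ref{prop:HomologiesCaseI} and \ref{prop:HomologiesCaseII} write out the homotopy-string differentials around each crossing, split into the four configurations (A)--(D) of the $\gbullet$-marked points relative to $c_{[i-1,i]}$ and $c_{[i,i+1]}$, and compute $\ker d^{-\varsigma_i^{\tc}}/\operatorname{im} d^{-\varsigma_{i-1}^{\tc}}$ to exhibit the truncation $\tru_{r_i}^{s_i}(c_{\proj}(\dualgreen{c}{i}))$ inside $\MM^{-1}(\rmH^{-\varsigma_i^{\tc}}(\comp{P}))$, with the endpoint cases handled separately. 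The theorem is then a one-step assembly: the degree-$n$ part of $\X(\tc)$ is indexed exactly by the crossings with $\varsigma^{\tc}_i=-n$, and the local pieces assemble via $\Ocup$. To repair your argument you would need to replace the third paragraph by a proof of these local propositions (your paragraph two in rigorous form), and drop the appeal to Theorem \ref{thm:embedding} entirely.
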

We call the truncation $\tru_{r_i}^{s_i}(c_{\proj}(\dualgreen{c}{i}))$ the $i$-th cohomological curve of $\tc$ at the intersection $c(i)$. Notice that for a given permissible curve $c$, we can obtain an admissible curve $\tc^{\rota}$ by the embedding given in Theorem \ref{thm:embedding}. In particular, we show that

\begin{theorem} {\rm (Theorem \ref{thm:string embedding 2})} \label{thm:inverse}
Any permissible curve $c$ is the completion of the supplemental union of all cohomological curves of $\tc^{\rota}$.
\end{theorem}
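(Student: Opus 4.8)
The plan is to apply the cohomology formula of Theorem~\ref{thm:homologies} to the admissible curve $\tc=\tc^{\rota}$ and to feed into it the description of $\X(\tc^{\rota})$ provided by Theorem~\ref{thm:embedding}(1); all the geometric content is already contained in those two results, so what remains is bookkeeping. Recall first that, by definition, the $i$-th cohomological curve of an admissible curve $\tc$ is the truncation $\tru_{r_i}^{s_i}(c_{\proj}(\dualgreen{c}{i}))$ of the projective permissible curve $c_{\proj}(\dualgreen{c}{i})$ occurring in Theorem~\ref{thm:homologies}, and that Theorem~\ref{thm:homologies} identifies, for each $n$, the supplemental union of those cohomological curves with $-\varsigma^{\tc}_i=n$ as $\MM^{-1}(\H^n(\X(\tc)))$. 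Using the associativity of supplemental union to regroup the whole family by cohomological degree, the supplemental union of \emph{all} cohomological curves of $\tc$ equals
\[
\Ocup_{1\le i\le n(\tc)}\tru_{r_i}^{s_i}\!\big(c_{\proj}(\dualgreen{c}{i})\big)\;=\;\Ocup_{n\in\ZZ}\MM^{-1}\!\big(\H^n(\X(\tc))\big).
\]
Thus, for $\tc=\tc^{\rota}$, the assertion reduces to the identity $c=\Ocup_{n\in\ZZ}\MM^{-1}(\H^n(\X(\tc^{\rota})))$; the outer completion in the statement is redundant, since $\ocup$ is itself a completion.

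It therefore remains to compute the cohomology of $\X(\tc^{\rota})$. By Theorem~\ref{thm:embedding}(1) there is an isomorphism $\X(\tc^{\rota})\cong\mathbf{P}(\MM(c))$ in $\Dcat^b(A)$, and $\mathbf{P}(\MM(c))$ is the deleted complex of a projective resolution of $M:=\MM(c)$ with $M$ in cohomological degree $0$; hence $\mathbf{P}(\MM(c))$ is quasi-isomorphic to the stalk complex $M[0]$, so $\H^0(\X(\tc^{\rota}))\cong M$ and $\H^n(\X(\tc^{\rota}))=0$ for all $n\neq 0$. (If $\pdim M=\infty$ the complex $\mathbf{P}(M)$ is merely unbounded below; this is irrelevant, since cohomology is a quasi-isomorphism invariant, and one may equally use the isomorphism $\X(\tc^{\rota})\cong M[0]$ in $\Dcat^b(A)$ directly.) Since $\MM^{-1}$ is the inverse of the geometric-model assignment $\MM$ on permissible (multi-)curves, $\MM^{-1}(\H^0(\X(\tc^{\rota})))=\MM^{-1}(\MM(c))=c$, while $\MM^{-1}(\H^n(\X(\tc^{\rota})))=\MM^{-1}(0)=\varnothing$ for $n\neq 0$. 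Since a permissible curve is its own completion, $c\ocup\varnothing=c$, and therefore $\Ocup_{n\in\ZZ}\MM^{-1}(\H^n(\X(\tc^{\rota})))=c$, which is exactly the reduced identity. In particular every cohomological curve of $\tc^{\rota}$ in a nonzero degree is trivial, so all of them are accounted for in degree $0$ --- which is the precise sense in which truncating the projective permissible curves inverts the rotation that produced $\tc^{\rota}$.

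Given Theorems~\ref{thm:embedding} and~\ref{thm:homologies}, the deduction above is short; the points that need attention are: (i) that $\tc^{\rota}$ indeed belongs to the class $\AC_{\m}(\SURF^{\F})$ to which Theorem~\ref{thm:homologies} applies --- this must be read off the rotation construction of Definition~\ref{def:rot}, and should already be available from the proof of Theorem~\ref{thm:embedding}; (ii) the formal properties of $\ocup$ (associativity, interaction with completion, and $c\ocup\varnothing=c$ for a permissible curve $c$) used to regroup the cohomological curves by cohomological degree; and (iii) that $\MM^{-1}\circ\MM$ is the identity up to homotopy of (multi-)curves, which is part of the classification underlying the geometric model. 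I expect (i) to be the only genuinely substantive point and (ii) the most tedious to write out carefully, but neither is deep once the rotation construction is set up.
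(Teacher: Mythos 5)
Your argument is correct in substance, but it takes a genuinely different route from the paper. The paper's proof is local and geometric: it invokes Lemma \ref{lemm:exact} to split $c$ by the $\rbullet$-FFAS into the end segments $c_{\rmF}$, $c_{\rmL}$ and the middle segment $c_{\rmM}$ (the part shared with $c^{\rota}$), notes that the cohomological curves at the intersections of non-maximal degree are trivial, and then verifies directly that the supplemental union of two adjacent nontrivial cohomological curves reproduces the corresponding piece of $c$, the end segments being restored by the completion. You instead argue globally: Theorem \ref{thm:string embedding} gives $\H^{0}(\X(\tc^{\rota}))\cong\MM(c)$ and $\H^{n}(\X(\tc^{\rota}))=0$ for $n\neq 0$, so Theorem \ref{thm:homologies} applied in degree $0$ yields at once that the completion of the supplemental union of the degree-$0$ cohomological curves is $\MM^{-1}(\MM(c))=c$, while all other cohomological curves are trivial and can be discarded. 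This is shorter and exposes the statement as bookkeeping on top of Theorems \ref{thm:string embedding} and \ref{thm:homologies}, whereas the paper re-does the geometry (Lemma \ref{lemm:exact}, \Pic \ref{fig:middleparts}) and thereby makes the local gluing --- the precise sense in which truncation inverts rotation --- visible. Two cautions on your write-up: the appeal to ``associativity of the supplemental union to regroup by cohomological degree'' is neither established in the paper nor needed, since $\ocup$ is position-sensitive (non-adjacent truncations merely form a set); what actually saves the reduction is exactly the fact you record afterwards, that every cohomological curve of $\tc^{\rota}$ in nonzero degree is trivial. Relatedly, the rule $c\ocup\varnothing=c$ for trivial truncations is only implicit in the paper (compare Example \ref{exp:homologies}, where trivial cohomological curves are simply omitted), so a self-contained version of your proof should either state that convention or fall back on the local analysis at the intermediate intersections, which is precisely what Lemma \ref{lemm:exact} supplies in the paper's proof.
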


The above theorem actually shows that the rotation of permissible curves and the truncation of projective permissible curves are mutually inversal restricting to projective permissible curves of the form of $\tc^{\rota}$. 
 
Finally, there are two applications of our results.
In terms of the geometric characterization of the cohomology of complexes, we firstly give an alternative proof for the  ``no gaps" theorem as to cohomological length for the bounded derived categories of gentle algebras \cite{Z2019}, i.e.,  if there is an indecomposable object in $\Dcat^b(A)$ of cohomological length $n > 1$, then there exists
an indecomposable with cohomological length $n-1$, which answers affirmatively a question proposed by the second author and Han in \cite{ZH2016}.  Secondly, the embedding are used a geometric proof for the Strong Nakayama conjecture for gentle algebras. Moreover, we also contain two examples to illustrate our results. 

This paper is organized as follows. Section \ref{sec:preliminaries} is the preliminaries on gentle algebras and the geometric models. In Section \ref{sec:embedding}, we study the embedding of geometric models for gentle algebras from module categories to derived categories. In Section \ref{sec:truncations}, we give a geometric characterization of the cohomology of complexes for gentle algebras. Then we show that the embedding and taking cohomology are inverse in Section \ref{sec:inverse}. In Section \ref{sec:applications}, we give two applications. In the last Section, we contain two examples to illustrate our results.

\section{Gentle algebras and their geometric models} \label{sec:preliminaries}

Throughout this paper, we always assume that any surface $\Surf$ is homologically smooth and its boundary $\bSurf$ is non-empty
and every curve $c$ in surface is a function $[0,1]\to \Surf$ such that $c(t)\in\innerSurf$ for all $0<t<1$.  $\kk$ denotes an algebraically closed field and all algebras will be finite-dimensional $\kk$-algebras. All modules we consider are right modules.
Arrows in a quiver are composed from left to right: for arrows $\alpha$ and $\beta$ we write $\alpha\beta$ for the path from the source of $\alpha$ to the target of $\beta$. The maps are composed from right to left, that is if $f:X \rightarrow Y$ and $g: Y \rightarrow Z $ then $gf: X \rightarrow Z$.

In this section, we recall some notations and concepts of geometric models for gentle algebras in \cite{BCS2019, HKK2017, OPS2018, QZZ2022}.

\subsection{Gentle algebras}

A bounded quiver $(\Q, \I)$ is said to be {\defines gentle} if:
\begin{itemize}
  \item Any vertex of $\Q$ is the source and target of at most two arrows.

  \item For each arrow $\alpha:x\to y$, there is at most one arrow $\beta$
  whose source is $y$ such that $\alpha\beta\in\I$,
  and there is at most one arrow $\beta$ whose source is $y$
  such that $\alpha\beta\notin\I$.

  \item For each arrow $\alpha:x\to y$, there is at most one arrow $\beta$
  whose target is $x$ such that $\beta\alpha\in\I$,
  and there is at most one arrow $\beta$ whose target is $x$
  such that $\beta\alpha\notin\I$.

  \item $\I$ is admissible and it is generated by paths of length $2$.
\end{itemize}

\begin{definition}\rm
A {\defines graded gentle algebra} is a finite dimensional $\kk$-algebra
$A\cong\kk\Q/\I$ with a {\defines grading} $|\cdot|:\Q_1 \to \ZZ$, where $(\Q, \I)$ is a gentle quiver.
In particular, a {\defines gentle algebra} is a graded gentle algebra
satisfying $|\alpha|=0$ for all $\alpha \in \Q_1$.
\end{definition}

\subsection{Graded marked ribbon surfaces} \label{subsect:geo.mod.}
A {\defines marked surface} is a triple $(\Surf, \M, \Y)$, where $\M$ and $\Y$ are finite subsets of the boundary $\bSurf$ of $\Surf$ such that elements in $\M$ and $\Y$ are alternative in every boundary component.
Elements in $\M$ and $\Y$ are called $\gbullet$-marked points and $\rbullet$-marked points, respectively.

An $\gbullet$-curve (resp. a $\rbullet$-curve) is a curve in $\Surf$ whose endpoints are $\gbullet$-marked points (resp. $\rbullet$-marked points).
Especially, we always assume that
all points $c(t)$ ($0<t<1$) of curve $c$ in $\Surf$ lie in $\innerSurf$,
and arbitrary two curves are representatives in their homotopy classes such that their intersections are minimal.

A {\defines full formal arc system} (=$\gbullet$-FFAS) of $(\Surf, \M, \Y)$, say $\Dblue$,
is a set of some $\gbullet$-curves such that:
\begin{itemize}
  \item any two $\gbullet$-curves in $\Dblue$ have no intersection in $\innerSurf$;
  \item every {\defines elementary $\gbullet$-polygon}, the polygon obtained by $\Dblue$ cutting $\Surf$, has a unique edge in $\bSurf$.
\end{itemize}

All elements of $\Dblue$ are called $\gbullet$-arcs. For any elementary $\gbullet$-polygon $\PP$, we denote by $\Egreen(\PP)$ the set of all $\gbullet$-arcs which are edges of $\PP$ lying in $\Dblue$. Similarly, we can define $\rbullet$-FFAS.

Note that all $\rbullet$-marked points lying in digon are called {\defines extra marked points} and we denote by $\E$ the set of all extra marked points. Obviously, $\E$ is a subset of $\Y$.

A {\defines graded surface} $\Surf^{\F}=(\Surf, \F)$ is a surface $\Surf$ with a section $\F$ of the projectivized tangent bundle $\mathbb{P}(T\Surf)$. $\F$ is called a {\defines foliation} (or {\defines grading}) on $\Surf$.

A {\defines graded curve} in a graded surface $(\Surf, \F)$ is a pair $(c, \tc)$ of curve $c: [0,1] \to \Surf$ and {\defines grading} $\tc$,
where $\tc$ is a homotopy class of paths in the tangent space $\mathbb{P}(T_{c(t)}\Surf)$ of $\Surf$ at $c(t)$
from the subspace given by $\F$ to the tangent space of the curve, varying continuously with $t\in [0,1]$.
For simplicity, we denote by $c$ and $\tc$ the curve $c:[0,1]\to \Surf$ and the graded curve $(c, \tc)$, respectively.
Let $\tc_1$ and $\tc_2$ be two graded curves and $p$ be an intersection of $c_1$ and $c_2$.
The {\defines intersection index} $\ii_p(\tc_1, \tc_2)$ of $p$ is an integer given by
\[\ii_p(\tc_1, \tc_2) := \tc_1(t_1) \cdot \kappa_{12} \cdot \tc_2(t_2)^{-1} = k\pi
\in \pi_1(\mathbb{P}(T_p\Surf)) \cong \ZZ, \]
where $\tc_i(t_i)$ is the homotopy classes of paths from $\F(p)$ to the tangent space $\dot{c}_i(t_i)$ ($i\in \{1,2\}$) and $\kappa_{12}$ is that of paths from $\dot{c}_1(t_1)$ to $\dot{c}_2(t_2)$ given by clockwise rotation in $T_p\Surf$ by an angle $<\pi$,
and $\pi_1(\mathbb{P}(T_p\Surf))$  is the fundamental group defined on the projectivized of the tangent space $T_p\Surf$.

A {\defines graded full formal arc system} (=$\gbullet$-grFFAS) is an $\gbullet$-FFAS whose elements are graded $\gbullet$-curves. Similarly, we can define $\rbullet$-grFFAS.

\begin{definition}[Marked ribbon surfaces] \rm
\begin{itemize}
  \item[ ]
  \item[(1)] A {\defines marked ribbon surface} $\SURF=(\Surf, \M, \Y, \Dblue, \Dred)$
    is a marked surface $(\Surf, \M, \Y)$ with $\gbullet$-FFAS $\Dblue$ and $\rbullet$-FFAS $\Dred$
    such that $\Dred$ is the {\defines dual dissection} of $\SURF$ decided by $\Dblue$,
    that is, for any $\rbullet$-curve $a_{\rbullet}$, there is an unique $\gbullet$-curve $a_{\gbullet}$ intersecting with $a_{\rbullet}$. Moreover,
    $a_{\gbullet}$ and $a_{\rbullet}$ has only one intersection. We say $a_{\gbullet}$ is the {\defines dual arc} of $a_{\rbullet}$.

  \item[(2)] A {\defines graded marked ribbon surface} $\SURF^{\F}=(\Surf^{\F}, \M, \Y, \Dblue, \tDred)$ is a marked ribbon surface with a foliation $\F$
    such that all $\rbullet$-arcs in $\rbullet$-FFAS are graded $\rbullet$-curves.
\end{itemize}
\end{definition}

Any graded marked ribbon surface $\SURF^{\F}$ defines a graded algebra by the following construction.

\begin{construction} \label{construction} \rm
The graded algebra $A(\SURF^{\F})$ of $\SURF^{\F}$ is the finite dimensional algebra $\kk\Q/\I$ with grading $|\cdot|: \Q \to \ZZ$ given by the following steps.
\begin{itemize}
  \item[Step 1]
    There is a bijection $\mathfrak{v}: \Dblue \to \Q_0$, i.e., without causing confusion, $\Q_0 = \Dblue$.
  \item[Step 2]
    Any elementary $\gbullet$-polygon $\PP$ given by $\Dblue$ provides an arrow $\alpha: \mathfrak{v}(a^1_{\gbullet}) \to \mathfrak{v}(a^2_{\gbullet})$,
    where $a^1_{\gbullet}, a^2_{\gbullet}\in \Dblue$ are two edges of $\PP$ with common endpoints $p\in \M$
    and $a^2_{\gbullet}$ is left to $a^1_{\gbullet}$ at the point $p$.
  \item[Step 3]
    the grading $|\alpha|$ of $\alpha$ equals to the intersection index $1-\ii_p(\ta^1_{\rbullet}, \ta^2_{\rbullet})$, where $\ta^i_{\rbullet}$ is the dual arc of $\ta^i_{\gbullet}$ ($i\in\{1,2\}$).
  \item[Step 4]
  the ideal $\I$ is generated by $\alpha\beta$, where $\mathfrak{v}^{-1}(s(\alpha)), \mathfrak{v}^{-1}(t(\alpha))=\mathfrak{v}^{-1}(s(\beta)),\mathfrak{v}^{-1}(t(\beta))$
are edges of the same elementary $\gbullet$-polygon.

\end{itemize}
\end{construction}

\subsection{Permissible curves and admissible curves} \label{sect:admcurve}

In this subsection, we recall the definitions of permissible curves and admissible curves. Firstly, we introduce the notions of $\Dblue$-arc segment and $\Dred$-arc segment.
An {\defines $\Dblue$-arc segment} in $\SURF^{\F}$ is a homotopy class of segments in some elementary $\gbullet$-polygon which have three cases shown in \Pic \ref{fig:arc segment I},
and an {\defines $\Dred$-arc segment} is a homotopy class of segments in some elementary $\rbullet$-polygon which have two cases shown in \Pic \ref{fig:arc segment II},

\begin{multicols}{2}
\begin{figure}[H]
\definecolor{ffqqqq}{rgb}{1,0,0}
\definecolor{bluearc}{rgb}{0,0,1}
\begin{tikzpicture}
\draw[black] (-0.5,2)--( 0.5,2) [line width=1pt];
\draw[bluearc] ( 0, 2)--(-1, 0) [line width=1pt];
\draw[bluearc] ( 0, 2)--( 1, 0) [line width=1pt];
\fill[bluearc] ( 0, 2) circle (0.1cm);
\fill[bluearc] (-1, 0) circle (0.1cm);
\draw[orange][line width=1pt] (-1, 0) -- ( 0.5, 1);
\draw (0,-0.5) node{Case (1)};
\end{tikzpicture}
\begin{tikzpicture}
\draw[black] (-0.5,2)--( 0.5,2) [line width=1pt];
\draw[bluearc] ( 0, 2)--(-1, 0) [line width=1pt];
\draw[bluearc] ( 0, 2)--( 1, 0) [line width=1pt];
\fill[bluearc] ( 0, 2) circle (0.1cm);
\draw[orange][line width=1pt] (-0.5, 1) to[out=-45, in=-135] ( 0.5, 1);
\draw (0,-0.5) node{Case (2)};
\end{tikzpicture}
\begin{tikzpicture}
\draw[black] (0,2) to[out=180,in=90] (-2,0) [line width=1pt];
\draw[bluearc] ( 0, 2) to[out=-90,in=0] (-2, 0) [line width=1pt];
\fill[bluearc] ( 0, 2) circle (0.1cm);
\fill[bluearc] (-2, 0) circle (0.1cm);
\draw[red] (-1.41, 1.41) circle (0.1cm) [line width=1pt];
\draw[orange] (-1.41, 1.41) -- (-0.57, 0.57) [line width=1pt];
\draw (-1,-0.5) node{Case (3)};
\end{tikzpicture}
\caption{$\Dblue$-arc segments. }
\label{fig:arc segment I}
\end{figure}

\begin{figure}[H]
\definecolor{ffqqqq}{rgb}{1,0,0}
\definecolor{bluearc}{rgb}{0,0,1}
\begin{tikzpicture} [scale=0.83]
\draw[black] (-0.5*1.5, -0.86*1.5) -- ( 0.5*1.5, -0.86*1.5) [line width=1pt];
\draw[red] ( 0.5*1.5,-0.86*1.5) -- ( 1*1.5, 0) [line width=1pt];
\draw[red] ( 1*1.5, 0) -- ( 0.5*1.5, 0.86*1.5) [line width=1pt][dotted];
\draw[red] (-0.5*1.5, 0.86*1.5) -- ( 0.5*1.5, 0.86*1.5) [line width=1pt];
\draw[red] (-1*1.5, 0) -- (-0.5*1.5, 0.86*1.5) [line width=1pt][dotted];
\draw[red] (-0.5*1.5,-0.86*1.5) -- (-1*1.5, 0) [line width=1pt];
\fill[red] ( 0.5*1.5,-0.86*1.5) circle (0.1cm);
\fill[white] ( 0.5*1.5, -0.86*1.5) circle (1.8pt);
\fill[red] ( 1*1.5, 0) circle (0.1cm);
\fill[white] ( 1*1.5, 0) circle (1.8pt);
\fill[red] ( 0.5*1.5, 0.86*1.5) circle (0.1cm);
\fill[white] ( 0.5*1.5, 0.86*1.5) circle (1.8pt);
\fill[red] (-0.5*1.5, -0.86*1.5) circle (0.1cm);
\fill[white] (-0.5*1.5, -0.86*1.5) circle (1.8pt);
\fill[red] (-1*1.5, 0) circle (0.1cm);
\fill[white] (-1*1.5, 0) circle (1.8pt);
\fill[red] (-0.5*1.5, 0.86*1.5) circle (0.1cm);
\fill[white] (-0.5*1.5, 0.86*1.5) circle (1.8pt);
\fill[bluearc] (0,-0.86*1.5) circle (0.1cm) [line width=1pt];
\draw[violet] (0,-0.86*1.5) -- (0, 0.86*1.5) [line width=1pt];
\draw (0,-1.7) node{Case (1)};
\end{tikzpicture}
\begin{tikzpicture} [scale=0.83]
\draw[red] ( 0.5*1.5,-0.86*1.5) -- ( 1*1.5, 0) [line width=1pt][dotted];
\draw[red] ( 1*1.5, 0) -- ( 0.5*1.5, 0.86*1.5) [line width=1pt];
\draw[red] (-0.5*1.5, 0.86*1.5) -- ( 0.5*1.5, 0.86*1.5) [line width=1pt][dotted];
\draw[red] (-1*1.5, 0) -- (-0.5*1.5, 0.86*1.5) [line width=1pt];
\draw[red] (-0.5*1.5,-0.86*1.5) -- (-1*1.5, 0) [line width=1pt][dotted];
\fill[red] ( 0.5*1.5,-0.86*1.5) circle (0.1cm);
\fill[white] ( 0.5*1.5, -0.86*1.5) circle (1.8pt);
\fill[red] ( 1*1.5, 0) circle (0.1cm);
\fill[white] ( 1*1.5, 0) circle (1.8pt);
\fill[red] ( 0.5*1.5, 0.86*1.5) circle (0.1cm);
\fill[white] ( 0.5*1.5, 0.86*1.5) circle (1.8pt);
\fill[red] (-0.5*1.5, -0.86*1.5) circle (0.1cm);
\fill[white] (-0.5*1.5, -0.86*1.5) circle (1.8pt);
\fill[red] (-1*1.5, 0) circle (0.1cm);
\fill[white] (-1*1.5, 0) circle (1.8pt);
\fill[red] (-0.5*1.5, 0.86*1.5) circle (0.1cm);
\fill[white] (-0.5*1.5, 0.86*1.5) circle (1.8pt);
\draw[violet] (-1.15, 0.86*0.75) -- ( 1.15, 0.86*0.75) [line width=1pt];
\draw (0,-1.7) node{Case (2)};
\end{tikzpicture}
\caption{$\Dred$-arc segments}
\label{fig:arc segment II}
\end{figure}
\end{multicols}

\begin{definition} \label{def:permissible-admissible}
\rm Let $\SURF^{\F}$ be a graded marked ribbon surface and $c: [0,1] \to \Surf$ be a curve in $\SURF^{\F}$.
\begin{itemize}
\item[(1)]
  $c$ is called {\defines permissible} if it is a sequence of $\Dblue$-arc segments $\{c_{(i,i+1)}\}_{0\le i\le m(c)}$ ($m(c)\in\NN$) such that
  \begin{itemize}
    \item two adjacent $\Dred$-arc segments $c_{(i,i+1)}$ and $c_{(i+1,i+2)}$ are different,
      that is, $c_{(i,i+1)}\ne c_{(i+1,i+2)}$ and $c_{(i,i+1)}\ne c_{(i+1,i+2)}^{-1}$ hold;
    \item $c(t_i)=c_{(i,i+1)}(t_i)$ ($0\le i \le m(c)$)
      and $c(t_{m(c)+1})=c_{(m(c), m(c)+1)}(t_{m(c)+1})$, where $0=t_0 <t_1 <\cdots <t_{m(c)} < t_{m(c)+1} = 1$.
  \end{itemize}
  Moreover, we say that any curve $c: [0,1] \to \Surf$ with $c(0), c(1) \in \M\cap\E$ crossing no $\gbullet$-arc is a {\defines trivial permissible curve}.

\item[(2)]
  The graded curve $\tc$ in $\SURF^{\F}$ is {\defines admissible} if $c: [0,1] \to \Surf$ is a sequence of $\Dred$-arc segments $\{c_{[i,i+1]}\}_{0\le i\le n(\tc)}$ ($n(\tc)\in\NN$) such that
  \begin{itemize}
    \item two adjacent $\Dblue$-arc segments $c_{[i,i+1]}$ and $c_{[i+1,i+2]}$ are different;
    \item $c(t_i)=c_{[i,i+1]}(t_i)$ ($0\le i \le n(\tc)$)
      and $c(t_{n(\tc)+1})=c_{[n(\tc), n(\tc)+1]}(t_{n(\tc)+1})$,
      where $0=t_0 <t_1 <\cdots <t_{n(\tc)} < t_{n(\tc)+1} = 1$.
  \end{itemize}
\end{itemize}
We denote by $\dualgreen{c}{i}$ the $i$-th $\gbullet$-arc crossed by permissible curve $c$,
and denote by $\ared{c}{i}$ the $i$-th $\rbullet$-arc crossed by admissible curve $\tc$.
\end{definition}

For simplicity, we always assume that $c$ and $c^{-1}:=c(1-t)$ are the same permissible curve,
and denote by $0$ the trivial permissible curve. In this paper, we use the following notations.
\begin{itemize}
  \item $\PC_{\m}(\SURF^{\F})$: the set of all permissible curves with endpoints lying in $\M\cup\E$;
  \item $\PC_{\oslash}(\SURF^{\F})$: the set of all permissible curves without endpoints (up to homotopy);
  \item $\AC_{\m}(\SURF^{\F}):=\AC^{\m}_{\m}(\SURF^{\F})\cup\AC^{\oslash}_{\m}(\SURF^{\F})$, where
  \begin{itemize}
    \item $\AC^{\m}_{\m}(\SURF^{\F})$: the set of all admissible curves with endpoints lying in $\M$;
    \item $\AC^{\oslash}_{\m}(\SURF^{\F})$: the set of all admissible curves with only one endpoints and lies in $\M$;
  \end{itemize}
  \item $\AC^{\oslash}_{\oslash}(\SURF^{\F})$: the set of all admissible curves without endpoints (up to homotopy).
\end{itemize}

Let $A$ be a gentle algebra. The following theorem shows that all indecomposable objects in $\modcat A$ and $\per A$ can be described by permissible curves and admissible curves, respectively.

\begin{theorem} \label{thm:OPS and BCS corresponding}
Let $\mathscr{J}$ be the set of all Jordan blocks with non-zero eigenvalue.
\begin{itemize}
  \item[\rm(1)] {\rm \cite[Theorems 3.8 and 3.9]{BCS2019}}
    There exists a bijection
    \[ \MM: \PC_{\m}(\SURF(A)^{\F_A}) \cup (\PC_{\oslash}(\SURF(A)^{\F_A})\times\mathscr{J}) \to \ind(\modcat A). \]
    between the set $\PC_{\m}(\SURF(A)^{\F_A}) \cup (\PC_{\oslash}(\SURF(A)^{\F_A})\times\mathscr{J})$ of all permissible curves and the set $\ind(\modcat A)$ of all isoclasses of indecomposable modules in $\modcat A$.
    We denote by $\MM^{-1}$ the quasi-equivalence of $\MM$.

  \item[\rm(2)]{\rm \cite[Theorem 2.12]{OPS2018}}
    There exists a bijection
    \[\X: \AC_{\m}(\SURF(A)^{\F_A}) \cup (\AC^{\oslash}_{\oslash}(\SURF(A)^{\F_A}) \times \mathscr{J}) \to \ind(\per A). \]
    between the set $\AC_{\m}(\SURF(A)^{\F_A}) \cup (\AC^{\oslash}_{\oslash}(\SURF(A)^{\F_A}) \times \mathscr{J})$ and the set of all isoclasses of indecomposable objects in $\per A$.
    We denote by $\X^{-1}$ the quasi-equivalence of $\X$.
\end{itemize}
\end{theorem}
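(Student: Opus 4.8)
The statement collects \cite[Theorems 3.8 and 3.9]{BCS2019} and \cite[Theorem 2.12]{OPS2018}, so the plan is to reconstruct the two bijections and indicate why each is well defined and bijective. The common strategy is to pass to a purely combinatorial classification of the indecomposables — the Butler--Ringel description \cite{BR1987} of indecomposable $A$-modules as string modules and band modules for part~(1), and the analogous description of indecomposable perfect complexes as string complexes and band complexes (in the spirit of Bekkert--Merklen \cite{BM2003}) for part~(2) — and then to match that combinatorial data with homotopy classes of permissible, respectively admissible, curves on $\SURF(A)^{\F_A}$.

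\emph{Part (1).} Recall from Construction~\ref{construction} (or its inverse) that the $\gbullet$-arcs of $\Dblue$ are the vertices of $\Q$, hence index the indecomposable projectives, and that the elementary $\gbullet$-polygons encode the arrows and relations of $(\Q,\I)$. Given $c\in\PC_{\m}(\SURF(A)^{\F_A})$, record the ordered sequence $\dualgreen{c}{1},\dots,\dualgreen{c}{m(c)}$ of $\gbullet$-arcs it crosses; by Definition~\ref{def:permissible-admissible} and Figure~\ref{fig:arc segment I} each $\Dblue$-arc segment joining two consecutive crossings has one of three local types, and reading these off yields a walk $w(c)$ in $\Q$ whose letters are arrows or formal inverses of arrows. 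The requirement that consecutive $\Dblue$-arc segments differ translates exactly into $w(c)$ being reduced and avoiding every generator of $\I$ together with its formal inverse, i.e.\ a string; conversely every string is realized by concatenating the corresponding arc segments inside the elementary polygons, and the standing convention that curves meet the arc system minimally pins down the homotopy class. Thus $c\mapsto w(c)$ is a well-defined bijection on the ``string'' part, sending $c\sim c^{-1}$ to a string and its inverse and sending trivial permissible curves to the simple projective--injective modules sitting inside digons. For the ``band'' part, permissible curves without endpoints correspond to cyclic strings up to rotation and inversion, i.e.\ to bands, and pairing such a curve with a Jordan block $\pmb{J}_{n}(\lambda)$, $0\neq\lambda\in\kk$, recovers the band module; this accounts for the factor $\PC_{\oslash}(\SURF(A)^{\F_A})\times\mathscr{J}$. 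Comparing with \cite{BR1987} gives that $\MM$ is a bijection.

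\emph{Part (2).} The construction is parallel, with the $\rbullet$-dissection $\tDred$ now playing the role $\Dblue$ played above and with the grading carried along. Since each $\rbullet$-arc of $\tDred$ is dual to a unique $\gbullet$-arc, the $\rbullet$-arcs are again indexed by the indecomposable projectives. Given an admissible graded curve $\tc\in\AC_{\m}(\SURF(A)^{\F_A})$, record the sequence $\ared{c}{1},\dots,\ared{c}{n(\tc)}$ of $\rbullet$-arcs crossed by $c$; by Figure~\ref{fig:arc segment II} the $\Dred$-arc segments joining consecutive crossings have one of two local types, which encode the ``direct'' and ``inverse'' letters of a homotopy string, while the grading $\tc$ — a homotopy class of paths from the foliation $\F$ to the tangent direction of $c$, with intersection indices $\ii_p$ computed as in Subsection~\ref{subsect:geo.mod.} — records the cohomological positions of the projective summands; the condition that consecutive $\Dblue$-arc segments differ is precisely the defining condition of a homotopy string. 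One then checks that $\X$ sends $\tc$ to the associated string complex, that homotopy of graded curves together with the identification $c\sim c^{-1}$ matches isomorphism in $\per A$, and, as in part~(1), that closed admissible curves paired with a Jordan block $\pmb{J}_{n}(\lambda)$ give the band complexes, accounting for the factor $\AC^{\oslash}_{\oslash}(\SURF(A)^{\F_A})\times\mathscr{J}$. Bijectivity of $\X$ follows from the classification underlying \cite{BM2003, OPS2018}.

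\emph{Where the difficulty lies.} In both parts the genuine work is the well-definedness of the combinatorial data up to homotopy: one must verify that what is read off a curve is independent of the representative once its intersections with the arc system are minimal, and conversely that the prescribed concatenation of arc segments already produces a curve in minimal position — this is where the hypotheses that $\Surf$ is homologically smooth and that every elementary polygon has a unique boundary edge enter. For part~(2) there is the additional delicate point that the grading must be transported through the arc segments so that the intersection indices $\ii_p(\ta^{i}_{\rbullet},\ta^{j}_{\rbullet})$ reproduce exactly the degree shifts appearing in the minimal projective presentation; granting this, the fact that a single surface $\SURF(A)^{\F_A}$ simultaneously supports both models is automatic, since both are built from the same pair $(\Dblue,\tDred)$ and the same foliation. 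The remaining verifications are the bookkeeping carried out in \cite{BCS2019, OPS2018}.
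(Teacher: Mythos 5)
The paper does not prove Theorem~\ref{thm:OPS and BCS corresponding}; it is recorded verbatim as a citation of \cite[Theorems~3.8 and~3.9]{BCS2019} for part~(1) and \cite[Theorem~2.12]{OPS2018} for part~(2), so there is no ``paper's own proof'' to compare against. What you have written is a reasonable high-level reconstruction of how those external results are established: translating a permissible (resp.\ admissible graded) curve, via the sequence of $\gbullet$-arcs (resp.\ $\rbullet$-arcs) it crosses and the local types of the intervening arc segments, into a Butler--Ringel string or band (resp.\ a Bekkert--Merklen homotopy string or band with degree data from the intersection indices), and then invoking the classical combinatorial classification of indecomposables. That is indeed the structure of the arguments in \cite{BCS2019, OPS2018}, and you correctly flag the genuinely delicate points (well-definedness under homotopy / minimal position, and transporting the grading so that intersection indices match cohomological degrees).

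Two small cautions. First, your claim that trivial permissible curves correspond to ``simple projective--injective modules sitting inside digons'' does not match the paper's own convention, where trivial permissible curves are denoted $0$ and represent the zero object (cf.\ the use of $\MM(\omega(\overleftarrow{a_{\gbullet}}))=0$ after Definition~\ref{def:projedtive curve}); in the bijection with $\ind(\modcat A)$ they are simply not hit, rather than realizing extra indecomposables. Second, your sketch silently treats the passage between a curve and its combinatorial string as an identification, but making this an honest \emph{bijection} requires checking that distinct homotopy classes of curves in minimal position give distinct (string-equivalence classes of) strings and conversely — you acknowledge this in your last paragraph, but it is precisely the technical content of the cited theorems, so a self-contained proof would need to carry it out rather than gesture at it.
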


\noindent
For any set $S$ of some permissible curves (resp. admissible curves),
we define $\MM(S) = \bigoplus_{c\in S}\MM(c)$ (resp. $\X(S) = \bigoplus_{\tc\in S}\X(\tc)$).

The indecomposable object corresponding to $c\in\PC_{\m}(\SURF(A)^{\F_A})$
(resp. $\tc\in\AC_{\m}(\SURF(A)^{\F_A})$) is called a {\defines string module} (resp. {\defines string complex}).

The indecomposable object corresponding to $b\in\PC^{\oslash}_{\oslash}(\SURF(A)^{\F_A}) \ne \varnothing$
(resp. $\tilde{b} \in \AC_{\oslash}(\SURF(A)^{\F_A})$)
with Jordan block $\pmb{J}_{n}(\lambda)$ ($\lambda\ne 0$ is the eigenvalue of $\pmb{J}_{n}(\lambda)$)
is called a {\defines band module} (resp. {\defines band complex}).

\subsection{Top, socle and projective permissible curves}

In this subsection, we recall the definitions of top, socle and projective permissible curves in \cite{ZLpre}.

\begin{definition}\rm
Let $x$ be the common endpoint of $\agreen{c}{i-1}$ and $\agreen{c}{i}$ and $y$ be the common endpoint of $\agreen{c}{i}$ and $\agreen{c}{i+1}$, for any $0\leq i\leq m(c)$. Then
\begin{itemize}
 \item[(1)] $\agreen{c}{i}$ is called a {\defines top $\gbullet$-arc of $c$} if $x$ is on the right of $c$ and $y$ is on the left of $c$.

 \item[(2)] $\agreen{c}{i}$ is called a {\defines socle $\gbullet$-arc of $c$} if $x$ is on the left of $c$ and $y$ is on the right of $c$.
 \end{itemize}
Denote by $\top(c)$ (resp., $\soc(c)$) the set of all top (resp., socle) $\gbullet$-arcs of $c$.
\end{definition}

\begin{remark} \rm
If $c$ crosses only one $\gbullet$-arc, then this $\gbullet$-arc is not only a top but also a socle $\gbullet$-arc of $c$.
\end{remark}

Note that the following isomorphisms hold.
\begin{align}
  \MM(\top (c)) \cong \top(\MM(c)) \text{ and }
  \MM(\soc (c)) \cong \soc(\MM(c)). \label{formula:top and soc}
\end{align}

\begin{definition}\rm \label{def:projedtive curve}
We say $c\in\PC_{\m}(\SURF^{\F})$, denote by $c_{\proj}(a_{\gbullet})$, is a {\defines projective curve corresponding to $a_{\gbullet}=\agreen{c}{i}$},
if there is an unique integer $1\le i\le m(c)$ such that:
\begin{itemize}
  \item[(1)]
    $\agreen{c}{1}$, $\agreen{c}{2}$, $\ldots$, $\agreen{c}{i}$ have a common endpoint $x$ which is right to $c$, and
        there is no arc $a\in\Dblue$ with endpoint $x$ such that $a$ is left to the $\agreen{c}{1}$ at the point $x$;
  \item[(2)]
    $\agreen{c}{i}$, $\agreen{c}{i+1}$, $\ldots$, $\agreen{c}{m(c)}$ have a common endpoint $y$ which is left to $c$, and there is no arc $\hat{a}\in\Dblue$ with endpoint $y$ such that $\hat{a}$ is left to the $\agreen{c}{m(c)}$ at the point $y$.
\end{itemize}
\end{definition}

By Definition \ref{def:projedtive curve}, $\MM(c_{\proj}(a_{\gbullet}))$ is an indecomposable projective module.

For any $\gbullet$-arc $a_{\gbullet}$ crossed by $c\in\PC_{\m}(\SURF^{\F})$, if there exists a $\gbullet$-arc $a_{\gbullet}'$ such that
it is left to $a_{\gbullet}$ at the intersection $p$ in $a_{\gbullet}\cap a_{\gbullet}'$ and there is no other $\gbullet$-arc between $a_{\gbullet}$ and $a_{\gbullet}'$ at $p$, then we define $\overleftarrow{a_{\gbullet}}=a_{\gbullet}'$. Otherwise, we define $\overleftarrow{a_{\gbullet}}=\varnothing$.

If $a_{\gbullet}$ satisfies (P1),
assume $c=c_{\proj}(a_{\gbullet})=\{c_{(t,t+1)}\}_{1\le t\le m(c)}$ such that
$p$ is left to the $c_{\proj}(a_{\gbullet})$ and $c_{(t_{i-1}, t_i)}$ is the $\Dblue$-arc segment
obtained by $a_{\gbullet}$ and $a_{\gbullet}'$ cutting $c$.
In this case, we have $i\ge 2$. We denote by $\omega(\overleftarrow{a_{\gbullet}})$
the permissible curve $\{c_{(t,t+1)}\}_{0\le t\le i-2}\cup\{c_{(i-1,i)}'\}$ of $c$,
where $c_{(i-1,i)}': [t_{i-1}, t_i] \to \Surf$ is the $\Dblue$-arc segment
between $a_{\gbullet}'$ and $a_{\gbullet}$ such that $c_{(i-1,i)}'(t_{i-1})=c_{(i-1,i)}(t_{i-1})$
and $c_{(i-1,i)}'(t_i)$ is an endpoint of $a_{\gbullet}$ (see \Pic \ref{fig:omega}).
If $a_{\gbullet}$ satisfies (P2), we set $\omega(\overleftarrow{a_{\gbullet}})$ is trivial.
In this case $\MM(\omega(\overleftarrow{a_{\gbullet}}))=0$.

\begin{figure}[H]
\definecolor{ffqqqq}{rgb}{1,0,0}
\definecolor{bluearc}{rgb}{0,0,1}
\begin{tikzpicture}
\draw[line width=1pt][->] (-2,-1)--(2,-1);
\draw[line width=1pt][<-] (-2, 1)--(2, );
\draw (-2,-1) node[left]{$\bSurf$};
\fill[bluearc] (0,-1) circle (0.1cm) [line width=1pt] node[below]{$q$};
\fill[bluearc] (0, 1) circle (0.1cm) [line width=1pt] node[above]{$p$};
\draw[bluearc][line width=1pt] (0,-1)--(0,1);
\draw[bluearc][line width=1pt][rotate around={ 45.0:(0,-1)}] (0,-1)--(0,1);
\draw[bluearc][line width=1pt][rotate around={ 67.5:(0,-1)}][dotted] (0,-1)--(0,1);
\draw[bluearc][line width=1pt][rotate around={ 45.0:(0,1)}] (0,-1)--(0,1);
\draw[bluearc][line width=1pt][rotate around={ 67.5:(0,1)}][dotted] (0,-1)--(0,1);
\draw (0,-0.5) node[right]{$a_{\gbullet}$};
\draw [rotate around={ 45.0:(0,-1)}](0,1) node[left]{$a_{\gbullet}'=\overleftarrow{a_{\gbullet}}$};
\fill[white] (-1,-1) circle (1.8pt); \fill[blue] (-1,-1) node{$\pmb{\diamond}$};
\fill[white] ( 1, 1) circle (1.8pt); \fill[blue] ( 1, 1) node{$\pmb{\diamond}$};
\draw[orange][line width=1pt] (-1,-1) -- (1,1);
\draw[orange] (0.25,0.25) node[right]{\tiny$c_{\proj}(a_{\gbullet})$};
\draw[orange][line width=1pt] (-1,-1) -- (0,1) [dash pattern=on 2pt off 2pt];
\draw[orange] (-0.25,0.3) node[left]{\tiny$\omega(a_{\gbullet})$};
\end{tikzpicture}
\caption{The point ``${\color{blue}\pmb{\diamond}}$'' is either a $\gbullet$-marked point
or an extra marked point. }
\label{fig:omega}
\end{figure}

The following lemma comes from \cite[Lemma 3.4]{ZLpre}.

\begin{lemma} \label{lemm:proj cover}
Let $\widehat{\soc(c)}$ be the set of all socle $\gbullet$-arcs of $c\in\PC_{\m}(\SURF^{\F})$ except $c_{\proj}(\agreen{c}{1})$ and $c_{\proj}(\agreen{c}{m(c)})$.
Then the projective cover of $\MM(c)$ is
\[ p_0: P_0 = \bigoplus_{a_{\gbullet}\in\top(c)}\MM(c_{\proj}(a_{\gbullet})) \to \MM(c), \]
and the kernel $\ker p_0$ of $p_0$ is isomorphic to $Q_L\oplus Q\oplus Q_R$, where:
\[Q_L \cong \MM(\omega(\overleftarrow{\agreen{c}{1}})),\
Q \cong \bigoplus\limits_{a_{\gbullet}\in \widehat{\soc(c)}} \MM(c_{\proj}(a_{\gbullet}))\
\text{ and }~~Q_R \cong \MM(\omega(\overleftarrow{\agreen{c}{m(c)}})). \]
\end{lemma}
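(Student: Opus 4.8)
The plan is to read off the projective cover of $\MM(c)$ directly from the geometric description of the top of $\MM(c)$, and then identify the kernel by a careful local analysis of how $c$ sits relative to the $\gbullet$-arcs it crosses. First I would use \eqref{formula:top and soc}, which gives $\MM(\top(c))\cong\top(\MM(c))$, so that the semisimple module $\top(\MM(c))$ is realized by the disjoint union of top $\gbullet$-arcs. Since $A$ is gentle, each simple top summand $S_{\mathfrak{v}(a_{\gbullet})}$ with $a_{\gbullet}\in\top(c)$ has projective cover the indecomposable projective $P_{\mathfrak{v}(a_{\gbullet})}$, which by the remark following Definition \ref{def:projedtive curve} is exactly $\MM(c_{\proj}(a_{\gbullet}))$; hence the minimal projective cover of $\MM(c)$ is $p_0:P_0=\bigoplus_{a_{\gbullet}\in\top(c)}\MM(c_{\proj}(a_{\gbullet}))\to\MM(c)$, which is the first assertion. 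The point here is that $c_{\proj}(a_{\gbullet})$ is well-defined for each top $\gbullet$-arc: conditions (P1) and (P2) in Definition \ref{def:projedtive curve} are precisely what is forced when $a_{\gbullet}$ is a top arc, so this needs only a short verification.

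The substantive part is the computation of $\ker p_0$. The idea is to decompose $c$ as a union of ``maximal projective subsegments'': between two consecutive top $\gbullet$-arcs of $c$ (reading along $c$), the curve descends to a socle $\gbullet$-arc and climbs back up, and the portion of $c$ in between is the overlap of the two adjacent projective curves $c_{\proj}(a_{\gbullet})$. Concretely, I would write $\top(c)=\{a_{\gbullet}^{(1)},\dots,a_{\gbullet}^{(k)}\}$ in the order they are met by $c$, and similarly order the socle $\gbullet$-arcs. The map $p_0$ sends the string module $\MM(c_{\proj}(a_{\gbullet}^{(j)}))$ onto the substring of $\MM(c)$ supported on the arcs from the $j$-th top arc down to its flanking socle arcs. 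Two consecutive summands $\MM(c_{\proj}(a_{\gbullet}^{(j)}))$ and $\MM(c_{\proj}(a_{\gbullet}^{(j+1)}))$ overlap exactly in the substring supported on the socle arc $a_{\gbullet}\in\widehat{\soc(c)}$ lying between them; this overlap contributes, via the standard mapping-cone/kernel description of a map of string modules, a copy of $\MM(c_{\proj}(a_{\gbullet}))$ to the kernel — giving the middle term $Q\cong\bigoplus_{a_{\gbullet}\in\widehat{\soc(c)}}\MM(c_{\proj}(a_{\gbullet}))$. At the two ends of $c$ the situation is different: the first socle arc $c_{\proj}(\agreen{c}{1})$ and the last $c_{\proj}(\agreen{c}{m(c)})$ are excluded from $\widehat{\soc(c)}$ because the corresponding projective does not split off symmetrically; instead the ``left overhang'' of $\MM(c_{\proj}(a_{\gbullet}^{(1)}))$ beyond $c$ is precisely the substring cut off by $\overleftarrow{\agreen{c}{1}}$, i.e. $\MM(\omega(\overleftarrow{\agreen{c}{1}}))=Q_L$, and dually on the right $\MM(\omega(\overleftarrow{\agreen{c}{m(c)}}))=Q_R$. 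When $\agreen{c}{1}$ (or $\agreen{c}{m(c)}$) satisfies (P2) there is no such overhang and $Q_L$ (resp. $Q_R$) vanishes, consistent with the convention $\MM(\omega(\overleftarrow{a_{\gbullet}}))=0$ set before Figure \ref{fig:omega}.

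To make the kernel identification precise I would either invoke the combinatorial formula for kernels of morphisms between string modules over gentle algebras (graph maps in the sense of Crawley-Boevey), applied to the canonical surjections $\MM(c_{\proj}(a_{\gbullet}^{(j)}))\twoheadrightarrow\MM(c)$, or argue dimension-by-dimension at each $\gbullet$-arc: for an arc crossed by $c$ with multiplicity, count how many of the projective summands $\MM(c_{\proj}(a_{\gbullet}^{(j)}))$ cover that position and subtract $1$ (the image in $\MM(c)$), and check this count matches the multiplicity contributed by $Q_L\oplus Q\oplus Q_R$. The main obstacle I anticipate is precisely this bookkeeping at the two ends of $c$ and at $\gbullet$-arcs that $c$ crosses more than once: one must be careful that the ``leftmost'' condition in the definition of $\overleftarrow{a_{\gbullet}}$ and in Definition \ref{def:projedtive curve}(1)–(2) really does match the branching of the string module there, and that no further summands appear from interactions between non-adjacent projective curves — this is where gentleness (each vertex a source/target of at most two arrows, relations of length two) is used to rule out unwanted overlaps. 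Since the statement is quoted from \cite[Lemma 3.4]{ZLpre}, I would lean on that reference for the full verification and present here only the structural argument above.
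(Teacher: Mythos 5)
The paper does not prove this statement: Lemma~\ref{lemm:proj cover} is quoted verbatim from \cite[Lemma~3.4]{ZLpre} and imported as a citation, so there is no internal argument to compare your proposal against. Your plan --- read $P_0$ off from $\top(\MM(c))\cong\MM(\top(c))$, then identify $\ker p_0$ by a local analysis at the socle arcs and at the two ends of $c$ --- is the natural one and, as far as I can tell, correct; but two points deserve sharper wording. First, $c_{\proj}(a_{\gbullet})$ is defined for \emph{any} $\gbullet$-arc $a_{\gbullet}$ (it is the permissible curve with unique peak $a_{\gbullet}$, extended maximally in both directions), and the remark immediately after Definition~\ref{def:projedtive curve} already identifies $\MM(c_{\proj}(a_{\gbullet}))$ as the indecomposable projective at $\mathfrak{v}(a_{\gbullet})$; there is nothing about (P1)--(P2) being ``forced'' by $a_{\gbullet}$ being a top arc, and phrasing it that way misreads the definition.

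Second, and more substantively: the kernel contribution at an interior socle arc $a_{\gbullet}$, flanked by top arcs $a_{\gbullet}^{(j)}$ and $a_{\gbullet}^{(j+1)}$, is not literally the ``overlap'' of the two projective strings inside $\MM(c)$ --- that overlap is only the one-dimensional simple at $\mathfrak{v}(a_{\gbullet})$. It is the submodule of $\MM(c_{\proj}(a_{\gbullet}^{(j)}))\oplus\MM(c_{\proj}(a_{\gbullet}^{(j+1)}))$ generated by the diagonal element $(x,-x')$ supported at the position $\mathfrak{v}(a_{\gbullet})$. Applying the two arrows leaving $\mathfrak{v}(a_{\gbullet})$ carries this generator into the right overhang of the left summand and the left overhang of the right summand, and the gentle conditions (each incoming arrow at $\mathfrak{v}(a_{\gbullet})$ composes with exactly one outgoing arrow outside $\I$) are precisely what make these two overhangs the two maximal-path branches of $P(\mathfrak{v}(a_{\gbullet}))$, so that the submodule generated is a copy of $\MM(c_{\proj}(a_{\gbullet}))$; the same mechanism at the two ends produces $Q_L$ and $Q_R$. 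Your appeal to the graph-map kernel formula, or the dimension count over each $\gbullet$-arc, are both adequate ways to make this rigorous --- I would just ask that one of them actually be carried out, since as written ``this overlap contributes a copy of $\MM(c_{\proj}(a_{\gbullet}))$'' asserts the conclusion rather than derives it.
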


\section{The unification of geometric models for gentle algebras} \label{sec:embedding}

In this subsection, we study the projective resolution of indecomposable modules of a gentle algebra and give an embedding of geometric models for a gentle algebra from module category to derived category.

\subsection{The admissible curves corresponding to string modules}

Let $c$ be a permissible curve. By Definition \ref{def:permissible-admissible}, the $\Dblue$-arc segment $c_{(0,1)}$ (or $c_{(m(c), m(c)+1)}$) of $c$
is one of the two Cases (1) and (3) shown in \Pic \ref{fig:arc segment I}.
Let $\PP_L$ and $\PP_R$ be the elementary $\gbullet$-polygons such that
$c_{(0,1)}\subset \PP_L$ and $c_{(m(c), m(c)+1)} \subset \PP_R$, respectively.
Then $\agreen{c}{1}\in \Egreen(\PP_L)$ and $\agreen{c}{m(c)}\in \Egreen(\PP_R)$.
In order to give a characterization of the projective resolution of indecomposable modules, we need to define the rotating of permissible curves.
For permissible curve $c$,
\begin{itemize}
  \item If $c_{(0,1)}$ belongs to Case (1) shown in \Pic \ref{fig:arc segment I}, then we move the endpoint $c(0)$
    along the edges lying in $\Egreen(\PP_L)$ of $\PP_L$ in a clockwise direction as far as possible.
  \item If $c_{(m(c), m(c)+1)}$ belongs to Case (1) shown in \Pic \ref{fig:arc segment I}, then we move the endpoint $c(1)$
    along the edges lying in $\Egreen(\PP_R)$ of $\PP_R$ in a clockwise direction as far as possible.
  \item If $c_{(0,1)}$ belongs to Case (3) shown in \Pic \ref{fig:arc segment I},
    then we move the endpoint $c(0)$ to the next $\gbullet$-marked point along the positive direction\footnote{In our paper, we suppose that the positive direction of the boundary $\partial S$ of a surface/polygon $S$ is the following walking direction: walking along $\partial S$, the inner of $S$ is on the left.} of boundary $\bSurf$.
  \item If $c_{(m(c), m(c)+1)}$  belongs to Case (3) shown in \Pic \ref{fig:arc segment I},
    then we move the endpoint $c(1)$ to the next $\gbullet$-marked point along the positive direction of boundary $\bSurf$.
\end{itemize}

\begin{definition} \rm \label{def:rot} Let $c$ be a permissible curve. Then {\defines the $\gbullet$-curve $c^{\rota}$} is defined to be the curve obtained by rotating the segments $c_{(0,1)}$ and $c_{(m(c), m(c)+1)}$ of $c$ as above.
\end{definition}

Now we can give the main result of this subsection.

\begin{theorem} \label{thm:string embedding}
Let $A$ be a gentle algebra and $\SURF(A)^{\F_A}$ be its graded marked ribbon surface.
Then there is an embedding
\[(-)^\rota: \PC_{\m}(\SURF(A)^{\F_A}) \to \AC_{\m}(\SURF(A)^{\F_A}),
c\mapsto \tc^{\rota}\]
such that $\H^{0}(\X(\tc^{\rota}))$ is isomorphic to $\MM(c)$ and $\H^{i}(\X(\tc^{\rota}))=0$ for any $i\neq 0$.
\end{theorem}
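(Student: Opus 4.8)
The plan is to establish the isomorphism $\H^0(\X(\tc^\rota))\cong\MM(c)$ by constructing the string complex $\X(\tc^\rota)$ explicitly from the combinatorics of the rotated curve and then matching it term-by-term with the deleted projective complex $\mathbf{P}(\MM(c))$ produced by the projective resolution of $\MM(c)$. First I would record the following dictionary. By Lemma \ref{lemm:proj cover}, the projective cover of $\MM(c)$ is $\bigoplus_{a_\gbullet\in\top(c)}\MM(c_\proj(a_\gbullet))$ and the first syzygy decomposes as $Q_L\oplus Q\oplus Q_R$, where $Q_L\cong\MM(\omega(\overleftarrow{\agreen{c}{1}}))$, $Q\cong\bigoplus_{a_\gbullet\in\widehat{\soc(c)}}\MM(c_\proj(a_\gbullet))$ and $Q_R\cong\MM(\omega(\overleftarrow{\agreen{c}{m(c)}}))$. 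The key observation is that the rotation $(-)^\rota$ is designed precisely so that the $\Dred$-arc segments of $\tc^\rota$ record exactly the sequence of top and socle $\gbullet$-arcs of $c$: when $c_{(0,1)}$ is in Case (1) of \Pic\ref{fig:arc segment I} the clockwise rotation of $c(0)$ terminates at the $\gbullet$-marked point that is the source of the arrow whose associated projective is $\MM(c_\proj(\agreen{c}{1}))$, and when it is in Case (3) the rotation to the next $\gbullet$-marked point along $\bSurf$ realizes the boundary segment dual to a trivial syzygy summand; the same holds at the other end with $c(1)$ and $\agreen{c}{m(c)}$.

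Next I would proceed by induction on $m(c)$, the number of $\gbullet$-arcs crossed by $c$. The base case $m(c)\le 1$ is immediate: a permissible curve crossing no $\gbullet$-arc is trivial and $\MM(c)=0$; a permissible curve crossing exactly one $\gbullet$-arc $a_\gbullet$ has $\MM(c)$ simple or projective, and one checks directly from Construction \ref{construction} and Definition \ref{def:rot} that $\tc^\rota$ is the admissible curve whose string complex is $\mathbf{P}(\MM(c))$ concentrated appropriately, so $\H^0(\X(\tc^\rota))\cong\MM(c)$ and all other cohomologies vanish. For the inductive step I would use the (well-known, via \cite{OPS2018} or \cite{HKK2017}) description of $\X(\tc^\rota)$ as the complex whose $i$-th term is $\bigoplus$ of the indecomposable projectives indexed by the $\rbullet$-arcs crossed by $\tc^\rota$ sitting in the homological degree prescribed by the grading, with differentials given by the $\Dred$-arc segments. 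Under the bijection $\mathfrak v$ of Construction \ref{construction}, the $\rbullet$-arcs dual to the top $\gbullet$-arcs of $c$ give precisely the degree-$0$ part $P_0=\bigoplus_{a_\gbullet\in\top(c)}\MM(c_\proj(a_\gbullet))$, and the remaining $\rbullet$-arcs of $\tc^\rota$ organize into the subcurves $\omega(\overleftarrow{\agreen{c}{1}})^\rota$, the $c_\proj(a_\gbullet)^\rota$ for $a_\gbullet\in\widehat{\soc(c)}$ (each of which crosses fewer $\gbullet$-arcs than $c$, since $c_\proj(a_\gbullet)$ is a proper projective sub/quotient configuration), and $\omega(\overleftarrow{\agreen{c}{m(c)}})^\rota$. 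Applying the inductive hypothesis to each of these yields that the subcomplex $\X$ of $\X(\tc^\rota)$ in degrees $\ge 1$ is quasi-isomorphic to $\mathbf{P}(Q_L\oplus Q\oplus Q_R)=\mathbf{P}(\ker p_0)$ concentrated in nonnegative degrees, hence acyclic except at the bottom where it contributes $\ker p_0$.

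Then I would assemble: the differential $P_0\to(\text{degree }1\text{ term})$ of $\X(\tc^\rota)$, read off from the $\Dred$-arc segments joining the top arcs to the socle arcs, is exactly the map whose image is $\ker p_0\subseteq$ (the top of the resolution of $\ker p_0$); dually $\mathrm{coker}$ of the incoming differential into $P_0$ is $P_0/\mathrm{im}=\MM(c)$ by Lemma \ref{lemm:proj cover}. A short exact sequence of complexes $0\to\X_{\ge1}\to\X(\tc^\rota)\to P_0[0]\to0$ and its long exact cohomology sequence, combined with the inductive acyclicity of $\X_{\ge1}$ in positive degrees and the identification $\H^0(\X_{\ge1})=\ker p_0$, force $\H^0(\X(\tc^\rota))=\mathrm{coker}(p_0)\cong\MM(c)$ and $\H^i(\X(\tc^\rota))=0$ for $i\ne0$; one has to separately note $\H^i=0$ for $i<0$ because $\tc^\rota$ is built from $c$ without introducing negative-degree $\rbullet$-arcs, which follows from the grading computation in Step 3 of Construction \ref{construction} applied to the rotated endpoints.

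The main obstacle I anticipate is bookkeeping the gradings and the precise homological placement of each projective summand: one must verify that the rotation does not shift any $c_\proj(a_\gbullet)^\rota$ into the wrong degree and that the two ``end'' corrections $\omega(\overleftarrow{\agreen{c}{1}})$ and $\omega(\overleftarrow{\agreen{c}{m(c)}})$ (which can be trivial, by Definition \ref{def:projedtive curve} cases (P1)/(P2)) are handled uniformly — in particular distinguishing the four rotation cases (Case (1) vs Case (3) at each end) and checking that in the Case (3) situations the rotated endpoint lands on a $\gbullet$-marked point producing a genuinely trivial syzygy contribution, consistent with $\MM(\omega(\overleftarrow{a_\gbullet}))=0$. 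A secondary technical point is ensuring $\tc^\rota$ is actually admissible (its consecutive $\Dblue$-arc segments are distinct), which should follow from permissibility of $c$ together with the gentleness of $A$, but needs to be checked at the rotated segments $c_{(0,1)}^\rota$ and $c_{(m(c),m(c)+1)}^\rota$.
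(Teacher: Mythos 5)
Your overall strategy (match the degree-zero part of $\X(\tc^{\rota})$ with the projective cover from Lemma \ref{lemm:proj cover} and show the rest resolves $\ker p_0$) points in the same direction as the paper, but the way you close the argument has a genuine gap. Your induction on $m(c)$ is not well-founded: the curves to which you apply the inductive hypothesis, namely $\omega(\overleftarrow{\agreen{c}{1}})$, $\omega(\overleftarrow{\agreen{c}{m(c)}})$ and the projective curves $c_{\proj}(a_{\gbullet})$ for $a_{\gbullet}\in\widehat{\soc(c)}$, need \emph{not} cross fewer $\gbullet$-arcs than $c$. If $c$ crosses a single arc (a simple module) whose projective cover has a long radical, every one of these curves is longer than $c$, so "each of which crosses fewer $\gbullet$-arcs than $c$" is false and the induction cannot get off the ground. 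Moreover, no induction on finite data of this kind can reach the case where an end of $c$ lies in an $\infty$-elementary $\gbullet$-polygon (the paper's Case 1.2): there $\MM(c)$ has infinite projective dimension, the resolution is eventually periodic ($\omega(a^{t}_{*})=\omega(a^{t+n}_{*})$), and $\tc^{\rota}$ is an admissible curve with only one endpoint that wraps around a boundary component; your proposal never addresses this situation. Finally, the key combinatorial claim that the $\rbullet$-arcs of $\tc^{\rota}$ beyond the duals of $\top(c)$ "organize into the subcurves $\omega(\overleftarrow{\agreen{c}{1}})^{\rota}$, $c_{\proj}(a_{\gbullet})^{\rota}$, $\omega(\overleftarrow{\agreen{c}{m(c)}})^{\rota}$" is asserted, not proved, and it is not quite the right structure: the socle projectives contribute only in (homological) degree $1$ and then disappear, and from degree $2$ onward the resolution consists of exactly two strands of indecomposable projectives $\MM(c_{\proj}(a^{t}_{L}))$ and $\MM(c_{\proj}(a^{t}_{R}))$ obtained by repeatedly applying $\overleftarrow{(-)}$ inside the two end polygons $\PP_L$, $\PP_R$ -- this is precisely the computation the paper carries out by iterating Lemma \ref{lemm:proj cover} ($K_t=\MM(\omega(a^{t+1}_{L}))\oplus\MM(\omega(a^{t+1}_{R}))$ for $t\ge 1$) before reading off the sequence of crossed $\rbullet$-arcs and identifying the resulting curve with $c^{\rota}$, with the Case (3) end segments treated separately.

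Two smaller points: your short exact sequence is written in the wrong direction -- with differentials $P_{i}\to P_{i-1}$ the stalk $P_0$ is the subcomplex and the truncation in degrees $\ge 1$ is the quotient, so the sequence should read $0\to P_0[0]\to\X(\tc^{\rota})\to\X_{\ge 1}\to 0$ (the long exact sequence argument can be repaired accordingly). And the vanishing of $\H^{i}$ for $i<0$ needs no grading discussion for a gentle (ungraded) algebra: once $\X(\tc^{\rota})$ is identified with the deleted projective resolution it is concentrated in non-positive cohomological degrees by construction. The essential missing content is the iterated syzygy computation (including its periodic, infinite-dimensional variant), which is exactly what the paper supplies and your induction does not replace.
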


\begin{proof} We only prove for the following two cases.

Case 1.  $c_{(0,1)}$ and $c_{(m(c),m(c)+1)}$ belong to Case (1) shown in \Pic \ref{fig:arc segment I}. Assume that the projective resolution of $\MM(c)$ is
\[ \cdots \mathop{\longrightarrow}\limits^{p_3} P_2
          \mathop{\longrightarrow}\limits^{p_2} P_1
          \mathop{\longrightarrow}\limits^{p_1} P_0
          \mathop{\longrightarrow}\limits^{p_0} \MM(c) \longrightarrow 0, \]
and define $K_t=\ker p_{t-1}$ to be the $t$-th syzygy of $\MM(c)$.
For simplicity, we denote by $a^{1}_{L}=\agreen{c}{1}$ and $a^{1}_{R}=\agreen{c}{m(c)}$.
Moreover, we assume that $a^{t+1}_{\ast}=\overleftarrow{a^{t}_{\ast}}$ for $t\geq 1$ and $\ast\in\{L, R\}$.
If $\PP_{\ast}$ is not an $\infty$-elementary $\gbullet$-polygon,
then there is an integer $N_{\ast}$ such that $a^{N_{\ast}}_{\ast}\ne\varnothing$ and $a^{\ge N_{\ast}+1}_{\ast}=\varnothing$.

By Lemma \ref{lemm:proj cover}, we obtain that
$K_1=\ker p_0 \cong Q_{L,1}\oplus Q_1\oplus Q_{R,1}$, where
\[Q_1 = \bigoplus\limits_{a_{\gbullet}\in \widehat{\soc(c)}} \MM(c_{\proj}(a_{\gbullet}))
   \mathop{=\!=\!=\!=\!=}\limits^{\text{denote by}}
   \bigoplus_{j=1}^{M} \MM(c_{\proj}(a_{\gbullet}^j))\
   \text{is projective},\]
and
\begin{center}
$Q_{L,1} \cong \MM(\omega(\overleftarrow{\agreen{c}{1}})) = \MM(\omega(a^2_L))$ and
$Q_{R,1} \cong \MM(\omega(\overleftarrow{\agreen{c}{m(c)}})) = \MM(\omega(a^2_R))$
\end{center}
are string modules.
Then the projective cover of $K_1$ is of the form
\[ p_1 =
\left(\begin{smallmatrix}
p_{L,1} & & \\
 & \text{id}_{Q_1} & \\
 & & p_{R,1}
\end{smallmatrix}\right):
P_1 = P(Q_{L,1}) \oplus Q_1 \oplus P(Q_{R,1}) \longrightarrow Q_{L,1}\oplus Q_1\oplus Q_{R,1}, \]
where
\begin{align}
& P(Q_{L,1})
  = \bigoplus_{a_{\gbullet} \in \top(\omega({a^2_L}))} \MM(c_{\proj}(a_{\gbullet}))
  = \MM(c_{\proj}(a^2_L)), \nonumber \\
 \ \ \  & P(Q_{R,1})
  = \bigoplus_{a_{\gbullet} \in \top(\omega({a^2_R}))} \MM(c_{\proj}(a_{\gbullet}))
  = \MM(c_{\proj}(a^2_R)).  \nonumber
\end{align}
Thus, $K_2:=\ker p_1 = \ker p_{L,1}\oplus \ker p_{R,1}$.
Furthermore, by Lemma \ref{lemm:proj cover}, we have
\begin{center}
$\ker p_{L,1} = Q_{L,2}\oplus Q_2 \oplus Q_{R,2}$
and $\ker p_{R,1} = Q_{L,2}'\oplus Q_2' \oplus Q_{R,2}'$,
\end{center}
where $Q_{L,2}=\MM(\omega(a^{3}_{L}))$, $Q_2=Q_{R,2}=0$,
$Q_{L,2}'=Q_2'=0$ and $Q_{R,2}'=\MM(\omega(a^{3}_{R}))$, that is,
\[K_2 = \MM(\omega(a^{3}_{L})) \oplus \MM(\omega(a^{3}_{R})).\]
By induction, we obtain
\[K_t = \MM(\omega(a^{t+1}_{L})) \oplus \MM(\omega(a^{t+1}_{R})) \ \text{for all}\ t\ge 1. \]

Case 1.1. $\PP_L$ and $\PP_R$ are not $\infty$-elementary $\gbullet$-polygons. By hypothesis, there exists a positive integer $N_L$ such that
\begin{center}
$\MM(\omega(a^{\theta_L+1}_{L}))=0$ if and only if $\theta_L\ge N_L-1$ (see \Pic \ref{fig:syzygy}).
\end{center}
Similarly, $\MM(\omega(a^{\theta_R+1}_{R}))=0$ if and only if $\theta_R\ge N_R-1$ for positive integer $N_R$.

\begin{figure}[htbp] \tiny
\definecolor{ffqqqq}{rgb}{1,0,0}
\definecolor{bluearc}{rgb}{0,0,1}
\begin{tikzpicture}[scale=0.7]
\draw[line width=1pt][->] (-17,-4)--(1,-4);
\draw (1,-4) node[below]{$\bSurf$};
\draw[line width=1pt] (1,-4) arc(-90:90:4) [dash pattern=on 4pt off 2pt];
\draw[line width=1pt] (1,4) -- (-8,4) [dash pattern=on 4pt off 2pt];
\fill[bluearc][opacity=0.15] (-6,4)
 to[out=-90,in=180] ( -3, 1) to[out=  0,in=-90] (  0, 4) -- (  0,-4)
 to[out= 90,in=  0] ( -2,-2) to[out=180,in= 90] ( -4,-4)
 to[out= 90,in=  0] ( -6,-2) to[out=180,in= 90] ( -8,-4)
 to[out= 90,in=  0] (-10,-2) to[out=180,in= 90] (-12,-4)
 to[out= 90,in=  0] (-14,-2) to[out=180,in= 90] (-16,-4) -- (-17 ,-4) -- (-17, 4) -- ( -6, 4);
\draw (-16,0) node{$\PP_L$};
\fill[bluearc] (  0, 4) circle (0.1*1.5cm) [line width=1pt];
\fill[bluearc] (  0,-4) circle (0.1*1.5cm) [line width=1pt];
\fill[bluearc] ( -4,-4) circle (0.1*1.5cm) [line width=1pt];
\fill[bluearc] ( -8,-4) circle (0.1*1.5cm) [line width=1pt];
\fill[bluearc] (-12,-4) circle (0.1*1.5cm) [line width=1pt];
\fill[bluearc] (-16,-4) circle (0.1*1.5cm) [line width=1pt];
\draw[bluearc][line width=1pt] ( 0, 4) -- ( 0,-4); \draw[bluearc] (0,2) node[right]{$a^1_L$};
\draw[bluearc][line width=1pt] ( 0,-4) arc(0:180:2);
\draw[bluearc][line width=1pt] ( 0,-4) arc(0:180:1.5) [dotted];
\draw[bluearc][line width=1pt] ( 0,-4) arc(0:180:1);
\fill[bluearc] (-3,-4) circle (0.1*1.5cm) [line width=1pt];
\fill[bluearc] (-2,-4) circle (0.1*1.5cm) [line width=1pt];
\draw[bluearc] (-2-1,-2-0.25) node[below]{$a^2_L$};
\draw[bluearc][line width=1pt] ( 0-4,-4) arc(0:180:2);
\draw[bluearc][line width=1pt] ( 0-4,-4) arc(0:180:1.5) [dotted];
\draw[bluearc][line width=1pt] ( 0-4,-4) arc(0:180:1);
\fill[bluearc] (-3-4,-4) circle (0.1*1.5cm) [line width=1pt];
\fill[bluearc] (-2-4,-4) circle (0.1*1.5cm) [line width=1pt];
\draw[bluearc] (-2-4-1,-2-0.25) node[below]{$a^3_L$};
\draw[bluearc][line width=1pt] ( 0-8,-4) arc(0:180:2  ) [dotted];
\draw[bluearc][line width=1pt] ( 0-8,-4) arc(0:180:1.5) [dotted];
\draw[bluearc][line width=1pt] ( 0-8,-4) arc(0:180:1  ) [dotted];
\fill[bluearc] (-3-8,-4) circle (0.1*1.5cm) [line width=1pt] [dotted];
\fill[bluearc] (-2-8,-4) circle (0.1*1.5cm) [line width=1pt] [dotted];
\draw[bluearc][line width=1pt] ( 0-12,-4) arc(0:180:2);
\draw[bluearc][line width=1pt] ( 0-12,-4) arc(0:180:1.5) [dotted];
\draw[bluearc][line width=1pt] ( 0-12,-4) arc(0:180:1);
\fill[bluearc] (-3-12,-4) circle (0.1*1.5cm) [line width=1pt];
\fill[bluearc] (-2-12,-4) circle (0.1*1.5cm) [line width=1pt];
\draw[bluearc] (-14,-2) node[above]{$a^{N_L}_L$};
\draw[bluearc][line width=1pt] ( 0,4) arc(0:-180:3) [dotted];
%
\draw[red][line width=1pt] (-10 , 4) to[out=-120,in=90] (-3.5-12,-4);
\draw[red][line width=1pt] (-10 , 4) -- (-3.5-8,-4) [dotted];
\draw[red][line width=1pt] (-10 , 4) -- (-3.5-4,-4);
\draw[red][line width=1pt] (-10 , 4) to[out=-60,in=90] (-3.5,-4);
\draw[red][line width=1pt] (-10 , 4) to[out=-45,in=180] (-1.5,0.5) -- (0,0.5);
\draw[red][line width=1pt] (0,0.5) -- (1.5,0.5) [dotted];
\draw[red][line width=1pt] (-10 , 4) to[out=-20,in=180] (-2,3) [dotted];
\draw[red] (1.5,0.5) node[above]{$(a^1_L)^{\star}$};
\draw[red] (-4.5,-1) node[right]{$(a^2_L)^{\star}$};
\draw[red] (-8.2,-1) node[right]{$(a^3_L)^{\star}$};
\draw[red] (-12.5,1) node[left]{$(a^{N_L}_L)^{\star}$};
\fill[red] (-10 , 4)  circle (0.1*1.5cm); \fill[white] (-10 , 4) circle (0.1cm);
\fill[red] (-1  ,-4)  circle (0.1*1.5cm); \fill[white] (-1  ,-4) circle (0.1cm);
\fill[red] (-2.5,-4)  circle (0.1*1.5cm); \fill[white] (-2.5,-4) circle (0.1cm);
\fill[red] (-3.5,-4)  circle (0.1*1.5cm); \fill[white] (-3.5,-4) circle (0.1cm);
\fill[red] (-1-4  ,-4)  circle (0.1*1.5cm); \fill[white] (-1-4  ,-4) circle (0.1cm);
\fill[red] (-2.5-4,-4)  circle (0.1*1.5cm); \fill[white] (-2.5-4,-4) circle (0.1cm);
\fill[red] (-3.5-4,-4)  circle (0.1*1.5cm); \fill[white] (-3.5-4,-4) circle (0.1cm);
\fill[red] (-1-8  ,-4)  circle (0.1*1.5cm); \fill[white] (-1-8  ,-4) circle (0.1cm);
\fill[red] (-2.5-8,-4)  circle (0.1*1.5cm); \fill[white] (-2.5-8,-4) circle (0.1cm);
\fill[red] (-3.5-8,-4)  circle (0.1*1.5cm); \fill[white] (-3.5-8,-4) circle (0.1cm);
\fill[red] (-1-12  ,-4)  circle (0.1*1.5cm); \fill[white] (-1-12  ,-4) circle (0.1cm);
\fill[red] (-2.5-12,-4)  circle (0.1*1.5cm); \fill[white] (-2.5-12,-4) circle (0.1cm);
\fill[red] (-3.5-12,-4)  circle (0.1*1.5cm); \fill[white] (-3.5-12,-4) circle (0.1cm);
\draw[orange][line width=1pt] (-4,-4) to[out=90,in=180] (0,0);
\draw[orange][line width=1pt] (0,0) -- (4,0) [dotted];
\draw[orange] (2,0) node[above]{$c$};
\draw[cyan][line width=1pt] (-1,-4) to[out=90,in=-90] (0,4);
\draw[cyan] (-0.25,1) node[left]{$\omega(a^2_L)$};
\draw[cyan][line width=1pt] (-1-4,-4) arc(180:0:2.5);
\draw[cyan] (-2,-1.70) node[above]{$\omega(a^3_L)$};
\draw[cyan][line width=1pt] (-1-8,-4) arc(180:0:2.5) [dotted];
\draw[cyan][line width=1pt] (-1-12,-4) arc(180:0:2.5);
\draw[cyan] (-12,-1.7) node[above]{$\omega(a^{N_L}_L)$};
\draw[violet][line width=1pt] (-16,-4) to[out=90,in=180] (0,0.2);
\draw[violet][line width=1pt] (0,0.2) -- (3,0.2) [dotted];
\draw[violet] (3,0.2) node[above]{$\tau = c^{\circlearrowleft}$};
\end{tikzpicture}
\caption{If $a^1_L\in\top(c)$, then we have $a_{\rbullet}^1=(a^1_L)^{\star}$.
Otherwise, $\tau$ crosses no $(a^1_L)^{\star}$.}
\label{fig:syzygy}
\end{figure}
Note that $P_{\theta} = \MM(c_{\proj}(a_L^{\theta+1})) \oplus \MM(c_{\proj}(a_R^{\theta+1}))$.
Thus, for $\theta \ge \max\{N_L-1, N_R-1\}$, we have $K_{\theta}=0$ and $P_\theta =0$. Let $\tilde{\tau}$ be the admissible curve corresponding to the complex
\[\cdots \mathop{\longrightarrow}\limits^{p_3} P_2
          \mathop{\longrightarrow}\limits^{p_2} P_1
          \mathop{\longrightarrow}\limits^{p_1} P_0
          \longrightarrow 0. \]
Denote by $(a^t_*)^{\star}$ the dual arc of $a^{t}_*$. Then $\widetilde{(a^t_*)^{\star}}$ is a graded $\rbullet$-arc and $\tau$ consecutively crosses
\begin{center}
$(a^{N_L}_L)^{\star}$, $\ldots$, $(a^{3}_L)^{\star}$, $(a^{2}_L)^{\star}$,
$a_{\rbullet}^1$, $a_{\rbullet}^2$, $\ldots$, $a_{\rbullet}^M$,
$(a^{2}_R)^{\star}$, $(a^{3}_R)^{\star}$, $\ldots$, $(a^{N_R}_R)^{\star}$.
\end{center}
Thus, $\tau = c^{\circlearrowleft}$.

Case 1.2. $\PP_L$ is an $\infty$-elementary $\gbullet$-polygon (see \Pic \ref{fig:infty pdim}).
In this subcase, we have  $\MM(\omega(a^{t}_{L}))\ne 0$ for all $t\ge 1$
and $\omega(a^{t}_{L}) = \omega(a^{t+n_L}_{L})$, where $\sharp\Egreen(\PP_L)=n$.
We can consider $\PP_R$ by the same way. Thus, $\tau$ consecutively crosses
\begin{center}
  $\ldots$,
  $(a^{n}_L)^{\star}$, $\ldots$, $(a^{2}_L)^{\star}$, $(a^{1}_L)^{\star}$,
  $(a^{n}_L)^{\star}$, $\ldots$, $(a^{2}_L)^{\star}$,
  $a_{\rbullet}^1$, $a_{\rbullet}^2$, $\ldots$, $a_{\rbullet}^M$,
  $(a^{2}_R)^{\star}$, $(a^{3}_R)^{\star}$, $\ldots$.
\end{center}

\begin{figure}[H]
\centering
\definecolor{ffqqqq}{rgb}{1,0,0}
\definecolor{bluearc}{rgb}{0,0,1}
\begin{tikzpicture}[scale=0.75] \small
\filldraw[black!20] (0,0) circle (0.3cm);
\draw[line width=1.2pt] (0,0) circle (0.3cm);
\draw[bluearc][line width=1.2pt] (1.73,1) -- (0, 2) -- (-1.73, 1);
\draw[bluearc][line width=1.2pt, dotted] (-1.73, 1) -- (-1.73,-1);
\draw[bluearc][line width=1.2pt] (-1.73,-1) -- (0,-2) -- (1.73,-1) -- (1.73,1);
\draw[bluearc] (-0.89, 1.45) node[above]{$a^n_L$};
\draw[bluearc] ( 0.89, 1.45) node[above]{$a^1_L$};
\draw[bluearc] ( 1.73, 0.00) node[right]{$a^2_L$};
\draw[bluearc] ( 0.89,-1.45) node[below]{$a^3_L$};
\draw[bluearc] (-0.89,-1.45) node[below]{$a^4_L$};
\draw[bluearc] (-1.73, 0.00) node[left]{$\vdots$};
\fill [bluearc] ( 1.73, 1) circle (2.8pt);
\fill [bluearc] ( 0.00, 2) circle (2.8pt);
\fill [bluearc] (-1.73, 1) circle (2.8pt);
\fill [bluearc] (-1.73,-1) circle (2.8pt);
\fill [bluearc] ( 0.00,-2) circle (2.8pt);
\fill [bluearc] ( 1.73,-1) circle (2.8pt);
\fill [bluearc] ( 1.73, 1) circle (2.8pt);
\draw [red][line width=1.2pt] (0,0.3) -- (-0.89, 1.45);
\draw [red][line width=1.2pt] (0,0.3) -- ( 0.89, 1.45);
\draw [red][line width=1.2pt] (0,0.3) to[out= 40,in=140] ( 1.73, 0.00);
\draw [red][line width=1.2pt] (0,0.3) to[out= 20,in= 70] ( 0.89,-1.45);
\draw [red][line width=1.2pt] (0,0.3) to[out=160,in=110] (-0.89,-1.45);
\draw [red][line width=1.2pt] (0,0.3) to[out=140,in= 40] (-1.73, 0.00) [dotted];
\fill [red] (0,0.3) circle (2.8pt); \fill [white] (0,0.3) circle (2pt);
\draw (0,0) node{$b$};
\draw[orange][line width=1.2pt] (0, 2.5) node[left]{$c$} to[out=0, in=165] (1,2.4) [dotted];
\draw[orange][line width=1.2pt] (1, 2.4) to[out=-15, in=180] (1.73,-1);
\draw[violet][line width=1.2pt] (0, 2.3) node[left]{$\tau$} to[out=0, in=165] (1,2.2) [dotted];
\draw[violet][line width=1.2pt] (1, 2.2)
  to[out=-15, in= 90] ( 1,0) to[out=-90,in=  0] (0,-1)
  to[out=180, in=-90] (-1,0) to[out= 90,in=180] (0, 1)
  to[out=  0, in= 90] ( 0.8,0) to[out=-90,in=  0] (0,-0.8)
  to[out=180, in=-90] (-0.8,0) to[out= 90,in=180] (0, 0.8)
  to[out=  0, in= 90] ( 0.6,0) to[out=-90,in=  0] (0,-0.6)
  to[out=180, in=-90] (-0.6,0) to[out= 90,in=180] (0, 0.6);
\draw[violet][line width=1.2pt][dotted] (0, 0.6)
  to[out=  0, in= 90] ( 0.45,0) to[out=-90,in=  0] (0,-0.45)
  to[out=180, in=-90] (-0.45,0) to[out= 90,in=180] (0, 0.45);
\end{tikzpicture}
\caption{String module with infinite projective dimension.}
\label{fig:infty pdim}
\end{figure}
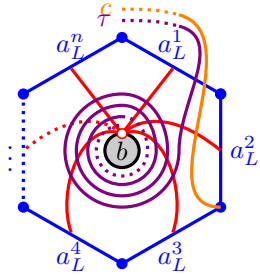

\begin{figure}[H]
\begin{center}
\definecolor{ffqqqq}{rgb}{1,0,0}
\definecolor{bluearc}{rgb}{0,0,1}
\begin{tikzpicture}[yscale=1.3]
\draw[black] (0,2) to[out=180,in=90] (-2,0) [line width=1pt];
\draw[black] (-1.5,0.5) node{$\PP_L$};
\draw[bluearc] ( 0, 2) to[out=-90,in=0] (-2, 0) [line width=1pt];
\draw[bluearc] ( 0, 2) -- (0,-0.5) [line width=1pt];
\fill[bluearc] ( 0, 2) circle (0.1cm);
\fill[bluearc] (-2, 0) circle (0.1cm);
\draw[red] (-1.41, 1.41) circle (0.1cm) [line width=1pt];
\draw[orange] (-1.41, 1.41) to[out=-45,in=180] (0.5,0) [line width=1pt];
\draw (-1,-0.5) node{ (I)};
\end{tikzpicture}
\ \ \ \
\begin{tikzpicture}[yscale=1.3]
\draw[black] (0,2) to[out=180,in=90] (-2,0) [line width=1pt];
\draw[black] (-1.5,0.5) node{$\PP_L$};
\draw[bluearc] ( 0, 2) to[out=-90,in=0] (-2, 0) [line width=1pt];
\draw[bluearc] (-2, 0) -- (0.5,0) [line width=1pt];
\fill[bluearc] ( 0, 2) circle (0.1cm);
\fill[bluearc] (-2, 0) circle (0.1cm);
\draw[red] (-1.41, 1.41) circle (0.1cm) [line width=1pt];
\draw[orange] (-1.41, 1.41) to[out=-45,in=90] (0,-0.5) [line width=1pt];
\draw (-1,-0.5) node{ (II)};
\end{tikzpicture}
\caption{Two forms of $\Dblue$-segment $c_{(0,1)}$ of $c$}
\label{fig:c(0,1)}
\end{center}
\end{figure}

Case 2. At least one of $c_{(0,1)}$ and $c_{(m(c),m(c)+1)}$ belongs to
Case (3) shown in \Pic \ref{fig:arc segment I}.
We only prove this case for $c_{(0,1)}$, the proof of $c_{(m(c),m(c)+1)}$ is similar. In this case, $c_{(0,1)}$ has two forms as shown in \Pic \ref{fig:c(0,1)}.

For (I), we have $\agreen{c}{1}\in\top(c)$ and $\MM(c_{\proj}(\agreen{c}{1}))$ is an indecomposable projective direct summand of $P_0$.
It follows that $Q_L=0$ for $\ker p_0$. Thus, $\tau$ crosses the dual $\rbullet$-arc $(\agreen{c}{1})^{\star}$ of $\agreen{c}{1}$ (see \Pic \ref{fig:c(0,1) satuation I}).

\begin{figure}[htbp]
\begin{center}
\definecolor{ffqqqq}{rgb}{1,0,0}
\definecolor{bluearc}{rgb}{0,0,1}
\begin{tikzpicture}
\fill[black!20] (-2,2)  --  (-2, 0) to[out=-90, in=180] (0, -2)
  -- (0,-2.2) to[out=180,in=-90] (-2.2,0) -- (-2.2,2);
\draw (-2, 0) to[out=-90, in=180] (0, -2)  [line width=1pt];
\draw (-2,2)  --  (-2, 0) [line width=1pt][dotted];
\fill[black!20] (3.5,-0.5) to[out=180,in=-90] (3,0) -- (3,1) to[out=90,in=180] (3.5,1.5);
\fill[black!20] (3.5,1.5) arc(90:-90:1);
\draw[line width=1pt] (3.5,-0.5) to[out=180,in=-90] (3,0) -- (3,1) to[out=90,in=180] (3.5,1.5);
\draw[line width=1pt][dotted] (3.5,1.5) arc(90:-90:1);
\draw[bluearc] (-2, 0) to[out=0,in=90] (0, -2) to[out=45,in=135] (2,-2) [line width=1pt];
\draw[bluearc] (2,-2) to[out=45,in=135] (4,-2)[line width=1pt][dotted];
\draw[bluearc] (4,-2) to[out=45,in=135] (6,-2)[line width=1pt];
\fill[bluearc] (-2,0) circle (0.1);
\fill[bluearc] (-2,2) circle (0.1);
\fill[bluearc] (0,-2) circle (0.1);
\fill[bluearc] (2,-2) circle (0.1);
\fill[bluearc] (4,-2) circle (0.1);
\fill[bluearc] (6,-2) circle (0.1);
\draw[bluearc][line width=1pt] (-2,2) -- (3,1) -- (-2,0);
\draw[bluearc][line width=1pt] (-2,0) -- (3,0.5) [dotted];
\draw[bluearc][line width=1pt] (-2,0) -- (3,0);
\fill[bluearc] (3,1) circle (0.1);
\fill[bluearc] (3,0.5) circle (0.1);
\fill[bluearc] (3,0) circle (0.1);
\fill[bluearc] (3.5,-0.5) circle (0.1);
\draw[bluearc][line width=1pt] (3.5,-0.5) to[out=-10, in=90] (6,-2);
\draw[red][line width=0.75pt] (-1.41,-1.41) to[out=10,in=170] (3.15,-0.35);
\draw[red][line width=0.75pt] (3.15,-0.35) to[out=-90,in=-90] (5.5,0);
\draw[red][line width=0.75pt] (3.15,-0.35) to[out=-160,in=90] (1,-2);
\draw[red][line width=0.75pt] (3.15,-0.35) to[out=-140,in=120] (3,-2) [dotted];
\draw[red][line width=0.75pt] (3.15,-0.35) to[out=-120,in=140] (5,-2);
\draw[red][line width=0.75pt] (3.15,-0.35) to[out=175,in=180] (3,0.25);
\draw[red][line width=0.75pt] (3,0.25) to[out=180,in=180] (3,0.75) [dotted];
\draw[red][line width=0.75pt] (3,0.75) -- (-2,1);
\draw[red][line width=0.75pt] (-2,1) -- (0,2);
\fill[red] (-1.41,-1.41) circle (0.11); \fill[white] (-1.41,-1.41) circle (0.08);
\fill[red] (3.15,-0.35) circle (0.11); \fill[white] (3.15,-0.35) circle (0.08);
\fill[red] (-2,1) circle (0.11); \fill[white] (-2,1) circle (0.08);
\draw[orange][line width=1pt] (-1.41,-1.41) to[out=20,in=180] (4,2.5);
\draw[orange][line width=1pt] (4,2.5) to[out=0,in=90] (6,0.5) [dotted];
\draw[orange][line width=1pt] (6,0.5) to[out=-90,in=90] (4,-2);
\draw[orange] (6,0.5) node[left]{$c$};
\draw[violet][line width=1pt] (0,-2) to[out=90,in=180] (4,2.7);
\draw[violet][line width=1pt] (4,2.7) to[out=0,in=90] (6.2,0.5) [dotted];
\draw[violet][line width=1pt] (6.2,0.5) to[out=-90,in=0] (3.5,-1)
  to[out=180,in=-90] (2.5,-0.5) to[out=90,in=180] (3,0);
\draw[violet] (6.2,0.5) node[right]{$\tau=c^{\circlearrowleft}$};
\draw[bluearc] (1.414-2,1.414-2) node[left]{$\agreen{c}{1}$};
\draw (-1,-1.73) node[below]{$\bSurf$};
\draw (4.5,0.5) node[left]{$\bSurf$};
\end{tikzpicture}
\caption{The case that $\agreen{c}{1}\in\top(c)$ }
\label{fig:c(0,1) satuation I}
\end{center}
\end{figure}

For (II), we have $\agreen{c}{1}\in\soc(c)$.
Assume that the string corresponding to $c$ is of the form:
\[ \mathfrak{v}(\agreen{c}{1}) \longleftarrow \cdots \longleftarrow \mathfrak{v}(a_{\gbullet}) \longrightarrow \cdots . \]
Let $c_{\simp}(\agreen{c}{1})$ be the permissible curve crosses only $\agreen{c}{1}$.
Then $\MM(c_{\simp}(\agreen{c}{1}))$ is a simple module corresponding to $\mathfrak{v}(\agreen{c}{1})$
and it is a direct summand of the socle of $\MM(c_{\proj}(a_{\gbullet}))$.
Thus we also have $Q_L=0$ for $\ker p_0$. So $\tilde{\tau}$ is the admissible curve such that $\tau$ crosses $(a_{\gbullet})^{\star}$.
\end{proof}

\subsection{The admissible curves corresponding to band modules}

Let $B(n,\lambda)$ be the band module corresponding to the permissible curve $c\in\PC^{\oslash}_{\oslash}(\SURF(A)^{\F_A}) \ne \varnothing$ with Jordan block $\pmb{J}_{n}(\lambda\ne 0)$.
By \cite{BR1987}, we have a short exact sequence
\begin{align}\label{s.e.s. of band}
0 \longrightarrow B(1,\lambda) \longrightarrow B(n+1,\lambda)
  \longrightarrow B(n,\lambda) \longrightarrow 0
\end{align}
for any $n\in\NN^+$. As we all know $\pdim B(n,\lambda) = 1$ (see \cite[Corollary 2.26]{Cpre}). Thus, we have

\begin{lemma} \label{lemm:1-band}
Let $(c,\pmb{J}_{n}(\lambda))\in\PC^{\oslash}_{\oslash}(\SURF(A)^{\F_A})\times \mathscr{J}$ be the permissible curve with Jordan block $\pmb{J}_{n}(\lambda)$ $(\lambda\ne 0)$.
Then the admissible curve corresponding to the projective resolution of $\MM(c,\pmb{J}_{n}(\lambda))\cong B(n,\lambda)$ is a graded curve
$\tilde{\tau} \in \AC_{\oslash}(\SURF(A)^{\F_A})$ such that $\tau$ is homotopic to $c$.
\end{lemma}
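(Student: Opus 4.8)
The plan is to bootstrap from the known equality $\pdim B(n,\lambda)=1$ (\cite[Corollary~2.26]{Cpre}, recalled above) by running the computation of Theorem~\ref{thm:string embedding} in the endpoint-free (cyclic) setting. Since the second coordinate $\pmb{J}_{n}(\lambda)\in\mathscr{J}$ only records the Jordan block and not the shape of the curve, and since the minimal projective resolution of $B(n,\lambda)$ has the two-term form
\[ 0\longrightarrow P_1\longrightarrow P_0\longrightarrow B(n,\lambda)\longrightarrow 0, \]
the deleted projective complex $\mathbf{P}(\MM(c,\pmb{J}_{n}(\lambda)))$ is a two-term complex of projectives; applying the horseshoe lemma to the short exact sequence \eqref{s.e.s. of band} and inducting on $n$ (using the standard description of band modules, see \cite{BR1987}) shows $P_i\cong (P_i^{(1)})^{\oplus n}$, where $0\to P_1^{(1)}\to P_0^{(1)}\to B(1,\lambda)\to 0$ is the $n=1$ resolution and the differential $P_1\to P_0$ differs from the $n=1$ one only by a Jordan block $\pmb{J}_{n}(\lambda)$ inserted along the ``closing'' letter of the band. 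Since the minimal complex $\mathbf{P}(\MM(c,\pmb{J}_{n}(\lambda)))$ has no contractible direct summand while every bounded exact complex of projectives is contractible, it is indecomposable in $\per A$ (its only nonzero cohomology is $\H^{0}=B(n,\lambda)$, which is indecomposable), so by Theorem~\ref{thm:OPS and BCS corresponding}(2) it equals $\X(\tilde\tau,\pmb{J}_{n}(\mu))$ for a single admissible curve $\tilde\tau$ and some $\mu\ne 0$, with $\tilde\tau$ the same for every $n$. It thus suffices to treat $n=1$ and to prove (i) $\tilde\tau\in\AC^{\oslash}_{\oslash}(\SURF(A)^{\F_A})$ and (ii) $\tau$ is homotopic to $c$.

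For $n=1$ I would repeat the proof of Theorem~\ref{thm:string embedding} cyclically. Since $c\in\PC^{\oslash}_{\oslash}(\SURF(A)^{\F_A})$ has no endpoints, it has no $\Dblue$-arc segments of Case~(1) or Case~(3) of \Pic\ref{fig:arc segment I} and no distinguished first and last $\gbullet$-arcs $\agreen{c}{1}$, $\agreen{c}{m(c)}$; hence the evident cyclic analogue of Lemma~\ref{lemm:proj cover} yields a projective cover $p_0\colon P_0=\bigoplus_{a_\gbullet\in\top(c)}\MM(c_{\proj}(a_\gbullet))\twoheadrightarrow B(1,\lambda)$ with $\ker p_0\cong\bigoplus_{a_\gbullet\in\soc(c)}\MM(c_{\proj}(a_\gbullet))$, where $c_{\proj}(a_\gbullet)$ now denotes the projective permissible curve through $a_\gbullet$. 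Crucially, the summands $Q_L$ and $Q_R$ of Lemma~\ref{lemm:proj cover} --- which in the string case are exactly the obstruction to the first syzygy being projective --- are now absent, so $\ker p_0$ is projective and $\pdim B(1,\lambda)=1$, recovering \cite[Corollary~2.26]{Cpre}. Translating the two-term complex $\ker p_0\to P_0$ into an admissible curve exactly as in the proof of Theorem~\ref{thm:string embedding} (via \cite[Theorem~2.12]{OPS2018}), $\tau$ crosses, consecutively and cyclically, the dual $\rbullet$-arcs of the top and socle $\gbullet$-arcs of $c$ in the order dictated by the differential; because there is no ``left tail'' or ``right tail'' of dual arcs coming from $\omega(\overleftarrow{\agreen{c}{1}})$- or $\omega(\overleftarrow{\agreen{c}{m(c)}})$-type contributions, this chain of $\Dred$-arc segments closes up into a loop with no endpoints, giving $\tilde\tau\in\AC^{\oslash}_{\oslash}(\SURF(A)^{\F_A})$ and proving (i).

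For (ii), the point is that the surgery underlying the above translation --- replacing, inside each elementary $\gbullet$-polygon met by $c$, the relevant $\Dblue$-arc segment of $c$ by the $\Dred$-arc segments through the corresponding dual $\rbullet$-arcs --- is a homotopy of the closed curve: in the string case these local moves combine with the genuine rotation of Definition~\ref{def:rot}, which slides the two endpoints of $c$ along $\bSurf$, but a loop has no endpoints to slide, so the chain of local homotopies exhibits $\tau$ as freely homotopic to $c$; that $\tilde\tau$ is primitive (a band curve, not a proper power) is inherited from primitivity of the band $c$, and admissibility --- consecutive $\Dred$-arc segments being distinct --- follows from $c$ being a reduced cyclic walk. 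I expect the main obstacle to be making the ``cyclic Lemma~\ref{lemm:proj cover}'' and the ensuing \cite{OPS2018}-translation precise without the bookkeeping carried by $\agreen{c}{1}$ and $\agreen{c}{m(c)}$: one must handle the degenerate short-band cases (for instance when a $\gbullet$-arc crossed by $c$ is simultaneously a top and a socle arc), and verify that the interleaved sequence of dual $\rbullet$-arcs and of the $\rbullet$-arcs met along the intervening segments really does assemble into one reduced closed $\Dred$-arc-segment walk freely homotopic to $c$.
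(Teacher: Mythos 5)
Your proposal is correct and follows essentially the same route as the paper, which in fact records no argument beyond the short exact sequence (\ref{s.e.s. of band}) from \cite{BR1987} and $\pdim B(n,\lambda)=1$ from \cite[Corollary 2.26]{Cpre}; your write-up (horseshoe induction to reduce to $n=1$, the cyclic analogue of Lemma \ref{lemm:proj cover} identifying $P_0$ and $P_1$, and the local-surgery homotopy between $\tau$ and $c$) simply supplies the details the paper leaves implicit. The one step to tighten is the appeal to Theorem \ref{thm:OPS and BCS corresponding}(2): the bijection alone does not exclude that the indecomposable two-term complex is a string complex, but your explicit differential already exhibits a cyclic homotopy word in the sense of Bekkert--Merklen/\cite{OPS2018}, which identifies it directly as a band complex over a closed curve, so you should draw that conclusion from the differential rather than from the bijection.
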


%
%
%
We immediately obtain the following result by Lemma \ref{lemm:1-band}.

\begin{corollary} \label{thm:band embedding}
For any $n\in \NN^+$, there exists an isomorphism in $\per A$
\[\X(\tilde{\tau}, \pmb{J}_{n}(\lambda)) \cong \comp{P}_n\]
between the band complex $\X(\tilde{\tau}, \pmb{J}_{n}(\lambda))$
and the complex $\comp{P}_n$ induced by the projective resolution of $B(n,\lambda)$,
where $\pmb{J}_{n}(\lambda)$ is the $n\times n$ Jordan block with non-zero eigenvalue $\lambda$.
\end{corollary}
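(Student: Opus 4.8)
The plan is to deduce Corollary \ref{thm:band embedding} directly from Lemma \ref{lemm:1-band} together with the bijection $\X$ from Theorem \ref{thm:OPS and BCS corresponding}(2) and the short exact sequence \eqref{s.e.s. of band}. First I would recall the setup: by Lemma \ref{lemm:1-band}, the deleted projective complex $\comp{P}_1$ attached to the projective resolution of the one-dimensional band module $B(1,\lambda)=\MM(c,\pmb{J}_1(\lambda))$ is, up to isomorphism in $\per A$, the string-type admissible complex $\X(\tilde{\tau})$, where $\tilde{\tau}\in\AC_\oslash(\SURF(A)^{\F_A})$ is the graded admissible curve whose underlying curve $\tau$ is homotopic to $c$. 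Since $\pdim B(n,\lambda)=1$ for all $n$ (cited from \cite[Corollary 2.26]{Cpre}), the complex $\comp{P}_n$ is concentrated in the two degrees $0$ and $-1$, with $P_0$ a projective cover of $B(n,\lambda)$ and $P_{-1}$ its first syzygy.

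The key step is to show that $\comp{P}_n$ is isomorphic in $\per A$ to the band complex $\X(\tilde{\tau},\pmb{J}_n(\lambda))$. I would argue this by induction on $n$, using the short exact sequence \eqref{s.e.s. of band}: $0\to B(1,\lambda)\to B(n+1,\lambda)\to B(n,\lambda)\to 0$. This short exact sequence induces a triangle $\comp{P}_1\to \comp{P}_{n+1}\to \comp{P}_n\to \comp{P}_1[1]$ in $\per A$ (the horseshoe lemma gives a term-wise split short exact sequence of complexes, hence a distinguished triangle); equivalently $\comp{P}_{n+1}$ is the mapping cone of the connecting morphism $\comp{P}_n\to\comp{P}_1[1]$. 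On the geometric side, the band complexes $\X(\tilde{\tau},\pmb{J}_m(\lambda))$ fit into the analogous triangles: by the description of band complexes in \cite{OPS2018} (the Jordan block filtration of $\pmb{J}_{n+1}(\lambda)$ by $\pmb{J}_1(\lambda)$ and $\pmb{J}_n(\lambda)$), there is a triangle $\X(\tilde{\tau},\pmb{J}_1(\lambda))\to\X(\tilde{\tau},\pmb{J}_{n+1}(\lambda))\to\X(\tilde{\tau},\pmb{J}_n(\lambda))\to$. Matching the base case $n=1$ (which is Lemma \ref{lemm:1-band}) and checking that the connecting morphisms agree under the identifications $\comp{P}_1\cong\X(\tilde{\tau})$ and $\comp{P}_n\cong\X(\tilde{\tau},\pmb{J}_n(\lambda))$ — both are, up to scalar, the canonical nonzero map realizing the self-extension controlled by $\lambda$ — completes the induction.

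Alternatively, and perhaps more cleanly, I would avoid the inductive cone argument and instead compute $\comp{P}_n$ directly. Since $\pdim B(n,\lambda)=1$, writing the projective presentation $0\to P_{-1}\xrightarrow{d} P_0\to B(n,\lambda)\to 0$, one identifies $P_0$ and $P_{-1}$ explicitly from the string/band combinatorics of $(c,\pmb{J}_n(\lambda))$: $P_0$ is a direct sum of indecomposable projectives indexed (with multiplicity $n$, because of the Jordan block) by the "peaks" of the cyclic word of $c$, and similarly for $P_{-1}$ from the "deeps". One then observes that the OPS-combinatorial data of the band complex $\X(\tilde{\tau},\pmb{J}_n(\lambda))$ — the graded arcs crossed by $\tau$, together with the $n\times n$ Jordan block decorating the closed curve — produce exactly the same two-term complex of projectives, with the same differential $d$ (the off-diagonal entries built from the arrows of the cyclic word, the diagonal twist built from $\lambda$). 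This is essentially the case $n=1$ of Lemma \ref{lemm:1-band} "tensored with" the Jordan block, so the main work is bookkeeping: tracking how the eigenvalue $\lambda$ enters the differential on both sides and verifying the two prescriptions coincide.

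The main obstacle I expect is precisely the compatibility of the eigenvalue data: Lemma \ref{lemm:1-band} only addresses the $n=1$ case (or at most asserts that $\tau\simeq c$ as curves), and one must be careful that the passage from $\pmb{J}_1(\lambda)$ to $\pmb{J}_n(\lambda)$ is compatible on the module side (via \eqref{s.e.s. of band} and the horseshoe lemma, which a priori only gives a filtration, not a canonical complex) and on the geometric side (where the band complex with an $n\times n$ Jordan block is defined in \cite{OPS2018} by a specific "decorated" two-term complex). Pinning down that these two constructions yield isomorphic objects of $\per A$ — rather than merely objects with the same class in the Grothendieck group or the same cohomology — is the crux. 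I would handle it by exhibiting an explicit chain isomorphism, reducing via a change of basis to the block-triangular form in which $\pmb{J}_n(\lambda)$ visibly appears, and invoking the uniqueness of projective resolutions to conclude.
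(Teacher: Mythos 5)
Your second, direct route is essentially the paper's argument made explicit, and it is the one to keep. Note first that Lemma \ref{lemm:1-band} is stated for every $n$, not only for $n=1$: it already asserts that the admissible curve underlying the complex of the projective resolution of $B(n,\lambda)=\MM(c,\pmb{J}_n(\lambda))$ is a graded curve $\tilde\tau$ with $\tau$ homotopic to $c$. Hence no induction is needed; the paper deduces the corollary immediately from that lemma, and what remains is precisely the bookkeeping of your alternative argument: since $\pdim B(n,\lambda)=1$, the deleted resolution $\comp{P}_n$ is an indecomposable two-term complex of projectives, hence an object of $\per A$, so under the bijection of Theorem \ref{thm:OPS and BCS corresponding}(2) it is the band complex attached to $\tilde\tau$ with some Jordan block; comparing the differential (or simply its degree-zero cohomology $B(n,\lambda)$, using that $B(m,\mu)\cong B(n,\lambda)$ forces $m=n$ and $\mu=\lambda$) pins the decoration down to $\pmb{J}_n(\lambda)$, and uniqueness of minimal projective resolutions gives the isomorphism in $\per A$.

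Your first route (induction on $n$ via the triangle induced by \eqref{s.e.s. of band} and a mapping-cone comparison) is genuinely different from the paper and contains the one real weak point of the proposal: the claim that the two connecting morphisms agree ``up to scalar'' presupposes that the relevant space, e.g. $\Hom_{\per A}\big(\X(\tilde\tau,\pmb{J}_n(\lambda)),\X(\tilde\tau,\pmb{J}_1(\lambda))[1]\big)\cong\Ext^1_A(B(n,\lambda),B(1,\lambda))$, is one-dimensional. For a general gentle algebra this is not automatic (a band may admit extra graph maps into itself), and cones over non-proportional morphisms need not be isomorphic, so without this dimension count the induction does not close. Since the direct two-term comparison avoids the issue entirely, discard the cone argument or keep it only as motivation.
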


\section{The cohomologies of complexes} \label{sec:truncations}

Let $A$ be a gentle algebra. In this section, we give a description of cohomologies of complexes in $\per A$.
For any path $\wp=\alpha_1\alpha_2\cdots\alpha_t$ on the quiver $\Q$ of $A$, recall that the formal inverse path of $\wp$ is $\wp^{-1}=\alpha_t^{-1}\cdots\alpha_2^{-1}\alpha_1^{-1}$. We denote by $\Q^{-1} = \bigcup_{\ell\in\NN}\Q^{-1}_{\ell}$ the set of all formal inverse paths, where $\Q^{-1}_{\ell}$ is the set of all formal inverse paths of length $\ell$.
For any $\wp\in\Q$, we have $(\wp^{-1})^{-1} = \wp \in \Q$. In particular, if the length of $\wp$ is zero, then $\wp=\wp^{-1}\in \Q_0 = \Q^{-1}_0$.

Let $\tc$ be an admissible curve with $c=\{c_{[i,i+1]}\}_{0\le i\le n(\tc)}$ (see \Pic \ref{fig:tc}).
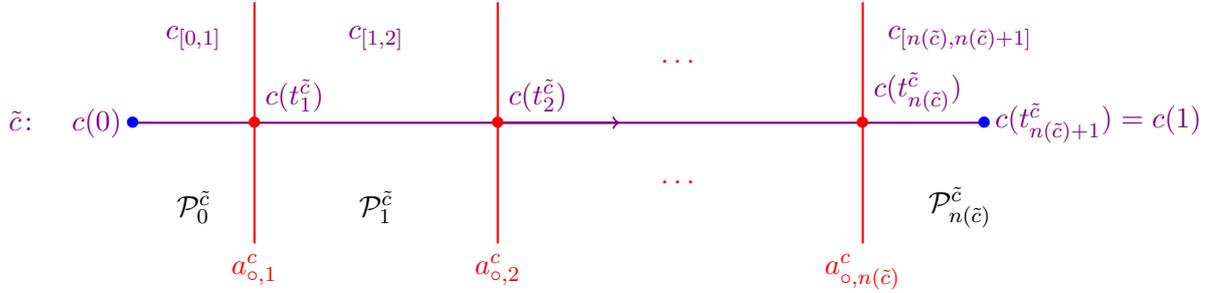
\begin{figure}[H]
\centering
\begin{tikzpicture}[scale=1.6]
\draw[violet,thick] (0,0)node[violet,left]{$\tc\colon\quad c(0)$} to
                    (7,0)node[violet,right]{$c(t^{\tc}_{n(\tc)+1})=c(1)$};
                    \draw[violet, thick, ->] (3.0,0) to (4.0,0);
\draw[violet]  (.5, .7)node{$c_{[0,1]}$} (2, .7)node{$c_{[1,2]}$} (6.8, .7)node{$c_{[n(\tc), n(\tc)+1]}$};
\draw[black]   (.5,-.7)node{$\PP_0^{\tc}$}     (2,-.7)node{$\PP_1^{\tc}$}     (6.8,-.7)node{$\PP_{n(\tc)}^{\tc}$};
\draw[red, thick](1,1)to(1,-1)node[below]{$a^{c}_{\rbullet, 1}$}
                 (1,0) node{$\bullet$} node[violet,above right]{$c(t^{\tc}_1)$};
\draw[red, thick](3,1)to(3,-1)node[below]{$a^{c}_{\rbullet, 2}$}
                 (3,0) node{$\bullet$} node[violet,above right]{$c(t^{\tc}_2)$};
\draw[red, thick](6,1)to(6,-1)node[below]{$a^{c}_{\rbullet, n(\tc)}$}
                 (6,0) node{$\bullet$} node[violet,above right]{$c(t^{\tc}_{n(\tc)})$}
                 (4.5,.5)node{$\cdots$}(4.5,-.5)node{$\cdots$};
\draw[blue] (0,0) node{$\gbullet$} (7,0) node{$\gbullet$};
\end{tikzpicture}
\caption{Arc segments of graded curve $\tc$}
\label{fig:tc}
\end{figure}
\noindent
It is clear that any $\Dred$-arc segment $c_{[i,i+1]}$ ($1\le i< n(\tc)$) of $c$ lies in the inner of an elementary $\rbullet$-polygon $\PP^{\tc}_i$,
which corresponding to a path $\wp(c_{[i,i+1]}): =\alpha_{i, 1}\alpha_{i, 2}\cdots\alpha_{i,t_i}$
(or a formal inverse path $\wp(c_{[i,i+1]}): =\alpha_{i,t_i}^{-1}\cdots\alpha_{i, 2}^{-1}\alpha_{i, 1}^{-1}$)
on the quiver of $A$.
Therefore, $c$ provides a path $\wp(c)=\wp_1\cdots\wp_{n(\tc)-1}$ on the underlying graph $\overline{\Q}$ of $\Q$, which is called {\defines the homotopy string of $\X(\tc)$}.
Let $\comp{P} = (P^n, d^n)_{n\in \ZZ} = $
\[ \cdots \mathop{\longrightarrow}\limits^{d^{-2}} P^{-1}
          \mathop{\longrightarrow}\limits^{d^{-1}} P^{0}
          \mathop{\longrightarrow}\limits^{d^{0}}  P^{1}
          \mathop{\longrightarrow}\limits^{d^{1}}  P^{2}
          \mathop{\longrightarrow}\limits^{d^{2}}  \cdots
\]
be an indecomposable object in $\per A$.
By \cite{BM2003} and \cite[Theorem 2.12]{OPS2018}, $\X(\tc) = (P^n, d^n)$ is the indecomposable projective complex in $\per A$ such that the following conditions hold:
\begin{itemize}
  \item The $n$-th component of $\X(\tc)$ is
    \[P^n = \bigoplus\limits_{\varsigma^{\tc}_i=-n}P(\bfv(\dualgreen{c}{i})),\]
    where $\bfv(\dualgreen{c}{i})$ is the starting point of $\wp_i$ and the ending point of $\wp_{i-1}$, $\dualgreen{c}{i}$ is the dual arc of $a^{c}_{\rbullet, i}$ and $\varsigma^{\tc}_i$ is the intersection index $\ii_{c(i)}(\tc, \ta_{\rbullet,i})$.

  \item $d^n$ is naturally induced by the homotopy string of projective complex.
\end{itemize}
Thus, for any $i$, $\MM(c_{\proj}(\dualgreen{c}{i}))$ is a direct summand of $(-\varsigma^{\tc}_i)$-th component $P^{-\varsigma^{\tc}_i}$ of $\X(\tc)$.
In this case, we say that the projective permissible curve $c_{\proj}(\dualgreen{c}{i})$ is a {\defines $(-\varsigma^{\tc}_i)$-th component} of $\tc$. Notice that we have the following four cases.
\begin{itemize}
  \item[(A)] (resp. (B)) The $\gbullet$-marked point in $\PP_{i-1}^{\tc}$ is to the right (resp. left) of $c_{[i-1,i]}$
    and the $\gbullet$-marked point in $\PP_{i}^{\tc}$ is to the left (resp. right) of $c_{[i,i+1]}$ for any
    $1\le i\le n(\tc)$.

  \item[(C)] (resp. (D)) The $\gbullet$-marked points in $\PP_{i-1}^{\tc}$ and $\PP_{i}^{\tc}$ are to the right (resp. left) of $c_{[i-1,i]}$ and $c_{[i,i+1]}$ for any $1\le i\le n(\tc)$, respectively.

\end{itemize}

Next, we introduce the truncations of projective permissible curves, which will be used to describe the cohomology of indecomposable object in $\per A$.

\begin{definition} \rm \label{def:truncation}
Let $p = \{p_{(i,i+1)}\}_{0\le i\le m(p)} \in \PC_{\m}(\SURF^{\F})$ be a permissible curve. For any $0\le t\le t+u\le m(p)+1$,  we call $\tru_{t}^{t+u}(p)$ a {\defines truncation} of $p$ given by $\agreen{p}{t}$ and $\agreen{p}{t+u}$ cutting $p$ (=truncation for short)
if it is a subsequence $\{p_{(i,i+1)}\}_{t\le i\le t+u-1}$ of $p$, where $\agreen{p}{0}=p(0)$ and $\agreen{p}{m(p)+1}=p(1)$.
Moreover, we say $\tru_{t_1}^{t_2}(p)$ is a {\defines trivial truncation of $p$} if $t_1>t_2$ and denote $\tru_{t_1}^{t_2}(p)=\varnothing$;
otherwise, we denote $\tru_{i}^{j}(p) \sqsubseteq p$.
\end{definition}

Let $c_1, \ldots, c_n$ be a family of permissible curves. Assume that $M =\bigoplus\limits_{i=1}^n \MM(c_i)$. We say that $q \sqsubseteq \MM^{-1}(M)=\{c_1, \ldots, c_n\}$ if $q \sqsubseteq c_i$ for some $\Dblue$-arc segments $q$ and some $1\le i\le n$.

\begin{proposition} \label{prop:HomologiesCaseI}
Let $\comp{P}$ be a string complex in $\per A$ and $\tc$ be an admissible curve in $\X^{-1}(\comp{P})$. Assume that $n(\tc) \ge 3$, that is, $\tc$ crosses at least three $\Dred$-arcs $\ared{c}{i-1}$, $\ared{c}{i}$ and $\ared{c}{i+1}$.
\begin{itemize}
\item[{\rm(1)}]
If the $\gbullet$-marked points $\m_1$ and $\m_2$ satisfy one of {Cases \rm(A)}, {\rm(C)} and {\rm(D)} {\rm(}see {\Pic}\!\!s \ref{fig:HomologiesCaseI1}--\ref{fig:HomologiesCaseI3}{\rm)},
then there exist two integers $1\le r, s \le m(c_{\proj}(\dualgreen{c}{i}))$ {\rm(}$=m$ for short{\rm)} such that $(c_{\proj}(\dualgreen{c}{i}))_{(0,1)}$, $\ldots$, $(c_{\proj}(\dualgreen{c}{i}))_{(r-1,r)}$ are $\Dblue$-arc segments of $c_{\proj}(\dualgreen{c}{i-1})$ and $(c_{\proj}(\dualgreen{c}{i}))_{(s,s+1)}$, $\ldots$, $(c_{\proj}(\dualgreen{c}{i}))_{(m, m+1)}$ are $\Dblue$-arc segments of $c_{\proj}(\dualgreen{c}{i+1})$, respectively.
%
In particular, if $s\ge r+2$, then
\[\tru_{r+1}^{s-1}(c_{\proj}(\dualgreen{c}{i})) \sqsubseteq \MM^{-1}(\rmH^{-\varsigma_i^{\tc}}(\comp{P})).\]

\item[{\rm(2)}] If $\m_1$ and $\m_2$ satisfy {\rm Case (B)} $($see \Pic \ref{fig:HomologiesCaseIII}$)$, then there exists an integer $1\le r\le m$ such that
\[\tru_{r}^{r}(c_{\proj}(\dualgreen{c}{i})) \sqsubseteq \MM^{-1}(\rmH^{-\varsigma^{\tc}_1}(\comp{P})). \]
That is, $\rmH^{-\varsigma^{\tc}_1}(\comp{P})e_{\mathfrak{v}(\agreen{p}{r})}\ne 0$. In this case, $\dualgreen{c}{i} = \agreen{p}{r}$.
\end{itemize}
\end{proposition}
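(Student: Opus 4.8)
The plan is to reduce the statement to a careful bookkeeping of the explicit form of the indecomposable projective complex $\X(\tc)$ recalled just before Definition~\ref{def:truncation}, combined with Lemma~\ref{lemm:proj cover} (the shape of the projective cover of a string module). The cohomology $\rmH^{-\varsigma_i^{\tc}}(\comp{P})$ at the spot corresponding to the intersection $c(i)$ is computed as $\ker d^{-\varsigma_i^{\tc}}/\im d^{-\varsigma_i^{\tc}-1}$, and the differentials $d^n$ of $\X(\tc)$ are induced by the homotopy string $\wp(c)=\wp_1\cdots\wp_{n(\tc)-1}$, with the summand $\MM(c_{\proj}(\dualgreen{c}{i}))$ sitting in homological degree $-\varsigma_i^{\tc}$. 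So the first step is to isolate the ``local'' piece of the complex around the $i$-th component: the incoming arrow(s) $\wp_{i-1}$ and outgoing arrow(s) $\wp_i$ of $\X(\tc)$ at $\bfv(\dualgreen{c}{i})$ correspond to paths in an elementary $\rbullet$-polygon, and these determine exactly which $\Dblue$-arc segments of $c_{\proj}(\dualgreen{c}{i})$ are ``shared'' with the neighbouring projective curves $c_{\proj}(\dualgreen{c}{i-1})$ and $c_{\proj}(\dualgreen{c}{i+1})$.

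Next I would treat the two items separately according to the four local configurations (A)--(D). For item (1), in Cases (A), (C), (D) the $\gbullet$-marked points $\m_1,\m_2$ are positioned so that the maps $\MM(c_{\proj}(\dualgreen{c}{i-1}))\to\MM(c_{\proj}(\dualgreen{c}{i}))\to\MM(c_{\proj}(\dualgreen{c}{i+1}))$ (the restrictions of $d^{-\varsigma_i^{\tc}-1}$ and $d^{-\varsigma_i^{\tc}}$ to the relevant summands) are, respectively, a monomorphism onto a ``beginning piece'' and a map whose image is an ``ending piece'' of the string $\MM(c_{\proj}(\dualgreen{c}{i}))$; this is precisely the content of Lemma~\ref{lemm:proj cover} applied to the projective curve $c_{\proj}(\dualgreen{c}{i})$, read through the identification of top and socle $\gbullet$-arcs with the neighbouring projective curves. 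The integers $r$ and $s$ are then defined as the lengths of these shared prefixes/suffixes: the first $r$ arc segments $(c_{\proj}(\dualgreen{c}{i}))_{(0,1)},\dots,(c_{\proj}(\dualgreen{c}{i}))_{(r-1,r)}$ are exactly those crossing $\gbullet$-arcs that also appear on $c_{\proj}(\dualgreen{c}{i-1})$, and dually for the last $m-s$ segments and $c_{\proj}(\dualgreen{c}{i+1})$. Once $r,s$ are pinned down, the subquotient $\ker d/\im d$ at this spot is the string module supported on the ``middle'' arc segments, i.e. on $\tru_{r+1}^{s-1}(c_{\proj}(\dualgreen{c}{i}))$ when $s\ge r+2$ (and is zero otherwise); hence $\tru_{r+1}^{s-1}(c_{\proj}(\dualgreen{c}{i}))\sqsubseteq\MM^{-1}(\rmH^{-\varsigma_i^{\tc}}(\comp{P}))$ by the definition of $\sqsubseteq$ for a direct sum of string modules.

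For item (2), Case (B) is the ``opposite orientation'' configuration: here $\dualgreen{c}{i}$ is simultaneously approached from the left on both sides, so $\dualgreen{c}{i}$ itself survives in the cohomology as a simple summand. Concretely, the local picture forces the vertex $\bfv(\dualgreen{c}{i})$ to lie in the top of the relevant composition factor while no arrow of the homotopy string hits it from the appropriate side, so $\rmH^{-\varsigma_1^{\tc}}(\comp{P})e_{\mathfrak{v}(\agreen{p}{r})}\ne 0$ for the index $r$ with $\dualgreen{c}{i}=\agreen{p}{r}$ on $p=c_{\proj}(\dualgreen{c}{i})$; this gives $\tru_r^r(c_{\proj}(\dualgreen{c}{i}))\sqsubseteq\MM^{-1}(\rmH^{-\varsigma_1^{\tc}}(\comp{P}))$. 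In all cases one still has to verify that the remaining summands of $P^{-\varsigma_i^{\tc}}$ (those coming from other intersections $c(j)$ with $\varsigma_j^{\tc}=\varsigma_i^{\tc}$) do not interfere, but this follows because the differential of a string complex respects the decomposition into the segments of the homotopy string, so the cohomology splits as a direct sum over the intersection points and one may compute one spot at a time.

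The main obstacle I anticipate is the orientation/side bookkeeping: correctly matching ``left of $c$'' / ``right of $c$'' in the surface with ``top'' / ``socle'' of the string module and with the direction of the arrows $\wp_{i-1},\wp_i$ in each of the four cases, and in particular checking that in Cases (C) and (D) the prefix and suffix pieces genuinely come from $c_{\proj}(\dualgreen{c}{i-1})$ and $c_{\proj}(\dualgreen{c}{i+1})$ rather than being cancelled by a longer shared segment. This is the step where the figures \ref{fig:HomologiesCaseI1}--\ref{fig:HomologiesCaseIII} do the real work, and the argument should be organized so that each case is reduced to a single application of Lemma~\ref{lemm:proj cover} after a change of picture.
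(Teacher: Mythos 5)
Your proposal follows essentially the same route as the paper's proof: it extracts the local piece of the string complex around $P(\bfv(\dualgreen{c}{i}))$ from the homotopy string, identifies the images of the neighbouring projective summands with the shared beginning/ending pieces of $c_{\proj}(\dualgreen{c}{i})$ (which pins down $r$ and $s$), and concludes that the middle truncation survives in $\ker d^{-\varsigma_i^{\tc}}/\im d^{-\varsigma_{i-1}^{\tc}}$ — precisely the paper's computation, carried out there by evaluating at the idempotents $e_t$ and the arrows $\alpha_l,\beta_k$, where the neighbouring projectives vanish. The one blemish is your closing claim that the cohomology ``splits as a direct sum over the intersection points'': taken literally this is false, since adjacent local contributions can glue into a single indecomposable summand (this is exactly why Theorem \ref{thm:homologies} needs the supplemental union $\ocup$), but it is harmless for the containment $\sqsubseteq$ asserted here, because the non-cancellation you need is already supplied by the evaluation at the middle idempotents and arrows.
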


\begin{figure}[H]
\centering
\definecolor{ffxfqq}{rgb}{1,0.5,0}
\definecolor{qqqqff}{rgb}{0,0,1}
\definecolor{ttqqqq}{rgb}{0.2,0,0}
\definecolor{ffqqqq}{rgb}{1,0,0}
\begin{tikzpicture}[scale=1.4]
\draw [line width=1pt,color=ffqqqq] (0,1)-- (-1,2);
\draw [line width=1pt,color=ffqqqq] (-1,2)-- (-3,2);
\draw [line width=1pt,dash pattern=on 2pt off 2pt,color=ffqqqq] (-3,2)-- (-4,1);
\draw [line width=1pt,color=ffqqqq] (-4,1)-- (-4,-1);
\draw [line width=1pt,dash pattern=on 2pt off 2pt,color=ffqqqq] (-4,-1)-- (-3,-2);
\draw [line width=1pt,color=ttqqqq] (-3,-2)-- (-1,-2);
\draw [line width=1pt,dash pattern=on 2pt off 2pt,color=ffqqqq] (-1,-2)-- (0,-1);
\draw [line width=1pt,color=ffqqqq] (0,-1)-- (0,1);
\draw [line width=1pt,dash pattern=on 2pt off 2pt,color=ffqqqq] (1,2)-- (0,1);
\draw [line width=1pt] (1,2)-- (3,2);
\draw [line width=1pt,dash pattern=on 2pt off 2pt,color=ffqqqq] (3,2)-- (4,1);
\draw [line width=1pt,color=ffqqqq] (4,1)-- (4,-1);
\draw [line width=1pt,dash pattern=on 2pt off 2pt,color=ffqqqq] (4,-1)-- (3,-2);
\draw [line width=1pt,color=ffqqqq] (3,-2)-- (1,-2);
\draw [line width=1pt,color=ffqqqq] (1,-2)-- (0,-1);
\draw [line width=1pt,dash pattern=on 2pt off 2pt,color=qqqqff] (-2.03,-2)-- (-3.49,-1.51);
\draw [line width=1pt,color=qqqqff] (-2.03,-2)-- (-4,-0.08);
\draw [line width=1pt,dash pattern=on 2pt off 2pt,color=qqqqff] (-2.03,-2)-- (-3.52,1.48);
\draw [line width=1pt,color=qqqqff] (-2.03,-2)-- (-2.02,2);
\draw [line width=1pt,color=qqqqff] (-2.03,-2)-- (-0.53,1.53);
\draw [line width=1pt,color=qqqqff] (-2.03,-2)-- (0,-0.06);
\draw [line width=1pt,dash pattern=on 2pt off 2pt,color=qqqqff] (-2.03,-2)-- (-0.55,-1.55);
\draw [line width=1pt,dash pattern=on 2pt off 2pt,color=qqqqff] (1.99,2)-- (0.5,1.5);
\draw [line width=1pt,color=qqqqff] (1.99,2)-- (0,-0.06);
\draw [line width=1pt,color=qqqqff] (1.99,2)-- (0.52,-1.51);
\draw [line width=1pt,color=qqqqff] (1.99,2)-- (1.97,-2);
\draw [line width=1pt,dash pattern=on 2pt off 2pt,color=qqqqff] (1.99,2)-- (3.54,-1.46);
\draw [line width=1pt,color=qqqqff] (1.99,2)-- (4,-0.03);
\draw [line width=1pt,dash pattern=on 2pt off 2pt,color=qqqqff] (1.99,2)-- (3.43,1.57);
\draw [line width=1pt][->] (-4   ,-0.56) -- (-3.62,-1.38);
\draw [line width=1pt][->] (-3.70, 1.3 ) -- (-4.00, 0.5 ); \draw (-3.70, 1.3 ) node[ left]{$\alpha_u$};
\draw [line width=1pt][->] (-2.48, 2   ) -- (-3.27, 1.73);
\draw [line width=1pt][->] (-0.7 , 1.7 ) -- (-1.44, 2   ); \draw (-1.44, 2   ) node[above]{$\alpha_2$};
\draw [line width=1pt][->] ( 0.  , 0.3 ) -- (-0.33, 1.33); \draw (-0.33, 1.33) node[right]{$\alpha_1$};
\draw [line width=1pt][->] ( 0.  ,-0.34) -- ( 0.24,-1.24); \draw ( 0.24,-1.24) node[below]{$\beta_1$};
\draw [line width=1pt][->] ( 0.68,-1.68) -- ( 1.48,-2   ); \draw ( 1.48,-2   ) node[below]{$\beta_2$};
\draw [line width=1pt][->] ( 2.4 ,-2   ) -- ( 3.32,-1.68);
\draw [line width=1pt][->] ( 3.77,-1.23) -- ( 4   ,-0.44); \draw ( 3.77,-1.23) node[right]{$\beta_v$};
\draw [line width=1pt][->] ( 4   , 0.43) -- ( 3.67, 1.33);
\draw [line width=1pt][->,line width=1.5pt,violet] (-5,0) -- (5,0) node[above]{$\tc$};
\begin{scriptsize}
\fill [red] ( 0.00,-1.00) circle (0.1cm); \fill [white] ( 0.00,-1.00) circle (0.07cm);
\fill [red] ( 0.00, 1.00) circle (0.1cm); \fill [white] ( 0.00, 1.00) circle (0.07cm);
\fill [red] (-1.00, 2.00) circle (0.1cm); \fill [white] (-1.00, 2.00) circle (0.07cm);
\fill [red] (-3.00, 2.00) circle (0.1cm); \fill [white] (-3.00, 2.00) circle (0.07cm);
\fill [red] (-4.00, 1.00) circle (0.1cm); \fill [white] (-4.00, 1.00) circle (0.07cm);
\fill [red] (-4.00,-1.00) circle (0.1cm); \fill [white] (-4.00,-1.00) circle (0.07cm);
\fill [red] (-3.00,-2.00) circle (0.1cm); \fill [white] (-3.00,-2.00) circle (0.07cm);
\fill [red] (-1.00,-2.00) circle (0.1cm); \fill [white] (-1.00,-2.00) circle (0.07cm);
\fill [red] ( 1.00, 2.00) circle (0.1cm); \fill [white] ( 1.00, 2.00) circle (0.07cm);
\fill [red] ( 3.00, 2.00) circle (0.1cm); \fill [white] ( 3.00, 2.00) circle (0.07cm);
\fill [red] ( 4.00, 1.00) circle (0.1cm); \fill [white] ( 4.00, 1.00) circle (0.07cm);
\fill [red] ( 4.00,-1.00) circle (0.1cm); \fill [white] ( 4.00,-1.00) circle (0.07cm);
\fill [red] ( 3.00,-2.00) circle (0.1cm); \fill [white] ( 3.00,-2.00) circle (0.07cm);
\fill [red] ( 1.00,-2.00) circle (0.1cm); \fill [white] ( 1.00,-2.00) circle (0.07cm);
\draw [red] ( 0.  , 0.75) node[right]{$\ta^c_{\rbullet, i}$};
\draw [red] (-4.  , 0.75) node[ left]{$\ta^c_{\rbullet, i-1}$};
\draw [red] ( 4.  , 0.75) node[right]{$\ta^c_{\rbullet, i+1}$};
\fill [blue] (-2.00,-2.00) circle (0.1cm);
\fill [blue] ( 2.00, 2.00) circle (0.1cm);
\draw [orange][line width=1.5pt][opacity=0.75]
      (-3.4,-2. ) to[out= 120, in= -90] (-3.8, 0. ) to[out=  90, in=-180]
      (-2. , 1.8) to[out=   0, in= 100] ( 0. , 0. ) to[out= -80, in= 180]
      ( 2. ,-1.8) to[out=   0, in= -90] ( 3.8, 0. ) to[out=  90, in= -60]
      ( 3.4, 2. );
\draw [orange] (-1.8, 2.20) node[above]{$p=c_{\proj}(\dualgreen{c}{i})$};
\draw [orange][->] (-1.8, 1.80) to (-1.8, 2.20);
\draw [green][line width=1.5pt]
      (-3.2,-2. ) to[out=  90, in=   0] (-4.5, 0.25) node[left]{$c_{\proj}(\dualgreen{c}{i-1})$};
\draw [green][line width=1.5pt]
      ( 3.2, 2. ) to[out= -90, in= 180] ( 4.5,-0.25) node[right]{$c_{\proj}(\dualgreen{c}{i+1})$};
\draw [cyan][line width=1.5pt]
      (-3.3, 0.9) to[out=  45, in= 180] (-2. , 1.6) to[out=   0, in= 110]
      ( 0. , 0. ) to[out= -70, in= 180] ( 2. ,-1.6) to[out=   0, in=-135]
      ( 3.3,-0.9);
\draw [cyan][->] (-3,1.25) -- (-3,2.4) node[above]{$\tru_{r+1}^{s-1}(c_{\proj}(\dualgreen{c}{i}))$};
\end{scriptsize}
\draw [blue] (-3. ,-1. ) node[above]{$\agreen{p}{r}$};
\draw [blue] ( 3. , 1. ) node[below]{$\agreen{p}{s}$};
\draw [blue] (-2,-2) node[below]{$\m_1$};
\draw [blue] ( 2, 2) node[above]{$\m_2$};
\end{tikzpicture}
\caption{Case (A)}
\label{fig:HomologiesCaseI1}
\end{figure}

\begin{figure}[H]
\centering
\definecolor{ffxfqq}{rgb}{1,0.5,0}
\definecolor{qqqqff}{rgb}{0,0,1}
\definecolor{ttqqqq}{rgb}{0.2,0,0}
\definecolor{ffqqqq}{rgb}{1,0,0}
\begin{tikzpicture}[scale=1.4]
\draw [line width=1pt,dash pattern=on 2pt off 2pt,color=ffqqqq] (0,1)-- (-1,2);
\draw [line width=1pt,color=ffqqqq] (-1,2)-- (-3,2);
\draw [line width=1pt,dash pattern=on 2pt off 2pt,color=ffqqqq] (-3,2)-- (-4,1);
\draw [line width=1pt,color=ffqqqq] (-4,1)-- (-4,-1);
\draw [line width=1pt,dash pattern=on 2pt off 2pt,color=ffqqqq] (-4,-1)-- (-3,-2);
\draw [line width=1pt,color=ttqqqq] (-3,-2)-- (-1,-2);
\draw [line width=1pt,dash pattern=on 2pt off 2pt,color=ffqqqq] (-1,-2)-- (0,-1);
\draw [line width=1pt,color=ffqqqq] (0,-1)-- (0,1);
\draw [line width=1pt,dash pattern=on 2pt off 2pt,color=ffqqqq] (1,2)-- (0,1);
\draw [line width=1pt,color=ffqqqq] (1,2)-- (3,2);
\draw [line width=1pt,dash pattern=on 2pt off 2pt,color=ffqqqq] (3,2)-- (4,1);
\draw [line width=1pt,color=ffqqqq] (4,1)-- (4,-1);
\draw [line width=1pt,dash pattern=on 2pt off 2pt,color=ffqqqq] (4,-1)-- (3,-2);
\draw [line width=1pt,color=ttqqqq] (3,-2)-- (1,-2);
\draw [line width=1pt,dash pattern=on 2pt off 2pt,color=ffqqqq] (1,-2)-- (0,-1);
\draw [line width=1pt,dash pattern=on 2pt off 2pt,color=qqqqff] (-2.03,-2)-- (-3.49,-1.51);
\draw [line width=1pt,color=qqqqff] (-2.03,-2)-- (-4,-0.08);
\draw [line width=1pt,dash pattern=on 2pt off 2pt,color=qqqqff] (-2.03,-2)-- (-3.52,1.48);
\draw [line width=1pt,color=qqqqff] (-2.03,-2)-- (-2.02,2);
\draw [line width=1pt,dash pattern=on 2pt off 2pt,color=qqqqff] (-2.03,-2)-- (-0.53,1.53);
\draw [line width=1pt,dash pattern=on 2pt off 2pt,color=qqqqff] (-2.03,-2)-- (-0.55,-1.55);
\draw [line width=1pt][->] (-3.7 , 1.3 ) -- (-4.  , 0.5 );
\draw [line width=1pt][->] (-2.48, 2.  ) -- (-3.27, 1.73);
\draw [line width=1pt][->] (-0.7 , 1.7 ) -- (-1.44, 2.  );
\draw [line width=1pt][->] ( 0   , 0.3 ) -- (-0.33, 1.33);
\draw [line width=1pt][->] (-4   ,-0.56) -- (-3.62,-1.38);
\draw [line width=1pt][->] ( 0.  ,-0.43) -- ( 0.33,-1.33);
\draw [line width=1pt][->,line width=1.5pt,violet] (-5,0) -- (5,0) node[above]{$\tc$};
\draw [line width=1pt,dash pattern=on 2pt off 2pt,color=qqqqff] (1.96,-2)-- (0.53,-1.53);
\draw [line width=1pt,dash pattern=on 2pt off 2pt,color=qqqqff] (1.96,-2)-- (0.53,1.53);
\draw [line width=1pt,color=qqqqff] (1.96,-2)-- (1.96,2);
\draw [line width=1pt,dash pattern=on 2pt off 2pt,color=qqqqff] (1.96,-2)-- (3.49,1.51);
\draw [line width=1pt,color=qqqqff] (1.96,-2)-- (4,0);
\draw [line width=1pt,dash pattern=on 2pt off 2pt,color=qqqqff] (1.96,-2)-- (3.5,-1.5);
\draw [shift={(-0.04,-2.93)},line width=1pt,color=qqqqff]  plot[domain=0.44:2.71,variable=\t]({1*2.2*cos(\t r)+0*2.2*sin(\t r)},{0*2.2*cos(\t r)+1*2.2*sin(\t r)});
\draw [orange][opacity=0.75][line width=1.5pt]
      (0.5,-2. ) to[out=90, in=0] (-1.6,0.8) to[out=180,in=90] (-3.4,-2. );
\begin{scriptsize}
\draw [orange] (-1.5,0.75) node[below]{$c_{\proj}(\dualgreen{c}{i})$};
\end{scriptsize}
\begin{scriptsize}
\fill [red] ( 0.00,-1.00) circle (0.1cm); \fill [white] ( 0.00,-1.00) circle (0.07cm);
\fill [red] ( 0.00, 1.00) circle (0.1cm); \fill [white] ( 0.00, 1.00) circle (0.07cm);
\fill [red] (-1.00, 2.00) circle (0.1cm); \fill [white] (-1.00, 2.00) circle (0.07cm);
\fill [red] (-3.00, 2.00) circle (0.1cm); \fill [white] (-3.00, 2.00) circle (0.07cm);
\fill [red] (-4.00, 1.00) circle (0.1cm); \fill [white] (-4.00, 1.00) circle (0.07cm);
\fill [red] (-4.00,-1.00) circle (0.1cm); \fill [white] (-4.00,-1.00) circle (0.07cm);
\fill [red] (-3.00,-2.00) circle (0.1cm); \fill [white] (-3.00,-2.00) circle (0.07cm);
\fill [red] (-1.00,-2.00) circle (0.1cm); \fill [white] (-1.00,-2.00) circle (0.07cm);
\fill [red] ( 1.00, 2.00) circle (0.1cm); \fill [white] ( 1.00, 2.00) circle (0.07cm);
\fill [red] ( 3.00, 2.00) circle (0.1cm); \fill [white] ( 3.00, 2.00) circle (0.07cm);
\fill [red] ( 4.00, 1.00) circle (0.1cm); \fill [white] ( 4.00, 1.00) circle (0.07cm);
\fill [red] ( 4.00,-1.00) circle (0.1cm); \fill [white] ( 4.00,-1.00) circle (0.07cm);
\fill [red] ( 3.00,-2.00) circle (0.1cm); \fill [white] ( 3.00,-2.00) circle (0.07cm);
\fill [red] ( 1.00,-2.00) circle (0.1cm); \fill [white] ( 1.00,-2.00) circle (0.07cm);
\draw [red] ( 0.  , 0.75) node[right]{$\ta^c_{\rbullet, i}$};
\draw [red] (-4.  , 0.75) node[ left]{$\ta^c_{\rbullet, i-1}$};
\draw [red] ( 4.  , 0.75) node[right]{$\ta^c_{\rbullet, i+1}$};
\fill [blue] (-2.00,-2.00) circle (0.1cm);
\fill [blue] ( 2.00,-2.00) circle (0.1cm);
\draw [blue] (-3. ,-1. ) node[above]{$\agreen{p}{r}$};
\draw [blue] (-1. ,-1. ) node[above]{$\agreen{p}{s}$};
\end{scriptsize}
\draw [green][line width=1.5pt]
      (-3.2,-2. ) to[out=  90, in=   0] (-4.5, 0.25) node[left]{$c_{\proj}(\dualgreen{c}{i-1})$};
\draw [green][line width=1.5pt]
      (4.5,-0.25) node[right]{$c_{\proj}(\dualgreen{c}{i+1})$} to[out= 180, in=  90] ( 0.8, -2. );
\draw [cyan][line width=1.5pt]
      (-3.3, 0.9) to[out=  45, in= 135] (-0.8, 0.9);
\draw [cyan][->] (-3,1.2) -- (-3,2.2) node[above]{\tiny$\tru_{r+1}^{s-1}(c_{\proj}(\dualgreen{c}{i}))$};
\end{tikzpicture}
\caption{Case (C)}
\label{fig:HomologiesCaseI2}
\end{figure}

\begin{figure}[H]
\centering
\definecolor{ffxfqq}{rgb}{1,0.5,0}
\definecolor{qqqqff}{rgb}{0,0,1}
\definecolor{ttqqqq}{rgb}{0.2,0,0}
\definecolor{ffqqqq}{rgb}{1,0,0}
\begin{tikzpicture}[scale=1.4]
\draw [line width=1pt,dash pattern=on 2pt off 2pt,color=ffqqqq] (0,-1)-- (-1,-2);
\draw [line width=1pt,color=ffqqqq] (-1,-2)-- (-3,-2);
\draw [line width=1pt,dash pattern=on 2pt off 2pt,color=ffqqqq] (-3,-2)-- (-4,-1);
\draw [line width=1pt,color=ffqqqq] (-4,-1)-- (-4,1);
\draw [line width=1pt,dash pattern=on 2pt off 2pt,color=ffqqqq] (-4,1)-- (-3,2);
\draw [line width=1pt,color=ttqqqq] (-3,2)-- (-1,2);
\draw [line width=1pt,dash pattern=on 2pt off 2pt,color=ffqqqq] (-1,2)-- (0,1);
\draw [line width=1pt,color=ffqqqq] (0,1)-- (0,-1);
\draw [line width=1pt,dash pattern=on 2pt off 2pt,color=ffqqqq] (1,-2)-- (0,-1);
\draw [line width=1pt,color=ffqqqq] (1,-2)-- (3,-2);
\draw [line width=1pt,dash pattern=on 2pt off 2pt,color=ffqqqq] (3,2)-- (4,1);
\draw [line width=1pt,color=ffqqqq] (4,-1)-- (4,1);
\draw [line width=1pt,dash pattern=on 2pt off 2pt,color=ffqqqq] (4,-1)-- (3,-2);
\draw [line width=1pt,color=ttqqqq] (3,2)-- (1,2);
\draw [line width=1pt,dash pattern=on 2pt off 2pt,color=ffqqqq] (1,2)-- (0,1);
\draw [line width=1pt,dash pattern=on 2pt off 2pt,color=qqqqff] (-2.03,2)-- (-3.49,1.51);
\draw [line width=1pt,color=qqqqff] (-2.03,2)-- (-4,0.08);
\draw [line width=1pt,dash pattern=on 2pt off 2pt,color=qqqqff] (-2.03,2)-- (-3.52,-1.48);
\draw [line width=1pt,color=qqqqff] (-2.03,2)-- (-2.02,-2);
\draw [line width=1pt,dash pattern=on 2pt off 2pt,color=qqqqff] (-2.03,2)-- (-0.53,-1.53);
\draw [line width=1pt,dash pattern=on 2pt off 2pt,color=qqqqff] (-2.03,2)-- (-0.55,1.55);
\draw [line width=1pt][<-] (-3.7 ,-1.3 ) -- (-4.  ,-0.5 );
\draw [line width=1pt][<-] (-2.48,-2.  ) -- (-3.27,-1.73);
\draw [line width=1pt][<-] (-0.7 ,-1.7 ) -- (-1.44,-2.  );
\draw [line width=1pt][<-] ( 0   ,-0.3 ) -- (-0.33,-1.33);
\draw [line width=1pt][<-] (-4   , 0.56) -- (-3.62, 1.38);
\draw [line width=1pt][<-] ( 0.  , 0.43) -- ( 0.33, 1.33);
\draw [line width=1pt][->,line width=1.5pt,violet] (-5,0) -- (5,0) node[above]{$\tc$};
\draw [line width=1pt,dash pattern=on 2pt off 2pt,color=qqqqff] (1.96, 2)-- (0.53, 1.53);
\draw [line width=1pt,dash pattern=on 2pt off 2pt,color=qqqqff] (1.96, 2)-- (0.53,-1.53);
\draw [line width=1pt,color=qqqqff] (1.96, 2)-- (1.96,-2);
\draw [line width=1pt,dash pattern=on 2pt off 2pt,color=qqqqff] (1.96, 2)-- (3.49,-1.51);
\draw [line width=1pt,color=qqqqff] (1.96, 2)-- (4,0);
\draw [line width=1pt,dash pattern=on 2pt off 2pt,color=qqqqff] (1.96, 2)-- (3.5, 1.5);
\draw [line width=1pt,color=qqqqff] (-1.96, 2) to[out=-45,in=180] (0,0.8) to[out=0,in=-135] (1.96, 2);
\begin{scriptsize}%
\fill [red] ( 0.00, 1.00) circle (0.1cm); \fill [white] ( 0.00, 1.00) circle (0.07cm);
\fill [red] ( 0.00,-1.00) circle (0.1cm); \fill [white] ( 0.00,-1.00) circle (0.07cm);
\fill [red] (-1.00,-2.00) circle (0.1cm); \fill [white] (-1.00,-2.00) circle (0.07cm);
\fill [red] (-3.00,-2.00) circle (0.1cm); \fill [white] (-3.00,-2.00) circle (0.07cm);
\fill [red] (-4.00,-1.00) circle (0.1cm); \fill [white] (-4.00,-1.00) circle (0.07cm);
\fill [red] (-4.00, 1.00) circle (0.1cm); \fill [white] (-4.00, 1.00) circle (0.07cm);
\fill [red] (-3.00, 2.00) circle (0.1cm); \fill [white] (-3.00, 2.00) circle (0.07cm);
\fill [red] (-1.00, 2.00) circle (0.1cm); \fill [white] (-1.00, 2.00) circle (0.07cm);
\fill [red] ( 1.00,-2.00) circle (0.1cm); \fill [white] ( 1.00,-2.00) circle (0.07cm);
\fill [red] ( 3.00,-2.00) circle (0.1cm); \fill [white] ( 3.00,-2.00) circle (0.07cm);
\fill [red] ( 4.00,-1.00) circle (0.1cm); \fill [white] ( 4.00,-1.00) circle (0.07cm);
\fill [red] ( 4.00, 1.00) circle (0.1cm); \fill [white] ( 4.00, 1.00) circle (0.07cm);
\fill [red] ( 3.00, 2.00) circle (0.1cm); \fill [white] ( 3.00, 2.00) circle (0.07cm);
\fill [red] ( 1.00, 2.00) circle (0.1cm); \fill [white] ( 1.00, 2.00) circle (0.07cm);
\draw [red] ( 0.  ,-0.75) node[right]{$\ta^c_{\rbullet, i}$};
\draw [red] (-4.  ,-0.75) node[ left]{$\ta^c_{\rbullet, i-1}$};
\draw [red] ( 4.  ,-0.75) node[right]{$\ta^c_{\rbullet, i+1}$};
\fill [blue] (-2.00, 2.00) circle (0.1cm);
\fill [blue] ( 2.00, 2.00) circle (0.1cm);
\draw [blue] ( 0.8, 1. ) node[above]{$\agreen{p}{r}$};
\draw [blue] ( 3.1, 1.1) node[below left]{$\agreen{p}{s}$};
\end{scriptsize}
\draw [orange][opacity=0.75][line width=1.5pt]
      (3.4,2. ) to[out=-90,in=0] (1.6,-0.8) to[out=180,in=-90] (-0.5,2. )
      node[above right]{$c_{\proj}(\dualgreen{c}{i})$};
\draw [green][line width=1.5pt]
      (3.2,2.) to[out=-90, in=180] (4.5,-0.25) node[right]{$c_{\proj}(\dualgreen{c}{i+1})$};
\draw [green][line width=1.5pt]
      (-4.5,0.25) node[left]{$c_{\proj}(\dualgreen{c}{i+1})$} to[out=0, in=-90] (-0.8,2.);
\draw [cyan][line width=1.5pt]
      (0.8,-0.9) to[out=-45, in=225] (3.3,-0.9);
\draw [cyan][->] (3,-1.2) -- (3,-2.2) node[below]{\tiny$\tru_{r+1}^{s-1}(c_{\proj}(\dualgreen{c}{i}))$};
\end{tikzpicture}
\caption{Case (D)}
\label{fig:HomologiesCaseI3}
\end{figure}

\begin{figure}[H]
\centering
\definecolor{ffxfqq}{rgb}{1,0.5,0}
\definecolor{qqqqff}{rgb}{0,0,1}
\definecolor{ttqqqq}{rgb}{0.2,0,0}
\definecolor{ffqqqq}{rgb}{1,0,0}
\begin{tikzpicture}[scale=1.4]
\draw [line width=1pt,color=ffqqqq] (0,-1)-- (-1,-2);
\draw [line width=1pt,color=ffqqqq] (-1,-2)-- (-3,-2);
\draw [line width=1pt,dash pattern=on 2pt off 2pt,color=ffqqqq] (-3,-2)-- (-4,-1);
\draw [line width=1pt,color=ffqqqq] (-4,-1)-- (-4, 1);
\draw [line width=1pt,dash pattern=on 2pt off 2pt,color=ffqqqq] (-4, 1)-- (-3, 2);
\draw [line width=1pt,color=ttqqqq] (-3, 2)-- (-1, 2);
\draw [line width=1pt,dash pattern=on 2pt off 2pt,color=ffqqqq] (-1, 2)-- ( 0, 1);
\draw [line width=1pt,color=ffqqqq] ( 0, 1)-- ( 0,-1);
\draw [line width=1pt,dash pattern=on 2pt off 2pt,color=ffqqqq] ( 1,-2)-- ( 0,-1);
\draw [line width=1pt] (1,-2)-- (3,-2);
\draw [line width=1pt,dash pattern=on 2pt off 2pt,color=ffqqqq] ( 3,-2)-- ( 4,-1);
\draw [line width=1pt,color=ffqqqq] ( 4,-1)-- (4, 1);
\draw [line width=1pt,dash pattern=on 2pt off 2pt,color=ffqqqq] ( 4, 1)-- (3, 2);
\draw [line width=1pt,color=ffqqqq] ( 3, 2)-- (1, 2);
\draw [line width=1pt,color=ffqqqq] ( 1, 2)-- (0, 1);
\draw [line width=1pt,dash pattern=on 2pt off 2pt,color=qqqqff] (-2.03, 2)-- (-3.49, 1.51);
\draw [line width=1pt,color=qqqqff] (-2.03, 2)-- (-4, 0.08);
\draw [line width=1pt,dash pattern=on 2pt off 2pt,color=qqqqff] (-2.03, 2)-- (-3.52,-1.48);
\draw [line width=1pt,color=qqqqff] (-2.03, 2)-- (-2.02,-2);
\draw [line width=1pt,color=qqqqff] (-2.03, 2)-- (-0.53,-1.53);
\draw [line width=1pt,color=qqqqff] (-2.03, 2)-- (0, 0.06);
\draw [line width=1pt,dash pattern=on 2pt off 2pt,color=qqqqff] (-2.03, 2)-- (-0.55, 1.55);
\draw [line width=1pt,dash pattern=on 2pt off 2pt,color=qqqqff] (1.99,-2)-- (0.5,-1.5);
\draw [line width=1pt,color=qqqqff] (1.99,-2)-- (0, 0.06);
\draw [line width=1pt,color=qqqqff] (1.99,-2)-- (0.52, 1.51);
\draw [line width=1pt,color=qqqqff] (1.99,-2)-- (1.97, 2);
\draw [line width=1pt,dash pattern=on 2pt off 2pt,color=qqqqff] (1.99,-2)-- (3.54, 1.46);
\draw [line width=1pt,color=qqqqff] (1.99,-2)-- (4, 0.03);
\draw [line width=1pt,dash pattern=on 2pt off 2pt,color=qqqqff] (1.99,-2)-- (3.43,-1.57);
\begin{scriptsize}
\fill [red] ( 0.00, 1.00) circle (0.1cm); \fill [white] ( 0.00, 1.00) circle (0.07cm);
\fill [red] ( 0.00,-1.00) circle (0.1cm); \fill [white] ( 0.00,-1.00) circle (0.07cm);
\fill [red] (-1.00,-2.00) circle (0.1cm); \fill [white] (-1.00,-2.00) circle (0.07cm);
\fill [red] (-3.00,-2.00) circle (0.1cm); \fill [white] (-3.00,-2.00) circle (0.07cm);
\fill [red] (-4.00,-1.00) circle (0.1cm); \fill [white] (-4.00,-1.00) circle (0.07cm);
\fill [red] (-4.00, 1.00) circle (0.1cm); \fill [white] (-4.00, 1.00) circle (0.07cm);
\fill [red] (-3.00, 2.00) circle (0.1cm); \fill [white] (-3.00, 2.00) circle (0.07cm);
\fill [red] (-1.00, 2.00) circle (0.1cm); \fill [white] (-1.00, 2.00) circle (0.07cm);
\fill [red] ( 1.00,-2.00) circle (0.1cm); \fill [white] ( 1.00,-2.00) circle (0.07cm);
\fill [red] ( 3.00,-2.00) circle (0.1cm); \fill [white] ( 3.00,-2.00) circle (0.07cm);
\fill [red] ( 4.00,-1.00) circle (0.1cm); \fill [white] ( 4.00,-1.00) circle (0.07cm);
\fill [red] ( 4.00, 1.00) circle (0.1cm); \fill [white] ( 4.00, 1.00) circle (0.07cm);
\fill [red] ( 3.00, 2.00) circle (0.1cm); \fill [white] ( 3.00, 2.00) circle (0.07cm);
\fill [red] ( 1.00, 2.00) circle (0.1cm); \fill [white] ( 1.00, 2.00) circle (0.07cm);
\draw [red] ( 0.  ,-0.75) node[left]{$\ta^c_{\rbullet, i}$};
\draw [red] (-4.  ,-0.75) node[ left]{$\ta^c_{\rbullet, i-1}$};
\draw [red] ( 4.  ,-0.75) node[right]{$\ta^c_{\rbullet, i+1}$};
\fill [blue] (-2.00, 2.00) circle (0.1cm);
\fill [blue] ( 2.00,-2.00) circle (0.1cm);
\draw [line width=1pt][->,line width=1.5pt,violet] (-5,-0.25) to[out=0,in=180] (5,0.25) node[above]{$\tc$};
\draw [orange][line width=1.5pt]
      (-0.5, 2.0) to[out=-90,in=130] (0,0) to[out=-50,in= 90] ( 0.5,-2.0)
      node[below]{$p=c_{\proj}(\dualgreen{c}{i})$};
\draw [green][line width=1.5pt] (-4.5, 0.0) node[ left]{$c_{\proj}(\dualgreen{c}{i-1})$}
      to[out=0, in=-95] (-0.6, 2.0);
\draw [green][line width=1.5pt] ( 4.5, 0.0) node[right]{$c_{\proj}(\dualgreen{c}{i+1})$}
      to[out=180,in=85] ( 0.6,-2.0);
\draw [cyan][line width=1.5pt] (-0.2,-0.2) to ( 0.2,0.2)
      node[right]{$\tru_{r}^{r}(c_{\proj}(\dualgreen{c}{i}))$};
\end{scriptsize}
\draw [blue] (0.5,-0.5) node[right]{$\agreen{p}{r}$};
\draw [blue] (-2, 2) node[above]{$\m_1$};
\draw [blue] ( 2,-2) node[below]{$\m_2$};
\end{tikzpicture}
\caption{Case (B)}
\label{fig:HomologiesCaseIII}
\end{figure}

\begin{proof}
We only prove Case (A), the proofs of the other Cases are similar.
By \cite[Lemma 2.16]{OPS2018}, $\tc$ corresponds to the following homotopy string $\omega$
\[ \cdots  \longline  \bfv(\dualgreen{c}{i-1})
\mathop{\longleftarrow}\limits^{\wp_1^{-1}} \bfv(\dualgreen{c}{i})
\mathop{\longleftarrow}\limits^{\wp_2} \bfv(\dualgreen{c}{i+1}) \longline \cdots, \]
where $\wp_1$ and $\wp_2$ are induced by the paths $\alpha_1\cdots\alpha_u$ and $\beta_1\cdots\beta_v$, respectively.
We only need to show that $\rmH^{-\varsigma_{i}^{\tc}}(\comp{P}) e_t \ne 0$ for any $e_t=\bfv(\dualgreen{c}{t})$ ($r+1\le t\le s-1$),
$\rmH^{-\varsigma_{i}^{\tc}}(\comp{P}) \alpha_l \ne 0$ ($1\le l<u$) and
$\rmH^{-\varsigma_{i}^{\tc}}(\comp{P}) \beta_k \ne 0$ ($1\le k<v$).

The indecomposable projective complex corresponded by $\omega$ is of the form
\begin{align}
 \cdots & \longrightarrow
  Q_1\oplus P(\bfv(\dualgreen{c}{i-1})) \oplus P(\bfv(\dualgreen{c}{i+1})) \oplus Q_2 \nonumber \\
& \mathop{\longline\!\!\!\longline\!\!\!\longline\!\!\!\longline\!\!\!\longrightarrow}\limits^{
  d^{-\varsigma_{i-1}^{\tc}} = \left(
  \begin{smallmatrix}
  f_1 & f_2 & 0 & 0 \\
   0  & \wp_1^{-1} & \wp_2 & 0\\
   0  & 0   & f_3 & f_4
  \end{smallmatrix}
  \right)}
  P_1 \oplus P(\bfv(\dualgreen{c}{i})) \oplus P_2
  \mathop{\longline\!\!\!\longline\!\!\!\longrightarrow}\limits^{
  d^{-\varsigma_{i}^{\tc}}
  = \left(\begin{smallmatrix}
  g_1& 0 &  0  \\
  0  & 0 & g_2
  \end{smallmatrix}\right)}
  R_1\oplus R_2 \longrightarrow \cdots \nonumber
\end{align}
where $P_1$, $P_2$, $Q_1$, $Q_2$, $R_1$ and $R_2$ are projective modules.
Thus,
\begin{align}
  \rmH^{-\varsigma_{i}^{\tc}}(\comp{P})
& = \ker d^{-\varsigma_{i}^{\tc}}/\im d^{-\varsigma_{i-1}^{\tc}} \nonumber \\
& \cong
\left(\begin{smallmatrix}
\ker g_1 \\
P(\bfv(\dualgreen{c}{i}) \\
\ker g_2
\end{smallmatrix}\right)
{\Big/}
\left(\begin{smallmatrix}
f_1(Q_1) + f_2(P(\bfv(\dualgreen{c}{i-1}))) \\
P(\bfv(\dualgreen{c}{i-1})) + P(\bfv(\dualgreen{c}{i+1})) \\
f_3(P(\bfv(\dualgreen{c}{i+1}))) + f_4(Q_2)
\end{smallmatrix}\right).
\nonumber
\end{align}
For any $e_t=\bfv(\dualgreen{c}{t})$ ($r+1\le t\le s-1$), we have  $P(\bfv(\dualgreen{c}{i})) e_t \ne 0$ and
\[(P(\bfv(\dualgreen{c}{i-1})) + P(\bfv(\dualgreen{c}{i+1})))e_t
= P(\bfv(\dualgreen{c}{i-1}))e_t+ P(\bfv(\dualgreen{c}{i+1}))e_t =0. \]
Therefore, as $\kk$-vector spaces, we have
\begin{align}
  \rmH^{-\varsigma_{i}^{\tc}}(\comp{P}) e_t &
  \ge \big(P(\bfv(\dualgreen{c}{i}))/(P(\bfv(\dualgreen{c}{i-1}))+P(\bfv(\dualgreen{c}{i+1})))\big)e_t \nonumber \\
& \ge P(\bfv(\dualgreen{c}{i}))e_t/(P(\bfv(\dualgreen{c}{i-1}))+P(\bfv(\dualgreen{c}{i+1})))e_t
  = P(\bfv(\dualgreen{c}{i}))e_t \ne 0 \nonumber
\end{align}
Similarly, for two arrows $\alpha_l$ ($1\le l<u$) and $\beta_k$ ($1\le k<v$), we have
\begin{center}
$\rmH^{-\varsigma_{i}^{\tc}}(\comp{P})\alpha_l\ne 0$ and $\rmH^{-\varsigma_{i}^{\tc}}(\comp{P})\beta_k\ne 0$.
\end{center}
Then it is clearly that any $\Dblue$-arc segment $(c_{\proj}(\dualgreen{c}{i}))^c_{[j,j+1]}$
($r+1\le j\le s-2$) is also a $\Dblue$-arc segment of a permissible curve corresponding to
some direct summand of $\rmH^{-\varsigma_{i}^{\tc}}(\comp{P})$.
\end{proof}

Keep the notations from Proposition \ref{prop:HomologiesCaseI}. Similarly, we can obtain the following result.

\begin{proposition} \label{prop:HomologiesCaseII}
Assume $n(\tc)\ge 2$ {\rm(}see \Pic \ref{fig:HomologiesCaseII}{\rm)}. Then there exists an integer $1\le r \le m(c_{\proj}(\dualgreen{c}{2}))$
    such that $(c_{\proj}(\dualgreen{c}{2}))_{(0,1)}$, $\ldots$, $(c_{\proj}(\dualgreen{c}{2}))_{(r-1,r)}$ are $\Dblue$-arc segments of $c_{\proj}(\dualgreen{c}{1})$.
Furthermore,
\[\tru_{r+1}^{m+1}(c_{\proj}(\dualgreen{c}{1})) \sqsubseteq \MM^{-1}(\rmH^{-\varsigma^{\tc}_1}(\comp{P})).\]
In particular, if $n(\tc)=2$, then $\rmH^{-\varsigma^{\tc}_1}(\comp{P})$ is indecomposable. Denote by  $c'=\MM^{-1}(\rmH^{-\varsigma^{\tc}_1}(\comp{P}))$, in this case,
\[\tru_{r+1}^{m+1}(c_{\proj}(\dualgreen{c}{1})) = \tru_1^{m(c')}(c').\]
\end{proposition}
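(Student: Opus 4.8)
The plan is to run the argument of Proposition~\ref{prop:HomologiesCaseI} in the degenerate situation where the index $1$ plays the role of the middle index $i$ there, but with the left neighbour $\ared{c}{i-1}$ absent, since $\ared{c}{1}$ is the first $\Dred$-arc crossed by $\tc$. By \cite[Lemma 2.16]{OPS2018}, near the intersection $c(1)$ the homotopy string of $\tc$ reads $\bfv(\dualgreen{c}{1}) \mathop{\longleftarrow}\limits^{\wp_1} \bfv(\dualgreen{c}{2}) \longline \cdots$, where $\wp_1$ is induced by the path (or formal inverse path) on $c_{[1,2]}$ and $\bfv(\dualgreen{c}{1})$ is a terminal vertex of this string. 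Hence in $\comp{P}$ the summand $P(\bfv(\dualgreen{c}{1}))$ of $P^{-\varsigma^{\tc}_1}$ carries exactly one nonzero connecting differential, with a single relevant block $\wp_1$ linking $P(\bfv(\dualgreen{c}{1}))$ with $P(\bfv(\dualgreen{c}{2}))$; in the configuration of Figure~\ref{fig:HomologiesCaseII} this block points into $P(\bfv(\dualgreen{c}{1}))$, so the differential out of $P(\bfv(\dualgreen{c}{1}))$ vanishes on that summand (the opposite orientation gives the symmetric statement, with a kernel in place of the cokernel below).

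First I would settle the geometric half. Since $\wp_1$ is a path between $\bfv(\dualgreen{c}{1})$ and $\bfv(\dualgreen{c}{2})$, the projective permissible curve $c_{\proj}(\dualgreen{c}{2})$, which by Definition~\ref{def:projedtive curve} unfolds all of $P(\bfv(\dualgreen{c}{2}))$, crosses $\dualgreen{c}{1}$; comparing its $\Dblue$-arc segments with those of $c_{\proj}(\dualgreen{c}{1})$ --- this is precisely the left-hand part of the case analysis in Proposition~\ref{prop:HomologiesCaseI}, now with no right-hand counterpart --- produces an integer $1 \le r \le m(c_{\proj}(\dualgreen{c}{2}))$ with $(c_{\proj}(\dualgreen{c}{2}))_{(0,1)}, \ldots, (c_{\proj}(\dualgreen{c}{2}))_{(r-1,r)}$ equal, as $\Dblue$-arc segments, to the first $r$ segments of $c_{\proj}(\dualgreen{c}{1})$; the degenerate subcase in which $\wp_1$ is trivial is handled as in Proposition~\ref{prop:HomologiesCaseI}(2).

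Next I would compute the cohomology exactly as in Proposition~\ref{prop:HomologiesCaseI}: the part of $\im d$ meeting the summand $P(\bfv(\dualgreen{c}{1}))$ is $\wp_1 A$, so $\rmH^{-\varsigma^{\tc}_1}(\comp{P})$ contains $P(\bfv(\dualgreen{c}{1}))/\wp_1 A$ as a $\kk$-subquotient; running the same check on the relevant idempotents and arrows as in Proposition~\ref{prop:HomologiesCaseI} and translating back through the bijection $\MM$ shows that the $\Dblue$-arc segments of $c_{\proj}(\dualgreen{c}{1})$ lying past its $r$-th $\gbullet$-arc belong to the permissible curve of a direct summand of $\rmH^{-\varsigma^{\tc}_1}(\comp{P})$, i.e. $\tru_{r+1}^{m+1}(c_{\proj}(\dualgreen{c}{1})) \sqsubseteq \MM^{-1}(\rmH^{-\varsigma^{\tc}_1}(\comp{P}))$. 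When $n(\tc) = 2$, the whole of $\comp{P}$ reduces to the two-term complex whose underlying string is $\bfv(\dualgreen{c}{1}) \mathop{\longleftarrow}\limits^{\wp_1} \bfv(\dualgreen{c}{2})$, so there is no other summand in degree $-\varsigma^{\tc}_1$ and $\rmH^{-\varsigma^{\tc}_1}(\comp{P}) = P(\bfv(\dualgreen{c}{1}))/\wp_1 A$ exactly; as the cokernel of a map between two indecomposable projectives of the gentle algebra $A$ given by a path, it is a string module, hence indecomposable, and matching its string with what is left of $c_{\proj}(\dualgreen{c}{1})$ after the shared sub-path is deleted gives $c' = \MM^{-1}(\rmH^{-\varsigma^{\tc}_1}(\comp{P}))$ with $\tru_1^{m(c')}(c') = \tru_{r+1}^{m+1}(c_{\proj}(\dualgreen{c}{1}))$.

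I expect the only genuine work to be the first, combinatorial, step: formulating and verifying --- uniformly in the positions of $\m_1$ and $\m_2$ --- how $\wp_1$ is recovered as the common initial block of the two projective permissible curves, together with the edge cases ($r$ maximal, $\wp_1$ trivial, one of the projectives uniserial), which in Proposition~\ref{prop:HomologiesCaseI} are absorbed into the figures rather than argued in the text. Everything afterwards --- the $\ker/\im$ bookkeeping and the identification of the quotient string with the truncation --- is then a formal repetition of that argument, simplified by the fact that $P(\bfv(\dualgreen{c}{1}))$ now has only one neighbour.
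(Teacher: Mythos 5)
Your approach is correct and is exactly what the paper has in mind: the paper gives Proposition~\ref{prop:HomologiesCaseII} no proof of its own, only the remark ``Keep the notations from Proposition~\ref{prop:HomologiesCaseI}. Similarly, we can obtain the following result,'' so running the Proposition~\ref{prop:HomologiesCaseI} argument in the boundary case where $\bfv(\dualgreen{c}{1})$ is a terminal vertex of the homotopy string, with one neighbour missing, is precisely the intended proof, and your account of the cohomology as a kernel/cokernel of the single path map $\wp_1$ between two indecomposable projectives, hence a string module when $n(\tc)=2$, is the right calculation.

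One small internal inconsistency is worth flagging. Your geometric paragraph implicitly takes $\wp_1$ to be a path $\bfv(\dualgreen{c}{2})\to\bfv(\dualgreen{c}{1})$ (this is what makes $c_{\proj}(\dualgreen{c}{2})$ cross $\dualgreen{c}{1}$, and it is the orientation of Figure~\ref{fig:HomologiesCaseII}), but for right modules a path $\bfv(\dualgreen{c}{2})\to\bfv(\dualgreen{c}{1})$ induces a map $P(\bfv(\dualgreen{c}{1}))\to P(\bfv(\dualgreen{c}{2}))$, i.e.\ a differential \emph{out of} the degree $-\varsigma^{\tc}_1$ summand, so the cohomology there is a kernel. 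The cokernel description $P(\bfv(\dualgreen{c}{1}))/\wp_1 A$ in your second paragraph is the one for the opposite orientation $\wp_1:\bfv(\dualgreen{c}{1})\to\bfv(\dualgreen{c}{2})$, in which case it is $c_{\proj}(\dualgreen{c}{1})$ that crosses $\dualgreen{c}{2}$ rather than the other way around. You do acknowledge both orientations in the parenthetical, so nothing is wrong in substance, but the two halves of the writeup should fix the same orientation (or carry through both cases explicitly, mirroring the Case (A)--(D) split of Proposition~\ref{prop:HomologiesCaseI}).
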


\begin{figure}[H]
\centering
\definecolor{ffxfqq}{rgb}{1,0.5,0}
\definecolor{qqqqff}{rgb}{0,0,1}
\definecolor{ttqqqq}{rgb}{0.2,0,0}
\definecolor{ffqqqq}{rgb}{1,0,0}
\begin{tikzpicture}[scale=1.4]
\draw [line width=1.6pt,color=ffqqqq] ( 0,1)-- (-1,2);
\draw [line width=1.6pt,color=ffqqqq] (-1,2)-- (-3,2);
\draw [line width=1.6pt,dash pattern=on 2pt off 2pt,color=ffqqqq] (-3,2)-- (-4,1);
\draw [line width=1.6pt,color=ffqqqq] (-4,1)-- (-4,-1);
\draw [line width=1.6pt,dash pattern=on 2pt off 2pt,color=ffqqqq] (-4,-1)-- (-3,-2);
\draw [line width=1.6pt,color=ttqqqq] (-3,-2)-- (-1,-2);
\draw [line width=1.6pt,dash pattern=on 2pt off 2pt,color=ffqqqq] (-1,-2)-- (0,-1);
\draw [line width=1.6pt,color=ffqqqq] (0,-1)-- (0,1);
\draw [line width=1pt,dash pattern=on 2pt off 2pt,color=qqqqff] (-2.03,-2)-- (-3.49,-1.51);
\draw [line width=1pt,color=qqqqff] (-2.03,-2)-- (-4,-0.08);
\draw [line width=1pt,dash pattern=on 2pt off 2pt,color=qqqqff] (-2.03,-2)-- (-3.52,1.48);
\draw [line width=1pt,color=qqqqff] (-2.03,-2)-- (-2.02,2);
\draw [line width=1pt,color=qqqqff] (-2.03,-2)-- (-0.53,1.53);
\draw [line width=1pt,color=qqqqff] (-2.03,-2)-- (0,-0.06);
\draw [line width=1pt,dash pattern=on 2pt off 2pt,color=qqqqff] (-2.03,-2)-- (-0.55,-1.55);
\draw [line width=1pt,dash pattern=on 2pt off 2pt,color=qqqqff] (1.99,2)-- (0.5,1.5);
\draw [line width=1pt,color=qqqqff] (1.99,2)-- (0,-0.06);
\draw [line width=1pt,dash pattern=on 2pt off 2pt,color=qqqqff] (1.99,2)-- (0.52,-1.51);
\draw [line width=1pt,color=qqqqff] (1.99,2)-- (1.97,-2);
\draw [ red] (-4.  ,-0.75) node[ left]{$\ta^c_{\rbullet, 1}$};
\draw [ red] ( 0.  , 0.75) node[right]{$\ta^c_{\rbullet, 2}$};
\draw [blue] (-3.  ,-1.  ) node[above]{$\agreen{p}{r}$};
\draw [line width=1pt][->] (-4   ,-0.56) -- (-3.62,-1.38);
\draw [line width=1pt][->] (-3.70, 1.3 ) -- (-4.00, 0.5 ); \draw (-3.70, 1.3 ) node[ left]{$\alpha_u$};
\draw [line width=1pt][->] (-2.48, 2   ) -- (-3.27, 1.73);
\draw [line width=1pt][->] (-0.7 , 1.7 ) -- (-1.44, 2   ); \draw (-1.44, 2   ) node[above]{$\alpha_2$};
\draw [line width=1pt][->] ( 0.  , 0.3 ) -- (-0.33, 1.33); \draw (-0.33, 1.33) node[right]{$\alpha_1$};
\draw [line width=1.6pt,violet][->] (-4.87,-0.  ) -- (-0.45,-0.  ) node[above]{$\tc$};
\draw [line width=1.6pt,violet]     (-0.5 ,-0.  ) to[out=  0,in=-90] ( 2.00, 2.00);
\fill [blue] (-4.87, 0.  ) circle (0.1cm);
\fill [blue] ( 2.  , 2.  ) circle (0.1cm);
\begin{scriptsize}
\fill [red] ( 0.00,-1.00) circle (0.1cm); \fill [white] ( 0.00,-1.00) circle (0.07cm);
\fill [red] ( 0.00, 1.00) circle (0.1cm); \fill [white] ( 0.00, 1.00) circle (0.07cm);
\fill [red] (-1.00, 2.00) circle (0.1cm); \fill [white] (-1.00, 2.00) circle (0.07cm);
\fill [red] (-3.00, 2.00) circle (0.1cm); \fill [white] (-3.00, 2.00) circle (0.07cm);
\fill [red] (-4.00, 1.00) circle (0.1cm); \fill [white] (-4.00, 1.00) circle (0.07cm);
\fill [red] (-4.00,-1.00) circle (0.1cm); \fill [white] (-4.00,-1.00) circle (0.07cm);
\fill [red] (-3.00,-2.00) circle (0.1cm); \fill [white] (-3.00,-2.00) circle (0.07cm);
\fill [red] (-1.00,-2.00) circle (0.1cm); \fill [white] (-1.00,-2.00) circle (0.07cm);
\fill [blue] (-2.00,-2.00) circle (0.1cm);
\end{scriptsize}
\draw [orange][line width=1.5pt][opacity=0.75]
      (-3.4,-2. ) to[out= 120, in= -90] (-3.8, 0. ) to[out=  90, in=-180]
      (-2. , 1.8) to[out=   0, in= 100] ( 0. , 0. ) to[out= -80, in= 180]
      ( 2.3,-1.8) node[right]{$p=c_{\proj}(\dualgreen{c}{i})$};
\draw [green][line width=1.5pt]
      (-3.2,-2. ) to[out=  90, in=   0] (-4.5, 0.25) node[left]{$c_{\proj}(\dualgreen{c}{1})$};
\draw [cyan][line width=1.5pt]
      (-3.3, 0.9) to[out=  45, in= 180] (-2. , 1.6) to[out=   0, in= 110]
      ( 0. , 0. ) to[out= -70, in= 180] ( 2.3,-1.4)
      node[right]{$\tru_{r+1}^{m+1}(c_{\proj}(\dualgreen{c}{i}))$};
\draw [blue] (-2,-2) node[below]{$\m$};
\end{tikzpicture}
\caption{Admissible curve $\tc$ crosses at least two $\Dred$-arcs, where $\m$ in $\PP_1^{\tc}$ is to the right of $c_{[1,2]}$.\emph{}}
\label{fig:HomologiesCaseII}
\end{figure}


Recall that any $\Dblue$-arc segment $s$ is a function from $[t_1,t_2]$ ($t_1<t_2$) to $\Surf$, where $s(t_1)$, $s(t_2) \in \partial\PP$ for some elementary $\gbullet$-polygon $\PP$.
Let $s=\{s_{(i,i+1)}:[t^s_i, t^s_{i+1}]\to\Surf\}_{1\le i\le m-1}$ be a sequence of $\Dblue$-arc segments.
If $s$ is a truncation, then
$$s_{(j-1,j)}(t^s_j) = s_{(j,j+1)}(t^s_j),$$
for all $1<j<m-1$.
Furthermore, its {\defines completion}, say $\hs$, is the permissible curve such that
\[ m=m(s) \text{ and }  \ \hs_{(i,i+1)} = s_{(i,i+1)}\  (1\le i\le m-1). \]

Let $\tc$ be an admissible curve, $\ared{\tc}{i-1}$, $\ared{\tc}{i}$ and $\ared{\tc}{i+1}$ be three graded $\Dred$-arcs crossed by $c$,
and $\tau_1$ and $\tau_2$ be two truncations of projective permissible curve.
We define the {\defines supplemental union} of $\tau_1$ and $\tau_2$ with respect to $\tc$ to be
\[ \tau_1 \overrightarrow{\cup}_{\tc}\ \tau_2
   \ (=\tau_1 \overrightarrow{\cup}_{\tc}\ \tau_2 \text{ for simplicity})\
:= \tau_1\cup s_{*} \cup \tau_2, \]
if
$\tau_1$ and $\tau_2$ respectively are two truncations of $c_{\proj}(\dualgreen{c}{i-1})$ and $c_{\proj}(\dualgreen{c}{i+1})$ such that
the the positional relationship of $\tau_1=s_1$ (resp. $={\color{cyan}s}$),
$\tau_2={\color{cyan}s'}$ (resp. $= s_2$)
and $s_{*} = {\color{blue!50}s_{\rmII}}$ (resp. $={\color{blue!50}s_{\rmI}}$)
are shown in \Pic \ref{fig:supp.union}.
Otherwise, $\tau_1 \overrightarrow{\cup}_{\tc}\ \tau_2 := \{\tau_1, \tau_2\}$.

Assume that $s_1$, $s_2$, ${\color{cyan}s}$, ${\color{cyan}s'}$, ${\color{blue!50}s_{\rmI}}$, ${\color{blue!50}s_{\rmII}}$ and ${\color{cyan}s^{\star}}$ are curves shown in \Pic \ref{fig:supp.union}.
It is natural that $s^{\star}$ is a point which can be seen as a trivial curve. We have
\begin{center}
$s\overrightarrow{\cup}s^{\star}=s\cup s_{\rmI}\cup s^{\star}$,
$s^{\star}\overrightarrow{\cup}s' = s^{\star} \cup s_{\rmII} \cup s'$,
and
$s\overrightarrow{\cup}s^{\star}\overrightarrow{\cup}s' =s\cup s_{\rmI}\cup s^{\star}\cup s_{\rmII} \cup s'$.
\end{center}
Let $s_1$, $s_2$, $\ldots$, $s_n$ be truncations of projective permissible curves. By induction, we have
${\mathop{\overrightarrow{\bigcup}}\limits_{i=1}^n} s_i = \Big({\mathop{\overrightarrow{\bigcup}}\limits_{i=1}^{n-1}}s_i\Big) \overrightarrow{\cup} s_n$.
Furthermore, we define
\[\Ocup\limits_{i=1}^n s_i = \widehat{{\mathop{\overrightarrow{\bigcup}}_{i=1}^n} s_i}. \]


\begin{figure}[H]
\centering
\definecolor{ffxfqq}{rgb}{1,0.5,0}
\definecolor{qqqqff}{rgb}{0,0,1}
\definecolor{ttqqqq}{rgb}{0.2,0,0}
\definecolor{ffqqqq}{rgb}{1,0,0}
\begin{tikzpicture}[scale=1.4]
\draw [line width=1pt,color=ffqqqq] (0,-1)-- (-1,-2);
\draw [line width=1pt,color=ffqqqq] (-1,-2)-- (-3,-2);
\draw [line width=1pt,dash pattern=on 2pt off 2pt,color=ffqqqq] (-3,-2)-- (-4,-1);
\draw [line width=1pt,color=ffqqqq] (-4,-1)-- (-4, 1);
\draw [line width=1pt,dash pattern=on 2pt off 2pt,color=ffqqqq] (-4, 1)-- (-3, 2);
\draw [line width=1pt,color=ttqqqq] (-3, 2)-- (-1, 2);
\draw [line width=1pt,dash pattern=on 2pt off 2pt,color=ffqqqq] (-1, 2)-- ( 0, 1);
\draw [line width=1pt,color=ffqqqq] ( 0, 1)-- ( 0,-1);
\draw [line width=1pt,dash pattern=on 2pt off 2pt,color=ffqqqq] ( 1,-2)-- ( 0,-1);
\draw [line width=1pt] (1,-2)-- (3,-2);
\draw [line width=1pt,dash pattern=on 2pt off 2pt,color=ffqqqq] ( 3,-2)-- ( 4,-1);
\draw [line width=1pt,color=ffqqqq] ( 4,-1)-- (4, 1);
\draw [line width=1pt,dash pattern=on 2pt off 2pt,color=ffqqqq] ( 4, 1)-- (3, 2);
\draw [line width=1pt,color=ffqqqq] ( 3, 2)-- (1, 2);
\draw [line width=1pt,color=ffqqqq] ( 1, 2)-- (0, 1);
\draw [line width=1pt,dash pattern=on 2pt off 2pt,color=qqqqff] (-2.03, 2)-- (-3.49, 1.51);
\draw [line width=1pt,color=qqqqff] (-2.03, 2)-- (-4, 0.08);
\draw [line width=1pt,dash pattern=on 2pt off 2pt,color=qqqqff] (-2.03, 2)-- (-3.52,-1.48);
\draw [line width=1pt,color=qqqqff] (-2.03, 2)-- (-2.02,-2);
\draw [line width=1pt,color=qqqqff] (-2.03, 2)-- (-0.53,-1.53);
\draw [line width=1pt,color=qqqqff] (-2.03, 2)-- (0, 0.06);
\draw [line width=1pt,dash pattern=on 2pt off 2pt,color=qqqqff] (-2.03, 2)-- (-0.55, 1.55);
\draw [line width=1pt,dash pattern=on 2pt off 2pt,color=qqqqff] (1.99,-2)-- (0.5,-1.5);
\draw [line width=1pt,color=qqqqff] (1.99,-2)-- (0, 0.06);
\draw [line width=1pt,color=qqqqff] (1.99,-2)-- (0.52, 1.51);
\draw [line width=1pt,color=qqqqff] (1.99,-2)-- (1.97, 2);
\draw [line width=1pt,dash pattern=on 2pt off 2pt,color=qqqqff] (1.99,-2)-- (3.54, 1.46);
\draw [line width=1pt,color=qqqqff] (1.99,-2)-- (4, 0.03);
\draw [line width=1pt,dash pattern=on 2pt off 2pt,color=qqqqff] (1.99,-2)-- (3.43,-1.57);
\begin{scriptsize}
\fill [red] ( 0.00, 1.00) circle (0.1cm); \fill [white] ( 0.00, 1.00) circle (0.07cm);
\fill [red] ( 0.00,-1.00) circle (0.1cm); \fill [white] ( 0.00,-1.00) circle (0.07cm);
\fill [red] (-1.00,-2.00) circle (0.1cm); \fill [white] (-1.00,-2.00) circle (0.07cm);
\fill [red] (-3.00,-2.00) circle (0.1cm); \fill [white] (-3.00,-2.00) circle (0.07cm);
\fill [red] (-4.00,-1.00) circle (0.1cm); \fill [white] (-4.00,-1.00) circle (0.07cm);
\fill [red] (-4.00, 1.00) circle (0.1cm); \fill [white] (-4.00, 1.00) circle (0.07cm);
\fill [red] (-3.00, 2.00) circle (0.1cm); \fill [white] (-3.00, 2.00) circle (0.07cm);
\fill [red] (-1.00, 2.00) circle (0.1cm); \fill [white] (-1.00, 2.00) circle (0.07cm);
\fill [red] ( 1.00,-2.00) circle (0.1cm); \fill [white] ( 1.00,-2.00) circle (0.07cm);
\fill [red] ( 3.00,-2.00) circle (0.1cm); \fill [white] ( 3.00,-2.00) circle (0.07cm);
\fill [red] ( 4.00,-1.00) circle (0.1cm); \fill [white] ( 4.00,-1.00) circle (0.07cm);
\fill [red] ( 4.00, 1.00) circle (0.1cm); \fill [white] ( 4.00, 1.00) circle (0.07cm);
\fill [red] ( 3.00, 2.00) circle (0.1cm); \fill [white] ( 3.00, 2.00) circle (0.07cm);
\fill [red] ( 1.00, 2.00) circle (0.1cm); \fill [white] ( 1.00, 2.00) circle (0.07cm);
\draw [red] ( 0.  ,-0.75) node[left]{$\ta^c_{\rbullet, i}$};
\draw [red] (-4.  ,-0.75) node[ left]{$\ta^c_{\rbullet, i-1}$};
\draw [red] ( 4.  ,-0.75) node[right]{$\ta^c_{\rbullet, i+1}$};
\fill [blue] (-2.00, 2.00) circle (0.1cm);
\fill [blue] ( 2.00,-2.00) circle (0.1cm);
\draw [line width=1pt][->,line width=1.5pt,violet] (-5,0.5) to[out=0,in=180] (5,-0.5) node[right]{$\tc$};
\draw [green][line width=1.5pt]
      (-0.5, 2.0) to[out=-90,in=130] (0,0) to[out=-50,in= 90] ( 0.5,-2.2)
      node[below]{$p=c_{\proj}(\dualgreen{c}{i})$};
\draw [orange][line width=1.5pt] (-4.5, 0.0) node[ left]{$c_{\proj}(\dualgreen{c}{i-1})$}
      to[out=0, in=-95] (-0.6, 2.0);
\draw [orange][line width=1.5pt] ( 4.5, 0.0) node[right]{$c_{\proj}(\dualgreen{c}{i+1})$}
      to[out=180,in=85] ( 0.6,-2.0);
\draw [cyan][line width=1.5pt] (-4.5,-0.3) node[ left]{$s$}
      to[out=   0,in=-160] (-1.3, 0.3);
\draw [cyan][line width=1.5pt] ( 4.5, 0.3) node[right]{$s'$}
      to[out= 180,in=  20] ( 1.3,-0.3);
\draw [cyan][line width=1.5pt] (-0.2,-0.2) to ( 0.2,0.2) node[right]{$s^{\star}$};
\draw [blue!50][line width=1.5pt] (-1.35, 0.40) to[out=  30,in=-120] (-0.85, 0.85);
\draw [blue!50] (-1.10, 0.63) node[below right]{$s_{\rmI}$};
\draw [blue!50][line width=1.5pt] ( 1.35,-0.40) to[out= 210,in=  60] ( 0.85,-0.85);
\draw [blue!50] ( 1.10,-0.63) node[above left]{$s_{\rmII}$};
\draw [black][opacity=0.75] [line width=1.5pt]
  (-4.5, 0.3) node[left]{$s_1$} to[out=   0,in=-135] (-1.3, 1.3);
\draw [black][opacity=0.75] [line width=1.5pt]
  ( 4.5,-0.3) node[right]{$s_2$} to[out= 180,in=  45] ( 1.3,-1.3);
\end{scriptsize}
\draw [blue] (-3. , 1. ) node[below]{$\agreen{p}{r}$};
\draw [blue] ( 3. ,-1. ) node[above]{$\agreen{p}{s}$};
\draw [blue] (-2, 2) node[above]{$\m_1$};
\draw [blue] ( 2,-2) node[below]{$\m_2$};
\end{tikzpicture}
\caption{The supplemental union of $s$ and $s'$}
\label{fig:supp.union}
\end{figure}
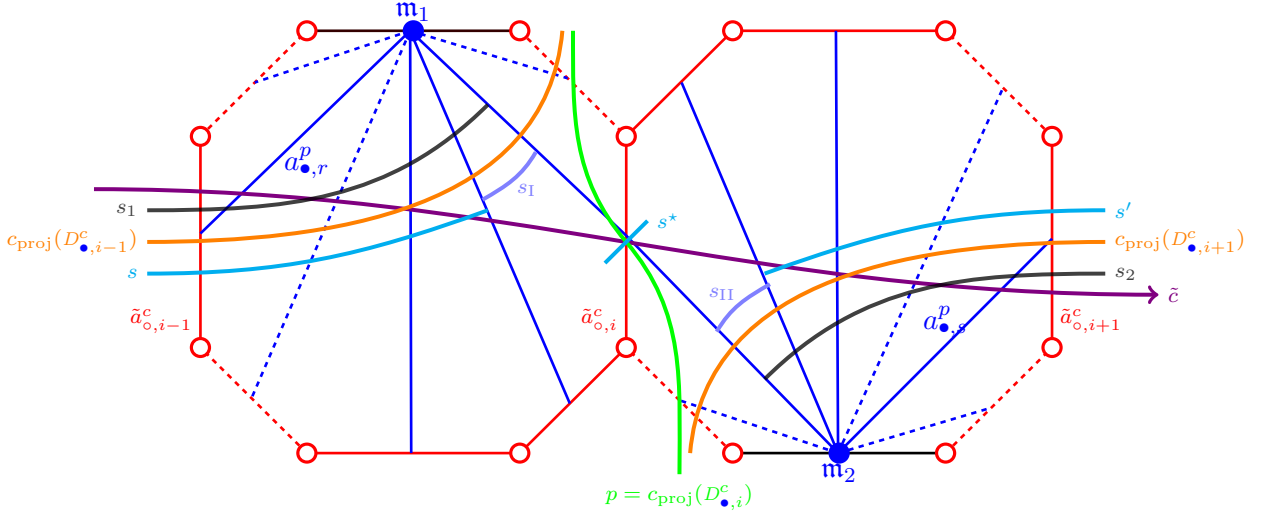

\begin{theorem} \label{thm:homologies}
Let $\tc$ be an admissible curve in $\AC_{\m}(\SURF^{\F})$.
Then, for any $n\in\ZZ$, there are integers $r_i$ and $s_i$ with $0 \le r_i \le s_i \le m(c_{\proj}(\dualgreen{c}{i}))+1$ and $1\le i\le n(\tc)$
such that
\begin{align}\label{homologies}
  \MM^{-1}(\rmH^n(\X(\tc))) = \Ocup\limits_{n=-\varsigma^{\tc}_{i}}\tru_{r_i}^{s_i}(c_{\proj}(\dualgreen{c}{i})).
\end{align}
\end{theorem}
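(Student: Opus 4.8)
The plan is to reduce the global statement to the two local computations already established, Propositions \ref{prop:HomologiesCaseI} and \ref{prop:HomologiesCaseII}, and then to recognize the way the resulting local pieces glue together as exactly the supplemental union followed by completion. First I would fix $n\in\ZZ$ and write $\comp{P}=\X(\tc)=(P^j,d^j)_{j\in\ZZ}$, so that $P^n=\bigoplus_{-\varsigma_i^{\tc}=n}P(\bfv(\dualgreen{c}{i}))$ by the description of $\X(\tc)$ recalled before Definition \ref{def:truncation}. Since $\tc\in\AC_{\m}(\SURF^{\F})$ crosses only finitely many $\Dred$-arcs, its homotopy string is a finite (possibly half-open) walk, $P^j=0$ for all but finitely many $j$, and $\rmH^n(\X(\tc))=\ker d^n/\im d^{n-1}$ involves only the direct summands $P(\bfv(\dualgreen{c}{i}))$ with $-\varsigma_i^{\tc}=n$ together with their immediate neighbours in the homotopy string; so there is no convergence issue even when the projective dimension is infinite. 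For each index $i$ with $-\varsigma_i^{\tc}=n$ I would apply Proposition \ref{prop:HomologiesCaseI} when $1<i<n(\tc)$ (so $\tc$ crosses $\ared{c}{i-1}$, $\ared{c}{i}$, $\ared{c}{i+1}$) and Proposition \ref{prop:HomologiesCaseII} when $i$ is adjacent to an endpoint of $\tc$. This produces integers $0\le r_i\le s_i\le m(c_{\proj}(\dualgreen{c}{i}))+1$ and the inclusion $\tru_{r_i}^{s_i}(c_{\proj}(\dualgreen{c}{i}))\sqsubseteq\MM^{-1}(\rmH^n(\X(\tc)))$, the degenerate value $r_i=s_i$ occurring precisely in Case (B). In particular this already gives the inclusion ``$\supseteq$'' of \eqref{homologies} at the level of $\Dblue$-arc segments, provided consecutive segments are glued as prescribed by $\overrightarrow{\cup}_{\tc}$.

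Next I would establish the reverse inclusion together with the gluing. The explicit subquotient computation in the proof of Proposition \ref{prop:HomologiesCaseI} shows that $\rmH^n(\X(\tc))$ is, as a $\kk$-module, a direct sum of subquotients of the form $P(\bfv(\dualgreen{c}{i}))\big/\big(\text{images of the neighbouring projectives}\big)$; hence every indecomposable direct summand $M$ of $\rmH^n(\X(\tc))$ corresponds under $\MM^{-1}$ to a string module whose string is read off a maximal portion of the homotopy string of $\X(\tc)$ all of whose vertices sit in homological degree $\ge n$ (with the orientation conventions of Cases (A)--(D)). Walking along such a maximal portion, each position at level exactly $n$ is some $\bfv(\dualgreen{c}{i})$ with $i\in I_n:=\{i\mid -\varsigma_i^{\tc}=n\}$, and the part of $P(\bfv(\dualgreen{c}{i}))$ surviving in $M$ is exactly the truncation $\tru_{r_i}^{s_i}(c_{\proj}(\dualgreen{c}{i}))$ of Step 1; between two consecutive such positions $i<i'$ the homotopy string passes through positions of strictly higher degree that are killed in cohomology, and the differential of $\X(\tc)$ identifies the relevant tops and socles of $P(\bfv(\dualgreen{c}{i}))$ and $P(\bfv(\dualgreen{c}{i'}))$ in exactly the pattern which, translated to the surface, inserts the connecting $\Dblue$-arc segment $s_{*}$ ($=s_{\rmI}$ or $s_{\rmII}$ of \Pic \ref{fig:supp.union}). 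Thus the ordered union of the local truncations over $i\in I_n$, interspersed with these bridges, equals $\MM^{-1}(M)$ up to completion; summing over the indecomposable summands $M$ of $\rmH^n(\X(\tc))$ and recalling the definitions of $\overrightarrow{\cup}_{\tc}$ and of $\Ocup$ yields \eqref{homologies}. The bounds $0\le r_i\le s_i\le m(c_{\proj}(\dualgreen{c}{i}))+1$ are exactly those supplied by Propositions \ref{prop:HomologiesCaseI} and \ref{prop:HomologiesCaseII}, with the extreme values $r_i=0$ or $s_i=m(c_{\proj}(\dualgreen{c}{i}))+1$ occurring only at the two ends of such a maximal portion, as in Proposition \ref{prop:HomologiesCaseII}.

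The hard part will be the bookkeeping in the gluing step: one must check that the connecting segment produced algebraically by the differential of $\X(\tc)$ — a datum read from the paths $\wp_i$ labelling the homotopy string — coincides with the geometric bridge $s_{*}$ in the definition of the supplemental union, including the degenerate Case (B) where the middle projective permissible curve contributes only the point $s^{\star}$ (so that $s\,\overrightarrow{\cup}\,s^{\star}\,\overrightarrow{\cup}\,s'=s\cup s_{\rmI}\cup s^{\star}\cup s_{\rmII}\cup s'$, matching what the truncation $\tru_r^r$ of Proposition \ref{prop:HomologiesCaseI}(2) records). A secondary technical point is the treatment of the two ends of $\tc$ according to whether they lie in $\M$ or are free, i.e.\ $\tc\in\AC^{\m}_{\m}(\SURF^{\F})$ versus $\tc\in\AC^{\oslash}_{\m}(\SURF^{\F})$; both are handled by Proposition \ref{prop:HomologiesCaseII} and the finiteness of the homotopy string. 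Once all $\Dblue$-arc segments and their gluings are identified, passing from the sequence of segments to an honest permissible curve is precisely the completion $\widehat{(\cdot)}$, and the compatibility of $\MM$ with direct sums then finishes the proof.
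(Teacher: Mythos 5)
Your proposal is correct and follows essentially the same route as the paper's own (very terse) proof: reduce to the local computations of Propositions \ref{prop:HomologiesCaseI} and \ref{prop:HomologiesCaseII} applied at each intersection $i$ with $-\varsigma_i^{\tc}=n$, and then observe that the resulting truncations assemble, via the supplemental union and completion, into $\MM^{-1}(\rmH^n(\X(\tc)))$. You spell out the gluing step — matching the algebraic bridges coming from the differential with the geometric bridges $s_{*}$ in the definition of $\overrightarrow{\cup}_{\tc}$ — considerably more carefully than the paper, which dispatches it in a single sentence.
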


\begin{proof}
By Propositions \ref{prop:HomologiesCaseI} and \ref{prop:HomologiesCaseII},
for any $1\le i\le n(\tc)$ satisfying $-\varsigma^{\tc}_{i} = n$,
there exist two integers $r_i$ and $s_i$ such that $\tru_{r_i}^{s_i}(c_{\proj}(\dualgreen{c}{i})) \sqsubseteq \MM^{-1}(\rmH^{n}(\X(\tc)))$.
Since the $n$-th homology of $\X(\tc)$ is given by all intersections of $\tc$ and $\rbullet$-grFFAS whose intersection index is $-n$,
we obtain (\ref{homologies}).
\end{proof}

\begin{remark} \rm
We call the truncation $\tru_{r_i}^{s_i}(c_{\proj}(\dualgreen{c}{i}))$ given in Theorem \ref{thm:homologies} the {\defines $i$-th cohomological curve} of $\tc$ at the intersection $c(i)$.
\end{remark}

The following  result is a direct corollary of Theorem \ref{thm:homologies}.

\begin{corollary}
Let $\X(\tc)$ be a string complex corresponding to $\tc \in \AC_{\m}(\SURF^{\F})$.
Then $\rmH^n(\X(\tc)) = 0$ if and only if for all $1\le i\le n(\tc)$, $i$-th cohomological curves of $\tc$  with $\varsigma^{\tc}_i = -n$ are trivial.
\end{corollary}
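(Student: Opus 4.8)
The plan is to read this off directly from Theorem~\ref{thm:homologies} together with the bijectivity of $\MM$ recorded in Theorem~\ref{thm:OPS and BCS corresponding}(1). First I would invoke Theorem~\ref{thm:homologies} to write
\[ \MM^{-1}(\rmH^n(\X(\tc))) = \Ocup\limits_{n=-\varsigma^{\tc}_{i}} \tru_{r_i}^{s_i}(c_{\proj}(\dualgreen{c}{i})), \]
where the index $i$ runs over those $1\le i\le n(\tc)$ with $\varsigma^{\tc}_i = -n$ and, by the preceding remark, each $\tru_{r_i}^{s_i}(c_{\proj}(\dualgreen{c}{i}))$ is the $i$-th cohomological curve of $\tc$ at the intersection $c(i)$. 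Since $\MM$ is a bijection onto the isoclasses of indecomposable $A$-modules, $\rmH^n(\X(\tc)) = 0$ is equivalent to $\MM^{-1}(\rmH^n(\X(\tc)))$ consisting only of the trivial permissible curve $0$, i.e.\ to the supplemental union on the right-hand side being trivial.

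It therefore remains to check that this supplemental union is trivial exactly when every truncation $\tru_{r_i}^{s_i}(c_{\proj}(\dualgreen{c}{i}))$ with $\varsigma^{\tc}_i = -n$ is trivial (equivalently $r_i > s_i$, in the sense of Definition~\ref{def:truncation}). The ``if'' direction is immediate from the definition of $\overrightarrow{\cup}$, its iteration, and the completion $\widehat{(-)}$: an iterated supplemental union of trivial truncations contributes no $\Dblue$-arc segments, so its completion is $0$. For the ``only if'' direction I would use Propositions~\ref{prop:HomologiesCaseI} and~\ref{prop:HomologiesCaseII} (equivalently, the computation inside the proof of Theorem~\ref{thm:homologies}): whenever a cohomological curve $\tru_{r_i}^{s_i}(c_{\proj}(\dualgreen{c}{i}))$ is non-trivial, these results give $\tru_{r_i}^{s_i}(c_{\proj}(\dualgreen{c}{i})) \sqsubseteq \MM^{-1}(\rmH^n(\X(\tc)))$, so $\MM^{-1}(\rmH^n(\X(\tc)))$ contains a non-trivial permissible curve and hence $\rmH^n(\X(\tc))\ne 0$. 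Combining the two directions, and rewriting the condition $n = -\varsigma^{\tc}_i$ as $\varsigma^{\tc}_i = -n$, yields the statement.

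There is no genuine analytic difficulty here, since all of the substance has already been absorbed into Theorem~\ref{thm:homologies}; the only points requiring care are bookkeeping ones. One must make sure the quantifier ranges over exactly the intersections $c(i)$ with $\varsigma^{\tc}_i = -n$ — these are precisely the ones contributing projective direct summands of $\X(\tc)$ in cohomological degree $n$, while the remaining intersections are irrelevant to $\rmH^n$. One should also note that, although the completion $\widehat{(-)}$ may glue several non-trivial truncations into a single permissible curve, this only enlarges $\MM^{-1}(\rmH^n(\X(\tc)))$, so the vanishing criterion is unaffected: a single non-trivial cohomological curve already forces $\rmH^n(\X(\tc))\ne 0$. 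This final remark is the one place I would double-check while writing the details.
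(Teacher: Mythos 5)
Your proof is correct and takes the same route as the paper, which simply declares the statement a direct corollary of Theorem~\ref{thm:homologies}; you have just filled in the short bookkeeping argument (bijectivity of $\MM$, the observation that the supplemental union and completion can only enlarge, and that a single non-trivial cohomological curve already forces a non-trivial summand) that the paper leaves implicit.
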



\section{The relation between rotations and truncations} \label{sec:inverse}
In this section, we study the relation between taking cohomology and embedding. Recall that $(-)^{\rota}$ sends any permissible curve $c$ in $\PC_{\m}(\SURF^{\F})$
to a graded curve $c^{\rota}$. Note that $c^{\rota}$ has two endpoints lying $\M$ or $c^{\rota}$ is a half-line with endpoint lying $\M$,
i.e., $c^{\rota}$ has only one endpoint lying $\M$.

\begin{lemma} \label{lemm:exact}
Let $c\in\PC_{\m}(\SURF^{\F})$. Then for any $\tc^{\rota}\in \AC_{\m}(\SURF^{\F})$,
$\rmH^{t}(\X(\tc^{\rota})) = 0$ for all $t\neq\max\{-\varsigma^{\tc^{\rota}}_j \mid 1\le j\le n(\tc^{\rota})\}$.
\end{lemma}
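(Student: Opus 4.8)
The plan is to compute the cohomology of $\X(\tc^{\rota})$ directly from the homotopy string of the rotated curve and then to use Theorem~\ref{thm:homologies} to locate the single degree in which it is concentrated. First I would recall the shape of $\tc^{\rota}$ from Definition~\ref{def:rot} and from the proof of Theorem~\ref{thm:string embedding}: reading its consecutive $\rbullet$-arcs from one end to the other, one gets a (possibly infinite) initial block $\ldots,(a^{2}_L)^{\star}$, then the dual arcs $a_{\rbullet}^1,\ldots,a_{\rbullet}^M$ of the top $\gbullet$-arcs of $c$, then a (possibly infinite) terminal block $(a^{2}_R)^{\star},(a^{3}_R)^{\star},\ldots$, where the two outer blocks consist of the dual arcs of the $\gbullet$-arcs occurring in the iterated syzygies $\MM(\omega(a^{t}_L))$ and $\MM(\omega(a^{t}_R))$. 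Consequently $\X(\tc^{\rota})$ is, up to isomorphism in $\per A$, the deleted projective complex $\mathbf{P}(\MM(c))$ of a projective resolution $\cdots\to P_{1}\to P_{0}\to 0$ of $\MM(c)$, with $P_{0}$ in cohomological degree $0$ and each $P_{t}$ in degree $-t$; the Case~(3) configurations of \Pic \ref{fig:c(0,1)} and the $\infty$-elementary-polygon configurations of \Pic \ref{fig:infty pdim} only affect whether an outer block is absent or infinite.

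Since the degree-$n$ component of $\X(\tc^{\rota})$ is $P^{n}=\bigoplus_{\varsigma^{\tc^{\rota}}_{i}=-n}P(\bfv(\dualgreen{c}{i}))$ and $\mathbf{P}(\MM(c))$ is supported in non-positive degrees with $P^{0}\ne 0$, every intersection index satisfies $\varsigma^{\tc^{\rota}}_{i}\ge 0$, with equality precisely at the crossings lying in the middle block; hence $\max\{-\varsigma^{\tc^{\rota}}_{j}\mid 1\le j\le n(\tc^{\rota})\}=0$, and the assertion reduces to $\rmH^{t}(\X(\tc^{\rota}))=0$ for $t\ne 0$. For $t>0$ there is no component in degree $t$, so this is clear. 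For $t<0$, Theorem~\ref{thm:homologies} gives
\[\MM^{-1}(\rmH^{t}(\X(\tc^{\rota})))=\Ocup\limits_{t=-\varsigma^{\tc^{\rota}}_{i}}\tru_{r_{i}}^{s_{i}}(c_{\proj}(\dualgreen{c}{i})),\]
and every index $i$ occurring on the right lies in an outer block, so $c_{\proj}(\dualgreen{c}{i})$ is the projective curve of an arc occurring in a higher syzygy of $\MM(c)$. For such $i$ the relevant differential of $\mathbf{P}(\MM(c))$ factors as a projective cover $P_{u}\twoheadrightarrow\ker p_{u-1}$ followed by the inclusion $\ker p_{u-1}\hookrightarrow P_{u-1}$; by Propositions~\ref{prop:HomologiesCaseI}--\ref{prop:HomologiesCaseII} this forces the two neighbouring $\gbullet$-arcs $\agreen{p}{r}$ and $\agreen{p}{s}$ appearing there to be adjacent along $c_{\proj}(\dualgreen{c}{i})$, so each truncation $\tru_{r+1}^{s-1}(c_{\proj}(\dualgreen{c}{i}))$ is trivial and $\rmH^{t}(\X(\tc^{\rota}))=0$. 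Equivalently, this is just the exactness of a deleted projective resolution in negative degrees, read off from the geometric model.

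The main obstacle I expect is bookkeeping rather than a genuine difficulty: one must carefully match the cohomological degrees of $\mathbf{P}(\MM(c))$ with the intersection-index labels $\varsigma^{\tc^{\rota}}_{i}$ read off the homotopy string of $\X(\tc^{\rota})$, and, in the $\infty$-elementary-polygon case, verify that the now-infinite outer chain of dual arcs $(a^{t}_L)^{\star}$, $(a^{t}_R)^{\star}$ still contributes only trivial cohomological curves in each negative degree; this holds because the statement needed is purely local to three consecutive crossings and the local configuration there is always of the ``projective cover'' type analysed in the previous paragraph. One could bypass the whole argument by quoting Theorem~\ref{thm:string embedding} directly, which already identifies $\X(\tc^{\rota})$ with $\mathbf{P}(\MM(c))$ and hence gives $\rmH^{t}(\X(\tc^{\rota}))=0$ for $t\ne 0$; the point of the proof above is that it stays inside the geometric framework of Section~\ref{sec:truncations}.
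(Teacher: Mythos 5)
Your proof is correct, but it takes a genuinely different route from the paper's. Your primary argument simply cites Theorem~\ref{thm:string embedding} to identify $\X(\tc^{\rota})$ with the deleted projective resolution $\mathbf{P}(\MM(c))$, observes that this complex is supported in non-positive degrees with $P^{0}\ne 0$ (so $\max\{-\varsigma^{\tc^{\rota}}_{j}\}=0$ under the grading fixed there, and the claim is invariant under shifting the grading), and concludes by exactness of a deleted projective resolution; this is a legitimate shortcut since Lemma~\ref{lemm:exact} appears after Theorem~\ref{thm:string embedding}. The paper instead reproves the vanishing entirely inside the Section~\ref{sec:truncations} framework: it cuts $c$ by the $\rbullet$-FFAS into a middle segment $c_{\rmM}$ and two end segments, and for each crossing $i$ of $\tc^{\rota}$ with $-\varsigma^{\tc^{\rota}}_{i}$ below the maximum it examines the position of the $\gbullet$-marked points of $\PP^{\tc}_{i-1}$ and $\PP^{\tc}_{i}$ relative to the curve, concludes directly that the overlaps of $c_{\proj}(\dualgreen{c^{\rota}}{i})$ with $c_{\proj}(\dualgreen{c^{\rota}}{i-1})$ and $c_{\proj}(\dualgreen{c^{\rota}}{i+1})$ together exhaust $c_{\proj}(\dualgreen{c^{\rota}}{i})$, so the truncation in Proposition~\ref{prop:HomologiesCaseI} is trivial, and then invokes Theorem~\ref{thm:homologies}. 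That self-contained geometric derivation is what Theorem~\ref{thm:string embedding 2} actually needs, since it reconstructs $c$ from the cohomological curves of $\tc^{\rota}$. Your secondary ``geometric'' paragraph is intended to parallel this, but notice that you obtain the adjacency $s\le r+1$ by appealing to the factorization of the differential through a projective cover followed by an inclusion --- that is, by the very exactness you want to re-read from the surface --- whereas the paper establishes that adjacency purely from the marked-point configuration before invoking Theorem~\ref{thm:homologies}. So both proofs are valid, but yours rests on the algebraic identification from Theorem~\ref{thm:string embedding} while the paper's is the independent geometric verification that the rest of Section~\ref{sec:inverse} is built on.
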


\begin{proof}
As shown in \Pic \ref{fig:threeparts}, we divide the permissible curve $c$ into three parts by $\rbullet$-FFAS as follows:
\begin{itemize}
  \item[(1)] the middle segment $c_{\rmM}$;
  \item[(2)] the first end segment $c_{\rmF}$;
  \item[(3)] the last end segment $c_{\rmL}$.
\end{itemize}
We only prove this lemma for the middle segment $c_{\rmM}$, the proofs for the other segments are similar.
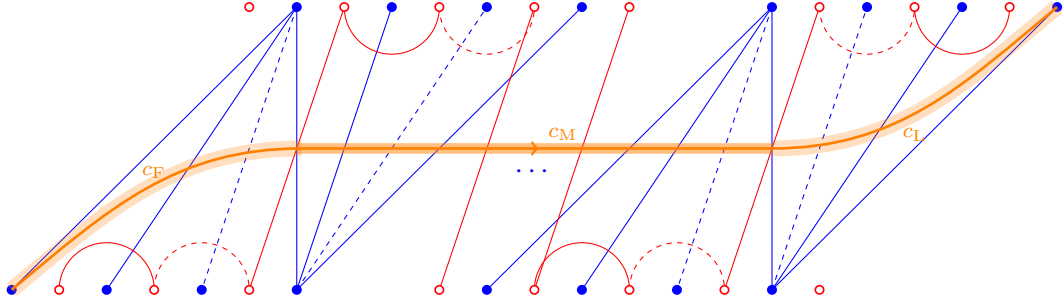
\begin{figure}[H]
\centering
\definecolor{ffqqqq}{rgb}{1,0,0}
\definecolor{qqqqff}{rgb}{0,0,1}
\begin{tikzpicture}[scale=1.25]
\draw [color=qqqqff] (-3, 3)-- (-6, 0);
\draw [color=qqqqff] (-3, 3)-- (-5, 0);
\draw [color=qqqqff] (-3, 3)-- (-4, 0) [dash pattern=on 2pt off 2pt];
\draw [color=qqqqff] (-3, 3)-- (-3, 0);
\draw [color=qqqqff] (-3, 0)-- ( 0, 3);
\draw [color=qqqqff] (-3, 0)-- (-1, 3) [dash pattern=on 2pt off 2pt];
\draw [color=qqqqff] (-3, 0)-- (-2, 3);
\draw [color=qqqqff] (-1, 0)-- ( 2, 3);
\draw [color=qqqqff] ( 2, 3)-- ( 2, 0);
\draw [color=qqqqff] ( 2, 3)-- ( 1, 0) [dash pattern=on 2pt off 2pt];
\draw [color=qqqqff] ( 2, 3)-- ( 0, 0);
\draw [color=qqqqff] ( 2, 0)-- ( 3, 3) [dash pattern=on 2pt off 2pt];
\draw [color=qqqqff] ( 2, 0)-- ( 4, 3);
\draw [color=qqqqff] ( 2, 0)-- ( 5, 3);
\draw [red] (-5.5, 0) arc(180:0:0.5);
\draw [red] (-4.5, 0) arc(180:0:0.5) [dash pattern=on 2pt off 2pt];
\draw [red] (-3.5, 0)-- (-2.5, 3);
\draw [red] (-2.5, 3) arc(-180:0:0.5);
\draw [red] (-1.5, 3) arc(-180:0:0.5) [dash pattern=on 2pt off 2pt];
\draw [red] (-0.5, 3)-- (-1.5, 0);
\draw [red] ( 0.5, 3)-- (-0.5, 0);
\draw [red] (-0.5, 0) arc( 180:0:0.5);
\draw [red] ( 0.5, 0) arc( 180:0:0.5) [dash pattern=on 2pt off 2pt];
\draw [red] ( 1.5, 0)-- ( 2.5, 3);
\draw [red] ( 2.5, 3) arc(-180:0:0.5) [dash pattern=on 2pt off 2pt];
\draw [red] ( 3.5, 3) arc(-180:0:0.5);
\begin{scriptsize}
\fill [color=qqqqff] (-3, 3) circle (1.5pt);
\fill [color=qqqqff] (-6, 0) circle (1.5pt);
\fill [color=qqqqff] (-3, 0) circle (1.5pt);
\fill [color=qqqqff] ( 0, 3) circle (1.5pt);
\fill [color=qqqqff] (-1, 0) circle (1.5pt);
\fill [color=qqqqff] ( 2, 3) circle (1.5pt);
\fill [color=qqqqff] ( 2, 0) circle (1.5pt);
\fill [color=qqqqff] ( 5, 3) circle (1.5pt);
\fill [color=qqqqff] (-5, 0) circle (1.5pt);
\fill [color=qqqqff] (-4, 0) circle (1.5pt);
\fill [color=qqqqff] (-2, 3) circle (1.5pt);
\fill [color=qqqqff] (-1, 3) circle (1.5pt);
\fill [color=qqqqff] ( 0, 0) circle (1.5pt);
\fill [color=qqqqff] ( 1, 0) circle (1.5pt);
\fill [color=qqqqff] ( 3, 3) circle (1.5pt);
\fill [color=qqqqff] ( 4, 3) circle (1.5pt);
\fill [red] (-5.5, 0) circle (1.5pt); \fill [white] (-5.5, 0) circle (1. pt);
\fill [red] (-4.5, 0) circle (1.5pt); \fill [white] (-4.5, 0) circle (1. pt);
\fill [red] (-3.5, 0) circle (1.5pt); \fill [white] (-3.5, 0) circle (1. pt);
\fill [red] (-2.5, 3) circle (1.5pt); \fill [white] (-2.5, 3) circle (1. pt);
\fill [red] (-1.5, 3) circle (1.5pt); \fill [white] (-1.5, 3) circle (1. pt);
\fill [red] (-0.5, 3) circle (1.5pt); \fill [white] (-0.5, 3) circle (1. pt);
\fill [red] ( 0.5, 3) circle (1.5pt); \fill [white] ( 0.5, 3) circle (1. pt);
\fill [red] (-1.5, 0) circle (1.5pt); \fill [white] (-1.5, 0) circle (1. pt);
\fill [red] (-0.5, 0) circle (1.5pt); \fill [white] (-0.5, 0) circle (1. pt);
\fill [red] ( 0.5, 0) circle (1.5pt); \fill [white] ( 0.5, 0) circle (1. pt);
\fill [red] ( 1.5, 0) circle (1.5pt); \fill [white] ( 1.5, 0) circle (1. pt);
\fill [red] ( 2.5, 3) circle (1.5pt); \fill [white] ( 2.5, 3) circle (1. pt);
\fill [red] ( 3.5, 3) circle (1.5pt); \fill [white] ( 3.5, 3) circle (1. pt);
\fill [red] ( 4.5, 3) circle (1.5pt); \fill [white] ( 4.5, 3) circle (1. pt);
\fill [red] (-3.5, 3) circle (1.5pt); \fill [white] (-3.5, 3) circle (1. pt);
\fill [red] ( 2.5, 0) circle (1.5pt); \fill [white] ( 2.5, 0) circle (1. pt);
\draw [orange][line width=6pt][opacity=0.25] (-6,0. ) to[out= 40,in=180] (-2.95,1.5);
      \draw[orange] (-4.5,1.25) node{$c_{\rmF}$};  
\draw [orange][line width=4pt][opacity=0.50] (-3,1.5) -- (2,1.5);
      \draw[orange] (-0.2,1.5) node[above]{$c_{\rmM}$};  
\draw [orange][line width=6pt][opacity=0.25] ( 2,1.5) to[out=  0,in=220] (5,3);
      \draw[orange] ( 3.5,1.65) node{$c_{\rmL}$};  
\draw [orange][line width=1pt] (-6,0. ) to[out= 40,in=180] (-2.95,1.5);
\draw [orange][line width=1pt] (-3,1.5) -- (2,1.5);
\draw [orange][line width=1pt][->] (-0.5,1.5) -- (-0.45,1.5);
\draw [orange][line width=1pt] ( 2,1.5) to[out=  0,in=220] (5,3);
\end{scriptsize}
\draw[blue] (-0.5, 1.25) node{$\cdots$};
\end{tikzpicture}
\caption{Dividing the permissible curve $c$ to three parts $c_{\rmF}$, $c_{\rmM}$ and $c_{\rmL}$}
\label{fig:threeparts}
\end{figure}
\noindent By Definition \ref{def:rot}, $c_{\rmM}$ is the common part of $c$ and $c^{\rota}$.
Assume that $c_{\rmM}$ crosses at least three $\rbullet$-arcs $\ared{c^{\rota}}{i-1}$, $\ared{c^{\rota}}{i}$ and $\ared{c^{\rota}}{i+1}$
such that the $\gbullet$-marked points $\m$ in $\PP_{i-1}^{\tc}$ is to the left of $c_{\rmM}$ and
the $\gbullet$-marked points $\m'$ in $\PP_i^{\tc}$ is to the right of $c_{\rmM}$ (see \Pic \ref{fig:middleparts}).
Indeed, $\m$ and $\m'$ cannot be on the same side of $c_{\rmM}$ because $c$ is a permissible curve. Moreover, if $\m$ and $\m'$ are the right and left of $c_{\rmM}$, then $\dualgreen{c^{\rota}}{i}$ is a top $\gbullet$-arc of $c$, then we have
\begin{center}
$-\varsigma^{\tc^{\rota}_i} = \max\{-\varsigma^{\tc^{\rota}}_j \mid 1\le j\le n(\tc^{\rota})\}$.
\end{center}
Thus, $c_{\proj}(\dualgreen{c^{\rota}}{i-1})$, $c_{\proj}(\dualgreen{c^{\rota}}{i})$ and $c_{\proj}(\dualgreen{c^{\rota}}{i+1})$
are projective permissible curves satisfying the following conditions.
\begin{figure}[htbp]
\centering
\definecolor{ffqqqq}{rgb}{1,0,0}
\definecolor{qqqqff}{rgb}{0,0,1}
\begin{tikzpicture}[scale=1.5]
\draw [blue] ( 0, 2)-- ( 0  ,-2  );
\draw [blue] ( 0,-2)-- ( 3.5, 1.5) [dash pattern=on 1pt off 1pt];
\draw [blue] ( 0,-2)-- ( 4  , 1  );
\draw [blue] ( 0,-2)-- ( 4.5, 0.5);
\draw [blue] ( 0, 2)-- (-3.5,-1.5) [dash pattern=on 1pt off 1pt];
\draw [blue] ( 0, 2)-- (-4  ,-1  );
\draw [blue] ( 0, 2)-- (-4.5,-0.5);
\draw [blue] ( 0, 2) arc(-180:0:0.5) [dash pattern=on 1pt off 1pt];
\draw [blue] ( 0, 2) arc(-180:0:1.0);
\draw [blue] ( 0,-2) arc(0: 180:0.5) [dash pattern=on 1pt off 1pt];
\draw [blue] ( 0,-2) arc(0: 180:1.0);
\draw [red] (-2.5, 2  )-- (-4  ,-0.5);
\draw [red] (-4  ,-0.5)-- (-3.5,-1  );
\draw [red] (-3.5,-1  )-- (-2.5,-2  );
\draw [red] ( 2.5,-2  )-- ( 4  , 0.5);
\draw [red] ( 4  , 0.5)-- ( 3.5, 1  );
\draw [red] ( 3.5, 1  )-- ( 2.5, 2  );
\draw [red] (-2.5,-2) arc(180:0:0.5);
\draw [red] (-1.5,-2) arc(180:0:0.5) [dash pattern=on 1pt off 1pt];
\draw [red] ( 2.5, 2) arc(0:-180:0.5);
\draw [red] ( 1.5, 2) arc(0:-180:0.5) [dash pattern=on 1pt off 1pt];
\draw [red] (-2.5,-2) to[out=90, in=-90] ( 2.5, 2);
\begin{scriptsize}
\fill [blue] ( 0  , 2  ) circle (1.5pt);
\fill [blue] ( 0  ,-2  ) circle (1.5pt);
\fill [blue] ( 3.5, 1.5) circle (1.5pt);
\fill [blue] ( 4  , 1  ) circle (1.5pt);
\fill [blue] ( 4.5, 0.5) circle (1.5pt);
\fill [blue] (-3.5,-1.5) circle (1.5pt);
\fill [blue] (-4  ,-1  ) circle (1.5pt);
\fill [blue] (-4.5,-0.5) circle (1.5pt);
\fill [blue] ( 2  , 2  ) circle (1.5pt);
\fill [blue] ( 1  , 2  ) circle (1.5pt);
\fill [blue] (-2  ,-2  ) circle (1.5pt);
\fill [blue] (-1  ,-2  ) circle (1.5pt);
\fill [red] (-2.5, 2) circle (1.5pt); \fill [white] (-2.5, 2) circle (1. pt);
\fill [red] (-4,-0.5) circle (1.5pt); \fill [white] (-4,-0.5) circle (1. pt);
\fill [red] (-3.5,-1) circle (1.5pt); \fill [white] (-3.5,-1) circle (1. pt);
\fill [red] (-2.5,-2) circle (1.5pt); \fill [white] (-2.5,-2) circle (1. pt);
\fill [red] (-1.5,-2) circle (1.5pt); \fill [white] (-1.5,-2) circle (1. pt);
\fill [red] (-0.5,-2) circle (1.5pt); \fill [white] (-0.5,-2) circle (1. pt);
\fill [red] ( 2.5,-2) circle (1.5pt); \fill [white] ( 2.5,-2) circle (1. pt);
\fill [red] ( 4,0. 5) circle (1.5pt); \fill [white] ( 4,0. 5) circle (1. pt);
\fill [red] ( 3.5, 1) circle (1.5pt); \fill [white] ( 3.5, 1) circle (1. pt);
\fill [red] ( 2.5, 2) circle (1.5pt); \fill [white] ( 2.5, 2) circle (1. pt);
\fill [red] ( 1.5, 2) circle (1.5pt); \fill [white] ( 1.5, 2) circle (1. pt);
\fill [red] ( 0.5, 2) circle (1.5pt); \fill [white] ( 0.5, 2) circle (1. pt);
\end{scriptsize}
\draw [orange][line width=1pt][->] (-4.5,0) -- (4.5,0);
\draw [orange] (1,0) node[above]{$c$};
\draw [violet][line width=1pt][->] (-4.5,0.5) to[out=0,in=180] (4.5,-0.5);
\draw [violet] (1,-0.2) node[below]{$c^{\rota}$};
\draw [cyan][line width=1pt] (-0.5,-1.75) to[out=90,in=-90] (0.5,1.75);
\draw [cyan][line width=1pt][->] (0,0) -- (0.09,0.18);
\draw [cyan] (-0.2,-0.4) node[left]{\tiny$p=c_{\proj}(\dualgreen{c^{\rota}}{i})$};
\draw [cyan][line width=1pt] (-0.5,-1.75) to[out=90,in=-90] (0.5,1.75);
\draw [green][line width=1pt][->] (-0.5,-1.75) to[out=45,in=135] (4,-1.75)
      node[below]{$c_{\proj}(\dualgreen{c^{\rota}}{i+1})$};
\draw [green][line width=1pt] ( 0.5, 1.75) to[out=-135,in=-45] (-4,1.75)
      node[above]{$c_{\proj}(\dualgreen{c^{\rota}}{i-1})$};
      \draw [green][line width=1pt][->] (-4,1.75) -- (-3.9,1.65);
\begin{scriptsize}
\draw (0,0.5) node[left]{$\agreen{p}{r}\!=\!\dualgreen{c^{\rota}}{i}$};
\draw ( 1, 1) node[below]{$\agreen{p}{r+1}$};
\draw (-1,-1) node[above]{$\agreen{p}{r-1}$};
\draw (0.95,-1.5) node[right]{$\dualgreen{c^{\rota}}{i+1}$};
\draw (2.8 ,-1.5) node[ left]{$\ared{c^{\rota}}{i+1}$};
\draw (-0.95,1.5) node[ left]{$\dualgreen{c^{\rota}}{i-1}$};
\draw (-2.8 ,1.5) node[right]{$\ared{c^{\rota}}{i-1}$};
\end{scriptsize}
\end{tikzpicture}
\caption{The $-\varsigma^{\tc^{\rota}}_{i}$-th cohomological curve of $\tc^{\rota}$ is zero. }
\label{fig:middleparts}
\end{figure}
\begin{itemize}
  \item $(c_{\proj}(\dualgreen{c^{\rota}}{i}))_{(0,1)}$, $(c_{\proj}(\dualgreen{c^{\rota}}{i}))_{(1,2)}$,
    $\ldots$, $(c_{\proj}(\dualgreen{c^{\rota}}{i}))_{(r-1,r)}$ are $\Dblue$-arc segments of $c_{\proj}(\dualgreen{c^{\rota}}{i+1})$,
    and \[(c_{\proj}(\dualgreen{c^{\rota}}{i}))_{(0,1)} = (c_{\proj}(\dualgreen{c^{\rota}}{i+1}))_{(0,1)}.\]
  \item $(c_{\proj}(\dualgreen{c^{\rota}}{i}))_{(r,r+1)}$, $(c_{\proj}(\dualgreen{c^{\rota}}{i}))_{(r+1,r+2)}$,
    $\ldots$, $(c_{\proj}(\dualgreen{c^{\rota}}{i}))_{(m,m+1)}$ are $\Dblue$-arc segments of $c_{\proj}(\dualgreen{c^{\rota}}{i-1})$
    and \[(c_{\proj}(\dualgreen{c^{\rota}}{i}))_{(m,m+1)} = (c_{\proj}(\dualgreen{c^{\rota}}{i-1}))_{(m',m'+1)},\]
    where $m=m(c_{\proj}(\dualgreen{c^{\rota}}{i}))$, $m'=m(c_{\proj}(\dualgreen{c^{\rota}}{i-1}))$.
  \item $-\varsigma_i^{\tc^{\rota}}\neq\max\{-\varsigma^{\tc^{\rota}}_j \mid 1\le j\le n(\tc^{\rota})\}$.
\end{itemize}
By Proposition \ref{prop:HomologiesCaseI}, $\tru^{r-1}_{r+1}(c_{\proj}(\dualgreen{c^{\rota}}{i}))$ is trivial.
Then, by Theorem \ref{thm:homologies}, we have
\[ \MM^{-1}(\rmH^{t}(\X(\tc))) = \Ocup_{t=-\varsigma_i^{\tc^{\rota}}}\tru_{r_i}^{s_i}(c_{\proj}(\dualgreen{c}{i})) = 0 \]
when $t\neq\max\{-\varsigma^{\tc^{\rota}}_t \mid 1\le t\le n(\tc^{\rota})\}$.
\end{proof}

\begin{theorem} \label{thm:string embedding 2}
Any permissible curve $c\in\PC_{\m}(\SURF^{\F})$ is the completion of the supplemental union of all cohomological curves of $\tc^{\rota}$.
\end{theorem}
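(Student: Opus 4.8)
The plan is to obtain the statement by assembling the three homological facts already at our disposal: the embedding of Theorem~\ref{thm:string embedding}, the cohomology formula of Theorem~\ref{thm:homologies}, and the vanishing of Lemma~\ref{lemm:exact}. Fix $c\in\PC_{\m}(\SURF^{\F})$; if $c$ is the trivial permissible curve the assertion is immediate, so assume $\MM(c)\neq 0$. By Theorem~\ref{thm:string embedding} the string complex $\X(\tc^{\rota})$ has $\rmH^{0}(\X(\tc^{\rota}))\cong\MM(c)$ and $\rmH^{i}(\X(\tc^{\rota}))=0$ for every $i\neq 0$; and by Lemma~\ref{lemm:exact} the only degree in which $\X(\tc^{\rota})$ can carry nonzero cohomology is $\max\{-\varsigma^{\tc^{\rota}}_{j}\mid 1\le j\le n(\tc^{\rota})\}$, which these two facts together force to be $0$. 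Since $\MM(c)$ is indecomposable, $\MM^{-1}$ applied to it is a single permissible curve, namely $c$ itself, so
\[
\MM^{-1}\bigl(\rmH^{0}(\X(\tc^{\rota}))\bigr)= c.
\]

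First I would feed $\tc^{\rota}$ and $n=0$ into Theorem~\ref{thm:homologies}: it produces integers $r_{i},s_{i}$ and the equality
\[
c=\MM^{-1}\bigl(\rmH^{0}(\X(\tc^{\rota}))\bigr)=\Ocup\limits_{0=-\varsigma^{\tc^{\rota}}_{i}}\tru_{r_{i}}^{s_{i}}\bigl(c_{\proj}(\dualgreen{c^{\rota}}{i})\bigr),
\]
so $c$ is the completion of the supplemental union of the cohomological curves of $\tc^{\rota}$ that lie in cohomological degree $0$. Next I would dispose of the remaining degrees: for each index $i$ with $-\varsigma^{\tc^{\rota}}_{i}\neq 0$ we have $\rmH^{-\varsigma^{\tc^{\rota}}_{i}}(\X(\tc^{\rota}))=0$ by the first paragraph, so the corollary following Theorem~\ref{thm:homologies} shows the $i$-th cohomological curve $\tru_{r_{i}}^{s_{i}}(c_{\proj}(\dualgreen{c^{\rota}}{i}))$ of $\tc^{\rota}$ is trivial. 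Since adjoining a trivial truncation to a supplemental union changes neither the union nor its completion, inserting all these trivial cohomological curves back at their positions along $\tc^{\rota}$ replaces the supplemental union over $\{i:-\varsigma^{\tc^{\rota}}_{i}=0\}$ by the supplemental union over all of $\{1,\dots,n(\tc^{\rota})\}$ while leaving the value $c$ unchanged. This is exactly the assertion that $c$ is the completion of the supplemental union of all cohomological curves of $\tc^{\rota}$.

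The only step I expect to require genuine care, rather than substitution, is making the gluing in the displayed equality precise: one must know that the indices $i$ with $-\varsigma^{\tc^{\rota}}_{i}=0$ occur consecutively along $\tc^{\rota}$ and that the corresponding truncations $\tru_{r_{i}}^{s_{i}}(c_{\proj}(\dualgreen{c^{\rota}}{i}))$ overlap in the way demanded by the definition of the supplemental union, so that their completion is a single permissible curve. This is precisely the structural picture extracted in the proof of Lemma~\ref{lemm:exact}: the non-rotated middle segment $c_{\rmM}$ of $c$ is where $c$ and $c^{\rota}$ coincide and carries the top-degree cohomology, the rotated end segments $c_{\rmF}$ and $c_{\rmL}$ contribute only in strictly lower degrees, and consecutive projective curves $c_{\proj}(\dualgreen{c^{\rota}}{i-1})$, $c_{\proj}(\dualgreen{c^{\rota}}{i})$, $c_{\proj}(\dualgreen{c^{\rota}}{i+1})$ share the prescribed $\Dblue$-arc segments. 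The remaining points, that the trivial entries may be omitted from $\overrightarrow{\cup}$ and that $\Ocup$ is well defined on the whole ordered list of cohomological curves, are immediate bookkeeping from Definition~\ref{def:truncation} and the definition of $\Ocup$. Everything else is a direct read-off from Theorem~\ref{thm:homologies}, with Theorem~\ref{thm:string embedding} and Lemma~\ref{lemm:exact} feeding in the vanishing of cohomology outside degree $0$.
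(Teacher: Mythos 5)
Your proof is correct, and it takes a genuinely different route from the paper's. The paper's published argument for Theorem~\ref{thm:string embedding 2} is an extremely terse geometric sketch: it invokes the three-part decomposition $c=c_{\rmF}\cup c_{\rmM}\cup c_{\rmL}$ appearing in the proof of Lemma~\ref{lemm:exact}, and then asserts, essentially by inspection of the picture, that the supplemental union of two adjacent cohomological curves of $\tc^{\rota}$ covers a longer stretch of $c$, leaving the reader to iterate and complete. You argue instead at the level of the established formulas: you feed $\tc^{\rota}$ into the cohomology formula of Theorem~\ref{thm:homologies} in degree $0$, identify the left-hand side with $c$ via Theorem~\ref{thm:string embedding}, and then dispose of the remaining degrees by noting that their cohomological curves are trivial because $\rmH^{n}(\X(\tc^{\rota}))=0$ for $n\neq 0$. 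This is a cleaner, more explicitly deductive chain, and it buys you a proof that does not depend on re-examining the surface picture once the supporting results are in place; the geometric work is entirely absorbed into the proofs of Theorem~\ref{thm:homologies} and Lemma~\ref{lemm:exact}.

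Two small caveats, neither fatal. First, your remark that the indices $i$ with $-\varsigma^{\tc^{\rota}}_{i}=0$ ``occur consecutively along $\tc^{\rota}$'' is not accurate: along the middle segment these degree-$0$ positions (the top $\gbullet$-arcs of $c$) alternate with positions of strictly smaller degree, whose cohomological curves are trivial. This does not threaten the argument, because the supplemental union $\tau_{1}\overrightarrow{\cup}_{\tc}\tau_{2}$ is set up precisely to bridge across the intermediate position $i$ and the gluing is already discharged inside Theorem~\ref{thm:homologies}, but the statement should be phrased in terms of the truncations overlapping as in the figure defining $\overrightarrow{\cup}$, not in terms of consecutiveness of the index set. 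Second, the step ``adjoining a trivial truncation changes neither the union nor its completion'' is used without justification. The paper never spells out this convention: the figure on supplemental unions only discusses the point truncation $s^{\star}$, not a genuinely empty truncation $\varnothing=\tru_{t_{1}}^{t_{2}}(p)$ with $t_{1}>t_{2}$, and the inductive definition of $\Ocup$ does not say how $\tau\overrightarrow{\cup}\varnothing$ should be read. In a write-up this deserves one sentence making the convention explicit, since otherwise the phrase ``the supplemental union of all cohomological curves'' in the theorem statement is not literally well defined; your characterization of it as ``immediate bookkeeping'' is fair but should be accompanied by stating what the bookkeeping is.
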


\begin{proof}
By Lemma \ref{lemm:exact}, the permissible curve $c$ has three parts by $\rbullet$-FFAS: the middle segment $c_{\rmM}$, the first end segment $c_{\rmF}$
and the last end segment $c_{\rmL}$. For the middle segment $c_{\rmM}$, it is clear that the supplemental union of two adjacent cohomology curves of $\tc^{\rota}$ is a truncation of $c^{\rota}$
which contains $c^{\rota}_{[i-1,i]}\cup c^{\rota}_{[i,i+1]}$.
\end{proof}

\begin{remark}
Given a grading of $\tc^{\rota}$ such that the minimal intersection index given by $\tc^{\rota}$ intersecting with $\rbullet$-grFFAS equals to zero, then we obtain the following embedding.
\[(-)^\rota: \PC_{\m}(\SURF(A)^{\F_A}) \to \AC_{\m}(\SURF(A)^{\F_A}),
c\mapsto \tc^{\rota}.\]
\end{remark}

\section{Applications} \label{sec:applications}

\subsection{The cohomological length of complexes in derived category of gentle algebras}
In this subsection, in terms of the geometric characterization of the cohomology of complexes, we give an alternative proof ``no gaps" theorem as to cohomological length for the bounded derived categories of gentle algebras \cite{Z2019}.
Let $A$ be a gentle algebra and $\comp{X}$ be a complex in the bounded derived category $\Dcat^b(A)$. Recall that the cohomological length of $\comp{X}$ is
\begin{align}
 & \hl(\comp{X}) = \max\{\dim_{\kk}\H^i(\comp{X}) \mid i\in \ZZ\},  \nonumber  
\end{align}
which was introduced by the second author and Han in \cite{ZH2016} to study of Brauer-Thrall type theorems for the bounded derived category.
Recall that the ``no gaps" theorem of the module category with respect to the length of indecomposable modules were studied by Bongartz \cite{Bon2013} and Ringel\cite{Rin2011}. They proved that if there is an indecomposable module of length $n>1$, then there exists an indecomposable module of length $n-1$ for a finite-dimensional algebra. Motivated by this result, the second author and Han in \cite{ZH2016} studied the Brauer-Thrall type theorems for derived category of a gentle algebra and obtained that if there is an indecomposable object in $\Dcat^b(A)$ of length $n>1$, then there exists an indecomposable with cohomological length $n-1$.

As we know, there is a full embedding from $\modcat A$ to $\Dcat^b(A)$, it is obvious that the dimension of an $A$-module $M$ is equal to the cohomological length of the stalk complex $M$. In terms of the embedding given in Theorem \ref{thm:string embedding} and Corollary \ref{thm:band embedding}, we revisit and give an alternative proof for the Brauer-Thrall type theorems for the bounded derived category of a gentle algebra.
We begin with the following useful lemma.

\begin{lemma} \label{lemm:hl invariant}
Let $\SURF^{\F}$ be a graded marked ribbon surface and $\tc \in \AC_{\m}(\SURF^{\F})$ be an admissible curve with $c= \{c_{[i,i+1]}\}_{0\le i \le n(\tc)}$ $(n(\tc)\ge 2)$.
Assume $\hl(\X(\tc))=\dim_{\kk}\H^{-\varsigma}(\X(\tc))$ for some integer $\varsigma$
and $k = \min\{1\le i\le n(\tc) \mid \varsigma^{\tc}_i = \varsigma\}$.
If $k\ge 2$, then there is an admissible curve $\tc' \in \AC_{\m}(\SURF^{\F})$ with $c'= \{c'_{[j,j+1]}\}_{0\le j \le n(\tc')}$ such that
\begin{itemize}
  \item $\max\limits_{\varsigma^{\tc'}_{1}\ne t\ne \varsigma^{\tc'}_{n(\tc')}}\dim_{\kk}\H^{-t}(\X(\tc')) = \hl(\X(\tc))$;
  \item $\min\{1<j<n(\tc') \mid \varsigma^{\tc'}_j = \varsigma'\}=2$;
  \item $\ared{\tc'}{1} = \ared{\tc}{k-1}$ for any $k\geq2$.
\end{itemize}
\end{lemma}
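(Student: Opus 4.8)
The plan is to restrict an admissible curve to a subsegment and argue that truncating away the initial arc segments does not change the value of the cohomological length, provided that value is attained at a cohomology degree different from the two "boundary" degrees $-\varsigma^{\tc}_1$ and $-\varsigma^{\tc}_{n(\tc)}$.

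Concretely, I would proceed as follows. First, set $\varsigma' := \varsigma$ and let $\tc'$ be the admissible subcurve of $\tc$ obtained by deleting the arc segments $c_{[0,1]},\dots,c_{[k-2,k-1]}$; that is, $\tc'$ consists of the $\Dred$-arc segments $c_{[k-1,k]}, c_{[k,k+1]},\dots,c_{[n(\tc),n(\tc)+1]}$, with its left endpoint moved to the appropriate $\gbullet$-marked point on $\partial\Surf$ so that $\tc' \in \AC_{\m}(\SURF^{\F})$ (using Case (3)-type behavior of the new initial segment, exactly as in the rotation construction of Definition~\ref{def:rot}). With this choice $\ared{\tc'}{1} = \ared{\tc}{k-1}$, which gives the third bullet; and since $k = \min\{i \mid \varsigma^{\tc}_i = \varsigma\}$, the new curve $\tc'$ has $\varsigma^{\tc'}_1 = \varsigma^{\tc}_{k-1} \ne \varsigma$ while $\varsigma^{\tc'}_2 = \varsigma^{\tc}_{k} = \varsigma$, so $\min\{1<j<n(\tc') \mid \varsigma^{\tc'}_j = \varsigma'\} = 2$, giving the second bullet. (One must check that $n(\tc') \ge 2$: this holds because $\hl(\X(\tc))$ is attained at a non-boundary degree, forcing $n(\tc) \ge k+1$.)

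The main work is the first bullet: $\max_{\varsigma^{\tc'}_1 \ne t \ne \varsigma^{\tc'}_{n(\tc')}} \dim_{\kk}\H^{-t}(\X(\tc')) = \hl(\X(\tc))$. The key observation is that by Theorem~\ref{thm:homologies}, $\H^n(\X(\tc))$ is computed from the cohomological curves $\tru_{r_i}^{s_i}(c_{\proj}(\dualgreen{c}{i}))$ with $-\varsigma^{\tc}_i = n$, and each such curve depends — via Propositions~\ref{prop:HomologiesCaseI} and \ref{prop:HomologiesCaseII} — only on the local data of $\tc$ at the \emph{three consecutive} intersections $c(i-1), c(i), c(i+1)$, i.e. on the segments $c_{[i-1,i]}$ and $c_{[i,i+1]}$. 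For any $i$ with $k \le i \le n(\tc)-1$ (these are exactly the interior intersections of $\tc'$), this local data is identical for $\tc$ and $\tc'$, so the corresponding cohomological curves, their supplemental unions, and hence $\dim_{\kk}\H^{-\varsigma^{\tc}_i}(\cdot)$ agree on both sides for every degree $t$ with $t \ne \varsigma^{\tc'}_1, \varsigma^{\tc'}_{n(\tc')}$. In particular the degree $-\varsigma$ is interior to $\tc'$ (since $\varsigma^{\tc'}_2 = \varsigma = \varsigma^{\tc}_k$ and $k$ is interior to $\tc$ as $n(\tc)\ge k+1$), so $\dim_{\kk}\H^{-\varsigma}(\X(\tc')) = \dim_{\kk}\H^{-\varsigma}(\X(\tc)) = \hl(\X(\tc))$, giving "$\ge$". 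For "$\le$": any degree $-t$ with $t$ interior to $\tc'$ is also computed from local data shared with $\tc$, hence $\dim_{\kk}\H^{-t}(\X(\tc')) \le \dim_{\kk}\H^{-t}(\X(\tc)) \le \hl(\X(\tc))$ — one must be slightly careful here, since $\H^{-t}(\X(\tc))$ may receive contributions from cohomological curves at intersections $c(i)$ with $i < k$ which are absent in $\tc'$, but those only \emph{add} to the dimension on the $\tc$ side, so the inequality goes the right way.

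The \textbf{main obstacle} I anticipate is the boundary bookkeeping: one must verify carefully that the degrees $\varsigma^{\tc'}_1$ and $\varsigma^{\tc'}_{n(\tc')}$ that get excluded from the max on the $\tc'$ side are genuinely not where $\hl(\X(\tc))$ lives — this uses the hypothesis that $\hl(\X(\tc)) = \dim_\kk \H^{-\varsigma}(\X(\tc))$ with $\varsigma$ attained at a non-boundary intersection (which in turn requires re-examining whether the original $\varsigma$ could coincide with $\varsigma^{\tc}_{n(\tc)}$ after the truncation shifts things) — together with ensuring that the grading on $\tc'$ is inherited from $\tc$ so that intersection indices $\varsigma^{\tc'}_j$ really equal $\varsigma^{\tc}_{j+k-1}$ for $1 \le j \le n(\tc')$. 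Once the indexing is pinned down, the cohomology comparison is a direct application of the locality built into Propositions~\ref{prop:HomologiesCaseI}, \ref{prop:HomologiesCaseII} and Theorem~\ref{thm:homologies}.
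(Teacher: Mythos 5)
Your overall strategy---truncate $\tc$ at the $k$-th polygon and appeal to the locality of Theorem~\ref{thm:homologies} to match the interior cohomologies---is exactly the paper's strategy, so the plan is sound. However, the construction of $\tc'$ as you literally describe it is off by one polygon, and this breaks two of the three bullets. If you delete $c_{[0,1]},\dots,c_{[k-2,k-1]}$ so that the first surviving $\Dred$-arc segment of $\tc'$ is $c_{[k-1,k]}$ (lying in $\PP^{\tc}_{k-1}$), then the first $\Dred$-arc crossed is the one separating $\PP^{\tc}_{k-1}$ from $\PP^{\tc}_{k}$, i.e.\ $\ared{\tc'}{1}=\ared{\tc}{k}$, not $\ared{\tc}{k-1}$; consequently $\varsigma^{\tc'}_{1}=\varsigma^{\tc}_{k}=\varsigma$, which is precisely the degree excluded from the maximum in the first bullet, and that bullet then fails. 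Later you assert $\varsigma^{\tc'}_{1}=\varsigma^{\tc}_{k-1}\neq\varsigma$, which contradicts your own construction. The paper avoids this by keeping one additional segment: it sets $\tc'=c^{*}_{[0,1]}\cup c^{*}$ with $c^{*}_{[j,j+1]}=c_{[j+k-2,j+k-1]}$ for $j\geq 1$ and $c^{*}_{[0,1]}$ a new $\Dred$-arc segment (of Case~(2) type in Figure~\ref{fig:arc segment II}) inside $\PP^{\tc}_{k-2}$; then $\ared{\tc'}{1}=\ared{\tc}{k-1}$ and $\varsigma^{\tc'}_{1}\neq\varsigma$ as required.

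Two further points worth flagging. First, the paper dispatches the boundary situations $k=2$ and $k=n(\tc)-1$ separately (taking $\tc'=\tc$ or $\tc'=\tc^{-1}$) before the truncation construction; your write-up does not address these, and they are needed for the induction in Proposition~\ref{prop:stringcase}. Second, your justification that ``$\hl(\X(\tc))$ is attained at a non-boundary degree, forcing $n(\tc)\geq k+1$'' is not a hypothesis of the lemma; the only hypotheses are $n(\tc)\geq 2$ and $k\geq 2$, so this claim needs a separate argument or the degenerate case needs to be ruled out. Once you shift the truncation to start in $\PP^{\tc}_{k-2}$, the locality argument you sketch for the first bullet (contributions to $\H^{-\varsigma}$ come only from intersections $c(i)$ with $i\geq k$, and those coincide with the contributions at $c'(j)$ for $j\geq 2$) does give $\H^{-\varsigma}(\X(\tc'))\cong\H^{-\varsigma}(\X(\tc))$ and the desired inequality, matching the paper's use of Theorem~\ref{thm:homologies}.
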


\begin{proof}
If $k=2$ or $k= n(\tc)-1$, then $\tc'=\tc$ or $\tc'=\tc^{-1}$. Next, we construct $\tc'$ for $2<k<n(\tc)-1$.
There are four Cases (A), (B), (C) and (D) given in subsection \ref{sec:truncations}.
We only prove Case (A), the proofs of (B), (C) and (D) are similar.
In this case (see \Pic \ref{fig:hl invariant}), by Proposition \ref{prop:HomologiesCaseI}, there are $r,s\in\NN^+$ such that
\[\tru_{r+1}^{s-1}(c_{\proj}(\dualgreen{c}{k})) \sqsubseteq \MM^{-1}(\rmH^{-\varsigma}(\X(\tc)))
= \MM^{-1}\left(\bigoplus_{1\le\imath\le M} \MM(c_{\imath})\right)\]
for some $c_1, \ldots, c_M$. Since $k = \min\{1\le i\le n(\tc) \mid \varsigma^{\tc}_i = \varsigma\}$, by Theorem \ref{thm:homologies}, we can assume that
\begin{align}\label{tru}
\rmH^{-\varsigma}(\X(\tc)) \cong \MM(T_0 \ocup \Ocup\limits_{1\le\jmath\le N} T_{\jmath}),
\end{align}
where $T_0=\tru_{r+1}^{s-1}(c_{\proj}(\dualgreen{c}{k}))$ and $T_1$, $\ldots$, $T_N$ are some truncations of projective permissible curves.
Then the curve $T_{0}^{\star}:=\tru_{0}^{m(\widehat{T_0})-1}(\widehat{T_0})$ is a truncation of some $c_{\imath}$ ($1\le \imath\le M$).
\begin{figure}[htbp]
\centering
\definecolor{ffxfqq}{rgb}{1,0.5,0}
\definecolor{qqqqff}{rgb}{0,0,1}
\definecolor{ttqqqq}{rgb}{0.2,0,0}
\definecolor{ffqqqq}{rgb}{1,0,0}
\begin{tikzpicture}[scale=1.4]
\draw [line width=1pt,color=ffqqqq] (0,1)-- (-1,2);
\draw [line width=1pt,dash pattern=on 2pt off 2pt,color=ffqqqq] (-1,2)-- (-3,2);
\draw [line width=1pt,color=ffqqqq] (-3,2)-- (-4,1);
\draw [line width=1pt,color=ffqqqq] (-4,1)-- (-4,-1);
\draw [line width=1pt,dash pattern=on 2pt off 2pt,color=ffqqqq] (-4,-1)-- (-3,-2);
\draw [line width=1pt,color=ttqqqq] (-3,-2)-- (-1,-2);
\draw [line width=1pt,dash pattern=on 2pt off 2pt,color=ffqqqq] (-1,-2)-- (0,-1);
\draw [line width=1pt,color=ffqqqq] (0,-1)-- (0,1);
\draw [line width=1pt,dash pattern=on 2pt off 2pt,color=ffqqqq] (1,2)-- (0,1);
\draw [line width=1pt] (1,2)-- (3,2);
\draw [line width=1pt,dash pattern=on 2pt off 2pt,color=ffqqqq] (3,2)-- (4,1);
\draw [line width=1pt,color=ffqqqq] (4,1)-- (4,-1);
\draw [line width=1pt,dash pattern=on 2pt off 2pt,color=ffqqqq] (4,-1)-- (3,-2);
\draw [line width=1pt,color=ffqqqq] (3,-2)-- (1,-2);
\draw [line width=1pt,color=ffqqqq] (1,-2)-- (0,-1);
\draw [line width=1pt,dash pattern=on 2pt off 2pt,color=qqqqff] (-2.03,-2)-- (-3.49,-1.51);
\draw [line width=1pt,color=qqqqff] (-2.03,-2)-- (-4,-0.08) -- (-5,0.9);
\draw [line width=1pt,color=qqqqff] (-2.03,-2)-- (-3.52,1.48)-- (-3.8,2.13);
\draw [line width=1pt,dash pattern=on 2pt off 2pt,color=qqqqff] (-2.03,-2)-- (-2.02,2);
\draw [line width=1pt,color=qqqqff] (-2.03,-2)-- (-0.53,1.53);
\draw [line width=1pt,color=qqqqff] (-2.03,-2)-- (0,-0.06);
\draw [line width=1pt,dash pattern=on 2pt off 2pt,color=qqqqff] (-2.03,-2)-- (-0.55,-1.55);
\draw [line width=1pt,dash pattern=on 2pt off 2pt,color=qqqqff] (1.99,2)-- (0.5,1.5);
\draw [line width=1pt,color=qqqqff] (1.99,2)-- (0,-0.06);
\draw [line width=1pt,color=qqqqff] (1.99,2)-- (0.52,-1.51);
\draw [line width=1pt,color=qqqqff] (1.99,2)-- (1.97,-2);
\draw [line width=1pt,dash pattern=on 2pt off 2pt,color=qqqqff] (1.99,2)-- (3.54,-1.46);
\draw [line width=1pt,color=qqqqff] (1.99,2)-- (4,-0.03);
\draw [line width=1pt,dash pattern=on 2pt off 2pt,color=qqqqff] (1.99,2)-- (3.43,1.57);
\draw [line width=1pt][->] (-4   ,-0.56) -- (-3.62,-1.38);
\draw [line width=1pt][->] (-3.70, 1.3 ) -- (-4.00, 0.5 ); \draw (-3.70, 1.3 ) node[ left]{$\alpha_u$};
\draw [line width=1pt][->] (-2.48, 2   ) -- (-3.27, 1.73);
\draw [line width=1pt][->] (-0.7 , 1.7 ) -- (-1.44, 2   ); \draw (-1.44, 2   ) node[above]{$\alpha_2$};
\draw [line width=1pt][->] ( 0.  , 0.3 ) -- (-0.33, 1.33); \draw (-0.33, 1.33) node[right]{$\alpha_1$};
\draw [line width=1pt][->] ( 0.  ,-0.34) -- ( 0.24,-1.24); \draw ( 0.24,-1.24) node[below]{$\beta_1$};
\draw [line width=1pt][->] ( 0.68,-1.68) -- ( 1.48,-2   ); \draw ( 1.48,-2   ) node[below]{$\beta_2$};
\draw [line width=1pt][->] ( 2.4 ,-2   ) -- ( 3.32,-1.68);
\draw [line width=1pt][->] ( 3.77,-1.23) -- ( 4   ,-0.44); \draw ( 3.77,-1.23) node[right]{$\beta_v$};
\draw [line width=1pt][->] ( 4   , 0.43) -- ( 3.67, 1.33);
\draw [line width=1pt][->,line width=1.5pt,violet] (-5,0) -- (5,0) node[right]{$\tc$};
\draw [line width=1pt][->,line width=1.5pt,black] (-5,1) to[out=-45,in=180] (-2.5,0.3) -- (5,0.3) node[right]{$\tc'$}; 
\draw [line width=1pt][->,line width=1.5pt,gray] (-3.8,2) to[out=-45,in=180] (5,0.6) node[right]{$\tc''$}; 
\begin{scriptsize}
\fill [red] ( 0.00,-1.00) circle (0.1cm); \fill [white] ( 0.00,-1.00) circle (0.07cm);
\fill [red] ( 0.00, 1.00) circle (0.1cm); \fill [white] ( 0.00, 1.00) circle (0.07cm);
\fill [red] (-1.00, 2.00) circle (0.1cm); \fill [white] (-1.00, 2.00) circle (0.07cm);
\fill [red] (-3.00, 2.00) circle (0.1cm); \fill [white] (-3.00, 2.00) circle (0.07cm);
\fill [red] (-4.00, 1.00) circle (0.1cm); \fill [white] (-4.00, 1.00) circle (0.07cm);
\fill [red] (-4.00,-1.00) circle (0.1cm); \fill [white] (-4.00,-1.00) circle (0.07cm);
\fill [red] (-3.00,-2.00) circle (0.1cm); \fill [white] (-3.00,-2.00) circle (0.07cm);
\fill [red] (-1.00,-2.00) circle (0.1cm); \fill [white] (-1.00,-2.00) circle (0.07cm);
\fill [red] ( 1.00, 2.00) circle (0.1cm); \fill [white] ( 1.00, 2.00) circle (0.07cm);
\fill [red] ( 3.00, 2.00) circle (0.1cm); \fill [white] ( 3.00, 2.00) circle (0.07cm);
\fill [red] ( 4.00, 1.00) circle (0.1cm); \fill [white] ( 4.00, 1.00) circle (0.07cm);
\fill [red] ( 4.00,-1.00) circle (0.1cm); \fill [white] ( 4.00,-1.00) circle (0.07cm);
\fill [red] ( 3.00,-2.00) circle (0.1cm); \fill [white] ( 3.00,-2.00) circle (0.07cm);
\fill [red] ( 1.00,-2.00) circle (0.1cm); \fill [white] ( 1.00,-2.00) circle (0.07cm);
\draw [red] ( 0.  , 0.75) node[right]{$\ta^c_{\rbullet, k}$};
\draw [red] (-4.  ,-0.75) node[ left]{$\ta^c_{\rbullet, k-1}$};
\draw [red] ( 4.  ,-0.75) node[right]{$\ta^c_{\rbullet, k+1}$};
\fill [blue] (-5.  , 0.9 ) circle (0.1cm); 
\fill [blue] (-3.8 , 2.13) circle (0.1cm); 
\fill [blue] (-2.00,-2.00) circle (0.1cm);
\fill [blue] ( 2.00, 2.00) circle (0.1cm);
\draw [orange][line width=1.5pt][opacity=0.75]
      (-3.4,-2. ) to[out= 120, in= -90] (-3.8, 0. ) to[out=  90, in=-180]
      (-2. , 1.8) to[out=   0, in= 100] ( 0. , 0. ) to[out= -80, in= 180]
      ( 2. ,-1.8) to[out=   0, in= -90] ( 3.8, 0. ) to[out=  90, in= -60]
      ( 3.4, 2. );
\draw [orange] (-1.8, 2.20) node[above]{$p=c_{\proj}(\dualgreen{c}{k})$};
\draw [orange][->] (-1.8, 1.80) to (-1.8, 2.20);
\draw [green][line width=1.5pt]
      (-3.2,-2. ) to[out=  90, in=   0] (-4.5, 0.25) node[left]{$c_{\proj}(\dualgreen{c}{k-1})$};
\draw [green][line width=1.5pt]
      ( 3.2, 2. ) to[out= -90, in= 180] ( 4.5,-0.25) node[right]{$c_{\proj}(\dualgreen{c}{k+1})$};
\draw [cyan][line width=1.5pt]
      (-3.3, 0.9) to[out=  45, in= 180] (-2. , 1.6) to[out=   0, in= 110]
      ( 0. , 0. ) to[out= -70, in= 180] ( 2. ,-1.6) to[out=   0, in=-135]
      ( 3.3,-0.9);
\draw [cyan][->] (-2.7,1.4) -- (-2.7,2.5) node[above]{$T_0=\tru_{r+1}^{s-1}(c_{\proj}(\dualgreen{c}{k}))$};
\draw [cyan][line width=4pt][opacity=0.5]
      (-5. , 0.9) to[out= -45, in= 225]
      (-3.2, 0.7) to[out=  45, in= 180] (-2. , 1.4) to[out=   0, in= 110]
      ( 0. , 0. ) to[out= -70, in= 180] ( 2. ,-1.4) to[out=   0, in=-135]
      ( 3.2,-0.7);
\draw [cyan][->] (3,-0.9) -- (3,-2.4) node[below]{$T_0^{\star}=\tru_0^{m(\widehat{T_0})-1}(\widehat{T_0})$};
\draw [purple][line width=1pt] (-2. , 1.2) to[out=   0, in= 120]
      ( 0. , 0. ) to[out= -60, in= 180] ( 2. ,-1.2) to[out=   0, in=-135]
      (3.1,-0.45);
\draw [purple][->] (-0.7,0.83) -- (-0.7,2.5);
\draw [purple] (-0.4,2.5)node[above]{$U_0=\tru_{r+2}^{s-1}(c_{\proj}(\dualgreen{c}{k}))$};
\draw [purple][line width=3pt][opacity=0.5]
      (-3.8 , 2.13) to[out= -65, in= 180] (-2. , 1) to[out=   0, in= 135]
      ( 0.  ,  0. ) to[out= -45, in= 180] ( 2. ,-1) to[out=   0, in=-135]
      ( 3. ,-0.25);
\draw [purple][->] (0.5,-0.5) -- (0.5,-2.4) node[below]{$U_0^{\star}=\tru_0^{m(\widehat{U_0})-1}(\widehat{U_0})$};
\end{scriptsize}
\draw [blue] (-3. ,-1. ) node[above]{$\agreen{p}{r}$};
\draw [blue] ( 3. , 1. ) node[below]{$\agreen{p}{s}$};
\draw [blue] (-2,-2) node[below]{$\m_1$}; \draw (-1.8,-1) node{$\PP_{k-1}^{\tc}$};
\draw [blue] ( 2, 2) node[above]{$\m_2$}; \draw ( 1.6,-0.5) node{$\PP_{k}^{\tc}$};
\end{tikzpicture}
\caption{The case of $k\ge 2$}
\label{fig:hl invariant}
\end{figure}
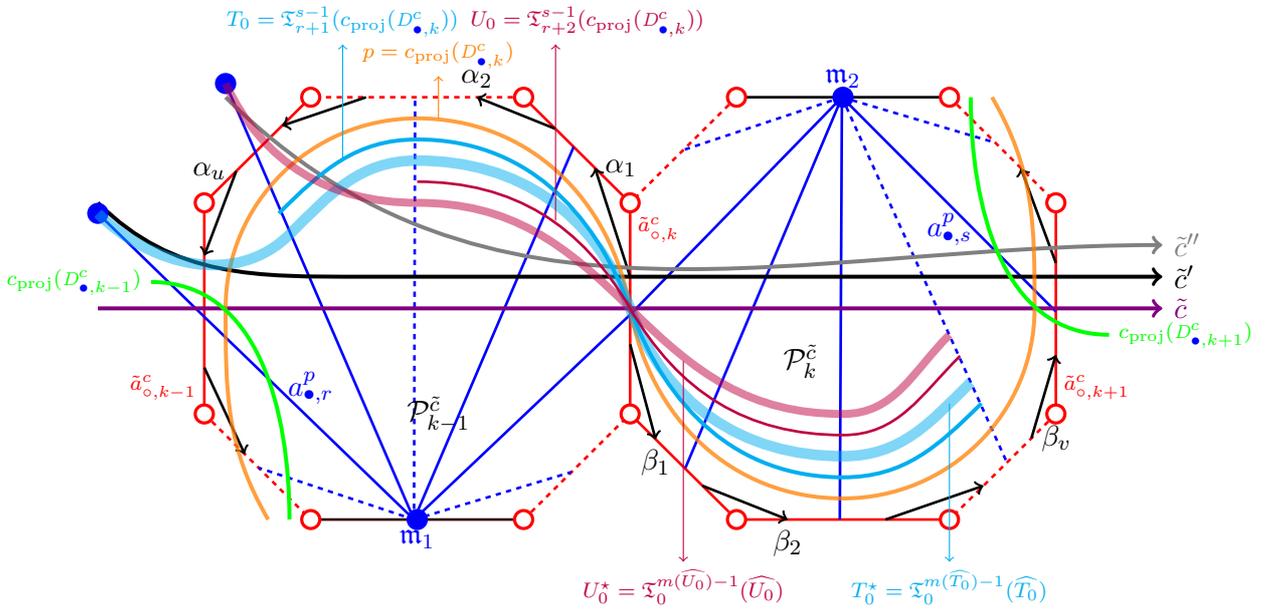
Now assume that $c^{*}$ is a curve with $c^{*} = \{c^{*}_{[j,j+1]}\}_{1\le j\le n(\tc^{*})}$ which satisfy
$c^{*}_{[j,j+1]}=c_{[j+k-2,j+k-1]}$ for all $1\le j\le n(\tc^{*})$ and $\varsigma^{\tc^{*}}_2=\varsigma^{\tc}_k=\varsigma$.
Let $\tc' =c^{*}_{[0,1]}\cup c^{*}$, where $c^{*}_{[0,1]}$ is the $\Dred$-arc segment lying in $\PP^{\tc}_{k-2}$.
We claim that $\tc'$ is the curve we need.

By Theorem \ref{thm:homologies}, we have
\[\H^{-\varsigma}(\X(\tc'))=\H^{-\varsigma}(\X(\tc))\] and
\[\dim_{\kk}\H^{-\varpi}(\X(\tc'))\le\dim_{\kk}\H^{-\varsigma}(\X(\tc))\ (\text{for any $\varpi\ne\varsigma, \varsigma^{\tc'}_1, \varsigma^{\tc'}_{n(\tc')}$}).\]
Thus, $\max\limits_{\varsigma^{\tc'}_{1}\ne t\ne \varsigma^{\tc'}_{n(\tc')}}\dim_{\kk}\H^{-t}(\X(\tc')) = \hl(\X(\tc))$.
Taking $\varsigma^{\tc'}_1=\varsigma'$, then $\tc'$ is an admissible curve in $\AC_{\m}(\SURF^{\F})$ such that
$\min\{1\le j\le n(\tc') \mid \varsigma^{\tc'}_j = \varsigma'\}=2$ and $\ared{\tc'}{1} = \ared{\tc}{k-1}$.
\end{proof}

\begin{proposition} \label{prop:stringcase}
Let $A$ be a gentle algebra and $\comp{X}$ a string complex in $\Dcat^b(A)$.
Then there exists a complex $\comp{Y}$ such that $\hl(\comp{X})-1=\hl(\comp{Y})$.
\end{proposition}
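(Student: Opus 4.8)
The plan is to carry out the reduction inside the geometric model, using the cohomology description of Theorem~\ref{thm:homologies} and the normalization provided by Lemma~\ref{lemm:hl invariant}. Write $\comp{X}=\X(\tc)$ for an admissible curve $\tc\in\AC_{\m}(\SURF^{\F})$, set $n=\hl(\comp{X})$ and assume $n>1$ (for $n=1$ there is nothing to prove). Pick a degree $-\varsigma$ with $\dim_{\kk}\rmH^{-\varsigma}(\comp{X})=n$. The degenerate cases $n(\tc)\le 1$ — where $\comp{X}$ is a stalk complex whose single cohomology is an indecomposable projective $\MM(c_{\proj}(\dualgreen{c}{1}))$ — are handled by hand: any proper initial truncation of $c_{\proj}(\dualgreen{c}{1})$ gives a permissible curve whose module has dimension one less, and this module is the cohomology of a suitable string complex by surjectivity of $\X$. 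So assume $n(\tc)\ge 2$. After possibly replacing $\tc$ by $\tc^{-1}$ and applying Lemma~\ref{lemm:hl invariant}, I may assume that some interior degree still realizes $\hl(\comp{X})=n$ and, writing $-\varsigma$ again for one such degree, that $\varsigma$ occurs among the intersection indices $\varsigma^{\tc}_i$ first at the interior intersection $c(2)$, with the first $\rbullet$-arc $\ared{\tc}{1}$ under control.

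With $\tc$ in this normalized form, Propositions~\ref{prop:HomologiesCaseI} and~\ref{prop:HomologiesCaseII} identify $\rmH^{-\varsigma}(\comp{X})$ with the $\MM$-image of the completion of the supplemental union $\Ocup_{-\varsigma^{\tc}_i=-\varsigma}\tru_{r_i}^{s_i}(c_{\proj}(\dualgreen{c}{i}))$, in which the summand at $c(2)$ is a truncation $T_0=\tru_{r+1}^{s-1}(c_{\proj}(\dualgreen{c}{2}))$ (or $\tru_r^r$ in Case~(B)) of a projective permissible curve. The key move is to apply a single elementary modification to $\tc$ at its first end — either deleting the first $\Dred$-arc segment $c_{[0,1]}$, or sliding the endpoint $c(0)\in\M$ to the adjacent marked point — chosen so that in the resulting admissible curve $\tc'\in\AC_{\m}(\SURF^{\F})$ the cohomological curve at the distinguished intersection loses exactly one $\Dblue$-arc segment, while every other cohomological curve of $\tc'$ coincides with, or is a further truncation of, the corresponding cohomological curve of $\tc$. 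Granting this, Theorem~\ref{thm:homologies} yields $\dim_{\kk}\rmH^{-\varsigma}(\X(\tc'))=n-1$ and $\dim_{\kk}\rmH^{-t}(\X(\tc'))\le\dim_{\kk}\rmH^{-t}(\X(\tc))\le n$ for every $t$. An induction on the number of degrees in which the current complex attains its cohomological length — each step replacing $\comp{X}$ by such a $\tc'$, strictly decreasing this number without raising the cohomological length, and re-normalizing via Lemma~\ref{lemm:hl invariant} — then terminates at a string complex $\comp{Y}$ with $\hl(\comp{Y})=n-1$.

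The hard part is the claim in the middle of the previous paragraph: that a single elementary end-modification affects the supplemental-union formula of Theorem~\ref{thm:homologies} in exactly the controlled way stated. Several things must be checked. First, using that the modification acts only on the elementary polygons $\PP^{\tc}_0$ and $\PP^{\tc}_1$, one has to verify that the intersection indices $\varsigma^{\tc}_i$ at the interior intersections $i\ne 1,2$ are unchanged, so that the grouping of cohomological curves by degree is preserved, by re-examining the grading bookkeeping of Construction~\ref{construction} and Definition~\ref{def:rot}. Second, one must show the drop in $\dim_{\kk}\rmH^{-\varsigma}$ is exactly one — this is where the normalization matters, since having $\varsigma$ occur first at $c(2)$ forces the modification to touch exactly one summand of the supplemental union for $\rmH^{-\varsigma}$ — and that the only configurations in which $\tc'$ degenerates (the perturbed first segment collapsing, or $\tc'$ leaving $\AC_{\m}(\SURF^{\F})$) are those with $\hl(\comp{X})=1$, already excluded. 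A routine but lengthy complication is the local case analysis at $c(2)$ among Cases~(A)--(D) of Section~\ref{sec:truncations}; as in the proofs of Propositions~\ref{prop:HomologiesCaseI} and~\ref{prop:HomologiesCaseII} and of Lemma~\ref{lemm:hl invariant}, treating Case~(A) in full and remarking that the remaining cases are analogous should suffice.
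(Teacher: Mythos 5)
Your strategy follows the paper's through the interior: normalize with Lemma~\ref{lemm:hl invariant} so the maximal cohomology is attained at $c(2)$, then shorten the curve at one end so the cohomological curve at the distinguished intersection loses one $\Dblue$-arc segment, and iterate. This matches the paper's construction of $\tc''$ and $\tc'''$ (the admissible curve whose first $\rbullet$-arc is moved to the predecessor of $\ared{\tc}{k-1}$), and the dimension drop by exactly one at that interior degree, via formula~(\ref{1}), is the same computation. However, your argument omits a step the paper cannot do without, and which a curve modification alone cannot supply.

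The gap is the two end degrees $-\varsigma^{\tc'''}_1$ and $-\varsigma^{\tc'''}_{l+1}$. Lemma~\ref{lemm:hl invariant} only controls $\max_{\varsigma^{\tc'}_1\ne t\ne\varsigma^{\tc'}_{n(\tc')}}\dim_{\kk}\H^{-t}(\X(\tc'))$ — the statement explicitly excludes the first and last intersection indices — so after the normalization and the interior reductions, one knows the interior max equals $\hl(\comp{X})-1$, but the cohomology at the two end degrees can still exceed $\hl(\comp{X})-1$; indeed nothing in the truncation bookkeeping bounds $\ker(P(v_1)\to P^{\theta+1})$ or $\mathrm{coker}(P_{\vartheta-1}\to P(v_{l+1}))$ by the interior max. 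Your assertion that ``every other cohomological curve of $\tc'$ coincides with, or is a further truncation of, the corresponding cohomological curve of $\tc$,'' and the resulting inequality $\dim_{\kk}\H^{-t}(\X(\tc'))\le\dim_{\kk}\H^{-t}(\X(\tc))$ for every $t$, is therefore unjustified precisely at the end degrees, where the modification replaces an $\Dred$-arc segment rather than truncating a cohomological curve, and where Propositions~\ref{prop:HomologiesCaseI}--\ref{prop:HomologiesCaseII} give the end cohomology the form $\tru_{r+1}^{m+1}(c_{\proj}(\dualgreen{c}{1}))$, which has no \emph{a priori} comparison with what it was before. The paper closes this by leaving the geometric model at the last step: it constructs $\comp{Y}$ from $\X(\tc''')$ by replacing the components in degrees $\theta+1$ and $\vartheta-1$ with $\operatorname{coker}f\oplus P_{\theta+1}$ and $P_{\vartheta-1}\oplus\ker\zeta$, which kills the surplus at the ends while leaving the interior cohomology (hence the interior max $\hl(\comp{X})-1$) intact. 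Note that this $\comp{Y}$ is no longer a string complex, contrary to your phrase ``terminates at a string complex $\comp{Y}$''; the proposition only asks for a complex in $\Dcat^b(A)$, and the paper's $\comp{Y}$ is precisely such a non-string object. If you wish to remain entirely inside $\AC_{\m}(\SURF^{\F})$, you would need a separate argument that the end cohomologies can always be brought below $n-1$ by further admissible-curve surgery, which the paper does not establish and which your sketch does not supply.

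A secondary issue is the termination clause of your induction: decreasing the number of degrees at which $\hl$ is attained to zero yields $\hl(\comp{Y})\le n-1$, but the claimed equality $\hl(\comp{Y})=n-1$ needs the interior max to remain exactly $n-1$ at the end of the process, which is formula~(\ref{2}) in the paper and which your sketch asserts without a comparable bound from below.
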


\begin{proof}
Let $\tc \in \AC_{\m}(\SURF(A)^{\F_A})$ be the admissible curve with $\X(\tc) \cong \comp{X}$. If $n(\tc)=1$, then $\X(\tc)$ is a stalk complex which can be viewed as a projective module over $A$. In this case, the result follows by the Brauer-Thrall type theorems for module categories.

Now assume that $n(\tc)\ge 2$. Keep the notation from Lemma \ref{lemm:hl invariant}. If $k\geq 2$, then there exists an admissible curve $\tc' \in \AC_{\m}(\SURF(A)^{\F_A})$ such that
\[\hl(\comp{X}) = \hl(\X(\tc)) = \max\limits_{\varsigma^{\tc'}_{1}\ne t\ne \varsigma^{\tc'}_{n(\tc')}}\dim_{\kk}\H^{-t}(\X(\tc')) = \hl(\X(\tc)).\]
Let $\ta_{\rbullet}$ be the graded $\Dred$-arc satisfying $\ta_{\rbullet}$ is a graded $\Dred$-arc adjacent to $\ared{\tc}{k-1}$ and $\ta_{\rbullet}$ is a predecessor of $\ared{\tc}{k-1}$ along the positive direction of the boundary of $\PP^{\tc}_{k-1}$, (see \Pic \ref{fig:hl invariant}, $\ta_{\rbullet}$ is the graded $\Dred$-arc corresponding to the source point of $\alpha_u$).

We define $\tc''$ to be the admissible curve in $\AC_{\m}(\SURF(A)^{\F_A})$ such that
\[ \ared{\tc''}{1} = \ta_{\rbullet}, \ared{\tc''}{t} =  \ared{\tc}{k+(t-2)}
\  \text{for any}~2\le t\le n(\tc)-k+2
\text{ and }
\varsigma^{\tc''}_2=\varsigma^{\tc'}_2. \]
Then
\[\H^{-\varsigma^{\tc''}_2}(\X(\tc'')) \cong \MM\left({{U_0}} \ocup \Ocup\limits_{1\le\jmath\le N} T_{\jmath}\right), \]
where ${{U_0}} = \tru^{s-1}_{r+2}(c_{\proj}(\dualgreen{c}{k}))$
and the truncations $T_1$, $T_2$, $\ldots$, $T_N$ is determined by the formula (\ref{tru}).
By Theorem \ref{thm:OPS and BCS corresponding} (1), we have
\begin{align}\label{1}
\dim_{\kk}\H^{-\varsigma^{\tc''}_2}(\X(\tc'')) = \dim_{\kk}\H^{-\varsigma^{\tc'}_2}(\X(\tc')) - 1 = \hl(\comp{X}) - 1.
\end{align}
By the same way, we can deal with $t_r^{\tc''}$ for any $1< r< n(\tc'')$ with $\dim_{\kk}\H^{-\varsigma_r^{-\tc''}}(\X(\tc'')) = \hl(\comp{X})$.
Indeed, we only need to observe the last one and obtain an admissible curve $\tc'''$ such that
\begin{align}\label{2}
\max\limits_{2\le i\le n(\tc''')-1}\dim_{\kk}\H^{-\varsigma_i^{\tc'''}}(\X(\tc'''))
= \dim_{\kk}\H^{-\varsigma^{\tc''}_2}(\X(\tc'')) \mathop{=\!=\!=}\limits^{(\ref{1})} \hl(\comp{X}) - 1.
\end{align}

Let $\omega=\omega_1\cdots\omega_l$ $(l+1=n(\tc'''))$ be the homotopy string corresponding to $\X(\tc''')$.
Assume that the lengths of $\omega_1$ and $\omega_l$ are $\ell_{\Left}$ and $\ell_{\Right}$, respectively. Without loss of generality, we can assume that $\omega_1\in\Q_{\ell_{\Left}}$ and $\omega_l\in\Q_{\ell_{\Right}}$. Then
\[\omega := \xymatrix{v_1 \ar@{~>}[r]^{\omega_1} & v_2 \ar@{~}[r] & \cdots \ar@{~}[r] & v_l \ar@{~>}[r]^{\omega_l} & v_{l+1}}.\]
Thus the $\theta=(-\varsigma^{\tc'''}_{1})$-th component of the complex $\X(\tc''')$ is of the form
\begin{align}\label{comp:Left}
\cdots \longrightarrow
   P_{\theta-1}=P(v_2) \oplus \hat{P}_{\theta-1} \mathop{-\!\!\!-\!\!\!-\!\!\!\longrightarrow}\limits^{d_{\theta-1}=\big( {{}_{\varphi}^{f}}\ {{}_{\phi}^{0}} \big)}
   P_\theta = P(v_1) \oplus \hat{P}_{\theta} \mathop{-\!\!\!-\!\!\!-\!\!\!\longrightarrow}\limits^{d_\theta=(0\ \ast)}
   P_{\theta+1}  \longrightarrow \cdots,
\end{align}
where $f$ is induced by $\omega_1$ and $\hat{P}_{\theta-1}$, $\hat{P}_{\theta}$ are projective. On the other hand, the $\vartheta=(-\varsigma^{\tc'''}_{l+1})$-th component of the complex $\X(\tc''')$ is of the form
\begin{align}\label{comp:Right}
\cdots \longrightarrow P_{\vartheta-1}
   \mathop{-\!\!\!-\!\!\!-\!\!\!\longrightarrow}\limits^{d_{\vartheta-1}=\big({}^{\star}_{0}\big)} P_{\vartheta} = \hat{P}_{\vartheta}\oplus P(v_{l+1})
   \mathop{-\!\!\!-\!\!\!-\!\!\!\longrightarrow}\limits^{d_{\vartheta}=\big({^{\zeta}_{\eta}}\ {^0_g}\big)} P_{\vartheta+1} = \hat{P}_{\vartheta+1}\oplus P(v_l)
   \longrightarrow \cdots,
\end{align}
where $g$ is induced by $\omega_l$ and $\hat{P}_{\vartheta}$, $\hat{P}_{\vartheta+1}$ are projective.
It is obvious that,
\[\H^{\theta}(\X(\tc''')) =  \ker d_{\theta} /\im d_{\theta-1} = (P(v_1)/\im f \oplus \ker(*)/\im(\varphi+\phi))\]
and
\[\H^{\vartheta}(\X(\tc''')) =  \ker d_{\vartheta} /\im d_{\vartheta-1} = \ker\zeta\oplus\ker(\im(\eta+g)/\im(\star)). \]
We define the complex $\comp{Y}=(Y_n, d^Y_n)$ as follows:

\[Y_n=\left\{\begin{array}{ll}
X_n&\mathrm{if}\ n\ne \theta+1, \vartheta-1,\\
\mathrm{coker}f \oplus P_{\theta+1}&\mathrm{if}\ n= \theta+1,\\
P_{\vartheta-1}\oplus\ker\zeta&\mathrm{if}\ n= \vartheta-1.
\end{array}\right.\]
where $(X_n, d^X_n)=\X(\tc''')$ and $d^Y_n$ is induced by $d^X_n$ naturally.
Then it is clear that $\comp{Y}$ is indecomposable. By the construction of $\tc'''$, we have
\[\dim_{\kk}\H^{-\varsigma^{\tc'''}_{1}}(\comp{Y}) < \dim_{\kk}\H^{-\varsigma^{\tc}_{k}}(\X(\tc)) \le \hl(\comp{X});\]
and
\[\dim_{\kk}\H^{-\varsigma^{\tc'''}_{l+1}}(\comp{Y}) < \dim_{\kk}\H^{-\varsigma^{\tc}_{k+l}}(\X(\tc)) \le \hl(\comp{X}).\]
Thus, by (\ref{2}), we obtain that $\hl(\comp{Y})=\hl(\comp{X})-1$.

If $k=1$, let $\ta_{\rbullet}$ be the graded $\Dred$-arc adjacent to the edge $b$ of $\PP^{\tc}_0$ lying in $\bSurf$ such that
$\ta_{\rbullet}$ is a predecessor of $b$ along the positive direction of the boundary of $\PP^{\tc}_0$.
Now consider the admissible curve $\tc'$ shown in \Pic \ref{fig:k=1}. In this case, $n(\tc')=n(\tc)+1$, $t^{\tc'}_1$ lies in $\ta_{\rbullet}$
and $\varsigma^{\tc'}_2=\varsigma^{\tc}_1$. Moreover, for any $1\le i\le n(\tc)$, we have $c'_{[i+1,i+2]} = c_{[i,i+1]}$. In particular, $c'_{[1,2]}$ is an $\Dred$-arc segment lying in $\PP_0^{\tc}$ of Case (2) shown in \Pic \ref{fig:arc segment II}.
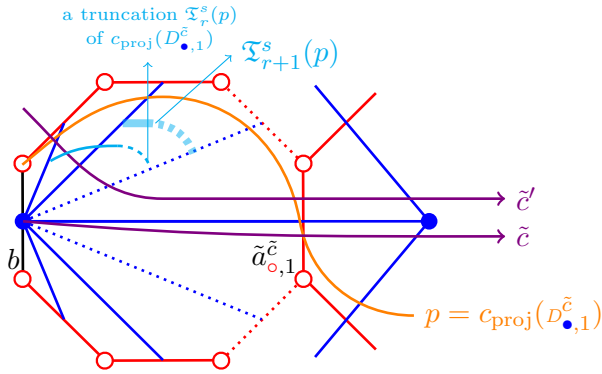
\begin{figure}[htbp]
\begin{center}
\definecolor{bluearc}{rgb}{0,0,1}
\begin{tikzpicture}
\draw (-1.75,-0.5) node[left]{$b$};
\draw[  red][rotate around={  0-22.5:(0,0)}] ( 2.00, 0.00) -- ( 1.41, 1.41) [line width=1pt];
     \draw (1.9,-0.5) node[left]{$\ared{\tc}{1}$};
\draw[  red][rotate around={ 45-22.5:(0,0)}] ( 2.00, 0.00) -- ( 1.41, 1.41) [line width=1pt][dotted];
\draw[  red][rotate around={ 90-22.5:(0,0)}] ( 2.00, 0.00) -- ( 1.41, 1.41) [line width=1pt];
\draw[  red][rotate around={135-22.5:(0,0)}] ( 2.00, 0.00) -- ( 1.41, 1.41) [line width=1pt];
\draw[black][rotate around={180-22.5:(0,0)}] ( 2.00, 0.00) -- ( 1.41, 1.41) [line width=1pt];
\draw[  red][rotate around={225-22.5:(0,0)}] ( 2.00, 0.00) -- ( 1.41, 1.41) [line width=1pt];
\draw[  red][rotate around={270-22.5:(0,0)}] ( 2.00, 0.00) -- ( 1.41, 1.41) [line width=1pt];
\draw[  red][rotate around={315-22.5:(0,0)}] ( 2.00, 0.00) -- ( 1.41, 1.41) [line width=1pt][dotted];
\draw[red][line width=1pt][shift={(1.8, 0.7)}] (0,0)--(1, 1);
\draw[red][line width=1pt][shift={(1.8,-0.7)}] (0,0)--(1,-1);
\fill[  red][rotate around={  0-22.5:(0,0)}] ( 2.00, 0.00) circle (0.12);
\fill[  red][rotate around={ 45-22.5:(0,0)}] ( 2.00, 0.00) circle (0.12);
\fill[  red][rotate around={ 90-22.5:(0,0)}] ( 2.00, 0.00) circle (0.12);
\fill[  red][rotate around={135-22.5:(0,0)}] ( 2.00, 0.00) circle (0.12);
\fill[  red][rotate around={180-22.5:(0,0)}] ( 2.00, 0.00) circle (0.12);
\fill[  red][rotate around={225-22.5:(0,0)}] ( 2.00, 0.00) circle (0.12);
\fill[  red][rotate around={270-22.5:(0,0)}] ( 2.00, 0.00) circle (0.12);
\fill[  red][rotate around={315-22.5:(0,0)}] ( 2.00, 0.00) circle (0.12);
\fill[white][rotate around={  0-22.5:(0,0)}] ( 2.00, 0.00) circle (0.09);
\fill[white][rotate around={ 45-22.5:(0,0)}] ( 2.00, 0.00) circle (0.09);
\fill[white][rotate around={ 90-22.5:(0,0)}] ( 2.00, 0.00) circle (0.09);
\fill[white][rotate around={135-22.5:(0,0)}] ( 2.00, 0.00) circle (0.09);
\fill[white][rotate around={180-22.5:(0,0)}] ( 2.00, 0.00) circle (0.09);
\fill[white][rotate around={225-22.5:(0,0)}] ( 2.00, 0.00) circle (0.09);
\fill[white][rotate around={270-22.5:(0,0)}] ( 2.00, 0.00) circle (0.09);
\fill[white][rotate around={315-22.5:(0,0)}] ( 2.00, 0.00) circle (0.09);
\fill[bluearc][rotate around={0:(0,0)}] (-1.84, 0.00) circle (0.11);
\draw[bluearc][line width=1pt] (-1.84, 0.00) to (  0:1.84);
\draw[bluearc][line width=1pt] (-1.84, 0.00) to ( 45:1.84)[dotted];
\draw[bluearc][line width=1pt] (-1.84, 0.00) to ( 90:1.84);
\draw[bluearc][line width=1pt] (-1.84, 0.00) to (135:1.84);
\draw[bluearc][line width=1pt] (-1.84, 0.00) to (180:1.84);
\draw[bluearc][line width=1pt] (-1.84, 0.00) to (225:1.84);
\draw[bluearc][line width=1pt] (-1.84, 0.00) to (270:1.84);
\draw[bluearc][line width=1pt] (-1.84, 0.00) to (315:1.84)[dotted];
\draw[bluearc][line width=1pt] (-1.84, 0.00) to (  0: 3.5);
\fill[bluearc] (3.5, 0) circle (0.11);
\draw[bluearc][line width=1pt] (2., 1.8)--(3.5, 0);
\draw[bluearc][line width=1pt] (2.,-1.8)--(3.5, 0);
\draw[orange][line width=1pt] (-1.85,0.75) to[out=40,in=180] (0,1.65) to[out=0,in=100] (1.8,0)
     to[out=-80,in=180] (3.3,-1.25) node[right]{$p=c_{\proj}(\dualgreen{\tc}{1})$};
\draw[violet][line width=1pt][->] (-1.84, 0.00) to[out= -5,in=180] (4.5,-0.2) node[right]{$\tc$};
\draw[violet][line width=1pt][->] (-1.84, 1.50) to[out=-45,in=180] (0, 0.30) -- (4.5, 0.30) node[right]{$\tc'$};
\draw[cyan][line width=1pt] (-1.48, 0.8 ) to[out= 30,in=180] (-0.6 , 1   );
\draw[cyan][line width=1pt] (-0.6 , 1   ) to[out=  0,in=110] (-0.17, 0.72) [dash pattern=on 2pt off 2pt];
\draw[cyan][->] (-0.2, 0.77) -- (-0.2, 2.11);
\draw[cyan] (-0.17, 2.46) node[above]{\tiny a truncation $\tru_r^s(p)$};
\draw[cyan] (-0.17, 2.11) node[above]{\tiny of $c_{\proj}(\dualgreen{\tc}{1})$};
\draw[cyan][opacity=0.5][line width=3pt] (-0.53, 1.3 ) -- (-0.2 , 1.3 );
\draw[cyan][opacity=0.5][line width=3pt] (-0.2 , 1.3 ) to[out=  0,in=110] ( 0.40, 0.90) [dash pattern=on 2pt off 2pt];
\draw[cyan][->] (-0.1,1.3) -- (0.9,2.2) node[right]{$\tru_{r+1}^{s}(p)$};
\end{tikzpicture}
\caption{The case of $k=1$}
\label{fig:k=1}
\end{center}
\end{figure}
Thus, the homotopy string $\varpi = \varpi_1\cdots\varpi_{\ell}$ corresponding to $\X(\tc')$ is of the form
\[ \xymatrix{v_1 \ar@{~>}[r]^{\varpi_1} & v_2 \ar@{~}[r] & \cdots \ar@{~}[r] & v_{\ell+1}. } \]
Then the $\kappa=-\varsigma^{\tc'}_1$-th component of $\X(\tc')$ is of the form
\[\label{comp:Left}
\cdots \longrightarrow
   P_{\kappa-1}=P(v_2) \oplus \hat{P}_{\kappa-1} \mathop{-\!\!\!-\!\!\!-\!\!\!\longrightarrow}\limits^{d_{\kappa-1}=\big( {{}_{\varphi}^{f}}\ {{}_{\phi}^{0}} \big)}
   P_\kappa = P(v_1) \oplus \hat{P}_{\kappa} \mathop{-\!\!\!-\!\!\!-\!\!\!\longrightarrow}\limits^{d_\kappa=(0\ \ast)}
   P_{\kappa+1}  \longrightarrow \cdots.
\]
By the same way, we can construct a complex $\comp{Y}$ with $\hl(\comp{Y})=\hl(\comp{X})-1$.
\end{proof}

\begin{lemma} \label{lemm:bandcase}
Let $A$ be a gentle algebra has band complex. Then, for any $l\in \NN$, there exists a string complex $\comp{X}$ such that $\hl(\comp{X})\ge l$.
\end{lemma}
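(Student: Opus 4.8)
The plan is to \emph{unfold the band}. Starting from the band complex, I extract its underlying homotopy band $w=\wp_1\wp_2\cdots$ (a cyclic homotopy string in the sense of \cite{BM2003,OPS2018}), form the $n$-th power $w^{n}$, and realize $w^{n}$ by an honest admissible curve $\tilde c_{n}\in\AC_{\m}(\SURF(A)^{\F_A})$; I will show that the string complex $\comp{X}_{n}:=\X(\tilde c_{n})$ has cohomological length growing linearly in $n$, so that $\hl(\comp{X}_{n})\ge l$ once $n$ is large enough.

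First I would record the structural input. Since the band complex $\X(\tilde b,\pmb{J}_{1}(\lambda))$ is a genuine $\ZZ$-graded object of $\per A$ (Lemma \ref{lemm:1-band}, Corollary \ref{thm:band embedding}), the homotopy band $w$ closes up around its loop with zero total cohomological degree drift. Two consequences: (i) $w^{n}$ is again a homotopy string, whose sequence of vertices $\bfv(\dualgreen{c}{i})$ together with their degrees $-\varsigma^{\tilde c_{n}}_{i}$ is literally $m_{0}$-periodic, where $m_{0}$ is the number of letters in one period of $w$; hence, after choosing a grading, cutting the loop at a generic point and pushing the two new ends to nearby $\gbullet$-marked points, $w^{n}$ is represented by some $\tilde c_{n}\in\AC_{\m}$ and $\comp{X}_{n}=\X(\tilde c_{n})$ is an indecomposable string complex in $\per A\subseteq\Dcat^{b}(A)$ (Theorem \ref{thm:OPS and BCS corresponding}(2)); (ii) because the degree function along $w$ has no net drift, all the complexes $\comp{X}_{n}$ are concentrated in one fixed, $n$-independent window of degrees of some width $W$ (controlled by the amplitude of the two-term complex $\comp{P}_{1}$ together with a bounded end-correction).

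It then remains to compute the Euler characteristic. The $r$-th term of $\comp{X}_{n}$ is $\bigoplus_{\varsigma^{\tilde c_{n}}_{i}=-r}P(\bfv(\dualgreen{c}{i}))$, and by the $m_{0}$-periodicity of $w^{n}$ the alternating sum of dimensions decomposes, up to a bounded boundary term coming from the cut, into $n$ equal per-period contributions. By Corollary \ref{thm:band embedding} with $n=1$, that per-period contribution equals $\chi(\comp{P}_{1})=\dim_{\kk}P_{0}-\dim_{\kk}P_{1}=\dim_{\kk}B(1,\lambda)$, where $0\to P_{1}\to P_{0}\to B(1,\lambda)\to 0$ is the length-one projective resolution of $B(1,\lambda)$ (\cite[Corollary 2.26]{Cpre}). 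Hence $\chi(\comp{X}_{n})=n\cdot\dim_{\kk}B(1,\lambda)+O(1)\to\infty$. Since $\sum_{r}\dim_{\kk}\rmH^{r}(\comp{X}_{n})\ge|\chi(\comp{X}_{n})|$ and at most $W$ of these cohomology groups are nonzero, $\hl(\comp{X}_{n})=\max_{r}\dim_{\kk}\rmH^{r}(\comp{X}_{n})\ge|\chi(\comp{X}_{n})|/W$, which exceeds any prescribed $l$ for $n\gg 0$.

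The main obstacle is the zero-drift bookkeeping: one must verify, using the derived classification of gentle algebras \cite{BM2003,OPS2018}, that homotopy bands carry cohomological degree $0$ — so that their powers are legitimate homotopy strings and the resulting string complexes remain in a bounded cohomological window — and one must keep the $O(1)$ correction coming from cutting the loop and extending its ends to $\gbullet$-marked points under control, so that it cannot cancel the linear growth of $\chi$. A more hands-on alternative would be to use the stalk complexes $\MM(b^{n})[0]$ of the string modules on the $n$-th powers of the band word and invoke Theorem \ref{thm:homologies}; but these stalk complexes need not be perfect, so passing through homotopy bands is cleaner.
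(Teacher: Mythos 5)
Your construction is the same as the paper's --- wrap the homotopy band around its loop many times and cap the two ends off at marked points to obtain a string complex $\comp{X}_n=\X(\tc_n)$ --- but the mechanism you use to force the cohomology to grow is genuinely different. The paper argues degreewise: a band complex is a nonzero object of $\per A$, hence non-exact, so some crossing of the band with a $\rbullet$-arc carries nontrivial cohomology in a fixed degree $-\varsigma$; since the degree pattern repeats with zero drift, each of the $t$ passes of the wrapped curve through that $\rbullet$-arc contributes at least $m\ge 1$ to $\dim_{\kk}\rmH^{-\varsigma}$, so the growth is localized in a single degree and uses only the cohomological-curve analysis already established in Section \ref{sec:truncations}. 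You instead count globally: linear growth of the Euler characteristic against a bounded amplitude window. This avoids Theorem \ref{thm:homologies} entirely, but it requires an input the paper does not need, and that input is where your write-up is thin: non-exactness alone does not give a nonzero per-period Euler characteristic, so your appeal to Corollary \ref{thm:band embedding} is the load-bearing step, and as stated it does not literally apply --- that corollary concerns the band complex built from a \emph{permissible} band curve with the grading coming from the projective resolution of $B(1,\lambda)$, whereas your hypothesis only provides some band complex $\X(\tilde b,\pmb{J}_{n_0}(\mu))$. To close this you should add two observations: (i) the underlying closed curve $b$ of any admissible band is also a permissible band (it is essential, hence must cross the $\gbullet$-FFAS, and its minimal-position decomposition into $\Dblue$-arc segments satisfies Definition \ref{def:permissible-admissible}(1)), so $B(1,\lambda)=\MM(b,\pmb{J}_{1}(\lambda))$ exists and Lemma \ref{lemm:1-band} and Corollary \ref{thm:band embedding} apply to this very curve; (ii) two gradings of a fixed closed curve differ by a constant integer, so the degrees of your given band complex agree with those of $\comp{P}_1$ up to a global shift, whence the per-period alternating sum equals $\pm\dim_{\kk}B(1,\lambda)\ne 0$, which is all your estimate $\hl(\comp{X}_n)\ge |\chi(\comp{X}_n)|/W$ needs. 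With those two sentences supplied, the zero-drift bookkeeping, the $O(1)$ end correction and the bounded window $W$ are all fine and your proof is correct; the paper's route stays a bit more economical in that it produces the growth in one explicit cohomological degree without any $K$-theoretic detour.
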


\begin{proof}
By Theorem \ref{thm:OPS and BCS corresponding} (2), $\AC^{\oslash}_{\oslash}(\SURF(A)^{\F_A}) \ne \varnothing$,
thus there is a band $c=\{c_{[i,i+1]}\}_{0\le i\le n-1}$ ($n>1$) such that
$c_{[\overline{i},\overline{i+1}]}(1)=c_{[\overline{i+1},\overline{i+2}]}(0)$, where $\overline{x}\equiv x(\bmod n)$.
Let $c_{[n,n+1]}$ be an $\Dred$-arc segment of Case (2) in \Pic \ref{fig:arc segment II} with $c_{[n,n+1]} \neq c_{[n-1,n]}$ and $c_{[n,n+1]} \neq c_{[n-1,n]}^{-1}$. Then,  $c\cup\{c_{[n,n+1]}\}=\{c_{[i,i+1]}\}_{0\le i\le n}$ is a truncation of some admissible curve $c'$ in $\AC_{\m}(\SURF(A)^{\F_A})$ with $c_{[0,1]}(1)=c'(t_2)$. Moreover, for any grading $\tc'$ of $c'$, we have $\varsigma^{\tc'}_{2} = \varsigma^{\tc'}_{n+2} =: \varsigma$ (see \Pic \ref{fig:band}).

\begin{figure}[htbp]
\centering
\begin{tikzpicture} [scale=0.85]
\draw[red][line width=1pt] (0,0)--(0,4) node[above]{$\ared{}{1}$};
\draw[red][line width=1pt] (0,0)--(-4*0.707,4*0.707) node[above right]{$\ared{}{n-1}$};
\draw[red][line width=1pt] (0,0)--(-4,0);
\draw[red][line width=1pt] (0,0)--(-4*0.707,-4*0.707) [dash pattern=on 2pt off 2pt];
\draw[red][line width=1pt] (0,0)--(0,-4) [dash pattern=on 2pt off 2pt];
\draw[red][line width=1pt] (0,0)--(4*0.707,-4*0.707) [dash pattern=on 2pt off 2pt];
\draw[red][line width=1pt] (0,0)--(4,0) node[right]{$\ared{}{3}$};
\draw[red][line width=1pt] (0,0)--(4*0.707,4*0.707)  node[above right]{$\ared{}{2}$};
\fill[white] (0,0) circle (1);
\fill[gray] (0,0) circle (0.5);
\draw[black][line width=1pt][dash pattern=on 2pt off 2pt] (0,0) circle (0.5);
\draw[black] (0.5,0) node[right]{$b_0$};
\draw[violet][line width=1.5pt] (0,0) circle (1.5);
\draw[violet][line width=1.5pt][->] (-1.5*0.707, 1.5*0.707) to[out=45, in=135] (1.5*0.707,1.5*0.707);
\draw[violet] (1.5,0) node[above left]{$\tc$};
\draw[violet] (-1.48, 0.61) node{$c_{[n-2,n-1]}$};
\draw[violet] (-0.61, 1.48) node[above]{$c_{[n-1,n]}$};
\draw[violet] ( 0.61, 1.48) node[above]{$c_{[0,1]}$};
\draw[violet] ( 1.48, 0.61) node[right]{$c_{[1,2]}$};
\draw[black][line width=1.5pt]
     (0,3) node[below left]{$c(t^{\tc'}_1)$} to[out=0,in=90] (3,0) to[out=-90,in=0]
     (0,-3) to[out=180,in=-90] (-3,0) to[out=90,in=180] (0,3.5) node[above left]{$c'(t^{\tc'}_{n+1})$} to[out=0,in=90]
     (3.5,0) node[below]{$\tc'$} [->];
\draw [cyan] (3*0.707,3*0.707) node{$\bullet$} node[below]{$\varsigma_2^{\tc'}$};
\draw [cyan] (3.5*0.707,3.5*0.707) node{$\bullet$} node[right]{$\varsigma_{n+2}^{\tc'}$};
\end{tikzpicture}
\caption{The admissible curve $b$ corresponded by band complex.}
\label{fig:band}
\end{figure}
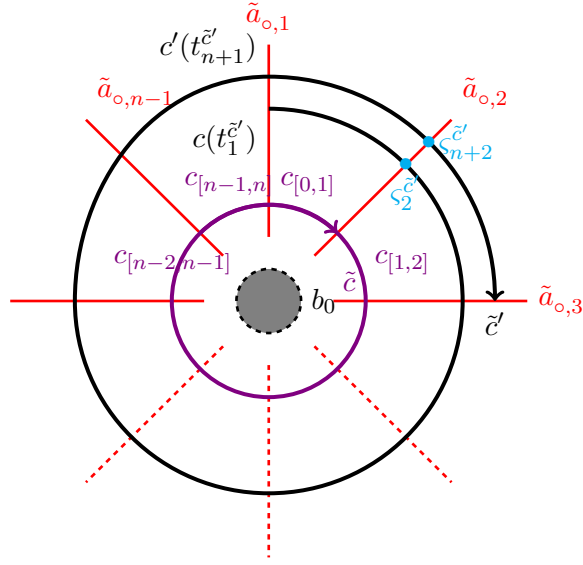
Notice that any band complex is non-exact. Thus, without loss of generality, we can assume that $\dim\H^{-\varsigma}(\X(\tc))\ne 0$. Then, by the intersection index of $c'(t^{\tc}_2)$ and $c'(t^{\tc}_{n+2})$, we obtain that $\dim\H^{-\varsigma}(\X(\tc'))\ne 0$.

Furthermore, assume that $c'\in \AC_{\m}(\SURF(A)^{\F_A})$ is an admissible curve in surrounding some boundary components of $\Surf(A)$ such that $c'$ passes through the $\Dred$-arc $\ared{}{2} (=\ared{c'}{2})$ $t$ times. Then
\[\varsigma^{\tc'}_{2} = \varsigma^{\tc'}_{n+2} = \varsigma^{\tc'}_{2n+2} = \cdots = \varsigma^{\tc'}_{tn+2} =: \varsigma. \]
By hypothesis, there exits an integer $m\in\NN^+$ such that $\dim_{\kk}\H^{-\varsigma}(\X(\tc')) \ge mt$.
Let $t = \lfloor l/m\rfloor+1$. Then we obtain $\dim_{\kk}\H^{-\varsigma}(\X(\tc')) \ge l$,
where $\X(\tc')$ is a string complex given by Theorem \ref{thm:OPS and BCS corresponding} (2).
\end{proof}

\begin{proposition} \label{prop:bandcase}
Let $A$ be a gentle algebra and $\comp{X}$ a band complex in $\Dcat^b(A)$.
Then there exists a complex $\comp{Y}$ such that $\hl(\comp{X})-1=\hl(\comp{Y})$.
\end{proposition}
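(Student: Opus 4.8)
The plan is to reduce the band case completely to the string case already handled in Proposition \ref{prop:stringcase}. The point is that the mere presence of one band complex forces $A$ to have string complexes of arbitrarily large cohomological length (Lemma \ref{lemm:bandcase}), and Proposition \ref{prop:stringcase} then allows one to shave off cohomological length one unit at a time until the value $\hl(\comp{X})-1$ is reached.

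First I would dispose of the degenerate situation: if $\hl(\comp{X})=1$ we may simply take $\comp{Y}=0$, since $\hl(0)=0=\hl(\comp{X})-1$; so assume from now on $\ell:=\hl(\comp{X})\ge 2$. Because $\comp{X}$ is a band complex, $\AC^{\oslash}_{\oslash}(\SURF(A)^{\F_A})\ne\varnothing$, so Lemma \ref{lemm:bandcase} applied with $l=\ell$ produces a string complex, call it $\comp{Z}$, with $\hl(\comp{Z})\ge\ell$. Now I would iterate Proposition \ref{prop:stringcase}, the key observation being that the complex it builds out of a string complex is again a string complex: in the proof of Proposition \ref{prop:stringcase} the output $\comp{Y}$ is obtained from $\X(\tc''')$ by altering only the two extreme components of the homotopy string (replacing $P(v_1)$, resp.\ the relevant summand, by $\mathrm{coker}\,f$, resp.\ $\ker\zeta$), and on the geometric side this amounts precisely to modifying the first and last $\Dred$-arc segments of the admissible curve $\tc'''$, so that $\comp{Y}\cong\X(\tc_{\ast})$ for a new admissible curve $\tc_{\ast}$. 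Granting this, set $\comp{Z}_0=\comp{Z}$ and let $\comp{Z}_{j+1}$ be the string complex obtained by applying Proposition \ref{prop:stringcase} to $\comp{Z}_j$; then each $\comp{Z}_j$ is indecomposable (and lies in $\per A\subseteq\Dcat^b(A)$) and $\hl(\comp{Z}_{j+1})=\hl(\comp{Z}_j)-1$. Since $\hl(\comp{Z}_0)\ge\ell$ and the cohomological length drops by exactly one at each step, there is an index $j_0$ with $\hl(\comp{Z}_{j_0})=\ell-1$; taking $\comp{Y}=\comp{Z}_{j_0}$ finishes the proof, and this $\comp{Y}$ is indecomposable, which is what is needed for the ``no gaps'' theorem.

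The only step that genuinely requires care — and hence the main obstacle — is the claim that the construction of Proposition \ref{prop:stringcase} sends string complexes to string complexes; I would establish it by reading off the admissible curve underlying $\comp{Y}$ directly from that construction, since the two end modifications correspond to truncating or extending $\tc'''$ at its endpoints. If one prefers not to revisit that argument, the alternative is to organize the whole reduction as a strong induction on cohomological length anchored by Lemma \ref{lemm:bandcase}; but then one is forced to sharpen Lemma \ref{lemm:bandcase} so that it yields a string complex of cohomological length \emph{exactly} $\ell$, and the natural way to do that is again to apply Proposition \ref{prop:stringcase} repeatedly, so little is gained.
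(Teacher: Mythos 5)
Your proposal is correct and takes essentially the same route as the paper: Lemma \ref{lemm:bandcase} supplies a string complex of cohomological length at least $\hl(\comp{X})$, and Proposition \ref{prop:stringcase} is then iterated to drop the cohomological length one unit at a time until the value $\hl(\comp{X})-1$ is reached. Your explicit justification that the output of Proposition \ref{prop:stringcase} is again a string complex is precisely the step the paper uses implicitly when it writes down the family $\comp{Y}_1,\ldots,\comp{Y}_L$.
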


\begin{proof}
Assume that $\hl(\comp{X})=l$. Then by Lemma \ref{lemm:bandcase}, there exists a string complex $\comp{Y}_1$ such that $\hl(\comp{Y}_1)\ge l$. Let $\hl(\comp{Y}_1)=L$.
By Proposition \ref{prop:stringcase}, there exists a string complex $\comp{Y}_2$ with $\hl(\comp{Y}_2)=L-1$.
Now again by Proposition \ref{prop:stringcase}, we obtain a family of string complexes $\comp{Y}_1$, $\comp{Y}_2$, $\ldots$, $\comp{Y}_L$ such that $\hl(\comp{Y}_i) = L-i+1$, $1\le i\le L$.
Then $\comp{Y}_{L-l+2}$ is the string complex with $\hl(\comp{Y}_{L-l+2})=L-(L-l+2)+1=l-1$.
\end{proof}

Now we can give the main result in this subsection.

\begin{theorem}
Let $A$ be a gentle algebra and $\comp{X}$ any indecomposable complex in $\Dcat^b(A)$ with cohomological length $l=\hl(\comp{X})$. 
Then there exists an indecomposable complex $\comp{Y}$ in $\Dcat^b(A)$ such that $\hl(\comp{Y})=l-1$.
\end{theorem}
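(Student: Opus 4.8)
The plan is to dispose of the statement by a dichotomy, invoking the two propositions already established just above. I may and will assume $l\geq 2$, since for $l\leq 1$ there is nothing substantive to prove. First I would recall, via \cite{BM2003} (and, for the part lying in $\per A$, via part (2) of Theorem \ref{thm:OPS and BCS corresponding}), that every indecomposable complex in $\Dcat^b(A)$ over a gentle algebra is isomorphic either to a string complex $\X(\tc)$ with $\tc\in\AC_{\m}(\SURF(A)^{\F_A})$ or to a band complex $\X(\tilde b,\pmb{J}_{n}(\lambda))$. Then I would treat these two cases in turn.

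In the string case, Proposition \ref{prop:stringcase} immediately yields a complex $\comp{Y}$ with $\hl(\comp{Y})=\hl(\comp{X})-1=l-1$, so the only point left to verify is that this $\comp{Y}$ is indecomposable. This will follow by inspecting its construction: $\comp{Y}$ is obtained from the string complex $\X(\tc''')$ by replacing the components in the two extreme relevant degrees $\theta+1$ and $\vartheta-1$ (passing to $\mathrm{coker}f$, resp.\ to $\ker\zeta$) while keeping the differentials induced by those of $\X(\tc''')$; the underlying homotopy string stays a connected walk, so $\comp{Y}$ is again an indecomposable string complex. This finishes the string case.

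In the band case one cannot lower the cohomological length ``locally'', because the eigenvalue data sits at a single self-intersection of the curve; instead I would route through string complexes as in Proposition \ref{prop:bandcase}. Concretely: Lemma \ref{lemm:bandcase} supplies a string complex $\comp{Y}_1$ with $L:=\hl(\comp{Y}_1)\geq l$, built by closing the band up to an admissible curve that winds sufficiently many times around the relevant boundary components; then repeated use of Proposition \ref{prop:stringcase} produces indecomposable string complexes $\comp{Y}_1,\comp{Y}_2,\ldots,\comp{Y}_L$ with $\hl(\comp{Y}_i)=L-i+1$, and since $2\leq l\leq L$ the index $i=L-l+2$ is admissible, so $\comp{Y}:=\comp{Y}_{L-l+2}$ has $\hl(\comp{Y})=l-1$. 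Combining the two cases completes the argument.

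I expect the genuine difficulty to be absorbed entirely in the two propositions rather than in this final assembly. The delicate ingredient --- carried out in Lemma \ref{lemm:hl invariant} and in the construction of $\tc'''$ in Proposition \ref{prop:stringcase} --- is to deform the admissible curve so that, by Theorem \ref{thm:homologies}, a single cohomological curve $\tru_{r+1}^{s-1}(c_{\proj}(\dualgreen{c}{k}))$ is shortened to $\tru_{r+2}^{s-1}(c_{\proj}(\dualgreen{c}{k}))$, thereby decreasing $\dim_{\kk}\H^{-\varsigma}$ by exactly one at every intersection realizing $\hl(\comp{X})$ while not increasing any other cohomology; the only residual care at the level of the theorem itself is the bookkeeping in the band case, namely ensuring that the descending chain of string complexes actually passes through the value $l-1$, which is guaranteed by $L\geq l$.
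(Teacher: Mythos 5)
Your proposal is correct and coincides with the paper's own argument: the paper proves this theorem precisely by combining Proposition \ref{prop:stringcase} (string complexes) and Proposition \ref{prop:bandcase} (band complexes, routed through Lemma \ref{lemm:bandcase} and repeated applications of Proposition \ref{prop:stringcase}), exactly as you assemble it. Your added remarks on the indecomposability of $\comp{Y}$ and on where the real work lies (Lemma \ref{lemm:hl invariant} and the construction of $\tc'''$) only make explicit what the paper leaves implicit in its one-line proof.
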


\begin{proof}
It is obvious by Propositions \ref{prop:stringcase} and \ref{prop:bandcase}.
\end{proof}

\subsection{Strong Nakayama conjecture over gentle algebras}
In this subsection, we use the embedding to give a geometric proof for the Strong Nakayama conjecture for gentle algebras.
Let $\Lambda$ be an Artinian ring. Recall that the Strong Nakayama conjecture (=SNC) claim that
\begin{center}
	$\Ext_\Lambda^{\ge 0}(M, \Lambda)=0$ if and only if $M=0$.
\end{center}
In this subsection, we give a directly proof of the SNC for gentle algebras by using the embedding of geometric models.

\begin{lemma} \label{lemm:SNC-band case}
	Let $A$ be a gentle algebra and $B(n,\lambda\ne 0)\in \modcat A$ be a band module.
	Then $\Ext_A^1(B(n,\lambda), A)\ne 0$.
\end{lemma}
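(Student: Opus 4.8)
The plan is to argue by contradiction: if $\Ext^1_A(B(n,\lambda),A)=0$, then $B(n,\lambda)$ would have to be projective, which is absurd. Recall that $\pdim_A B(n,\lambda)=1$ (see \cite[Corollary 2.26]{Cpre}), so there is a projective resolution $0\to P^{-1}\xrightarrow{d}P^{0}\to B(n,\lambda)\to 0$ with $P^{0},P^{-1}$ finitely generated projective; in particular $B(n,\lambda)\in\per A$. By Corollary \ref{thm:band embedding} this resolution is the one realized geometrically by the admissible curve $\tilde\tau$ with $\tau$ homotopic to $c$, although the argument below does not actually use that. From the resolution, $\Ext^1_A(B(n,\lambda),A)\cong\operatorname{coker}\bigl(d^{*}\colon\Hom_A(P^{0},A)\to\Hom_A(P^{-1},A)\bigr)$.

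Assume now $\Ext^1_A(B(n,\lambda),A)=0$. Applying the contravariant functor $\Hom_A(-,A)$ to the resolution, and using the vanishing of $\Ext^{\ge1}_A(P^{i},A)$, produces a short exact sequence of left $A$-modules
\[
0\longrightarrow N:=\Hom_A(B(n,\lambda),A)\longrightarrow \Hom_A(P^{0},A)\xrightarrow{\ d^{*}\ }\Hom_A(P^{-1},A)\longrightarrow 0.
\]
Since $P(x)=e_xA$ gives $\Hom_A(e_xA,A)\cong Ae_x$, both $\Hom_A(P^{0},A)$ and $\Hom_A(P^{-1},A)$ are projective left $A$-modules; hence the sequence splits and $N$ is a projective left $A$-module. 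Equivalently, $\mathrm{R}\Hom_A(B(n,\lambda),A)$ is concentrated in degree $0$ and equals the projective module $N$.

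To finish, I would invoke the classical duality $\Hom_A(-,A)\colon\proj A\xrightarrow{\sim}(\proj A^{\mathrm{op}})^{\mathrm{op}}$, extended to a contravariant equivalence $\mathrm{R}\Hom_A(-,A)\colon\per A\xrightarrow{\sim}(\per A^{\mathrm{op}})^{\mathrm{op}}$ with quasi-inverse $\mathrm{R}\Hom_{A^{\mathrm{op}}}(-,A)$. Applying the latter to $N$, which is projective over $A^{\mathrm{op}}$, gives $\mathrm{R}\Hom_{A^{\mathrm{op}}}(N,A)=\Hom_{A^{\mathrm{op}}}(N,A)$, a projective \emph{right} $A$-module; but biduality identifies this complex with $B(n,\lambda)$. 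Thus $B(n,\lambda)$ would be a projective right $A$-module, contradicting the fact that over a gentle algebra the indecomposable projectives are string modules (Theorem \ref{thm:OPS and BCS corresponding}, see also Section \ref{sec:preliminaries}), whereas $B(n,\lambda)$ is an indecomposable band module. Hence $\Ext^1_A(B(n,\lambda),A)\ne0$.

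The one point that genuinely needs care is the homological bookkeeping of the second paragraph — checking that applying $\Hom_A(-,A)$ to the resolution, together with projectivity of the $\Hom_A(P^{i},A)$, really does force $N$ to be projective — and recording that a band module is never projective over a gentle algebra; the rest is formal, so I do not expect a serious obstacle. If instead one wishes to exploit the embedding more visibly, one would keep the resolution as the curve $\tilde\tau\simeq c$ of Corollary \ref{thm:band embedding} and produce a nonzero class in $\Ext^1_A(B(n,\lambda),\MM(c_{\proj}(a_{\gbullet})))$ from a crossing of the closed curve $c$ with the projective permissible curve $c_{\proj}(a_{\gbullet})$ at a socle $\gbullet$-arc $a_{\gbullet}$ of $c$, such a crossing necessarily occurring in one of the two elementary polygons adjacent to $a_{\gbullet}$; there the delicate point becomes verifying, through the classification of intersections, that this crossing contributes to $\Ext^1$ and not to $\Hom$.
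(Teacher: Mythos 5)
Your proposal is correct, but it reaches the contradiction by a different mechanism than the paper. The paper applies $\Hom_A(B,-)$ to the resolution $0\to P_1^n\to P_0^n\to B\to 0$ and, using $\pdim B(n,\lambda)=1$ (so that $\Ext^2_A(B,P_1^n)=0$), shows that $\Ext^1_A(B,A)=0$ would force $\Ext^1_A(B,B)=0$, contradicting the fact that a band module always has a non-trivial self-extension. You instead apply $\Hom_A(-,A)$: the vanishing of $\Ext^1_A(B,A)$ makes the dualized sequence exact with projective outer terms, so $\mathrm{R}\Hom_A(B,A)$ is a projective left module concentrated in degree $0$, and reflexivity of perfect complexes under $\mathrm{R}\Hom_A(-,A)\colon\per A\to(\per A^{\mathrm{op}})^{\mathrm{op}}$ then forces $B$ itself to be projective (or zero), contradicting that an indecomposable band module is never projective. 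Both proofs hinge on $\pdim B=1$; the paper's is shorter and stays at the level of module-theoretic long exact sequences but needs the representation-theoretic input $\Ext^1_A(B,B)\ne 0$, whereas yours needs only the weaker fact that $B$ is not projective, at the cost of invoking derived duality and biduality. In fact your argument is the classical statement that a non-projective module of finite projective dimension $d$ has $\Ext^d_A(M,A)\ne 0$, so it is more general and uses nothing specific to band modules; one could even shortcut the duality step by noting that $\Ext^1_A(B,-)$ is right exact when $\pdim B\le 1$, so $\Ext^1_A(B,A)=0$ would give $\Ext^1_A(B,-)=0$ and hence $B$ projective. The geometric sketch in your final paragraph is, as you say, not fully verified, but the homological argument stands on its own.
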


\begin{proof}
	We have a short exact sequence for band module $B=B(n,\lambda)$ as follows
	\[ 0 \longrightarrow P_1^n \longrightarrow P_0^n \longrightarrow B \longrightarrow 0. \]
	Since $\pdim B(n,\lambda) = 1$, we obtain a long exact sequence
	\begin{align}
		0 \longrightarrow
		& \Hom_A(B, P_1^n) \longrightarrow \Hom_A(B, P_0^n) \longrightarrow \Hom_A(B, B) \nonumber \\
		\longrightarrow
		& \Ext_A^1(B, P_1^n) \longrightarrow \Ext_A^1(B, P_0^n) \longrightarrow  \Ext_A^1(B, B) \longrightarrow 0 \nonumber
	\end{align}
	If $\Ext_A^1(B, A)=0$, then so is $\Ext_A^1(B, P_0^n)$. Thus,  $\Ext_A^1(B, B)$=0,
	this is a contradiction because any band module has non-trivial positive self-extension.
\end{proof}

\begin{lemma}\label{lemm:SNC-string case}
	Let $A$ be a gentle algebra and $M \in \modcat A$ be a string module.
	Then there is an integer $d\ge 0$ such that $\Ext_A^d(M, A)\ne 0$.
\end{lemma}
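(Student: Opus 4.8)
The plan is to argue by contradiction, feeding the geometric description of the minimal projective resolution of a string module (Lemma~\ref{lemm:proj cover} and the syzygy computation inside the proof of Theorem~\ref{thm:string embedding}) into standard facts about Gorenstein-projective modules. Write $M=\MM(c)$ with $c\in\PC_{\m}(\SURF(A)^{\F_{A}})$, assume $M\neq 0$, and suppose toward a contradiction that $\Ext^{d}_{A}(M,A)=0$ for all $d\geq 0$; in particular $\Hom_{A}(M,A)=0$ and $\Ext^{\geq 1}_{A}(M,A)=0$. Throughout put $(-)^{\vee}=\Hom_{A}(-,A)$.

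First I would settle the case $\pdim_{A}M=n<\infty$, where in fact $d=n$ works. If $n=0$ then $M$ is a nonzero projective direct summand of some $A^{k}$, so $\mathrm{id}_{M}$ factors through $A^{k}$ and some component $M\to A$ is nonzero, i.e. $M^{\vee}\neq 0$. If $n\geq 1$, take the minimal projective resolution $0\to P_{n}\xrightarrow{d_{n}}P_{n-1}\to\cdots\to P_{0}\to M\to 0$; then $\Ext^{n}_{A}(M,A)=\mathrm{coker}(d_{n}^{\vee})$, and were this zero the epimorphism $d_{n}^{\vee}$ of projective modules would split, so applying $(-)^{\vee}$ again and using $P^{\vee\vee}\cong P$ for projective $P$ would make $d_{n}$ a split monomorphism---impossible since $\mathrm{im}\,d_{n}\subseteq\mathrm{rad}\,P_{n-1}$ and $P_{n}\neq 0$. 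Either way this contradicts the assumption.

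It remains to treat $\pdim_{A}M=\infty$. Here I would invoke the syzygy analysis from the proof of Theorem~\ref{thm:string embedding}: the only way a string module acquires infinite projective dimension is through an $\infty$-elementary $\gbullet$-polygon at an end of $c$, and then $\Omega^{t}M\cong\MM(\omega(a_{L}^{t+1}))\oplus\MM(\omega(a_{R}^{t+1}))$, where for each $\ast\in\{L,R\}$ the curve $\omega(a_{\ast}^{t})$ is eventually trivial (if $\PP_{\ast}$ is not $\infty$-elementary) or satisfies $\omega(a_{\ast}^{t+n_{\ast}})=\omega(a_{\ast}^{t})$ with $\MM(\omega(a_{\ast}^{t}))\neq 0$ for all $t$, in which case $\Omega^{n_{\ast}}\MM(\omega(a_{\ast}^{t}))\cong\MM(\omega(a_{\ast}^{t}))$. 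Thus for $t_{0}$ large $G:=\Omega^{t_{0}}M$ is a nonzero direct sum of $\Omega$-periodic modules, and dimension shift from $\Ext^{\geq 1}_{A}(M,A)=0$ gives $\Ext^{\geq 1}_{A}(G,A)=0$. Next I would use: a nonzero $\Omega$-periodic module $N$ with $\Ext^{\geq 1}_{A}(N,A)=0$ is Gorenstein-projective---gluing copies of its periodic minimal projective resolution yields a doubly-infinite acyclic complex $T^{\bullet}$ of projectives having $N$ among its cycle modules, and exactness of $\Hom_{A}(T^{\bullet},A)$ at each spot is precisely the vanishing $\Ext^{i}_{A}(\Omega^{j}N,A)=\Ext^{i+j}_{A}(N,A)=0$. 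Hence $G$ is Gorenstein-projective. Shifting the totally acyclic complex and applying Schanuel's lemma shows that any cosyzygy of a Gorenstein-projective module is again Gorenstein-projective, so peeling back the $t_{0}$ extensions $0\to\Omega^{j+1}M\to P_{j}\to\Omega^{j}M\to 0$ ($0\le j<t_{0}$) shows $M$ itself is Gorenstein-projective. But then $M\cong M^{\vee\vee}$, so $M\neq 0$ forces $M^{\vee}=\Hom_{A}(M,A)\neq 0$, contradicting $\Hom_{A}(M,A)=0$.

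Both cases being contradictory, there must be some $d\geq 0$ with $\Ext^{d}_{A}(M,A)\neq 0$. Everything here except the syzygy description is standard homological algebra, so the hard part is the input from the geometric model in the infinite-dimensional case: one must confirm that the two ends of $c$ evolve independently under taking syzygies and that an $\infty$-elementary $\gbullet$-polygon produces genuinely nonzero periodic syzygy summands. This is exactly the content of the syzygy computation underlying Theorem~\ref{thm:string embedding}, so once that is cited carefully the Gorenstein-projective machinery finishes the proof.
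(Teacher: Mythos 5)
Your route is genuinely different from the paper's: the paper proves the lemma by rotating $c$ to $\tc^{\rota}$ and computing intersection indices against the $\rbullet$-grFFAS to exhibit a nonzero morphism $\X(\tc^{\rota})\to\X(\ta_{\gbullet}^t)[1]$, i.e. $\Ext^1_A(M,e_tA)\neq 0$, whereas you argue homologically from the syzygy description. Your finite projective dimension case is correct and standard (for a minimal resolution, $\Ext^n_A(M,A)=\operatorname{coker}(d_n^{\vee})=0$ would make $d_n$ a split monomorphism into the radical, forcing $P_n=0$), and your reading of the syzygy computation behind Theorem \ref{thm:string embedding} (syzygies split into two end contributions, eventually periodic when $\pdim M=\infty$) is consistent with the paper.

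However, there is a genuine gap in the infinite projective dimension case, at the step ``any cosyzygy of a Gorenstein-projective module is again Gorenstein-projective, so peeling back the extensions $0\to\Omega^{j+1}M\to P_j\to\Omega^jM\to 0$ shows $M$ is Gorenstein-projective.'' The class of Gorenstein-projective modules is closed under kernels of epimorphisms, but \emph{not} under cokernels of monomorphisms into projectives: for any module $X$ with $\pdim X=1$ the syzygy $\Omega X$ is projective, hence Gorenstein-projective, while $X$ is not; likewise co-Schanuel fails, so shifting the totally acyclic complex of $G=\Omega^{t_0}M$ only produces cosyzygies \emph{inside that complex}, which need not be the modules $\Omega^jM$. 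The inference you actually need -- from $\Omega X$ Gorenstein-projective together with $\Ext^{\ge 1}_A(X,A)=0$ conclude $X$ Gorenstein-projective -- is exactly the delicate point separating ``semi-Gorenstein-projective'' from Gorenstein-projective (Ringel--Zhang show these notions differ over general finite-dimensional algebras), so it cannot be obtained by the formal argument you give. To repair the strategy you would have to add real input: either the Auslander--Bridger criterion (which also requires vanishing of $\Ext$ against $A$ for the transpose, something your hypotheses do not supply), or the fact that gentle algebras are Iwanaga--Gorenstein (Gei\ss--Reiten, as in the paper's remark) together with the theorem that over such algebras the left orthogonal of $A$ coincides with the Gorenstein-projectives; then $M\neq 0$ with $\Ext^{\ge 1}_A(M,A)=0$ embeds in a projective and $\Hom_A(M,A)\neq 0$, giving your contradiction. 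As written, the proposal does not establish the lemma in the infinite projective dimension case.
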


\begin{proof}
	Let $c\in \PC_{\m}(\SURF(A)^{\F_A})$ be a permissible curve and $M\cong \MM(c)$.
	By Theorem \ref{thm:string embedding}, we have
	\[\Ext_A^d(M, A) \cong \Hom_{\per A} (\X(\tc^{\rota}), A[d]). \]
	Next, we need to show that $\Hom_{\per A} (\X(\tc^{\rota}), A[d])\ne 0$ for some $d\ge 1$. First, if $n(\tc^{\rota})=1$, that is, $c^{\rota}$ is a concatenation of two $\Dred$-arc segments, then $\X(\tc^{\rota})$ is quasi-isomorphic to the stalk complex $eA$ for some primitive idempotent of $A$. In this case, we have $\Ext_A^1(eA, A[1])\ne 0$.
	
	Now if $c^{\rota}$ contains at least one $\Dred$-arc segment $c_{[i,i+1]}$ ($1\leq i\leq n(\tc^{\rota}){-}1$)
	with $\ii_{c(t_{i+1})}(\tc^{\rota}, \ared{c}{i+1})=0$
	(see \Pic \ref{fig:pf of lemm:SNC-string case}).
	\begin{figure}[htbp]
		\begin{center}
			\definecolor{bluearc}{rgb}{0,0,1}
			\begin{tikzpicture}
				\draw[  red][rotate around={  0-22.5:(0,0)}] ( 2.00, 0.00) -- ( 1.41, 1.41) [line width=1pt];
				\draw[  red][rotate around={ 45-22.5:(0,0)}] ( 2.00, 0.00) -- ( 1.41, 1.41) [line width=1pt][dotted];
				\draw[  red][rotate around={ 90-22.5:(0,0)}] ( 2.00, 0.00) -- ( 1.41, 1.41) [line width=1pt];
				\draw[  red][rotate around={135-22.5:(0,0)}] ( 2.00, 0.00) -- ( 1.41, 1.41) [line width=1pt];
				\draw[  red][rotate around={180-22.5:(0,0)}] ( 2.00, 0.00) -- ( 1.41, 1.41) [line width=1pt];
				\draw[  red][rotate around={225-22.5:(0,0)}] ( 2.00, 0.00) -- ( 1.41, 1.41) [line width=1pt][dotted];
				\draw[black][rotate around={270-22.5:(0,0)}] ( 2.00, 0.00) -- ( 1.41, 1.41) [line width=1pt];
				\draw[  red][rotate around={315-22.5:(0,0)}] ( 2.00, 0.00) -- ( 1.41, 1.41) [line width=1pt][dotted];
				\fill[  red][rotate around={  0-22.5:(0,0)}] ( 2.00, 0.00) circle (0.12);
				\fill[  red][rotate around={ 45-22.5:(0,0)}] ( 2.00, 0.00) circle (0.12);
				\fill[  red][rotate around={ 90-22.5:(0,0)}] ( 2.00, 0.00) circle (0.12);
				\fill[  red][rotate around={135-22.5:(0,0)}] ( 2.00, 0.00) circle (0.12);
				\fill[  red][rotate around={180-22.5:(0,0)}] ( 2.00, 0.00) circle (0.12);
				\fill[  red][rotate around={180-22.5:(0,0)}] ( 2.00, 0.00) node[left]{$q$};
				\fill[  red][rotate around={225-22.5:(0,0)}] ( 2.00, 0.00) circle (0.12);
				\fill[  red][rotate around={270-22.5:(0,0)}] ( 2.00, 0.00) circle (0.12);
				\fill[  red][rotate around={315-22.5:(0,0)}] ( 2.00, 0.00) circle (0.12);
				\fill[white][rotate around={  0-22.5:(0,0)}] ( 2.00, 0.00) circle (0.09);
				\fill[white][rotate around={ 45-22.5:(0,0)}] ( 2.00, 0.00) circle (0.09);
				\fill[white][rotate around={ 90-22.5:(0,0)}] ( 2.00, 0.00) circle (0.09);
				\fill[white][rotate around={135-22.5:(0,0)}] ( 2.00, 0.00) circle (0.09);
				\fill[white][rotate around={180-22.5:(0,0)}] ( 2.00, 0.00) circle (0.09);
				\fill[white][rotate around={225-22.5:(0,0)}] ( 2.00, 0.00) circle (0.09);
				\fill[white][rotate around={270-22.5:(0,0)}] ( 2.00, 0.00) circle (0.09);
				\fill[white][rotate around={315-22.5:(0,0)}] ( 2.00, 0.00) circle (0.09);
				\fill[rotate around={  0:(0,0)}] ( 1.8,-0.4) node[right]{$\ared{\tc^{\rota}}{i+1}$};
				\fill[rotate around={180:(0,0)}] ( 1.8, 0.4) node[ left]{$\ared{\tc^{\rota}}{i} = \ta_{\rbullet}^{t+1}$};
				\fill[rotate around={135:(0,0)}] ( 1.9, 0.2) node[ left]{$\ta_{\rbullet}^t$};
				\fill[rotate around={ 90:(0,0)}] ( 1.8,-0.4) node[above]{$\ta_{\rbullet}^{t-1}$};
				\fill[bluearc][rotate around={270:(0,0)}] ( 1.84, 0.00) circle (0.11);
				\draw[bluearc][line width=1pt] ( 0.00,-1.84) to (  0:1.84);
				\draw[bluearc][line width=1pt] ( 0.00,-1.84) to ( 45:1.84) [dotted];
				\draw[bluearc][line width=1pt] ( 0.00,-1.84) to ( 90:1.84);
				\draw[bluearc][line width=1pt] ( 0.00,-1.84) to (135:1.84);
				\draw[bluearc][line width=1pt] ( 0.00,-1.84) to (180:1.84);
				\draw[bluearc][line width=1pt] ( 0.00,-1.84) to (225:1.84) [dotted];
				\draw[bluearc][line width=1pt] ( 0.00,-1.84) to (315:1.84) [dotted];
				\draw (135:1.84) node{$\bullet$}; \draw (135:1.84) node[above]{$r$};
				\draw (-1.84,0.5) node{$\bullet$}; \draw (-1.84,0.5) node[left]{$s$};
				\draw[violet][->] (1.88,0.5) to[out=0,in=-90] (2.5,1) ;
				\draw[violet] (2.5,1) node[above]{$\ii_{c(t_{i+1})}(\tc^{\rota}, \ared{c}{i+1})=0$};
				\draw[violet][line width=1pt] (-1.84,0.5) -- (1.84,0.5);
				\draw[violet]( 1.60, 0.5 ) node[below]{$c^{\rota}$};
				\draw[black] (-0.97, 0.49) node{$\bullet$};
				\draw[black] (-0.97, 0.49) node[below]{$p$};
				\draw[black] (0.3,0) node{$\PP$};
			\end{tikzpicture}
			\caption{ }
			\label{fig:pf of lemm:SNC-string case}
		\end{center}
	\end{figure}
	Denote by $a_{\gbullet}^i$ the dual $\gbullet$-arc of $a_{\rbullet}^i$. Note that $\X(\ta_{\gbullet}^i)$ is quasi-isomorphic to the
	stalk complex of $e_i A$, where $e_i$ is  the primitive idempotent of $A$ corresponding to the vertex $\mathfrak{v}(a_{\gbullet}^i)$ of $\Q$. Then, we have
	\begin{itemize}
		\item $\ii_q(\ta_{\rbullet}^t, \ta_{\rbullet}^{t+1}) = 1$
		because the grading of $\alpha: \mathfrak{v}(a_{\gbullet}^t) \to \mathfrak{v}(a_{\gbullet}^{t+1})$ is zero;
		\item $\ii_{a_{\rbullet}\cap a_{\gbullet}}(\ta_{\gbullet}^i, \ta_{\rbullet}^i)=0$, for all $1\le i\le N$,
		where $N$ is the number of elements in $\Ered(\PP)$
		and $\ii_{a_{\rbullet}\cap a_{\gbullet}}(\ta_{\rbullet}^i, \ta_{\gbullet}^i)=1$;
		\item $\ii_{s}(\tc^{\rota}, \ta_{\rbullet}^{t+1})=1$
		because $\X(\tc^{\rota})$ is the complex in $\per A$ corresponding to
		the projective resolution of $\MM(c)$.
	\end{itemize}
	Furthermore, we obtain
	\begin{align}
		\ii_p(\tc^{\rota}, \ta_{\gbullet}^t)
		& = \ii_s(\tc^{\rota}, \ta_{\rbullet}^{t+1})
		\cdot \big(\ii_q(\ta_{\rbullet}^t, \ta_{\rbullet}^{t+1})\big)^{-1}
		\cdot \ii_r(\ta_{\rbullet}^t, \ta_{\gbullet}^t) \nonumber \\
		& = 1+(-1)+1 = 1 > 0.
	\end{align}
	Then $\Ext_A^1 (\MM(c), e_tA) \cong \Hom_{\per A}(\X(\tc^{\rota}), \X(\ta_{\gbullet}^t)[1])\ne 0$, and so $\Ext_A^1 (\MM(c), A) \ne 0$.
\end{proof}

Now we can prove SNC for gentle algebras.

\begin{theorem}
	Let $A$ be a gentle algebra and $M$ be an  $A$-module.
	Then $\Ext_A^{\ge 0} (M, A)=0$ if and only if $M=0$.
\end{theorem}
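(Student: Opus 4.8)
The plan is to deduce the theorem from the two lemmas already established for the indecomposable building blocks, via Krull--Schmidt and the geometric classification of indecomposable modules. First I would dispose of the easy implication: if $M=0$, then $\Ext_A^{d}(M,A)=0$ for every $d\ge 0$, so $\Ext_A^{\ge 0}(M,A)=0$.

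For the converse I would argue by contraposition, so assume $M\ne 0$. Since $A$ is a finite-dimensional $\kk$-algebra, the Krull--Schmidt theorem gives a decomposition $M\cong\bigoplus_{i=1}^{t}M_i$ into indecomposable $A$-modules with $t\ge 1$, and by Theorem \ref{thm:OPS and BCS corresponding}(1) each $M_i$ is either a string module $\MM(c_i)$ for some permissible curve $c_i$, or a band module $B(n_i,\lambda_i)$ with $\lambda_i\ne 0$. Now fix one summand, say $M_1$. If $M_1$ is a band module, Lemma \ref{lemm:SNC-band case} yields $\Ext_A^{1}(M_1,A)\ne 0$; if $M_1$ is a string module, Lemma \ref{lemm:SNC-string case} yields an integer $d\ge 0$ with $\Ext_A^{d}(M_1,A)\ne 0$. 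In either case there is some $d\ge 0$ with $\Ext_A^{d}(M_1,A)\ne 0$.

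Finally I would invoke additivity of $\Ext_A^{d}(-,A)$ on finite direct sums in the first argument: $\Ext_A^{d}(M,A)\cong\bigoplus_{i=1}^{t}\Ext_A^{d}(M_i,A)$ has $\Ext_A^{d}(M_1,A)\ne 0$ as a direct summand, hence $\Ext_A^{d}(M,A)\ne 0$ and therefore $\Ext_A^{\ge 0}(M,A)\ne 0$. This completes the contrapositive, so $\Ext_A^{\ge 0}(M,A)=0$ forces $M=0$.

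As for the main obstacle: there is essentially none left at this stage, since the substantive work is done in Lemmas \ref{lemm:SNC-band case} and \ref{lemm:SNC-string case} (and behind the latter, the embedding $(-)^{\rota}$ of Theorem \ref{thm:string embedding} together with the intersection-index computation). The only points needing a moment's care are that Theorem \ref{thm:OPS and BCS corresponding}(1) indeed exhausts all indecomposable summands by string and band modules, and that the value $d=0$ allowed by Lemma \ref{lemm:SNC-string case} (i.e.\ $\Hom_A(M_1,A)\ne 0$) is harmless, as the strong Nakayama conjecture only asks for the vanishing of $\Ext$ in \emph{all} nonnegative degrees. Beyond this bookkeeping the argument is routine.
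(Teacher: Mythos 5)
Your proposal is correct and follows essentially the same route as the paper: decompose $M$ into indecomposables (necessarily string or band modules), apply Lemma \ref{lemm:SNC-band case} or Lemma \ref{lemm:SNC-string case} to a summand, and conclude by additivity of $\Ext_A^{d}(-,A)$ on direct sums. Your phrasing as a contrapositive and the explicit appeal to Krull--Schmidt and the classification theorem are just slightly more careful bookkeeping of the same argument.
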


\begin{proof}
	If $M=0$, then it is trivial. Now let $M = \bigoplus_{i=1}^nM_i$, where all $M_i$ are indecomposable. Then
	\[\Ext_A^{\ge 0}(M, A) \cong \bigoplus_{i=1}^n\Ext_A^{\ge 0}(M_i, A), \]
	where $M_i$ is either a string module or a band module for any $1\le i\le n$.
	By Lemmas \ref{lemm:SNC-band case} and \ref{lemm:SNC-string case}, there is an integer $d\ge 0$ such that $\Ext_A^d(M_i, A)\ne 0$. We obtain a contradiction.
\end{proof}

\begin{remark}\rm
	In \cite{GR2005}, Gei\ss\ and Reiten provided a method to calculate the self-injective dimension of a gentle algebra.
	Furthermore, we can show that the finite dimension conjecture is true for gentle algebras.
	Note that this yields that SNC holds for gentle algebras.
\end{remark}

\section{Two examples} \label{sec: examples}
\begin{example} \rm \label{exp:homologies}
Let $A=\kk\Q/\I$ be the algebra given by the quiver $\Q=$
\[\xymatrix@R=0.25cm{
1\ar[r]^{\alpha_1}\ar[dd]_{\alpha_9} & 2\ar[r]^{\alpha_2} & 3  & 4 \ar[l]_{\alpha_3} \ar[rd]^{\alpha_4} & \\
& & & & 5 \ar[ld]^{\alpha_5} \\
9\ar[r]_{\alpha_8} & 8 \ar[r]_{\alpha_7} & 7 \ar[r]_{\alpha_6} & 6 &
}\]
with $\I=\langle \alpha_1\alpha_2, \alpha_4\alpha_5, \alpha_9\alpha_8, \alpha_8\alpha_7 \rangle.$ It is obvious that $A$ is a gentle algebra. The marked ribbon surface $\mathbf{S}^{\mathcal{F}}(A)$ of $A$ is shown in \Pic \ref{fig:homologies}.

(1) We take an admissible curve $\tc$. Then we have $n(\tc)=10$ and $\ared{\tc}{1}$ corresponds to the vertex $2\in \Q_0$. Assume that $\varsigma^{\tc}_1=\ii_{c(t^{\tc}_1)}(\tc, \ared{\tc}{1})=1$. Now we first calculate $\H^{-1}(\X(\tc))$.
\begin{figure}[htbp]
\centering
\begin{tikzpicture}[scale=2]
\draw[line width=1pt] (0,0) circle(2);
\fill[gray!50] (0,0) circle(0.5);
\draw[line width=1pt] (0,0) circle(0.5);
\fill[blue][line width=1pt] ( 0.  , 2.  ) circle(0.08);
\fill[blue][line width=1pt] ( 0.  , 0.5 ) circle(0.08);
\fill[blue][line width=1pt] ( 1.41, 1.41) circle(0.08);
\fill[blue][line width=1pt] ( 0.35, 0.35) circle(0.08);
\fill[blue][line width=1pt] ( 2.00, 0.00) circle(0.08);
\fill[blue][line width=1pt] ( 1.41,-1.41) circle(0.08);
\fill[blue][line width=1pt] ( 0.35,-0.35) circle(0.08);
\fill[blue][line width=1pt] ( 1. ,-1.732) circle(0.08);
\fill[blue][line width=1pt] (-0.35,-0.35) circle(0.08);
\draw[blue][dash pattern=on 2pt off 2pt] ( 0.  , 0.5 )--( 0.  , 2.  )--( 1.41, 1.41)--( 0.35, 0.35)--( 2.00, 0.  )
      --( 1.41,-1.41)--( 0.35,-0.35)--( 1. ,-1.732);
\draw[blue][dash pattern=on 2pt off 2pt] ( 0.35,-0.35) to[out=-80,in=0] (0,-1) to[out=180,in=-100] (-0.35,-0.35)
     to[out=170,in=-90] (-1.3,0.4) to[out=90,in=135] (0,0.5);
\fill[red] (-2.  , 0.  ) circle(0.08); \fill[white] (-2.  , 0.  ) circle(0.06);
\fill[red] ( 0.19, 0.46) circle(0.08); \fill[white] ( 0.19, 0.46) circle(0.06);
\fill[red] ( 0.76, 1.85) circle(0.08); \fill[white] ( 0.76, 1.85) circle(0.06);
\fill[red] ( 1.85, 0.76) circle(0.08); \fill[white] ( 1.85, 0.76) circle(0.06);
\fill[red] ( 0.5 , 0.  ) circle(0.08); \fill[white] ( 0.5 , 0.  ) circle(0.06);
\fill[red] ( 1.85,-0.76) circle(0.08); \fill[white] ( 1.85,-0.76) circle(0.06);
\fill[red] ( 1.23,-1.57) circle(0.08); \fill[white] ( 1.23,-1.57) circle(0.06);
\fill[red] ( 0.  ,-0.5 ) circle(0.08); \fill[white] ( 0.  ,-0.5 ) circle(0.06);
\fill[red] (-0.5 , 0.  ) circle(0.08); \fill[white] (-0.5 , 0.  ) circle(0.06);
\draw[red][dash pattern=on 2pt off 2pt] (-2.  , 0.  ) to[out=  85,in= 100] ( 0.19, 0.46) -- ( 0.76, 1.85);
\draw[red][dash pattern=on 2pt off 2pt] ( 0.19, 0.46) to[out=  50,in= 150] ( 1.85, 0.76);
\draw[red][dash pattern=on 2pt off 2pt] ( 0.5 , 0.  ) to[out=   0,in= 230] ( 1.85, 0.76);
\draw[red][dash pattern=on 2pt off 2pt] ( 0.5 , 0.  ) -- ( 1.85,-0.76);
\draw[red][dash pattern=on 2pt off 2pt] ( 0.5 , 0.  ) -- ( 1.23,-1.57) to[out= 200,in=  -80] (-2.  , 0.  ) -- (-0.5 , 0.  );
\draw[red][dash pattern=on 2pt off 2pt] (-2.  , 0.  ) to[out= -65,in= -90] ( 0.  ,-0.5 );
{\tiny
\draw[blue] ( 0.  , 0.85) node[right]{1} ( 0.8 , 1.65) node[below]{2} ( 1.06, 1.06) node[left]{3}
      ( 1.6 , 0.1 ) node[above]{4} ( 1.74,-0.56) node[ left]{5} ( 1.  ,-1.  ) node[right]{6}
      ( 0.7 ,-1.11) node[below]{7} ( 0.  ,-0.99) node[above]{8} (-1.3 , 0.4 ) node[ left]{9};}
\draw[violet][line width=1.5pt] ( 0.  , 0.5 ) to[out=  45, in=  90] ( 1.25, 0.  ) node[left]{$\tc$}
     to[out= -90, in=   0] ( 0.  ,-1.25) to[out= 180, in= -90] (-1.25, 0.  )
     to[out=  90, in= 180] ( 0.  , 1.25) to[out=   0, in= 195] ( 1.41, 1.41);
\draw[violet][line width=1.5pt]  ( 0.  ,-1.25) to[out= 180, in= -90] (-1.25, 0.  ) [<-];
{\tiny
\draw[black][line width=4pt][opacity=0.25]
     (0,2) to[out=-70,in=135] (0.85,0.77) to[out=-45,in=105]
     (1.42,0.00) to[out=-75,in=75](1.414,-1.414);
     \draw[->] (1.44,-1.2) -- (1,-1.2) -- (0.7,-1.5) node[left]{$\tru_1^1(p_4)\ocup\tru_2^2(p_4)$};
\draw[black][line width=4pt][opacity=0.25] (0,2) to[out=-40,in=150] ( 1.0, 1.3) to[out=-30,in=100](2,0); 
\draw[orange][line width=1.5pt] ( 0.76, 1.85)--( 2.  , 0.  ) ( 1.47, 0.77) node[left]{$p_2$};
\draw[green][line width=1.5pt]  ( 0.76, 1.85)to[out=180,in=90](-0.5 , 0.  ) (-0.12,1.45) node[left]{$p_1$};
\draw[cyan][line width=2pt][opacity=0.75] ( 1.31, 1.31)to[out=-45,in=110]( 2.  , 0.  );
\draw[cyan][->] ( 1.41, 1.21) -- ( 1.8 , 1.21) node[right]{$\tru_2^3(p_2)$};
\draw[orange][line width=1.5pt] ( 0.  , 2.  ) to[out=-80,in=100] ( 1.85,-0.76) ( 1.65,-0.26) node[right]{$p_4$};
\draw[green][line width=1.5pt] ( 0.  , 2.  ) to[out=-75,in=135] ( 2.  ,-0.  );
\draw[green][line width=1.5pt] ( 1.85,-0.76) to[out=220,in=65] ( 1. ,-1.732);
\draw[cyan][line width=2pt][opacity=0.75] (1.5,0.14) -- (1.62,0.00);
\draw[cyan] (1.58,0.02) node[below right]{$\tru_2^2(p_4)$};
\draw[cyan][line width=2pt][opacity=0.75] (0.68,0.77) -- (0.78,0.67);
\draw[cyan][->] (0.78,0.67) -- (1.98,0.67) node[right]{$\tru_1^1(p_4)$};
}
\end{tikzpicture}
\caption{The marked ribbon surface of the gentle algebra $A$
($1$, $2$, $\ldots$, $9$ are $\Dblue$-arcs corresponded by vertices of $\Q_0$)}
\label{fig:homologies}
\end{figure}
Notice that the indexes of intersections $c(t^{\tc}_1)$, $c(t^{\tc}_3)$, $c(t^{\tc}_7)$ and $c(t^{\tc}_9)$ given by $\tc$ intersecting with $\rbullet$-FFAS are $1$. Denote by $p_i$ the projective permissible curve corresponding to the indecomposable projective module $P(i)$. First, by Propositions \ref{prop:HomologiesCaseI} and \ref{prop:HomologiesCaseII}, we calculate the truncations of projective curves corresponding to these intersections.
\begin{itemize}
    \item For the intersection $c(t^{\tc}_1)$, the $1$-st cohomological curve is the truncation ${\color{cyan}\tru_2^3(p_2)}$ of ${\color{orange}p_2}$ which is induced by ${\color{green}p_1}$.
    \item For the intersection $c(t^{\tc}_3)$, the $3$-rd cohomological curve is trivial.
    \item For the intersection $c(t^{\tc}_7)$, the $7$-th cohomological curve is ${\color{cyan}\tru_2^2(p_4)}$ of ${\color{orange}p_4}$ which is induced by ${\color{green}p_5}$ and ${\color{green}p_3}$.
    \item For the intersection $c(t^{\tc}_9)$, the $9$-th cohomological curve is ${\color{cyan}\tru_1^1(p_4)}$.
 \end{itemize}
Then, by Theorem \ref{thm:homologies}, we have
\begin{align}
  \H^{-1}(\X(\tc)) & \cong \MM(\tru_2^3(p_2)\ocup\tru_2^2(p_4)\ocup\tru_1^1(p_4))
= \MM(\widehat{\tru_2^3(p_2)}\cup \widehat{\tru_2^2(p_4)\cup s\cup\tru_1^1(p_4)}), \nonumber
\end{align}
where $s$ is the $\Dblue$-arc segment of Case (2) in the $\gbullet$-elementary polygon with edges $\mathfrak{v}^{-}(3)$ and $\mathfrak{v}^{-}(4)$.
Thus,
\[ \H^{-1}(\X(\tc)) \cong \MM(\widehat{\tru_2^3(p_2)})\oplus \MM(\widehat{\tru_2^2(p_4)\cup s\cup\tru_1^1(p_4)})
     = 3 \oplus {^4_3}. \]

Next we construct a complex $\comp{Y}$ such that $\hl(\comp{Y}) = \hl(\X(\tc))-1$. Similar to the method above, we can obtain that
\begin{center}
$\H^{-3}(\X(\tc))=0$, $\H^{-2}(\X(\tc))={^7_6}$, $\H^{-1}(\X(\tc))=3\oplus {^4_3}$, $\H^{0}(\X(\tc))=1\oplus{^1_9}$,
\end{center}
it follows that $\hl(\X(\tc)) = \dim_{\kk}\H^{-1}(\X(\tc))$=3. In this case,
\begin{center}
$\varsigma=1$ and $ k = \min\{1\le i\le n(\tc)=10 \mid \varsigma^{\tc}_i = \varsigma = 1\} = 1.$
\end{center}
According to the proof of Proposition \ref{prop:stringcase}, the curve $c_{[1,2]}\cup\cdots\cup c_{[10,11]}$ induces a complex
\[\comp{X}= 6
\mathop{-\!\!\!-\!\!\!\longrightarrow}\limits^{\left(\begin{smallmatrix} 0 \\ \alpha_7\alpha_6 \\ \alpha_5 \\ 0 \end{smallmatrix}\right)}
\ker\alpha_1 \oplus \begin{smallmatrix} 8 \\ 7 \\ 6 \end{smallmatrix} \oplus {^5_6} \oplus 3
\mathop{-\!\!\!-\!\!\!-\!\!\!-\!\!\!-\!\!\!-\!\!\!\longrightarrow}\limits^{
\left(\begin{smallmatrix}
 \hbar & 0 & 0 & 0 \\
 0 & \alpha_8 & 0 & 0\\
 0 & 0 & \alpha_4 & \alpha_3\\
 0 & 0 & 0 & \alpha_2
\end{smallmatrix}\right)}
{^2_3}\oplus {^9_8} \oplus {{_5}{^4}{_3}} \oplus ^2_3
\mathop{-\!\!\!-\!\!\!-\!\!\!-\!\!\!-\!\!\!-\!\!\!\longrightarrow}\limits^{
\left(\begin{smallmatrix}
\alpha_1 & \alpha_9 & 0 & 0 \\
 0 & 0 & 0 & \alpha_1
\end{smallmatrix}\right) }
{{_9}{^1}{_2}}\oplus {{_9}{^1}{_2}},
\]
where $\hbar:\ker\alpha_1 \to {^2_3}$ is the kernel of $\alpha_1: {^2_3}\to {{_9}{^1}{_2}}$.
Then there is a curve $c'$ with
\[c' = c_{[0,1]}'\cup c_{[1,2]}'\cup\cdots\cup c_{[11, 12]}' \] such that $\comp{X}$ is quasi-isomorphic to $\X(\tc')$, where $n(\tc')=n(\tc)+1=11$ and for all $1\le i\le n(\tc)$, $c_{[i+1,i+2]}'=c_{[i,i+1]}$, see the left picture in \Pic \ref{fig:coholength-1}.
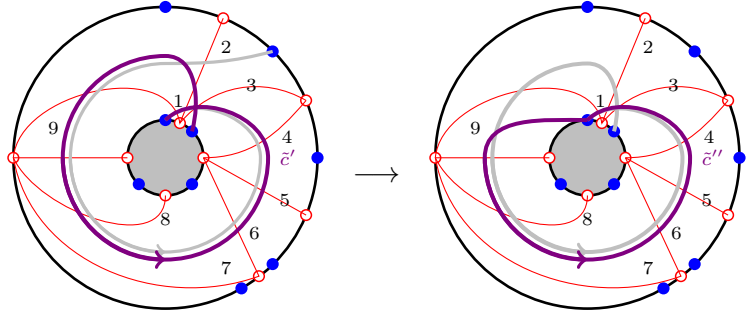
\begin{figure}[htbp]
\centering
\begin{tikzpicture}
\draw[line width=1pt] (0,0) circle(2);
\fill[gray!50] (0,0) circle(0.5);
\draw[line width=1pt] (0,0) circle(0.5);
\fill[blue][line width=1pt] ( 0.  , 2.  ) circle(0.08);
\fill[blue][line width=1pt] ( 0.  , 0.5 ) circle(0.08);
\fill[blue][line width=1pt] ( 1.41, 1.41) circle(0.08);
\fill[blue][line width=1pt] ( 0.35, 0.35) circle(0.08);
\fill[blue][line width=1pt] ( 2.00, 0.00) circle(0.08);
\fill[blue][line width=1pt] ( 1.41,-1.41) circle(0.08);
\fill[blue][line width=1pt] ( 0.35,-0.35) circle(0.08);
\fill[blue][line width=1pt] ( 1. ,-1.732) circle(0.08);
\fill[blue][line width=1pt] (-0.35,-0.35) circle(0.08);
\fill[red] (-2.  , 0.  ) circle(0.08); \fill[white] (-2.  , 0.  ) circle(0.06);
\fill[red] ( 0.19, 0.46) circle(0.08); \fill[white] ( 0.19, 0.46) circle(0.06);
\fill[red] ( 0.76, 1.85) circle(0.08); \fill[white] ( 0.76, 1.85) circle(0.06);
\fill[red] ( 1.85, 0.76) circle(0.08); \fill[white] ( 1.85, 0.76) circle(0.06);
\fill[red] ( 0.5 , 0.  ) circle(0.08); \fill[white] ( 0.5 , 0.  ) circle(0.06);
\fill[red] ( 1.85,-0.76) circle(0.08); \fill[white] ( 1.85,-0.76) circle(0.06);
\fill[red] ( 1.23,-1.57) circle(0.08); \fill[white] ( 1.23,-1.57) circle(0.06);
\fill[red] ( 0.  ,-0.5 ) circle(0.08); \fill[white] ( 0.  ,-0.5 ) circle(0.06);
\fill[red] (-0.5 , 0.  ) circle(0.08); \fill[white] (-0.5 , 0.  ) circle(0.06);
\draw[red] (-2.  , 0.  ) to[out=  85,in= 100] ( 0.19, 0.46) -- ( 0.76, 1.85);
\draw[red] ( 0.19, 0.46) to[out=  50,in= 150] ( 1.85, 0.76);
\draw[red] ( 0.5 , 0.  ) to[out=   0,in= 230] ( 1.85, 0.76);
\draw[red] ( 0.5 , 0.  ) -- ( 1.85,-0.76);
\draw[red] ( 0.5 , 0.  ) -- ( 1.23,-1.57) to[out= 200,in=  -80] (-2.  , 0.  ) -- (-0.5 , 0.  );
\draw[red] (-2.  , 0.  ) to[out= -65,in= -90] ( 0.  ,-0.5 );
{\tiny
\draw ( 0.  , 0.75) node[right]{1} ( 0.8 , 1.65) node[below]{2} ( 0.96, 0.96) node[right]{3}
      ( 1.6 , 0.1 ) node[above]{4} ( 1.74,-0.56) node[ left]{5} ( 1.  ,-1.  ) node[right]{6}
      ( 0.8 ,-1.28) node[below]{7} ( 0.  ,-0.99) node[above]{8} (-1.3 , 0.4 ) node[ left]{9};}
\draw[black!25][line width=1.2pt] ( 0.  , 0.5 ) to[out=  45, in=  90] ( 1.25, 0.  )
     to[out= -90, in=   0] ( 0.  ,-1.25) to[out= 180, in= -90] (-1.25, 0.  )
     to[out=  90, in= 180] ( 0.  , 1.25) to[out=   0, in= 195] ( 1.41, 1.41);
\draw[black!25][line width=1.2pt]  ( 0.  ,-1.25) to[out= 180, in= -90] (-1.25, 0.  ) [<-];
\draw[violet][line width=1.5pt] ( 0.  , 0.5 ) to[out=  45, in=  90] ( 1.35, 0.  ) node[right]{\tiny$\tc'$}
     to[out= -90, in=   0] ( 0.  ,-1.35) to[out= 180, in= -90] (-1.35, 0.  )
     to[out=  90, in= 180] ( 0.  , 1.35) to[out=   0, in=  75] ( 0.35, 0.35);
\draw[violet][line width=1.5pt]  ( 0.  ,-1.35) to[out= 180, in= -90] (-1.35, 0.  ) [<-];
\end{tikzpicture}
\
\begin{tikzpicture}[scale=0.9]
\draw[white] (0,-2) -- (0,2);
\draw (0,0) node{$\longrightarrow$};
\end{tikzpicture}
\
\begin{tikzpicture}
\draw[line width=1pt] (0,0) circle(2);
\fill[gray!50] (0,0) circle(0.5);
\draw[line width=1pt] (0,0) circle(0.5);
\fill[blue][line width=1pt] ( 0.  , 2.  ) circle(0.08);
\fill[blue][line width=1pt] ( 0.  , 0.5 ) circle(0.08);
\fill[blue][line width=1pt] ( 1.41, 1.41) circle(0.08);
\fill[blue][line width=1pt] ( 0.35, 0.35) circle(0.08);
\fill[blue][line width=1pt] ( 2.00, 0.00) circle(0.08);
\fill[blue][line width=1pt] ( 1.41,-1.41) circle(0.08);
\fill[blue][line width=1pt] ( 0.35,-0.35) circle(0.08);
\fill[blue][line width=1pt] ( 1. ,-1.732) circle(0.08);
\fill[blue][line width=1pt] (-0.35,-0.35) circle(0.08);
\fill[red] (-2.  , 0.  ) circle(0.08); \fill[white] (-2.  , 0.  ) circle(0.06);
\fill[red] ( 0.19, 0.46) circle(0.08); \fill[white] ( 0.19, 0.46) circle(0.06);
\fill[red] ( 0.76, 1.85) circle(0.08); \fill[white] ( 0.76, 1.85) circle(0.06);
\fill[red] ( 1.85, 0.76) circle(0.08); \fill[white] ( 1.85, 0.76) circle(0.06);
\fill[red] ( 0.5 , 0.  ) circle(0.08); \fill[white] ( 0.5 , 0.  ) circle(0.06);
\fill[red] ( 1.85,-0.76) circle(0.08); \fill[white] ( 1.85,-0.76) circle(0.06);
\fill[red] ( 1.23,-1.57) circle(0.08); \fill[white] ( 1.23,-1.57) circle(0.06);
\fill[red] ( 0.  ,-0.5 ) circle(0.08); \fill[white] ( 0.  ,-0.5 ) circle(0.06);
\fill[red] (-0.5 , 0.  ) circle(0.08); \fill[white] (-0.5 , 0.  ) circle(0.06);
\draw[red] (-2.  , 0.  ) to[out=  85,in= 100] ( 0.19, 0.46) -- ( 0.76, 1.85);
\draw[red] ( 0.19, 0.46) to[out=  50,in= 150] ( 1.85, 0.76);
\draw[red] ( 0.5 , 0.  ) to[out=   0,in= 230] ( 1.85, 0.76);
\draw[red] ( 0.5 , 0.  ) -- ( 1.85,-0.76);
\draw[red] ( 0.5 , 0.  ) -- ( 1.23,-1.57) to[out= 200,in=  -80] (-2.  , 0.  ) -- (-0.5 , 0.  );
\draw[red] (-2.  , 0.  ) to[out= -65,in= -90] ( 0.  ,-0.5 );
{\tiny
\draw ( 0.  , 0.75) node[right]{1} ( 0.8 , 1.65) node[below]{2} ( 0.96, 0.96) node[right]{3}
      ( 1.6 , 0.1 ) node[above]{4} ( 1.74,-0.56) node[ left]{5} ( 1.  ,-1.  ) node[right]{6}
      ( 0.8 ,-1.28) node[below]{7} ( 0.  ,-0.99) node[above]{8} (-1.3 , 0.4 ) node[ left]{9};}
\draw[black!25][line width=1.5pt] ( 0.  , 0.5 ) to[out=  45, in=  90] ( 1.25, 0.  )
     to[out= -90, in=   0] ( 0.  ,-1.25) to[out= 180, in= -90] (-1.25, 0.  )
     to[out=  90, in= 180] ( 0.  , 1.25) to[out=   0, in=  75] ( 0.35, 0.35);
\draw[black!25][line width=1.5pt]  ( 0.  ,-1.25) to[out= 180, in= -90] (-1.25, 0.  ) [<-];
\draw[violet][line width=1.5pt] ( 0.  , 0.5 ) to[out=  45, in=  90] ( 1.35, 0.  ) node[right]{\tiny$\tc''$}
     to[out= -90, in=   0] ( 0.  ,-1.35) to[out= 180, in= -90] (-1.35, 0.  )
     to[out=  90, in= 180] ( 0.  , 0.5);
\draw[violet][line width=1.5pt]  ( 0.  ,-1.35) to[out= 180, in= -90] (-1.35, 0.  ) [<-];
\end{tikzpicture}
\caption{The curves $\tc'$ and $\tc''$}
\label{fig:coholength-1}
\end{figure}
Since $\H^0(\X(\tc'))=1\oplus{^1_9}$, we have $\hl(\X(\tc'))=3=\hl(\X(\tc))$.
Furthermore,
\[k' = \min\{1\le i\le n(\tc')=11 \mid \varsigma^{\tc}_i = 0\} = 3.\] Thus, $c_{[3,4]}'\cdots c_{[11,12]}'$
induces a complex as follows:
\[ 6
\mathop{-\!\!\!-\!\!\!\longrightarrow}\limits^{\left(\begin{smallmatrix} \alpha_7\alpha_6 \\ \alpha_5 \\ 0 \end{smallmatrix}\right)}
\begin{smallmatrix} 8 \\ 7 \\ 6 \end{smallmatrix} \oplus {^5_6} \oplus 3
\mathop{-\!\!\!-\!\!\!-\!\!\!-\!\!\!\longrightarrow}\limits^{
\left(\begin{smallmatrix}
 \alpha_8 & 0 & 0\\
 0 & \alpha_4 & \alpha_3\\
 0 & 0 & \alpha_2
\end{smallmatrix}\right)}
{^9_8} \oplus {{_5}{^4}{_3}} \oplus ^2_3
\mathop{-\!\!\!-\!\!\!-\!\!\!-\!\!\!\longrightarrow}\limits^{
\left(\begin{smallmatrix}
 \hat{h} & 0 & 0 \\
 0 & 0 & \alpha_1
\end{smallmatrix}\right) }
\mathrm{coker}\ \alpha_8 \oplus {{_9}{^1}{_2}}.
\]
It is quasi-isomorphic to
\[\X(\tc'') = 6
\mathop{-\!\!\!-\!\!\!\longrightarrow}\limits^{\left(\begin{smallmatrix} \alpha_6 \\ \alpha_5 \\ 0 \end{smallmatrix}\right)}
{^7_6}\oplus{^5_6}\oplus 3
\mathop{-\!\!\!-\!\!\!\longrightarrow}\limits^{
\left(\begin{smallmatrix}
0 & \alpha_4 & \alpha_3 \\
0 & 0 & \alpha_2
\end{smallmatrix}\right)}
{{_5}{^4}{_3}\oplus {^2_3}}
\mathop{-\!\!\!-\!\!\!\longrightarrow}\limits^{\big(0\ \alpha_1\big)}
{{_9}{^1}{_2}}, \]
where $\tc''$ is shown in the right picture in \Pic \ref{fig:coholength-1}.
Then, we obtain that
\begin{center}
$\H^{-3}(\X(\tc''))=0$, $\H^{-2}(\X(\tc''))={^7_6}$, $\H^{-1}(\X(\tc''))={^4_3}$ and $\H^0(\X(\tc''))={^1_9}$,
\end{center}
this shows that $\hl(\X(\tc''))=2 = \hl(\X(\tc))-1$.

\begin{figure}[htbp]
\centering
\begin{tikzpicture}[scale=2]
\draw[line width=1pt] (0,0) circle(2);
\fill[gray!50] (0,0) circle(0.5);
\draw[line width=1pt] (0,0) circle(0.5);
\fill[blue][line width=1pt] ( 0.  , 2.  ) circle(0.08);
\fill[blue][line width=1pt] ( 0.  , 0.5 ) circle(0.08);
\fill[blue][line width=1pt] ( 1.41, 1.41) circle(0.08);
\fill[blue][line width=1pt] ( 0.35, 0.35) circle(0.08);
\fill[blue][line width=1pt] ( 2.00, 0.00) circle(0.08);
\fill[blue][line width=1pt] ( 1.41,-1.41) circle(0.08);
\fill[blue][line width=1pt] ( 0.35,-0.35) circle(0.08);
\fill[blue][line width=1pt] ( 1. ,-1.732) circle(0.08);
\fill[blue][line width=1pt] (-0.35,-0.35) circle(0.08);
\draw[blue][dash pattern=on 2pt off 2pt] ( 0.  , 0.5 )--( 0.  , 2.  )--( 1.41, 1.41)--( 0.35, 0.35)--( 2.00, 0.  )
      --( 1.41,-1.41)--( 0.35,-0.35)--( 1. ,-1.732);
\draw[blue][dash pattern=on 2pt off 2pt] ( 0.35,-0.35) to[out=-80,in=0] (0,-1) to[out=180,in=-100] (-0.35,-0.35)
     to[out=170,in=-90] (-1.3,0.4) to[out=90,in=135] (0,0.5);
\fill[red] (-2.  , 0.  ) circle(0.08); \fill[white] (-2.  , 0.  ) circle(0.06);
\fill[red] ( 0.19, 0.46) circle(0.08); \fill[white] ( 0.19, 0.46) circle(0.06);
\fill[red] ( 0.76, 1.85) circle(0.08); \fill[white] ( 0.76, 1.85) circle(0.06);
\fill[red] ( 1.85, 0.76) circle(0.08); \fill[white] ( 1.85, 0.76) circle(0.06);
\fill[red] ( 0.5 , 0.  ) circle(0.08); \fill[white] ( 0.5 , 0.  ) circle(0.06);
\fill[red] ( 1.85,-0.76) circle(0.08); \fill[white] ( 1.85,-0.76) circle(0.06);
\fill[red] ( 1.23,-1.57) circle(0.08); \fill[white] ( 1.23,-1.57) circle(0.06);
\fill[red] ( 0.  ,-0.5 ) circle(0.08); \fill[white] ( 0.  ,-0.5 ) circle(0.06);
\fill[red] (-0.5 , 0.  ) circle(0.08); \fill[white] (-0.5 , 0.  ) circle(0.06);
\draw[red][dash pattern=on 2pt off 2pt] (-2.  , 0.  ) to[out=  85,in= 100] ( 0.19, 0.46) -- ( 0.76, 1.85);
\draw[red][dash pattern=on 2pt off 2pt] ( 0.19, 0.46) to[out=  50,in= 150] ( 1.85, 0.76);
\draw[red][dash pattern=on 2pt off 2pt] ( 0.5 , 0.  ) to[out=   0,in= 230] ( 1.85, 0.76);
\draw[red][dash pattern=on 2pt off 2pt] ( 0.5 , 0.  ) -- ( 1.85,-0.76);
\draw[red][dash pattern=on 2pt off 2pt] ( 0.5 , 0.  ) -- ( 1.23,-1.57) to[out= 200,in=  -80] (-2.  , 0.  ) -- (-0.5 , 0.  );
\draw[red][dash pattern=on 2pt off 2pt] (-2.  , 0.  ) to[out= -65,in= -90] ( 0.  ,-0.5 );
\draw[line width=1pt] (1.414,1.414) to[out=180,in=90] (-1.7,0.5) to [out=-90,in=180] (-0.35,-0.35)
                      (-1.65, 0.7 ) node[left]{$s$};
\draw[violet][line width=1.5pt] ( 0.35, 0.35) to[out=45,in=0] ( 0.  , 1.3 ) to [out=180,in=90] (-1.6 ,0.  )
           to[out=-90,in=165] (-0.  ,-1.6 ) -- ( 1. ,-1.732)
           ( 0.  , 1.26) node[below right]{$s^{\circlearrowleft}$};
\draw[violet][line width=1.5pt] ( 0.35, 0.35) to[out=45,in=0] ( 0.  , 1.3 ) [->];
\draw[orange][line width=1.5pt] ( 0.76, 1.85) to[out= 180,in=  90] (-0.5 , 0.  ) ( 0.07, 1.75) node[right]{$p_1$};
\draw[ green][line width=1.5pt] ( 0.76, 1.85) -- ( 2.  , 0.  )  ( 1.47, 0.77) node[left]{$p_2$};
\draw[ green][line width=1.5pt] (-0.52,-0. ) to[out= 90, in=   0] (-0.85, 1.1 )
           to[out= 180,in=  90] (-1.5 , 0.  ) to[out=-90, in= 180] ( 0.  ,-0.5 )
           (-0.85,-0.5 ) node[below]{$p_9$};
\end{tikzpicture}
\caption{The permissible curve $s$ and the cohomology curve of $s^{\circlearrowright}$}
\label{fig:projresol}
\end{figure}
\end{example}

(2) Let $M = S(1) = \MM(s)$ be the simple module corresponding to $1\in\Q_0$. By Theorem \ref{thm:string embedding}, we can embedding $s$ to an admissible curve $\tilde{s}^{\circlearrowleft}$, See \Pic \ref{fig:projresol}. Then, we know that $n(\tilde{s}^{\circlearrowleft})=6$. Now assume that $\varsigma^{\tilde{s}^{\circlearrowleft}}_1 = 2$. Then the complex $\X(\tilde{s}^{\circlearrowleft})$ is
\[0 \longrightarrow {^7_6} \longrightarrow \begin{smallmatrix}8\\ 7 \\ 6 \end{smallmatrix} \oplus 3 \longrightarrow {^9_8} \oplus {^2_3} \longrightarrow {{_9}{^1}{_2}} \longrightarrow 0. \]
As we all know, it is the deleted projective resolution of $S(1)$.

On the other hand, the index of intersection $c(t^{\tilde{s}^{\circlearrowleft}}_3)$ given by $\tilde{s}^{\circlearrowleft}$ intersecting with $\rbullet$-FFAS is $0$. For the intersection $c(t^{\tilde{s}^{\circlearrowleft}}_3)$, the $3$-rd cohomological curve of $\tilde{s}^{\circlearrowleft}$ is the truncation $\tru_2^2(p_1)$ of $p_1$.
Thus,  by Theorem \ref{thm:homologies}, we obtain that $\widehat{\tru_2^2(p_1)}=s$. Furthermore, we have
\[\H^{0}(\X(\tilde{s}^{\circlearrowleft})) \cong \MM(\widehat{\tru_2^2(p_1)}) \cong \MM(s) = S(1). \]

\begin{example}\rm
Let $A=\kk\Q/\I$ be the algebra given by the $2$-Kronecker quiver
\[\Q= \xymatrix{1 \ar@/^0.25pc/[r]^{\alpha} \ar@/_0.25pc/[r]_{\beta} & 2}.\]
The marked ribbon surface $\mathbf{S}^{\mathcal{F}}(A)$ is shown in Figure \ref{fig:exp}.
\begin{figure}[htbp]
\centering
\begin{tikzpicture}
\draw[line width=1pt] (0,0) circle(2);
\fill[gray!50] (0,0) circle(0.5);
\draw[line width=1pt] (0,0) circle(0.5);
\fill[blue][line width=1pt] ( 0. , 2. ) circle(0.1);
\fill[blue][line width=1pt] ( 0. , 0.5) circle(0.1);
\draw[blue][dash pattern=on 2pt off 2pt] ( 0. , 2. )
  to[out= -30, in=  90] ( 1.5, 0. ) to[out= -90, in=   0] ( 0. ,-1.5)
  to[out= 180, in= -90] (-1.2, 0. ) to[out=  90, in= 135] ( 0. , 0.5);
\draw[blue][dash pattern=on 2pt off 2pt] ( 0. , 2. )--( 0. , 0.5);
\draw[red][line width=1pt] ( 0. ,-2. ) circle(0.1);
\draw[red][line width=1pt] ( 0. ,-0.5) circle(0.1);
\draw[red][dash pattern=on 2pt off 2pt][rotate=180] ( 0. , 2. )
  to[out= -30, in=  90] ( 1.5, 0. ) to[out= -90, in=   0] ( 0. ,-1.5)
  to[out= 180, in= -90] (-1.2, 0. ) to[out=  90, in= 135] ( 0. , 0.5);
\draw[red][dash pattern=on 2pt off 2pt][rotate=180] ( 0. , 2. )--( 0. , 0.5);
\draw[violet][line width=1.5pt] (0,2) to[out=-10,in=90] (1.7,0)
     to[out=-90,in=  0] (0,-1.7) to[out=180,in=-90] (-1.4,0) node[left]{$\tc'$}
     to[out= 90,in=180] (0, 1.4) to[out=  0,in= 90] ( 1.1,0)
     to[out=-90,in=  0] (0,-1.1) to[out=180,in=-90] (-1.1,0)
     to[out= 90,in=180] (0, 1.1) to[out=  0,in= 90] ( 0.9,0)
     to[out=-90,in=  0] (0,-0.9) to[out=180,in=-90] (-0.9,0)
     to[out= 90,in=225] (0, 2. );
\draw[green][line width=1.5pt] (0,0) circle(1.25); \draw[green] (1.15, 0) node[right]{$\tc$};
\end{tikzpicture}
\caption{Marked ribbon surface of 2-Kronecker algebra and the admissible curve $c'$ corresponded by band complex}
\label{fig:exp}
\end{figure}
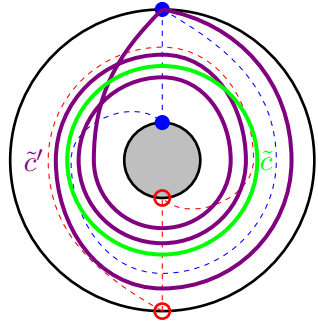
Consider the band complex
\[\comp{B}=\ \ \xymatrix@C=2cm{
P(2)^{\oplus 2} \ar[r]^{\left(\begin{smallmatrix}
\alpha\cdot\text{\i d} & \\
 & \beta \pmb{J}_{2}(\lambda)
\end{smallmatrix}\right)} & P(1)^{\oplus 2}}, \]
where $P(1)$ is $0$-th component of $\comp{B}$, $\text{\i d}=\pmb{E}_{2\times 2}$ and $\pmb{J}_{2}(\lambda)=\left({_1^\lambda}\ {_\lambda^0}\right)$ ($\lambda\ne 0$).
Then, we have
\[\H^0(\comp{B}) = \xymatrix{\kk^2 \ar@/^0.25pc/[r]^{\left({_0^1}\ {_1^0}\right)}
\ar@/_0.25pc/[r]_{\left({_1^\lambda}\ {_\lambda^0}\right)} & \kk^2}\]
and $\H^{-1}(\comp{B})$ is a quotient of $P(2)^{\oplus 2}$.
Thus, $\hl(\comp{B})=\dim_{\kk}\H^0(\comp{B})=4$. According to the proof of Lemma \ref{lemm:bandcase} and Proposition \ref{prop:bandcase},
there is an admissible curve $\tc'$ such that the indecomposable complex
\[\X(\tc')=\xymatrix@C=2cm{ P(2)^{\oplus 3} \ar[r]^{d=\left(\begin{smallmatrix}
\beta & \alpha & 0  \\
0 & \beta  & \alpha \\
0 & 0  & \beta
\end{smallmatrix}\right)} & P(1)^{\oplus 3} }\]
satisfies that
\[\dim_{\kk}\H^0(\X(\tc')) = \dim_{\kk}(P(1)^{\oplus 3}/\im d) 
= 6 \ge \hl(\comp{B})=4.\]

Let $\tc''$ be the admissible curve induced by a truncation of $\tc$ which is shown in
the second picture of \Pic \ref{fig:exp:dim-1}, then
\[\X(\tc'') = \xymatrix@C=2cm{ P(2)^{\oplus 3} \ar[r]^{\left(\begin{smallmatrix}
\beta & \alpha & 0  \\
0 & \beta  & \alpha
\end{smallmatrix}\right)} & P(1)^{\oplus 2} }\]
is a indecomposable complex with
\[\hl(\X(\tc''))  =\dim_{\kk}\H^{-1}(\X(\tc''))=\dim_{\kk}\H^0(\X(\tc'')) = 3 = \hl(\comp{B})-1.\]
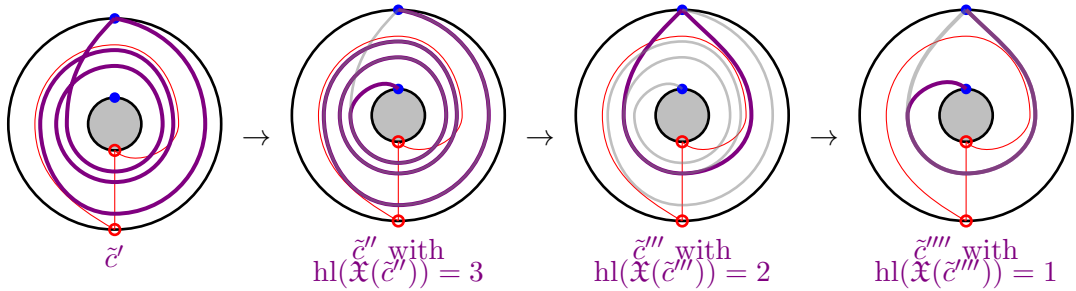
\begin{figure}[htbp]
\centering
\begin{tikzpicture}[scale=0.7]
\draw[line width=1pt] (0,0) circle(2);
\fill[gray!50] (0,0) circle(0.5);
\draw[line width=1pt] (0,0) circle(0.5);
\fill[blue][line width=1pt] ( 0. , 2. ) circle(0.1);
\fill[blue][line width=1pt] ( 0. , 0.5) circle(0.1);
\draw[red][line width=1pt] ( 0. ,-2. ) circle(0.1);
\draw[red][line width=1pt] ( 0. ,-0.5) circle(0.1);
\draw[red][rotate=180] ( 0. , 2. )
  to[out= -30, in=  90] ( 1.5, 0. ) to[out= -90, in=   0] ( 0. ,-1.5)
  to[out= 180, in= -90] (-1.2, 0. ) to[out=  90, in= 135] ( 0. , 0.5);
\draw[red][rotate=180] ( 0. , 2. )--( 0. , 0.5);
\draw[violet][line width=1.5pt] (0,2) to[out=-10,in=90] (1.7,0)
     to[out=-90,in=  0] (0,-1.7) to[out=180,in=-90] (-1.4,0)
     to[out= 90,in=180] (0, 1.4) to[out=  0,in= 90] ( 1.1,0)
     to[out=-90,in=  0] (0,-1.1) to[out=180,in=-90] (-1.1,0)
     to[out= 90,in=180] (0, 1.1) to[out=  0,in= 90] ( 0.9,0)
     to[out=-90,in=  0] (0,-0.9) to[out=180,in=-90] (-0.9,0)
     to[out= 90,in=225] (0, 2. )
     (0,-2.5) node{$\tc'$} (0,-3) node{\color{white}$x$};
\end{tikzpicture}
\begin{tikzpicture}[scale=0.7]
\draw[white] (0,-3) -- (0,2);
\draw (0,0) node{$\to$};
\end{tikzpicture}
\begin{tikzpicture}[scale=0.7]
\draw[line width=1pt] (0,0) circle(2);
\fill[gray!50] (0,0) circle(0.5);
\draw[line width=1pt] (0,0) circle(0.5);
\fill[blue][line width=1pt] ( 0. , 2. ) circle(0.1);
\fill[blue][line width=1pt] ( 0. , 0.5) circle(0.1);
\draw[red][line width=1pt] ( 0. ,-2. ) circle(0.1);
\draw[red][line width=1pt] ( 0. ,-0.5) circle(0.1);
\draw[red][rotate=180] ( 0. , 2. )
  to[out= -30, in=  90] ( 1.5, 0. ) to[out= -90, in=   0] ( 0. ,-1.5)
  to[out= 180, in= -90] (-1.2, 0. ) to[out=  90, in= 135] ( 0. , 0.5);
\draw[red][rotate=180] ( 0. , 2. )--( 0. , 0.5);
\draw[violet][line width=1.5pt] (0,2) to[out=-10,in=90] (1.7,0)
     to[out=-90,in=  0] (0,-1.7) to[out=180,in=-90] (-1.4,0)
     to[out= 90,in=180] (0, 1.4) to[out=  0,in= 90] ( 1.1,0)
     to[out=-90,in=  0] (0,-1.1) to[out=180,in=-90] (-1.1,0)
     to[out= 90,in=180] (0, 1.1) to[out=  0,in= 90] ( 0.9,0)
     to[out=-90,in=  0] (0,-0.9) to[out=180,in=-90] (-0.9,0)
     to[out= 90,in=135] (0, 0.5)
     (0,-2.5) node{$\tc''$ with} (0,-3) node{$\hl(\X(\tc''))=3$};
\draw[gray][line width=1pt][opacity=0.5] (0,2) to[out=-10,in=90] (1.7,0)
     to[out=-90,in=  0] (0,-1.7) to[out=180,in=-90] (-1.4,0)
     to[out= 90,in=180] (0, 1.4) to[out=  0,in= 90] ( 1.1,0)
     to[out=-90,in=  0] (0,-1.1) to[out=180,in=-90] (-1.1,0)
     to[out= 90,in=180] (0, 1.1) to[out=  0,in= 90] ( 0.9,0)
     to[out=-90,in=  0] (0,-0.9) to[out=180,in=-90] (-0.9,0)
     to[out= 90,in=225] (0, 2. );
\end{tikzpicture}
\begin{tikzpicture}[scale=0.7]
\draw[white] (0,-3) -- (0,2);
\draw (0,0) node{$\to$};
\end{tikzpicture}
\begin{tikzpicture}[scale=0.7]
\draw[line width=1pt] (0,0) circle(2);
\fill[gray!50] (0,0) circle(0.5);
\draw[line width=1pt] (0,0) circle(0.5);
\fill[blue][line width=1pt] ( 0. , 2. ) circle(0.1);
\fill[blue][line width=1pt] ( 0. , 0.5) circle(0.1);
\draw[red][line width=1pt] ( 0. ,-2. ) circle(0.1);
\draw[red][line width=1pt] ( 0. ,-0.5) circle(0.1);
\draw[red][rotate=180] ( 0. , 2. )
  to[out= -30, in=  90] ( 1.5, 0. ) to[out= -90, in=   0] ( 0. ,-1.5)
  to[out= 180, in= -90] (-1.2, 0. ) to[out=  90, in= 135] ( 0. , 0.5);
\draw[red][rotate=180] ( 0. , 2. )--( 0. , 0.5);
\draw[violet][line width=1.5pt] (0,2) to[out=-45,in=90] (1.3,0)
     to[out=-90,in=  0] (0,-1.1) to[out=180,in=-90] (-1.1,0)
     to[out= 90,in=225] (0, 2  )
     (0,-2.5) node{$\tc'''$ with} (0,-3) node{$\hl(\X(\tc'''))=2$};
\draw[gray][line width=1pt][opacity=0.5] (0,2) to[out=-10,in=90] (1.7,0)
     to[out=-90,in=  0] (0,-1.7) to[out=180,in=-90] (-1.4,0)
     to[out= 90,in=180] (0, 1.4) to[out=  0,in= 90] ( 1.1,0)
     to[out=-90,in=  0] (0,-1.1) to[out=180,in=-90] (-1.1,0)
     to[out= 90,in=180] (0, 1.1) to[out=  0,in= 90] ( 0.9,0)
     to[out=-90,in=  0] (0,-0.9) to[out=180,in=-90] (-0.9,0)
     to[out= 90,in=135] (0, 0.5);
\end{tikzpicture}
\begin{tikzpicture}[scale=0.7]
\draw[white] (0,-3) -- (0,2);
\draw (0,0) node{$\to$};
\end{tikzpicture}
\begin{tikzpicture}[scale=0.7]
\draw[line width=1pt] (0,0) circle(2);
\fill[gray!50] (0,0) circle(0.5);
\draw[line width=1pt] (0,0) circle(0.5);
\fill[blue][line width=1pt] ( 0. , 2. ) circle(0.1);
\fill[blue][line width=1pt] ( 0. , 0.5) circle(0.1);
\draw[red][line width=1pt] ( 0. ,-2. ) circle(0.1);
\draw[red][line width=1pt] ( 0. ,-0.5) circle(0.1);
\draw[red][rotate=180] ( 0. , 2. )
  to[out= -30, in=  90] ( 1.5, 0. ) to[out= -90, in=   0] ( 0. ,-1.5)
  to[out= 180, in= -90] (-1.2, 0. ) to[out=  90, in= 135] ( 0. , 0.5);
\draw[red][rotate=180] ( 0. , 2. )--( 0. , 0.5);
\draw[violet][line width=1.5pt] (0,2) to[out=-45,in=90] (1.3,0)
     to[out=-90,in=  0] (0,-1.1) to[out=180,in=-90] (-1.1,0)
     to[out= 90,in=135] (0, 0.5)
     (0,-2.5) node{$\tc''''$ with} (0,-3) node{$\hl(\X(\tc''''))=1$};
\draw[gray][opacity=0.5] [line width=1.5pt] (0,2) to[out=-45,in=90] (1.3,0)
     to[out=-90,in=  0] (0,-1.1) to[out=180,in=-90] (-1.1,0)
     to[out= 90,in=225] (0, 2  );
\end{tikzpicture}
\caption{Admissible curves and the cohomological lengths of complexes}
\label{fig:exp:dim-1}
\end{figure}

\noindent Next, we can construct $\tc'''$ and $\tc''''$ (see the third and fourth pictures in \Pic \ref{fig:exp:dim-1}) by the method given in the proof of Proposition \ref{prop:stringcase}, we have
\[\X(\tc''') = \xymatrix@C=1cm{ P(2)\ar[r]^{\beta} & P(1)} \text{ and } \X(\tc'''') = \xymatrix{P(2) \ar[r]^{0} & 0}. \]
Thus, $\hl(\X(\tc'''))=2 = \hl(\X(\tc''))-1$ and $\hl(\X(\tc'''')) = 1 = \hl(\X(\tc'''))-1$.
\end{example}


\section*{Acknowledgements}
This work is supported by the National Natural Science Foundation of China (Grant No. 12171207, 11961007), Scientific Research Foundations of Guizhou
University (Grant No. [2022]65) and Natural Science Research Start-up Foundation
of Recruiting Talents of Nanjing University of Posts and Telecommunications (Grant No.
NY222092).


%

\def\cprime{$'$}

\end{document}